\DeclareMathOperator{\tr}{tr}
\numberwithin{equation}{subsection}	
\newcommand{\vertiii}[1]{{\left\vert\kern-0.25ex\left\vert\kern-0.25ex\left\vert #1 
		\right\vert\kern-0.25ex\right\vert\kern-0.25ex\right\vert}}
\theoremstyle{plain}
\theoremstyle{definition}
\newtheorem{problem}{Problem}[section]
\newtheorem{lemma}{Lemma}[section]
\newtheorem{theorem}{Theorem}[section]
\theoremstyle{definition}
\newtheorem{definition}{Definition}[section]
\newtheorem{remark}{Remark}[section]
\newcommand{\customfootnotetext}[2]{{
		\renewcommand{\thefootnote}{#1}
		\footnotetext[0]{#2}}}
\title{Linear Quadratic Extended Mean Field Games and Control Problems}
\author{
	\renewcommand{\thefootnote}{\arabic{footnote}}
	Alain Bensoussan\textsuperscript{$\dagger$,}\footnotemark[1] \quad
	Bohan Li\textsuperscript{$\mathsection$,}\footnotemark[2] \textsuperscript{$,3$} \quad
	Sheung Chi Phillip Yam\textsuperscript{$\mathparagraph$,}\footnotemark[3]
}
\date{\today}
\begin{document}
	\maketitle
	
	\begin{center}
	\textsuperscript{1} \textit{Naveen Jindal School of Management, The University of Texas at Dallas, Dallas, Texas, USA}\\
	\textsuperscript{2} \textit{Center for Financial Engineering, Soochow University, Suzhou, Jiangsu 215006, PRC}\\
	\textsuperscript{3} \textit{Department of Statistics, Chinese University of Hong Kong, Shatin, N.T., Hong Kong SAR}  
	\end{center}\vspace{0pt}
	
	
	\begin{abstract}
	We provide a thorough study of a general class of linear-quadratic extended mean field games and control problems in any dimensions where the mean field terms are allowed to be unbounded and there are also presence of cross terms in the objective functionals. Our investigation focuses on the unique existence of equilibrium strategies for the extended mean field problems by employing the stochastic maximum principle approach and the appropriate fixed point argument. Notably, the linearity of the state and adjoint processes allows us to derive a forward-backward ordinary differential equation governing the optimal mean field term. We provide two distinct proofs, accompanied by two sufficient conditions, that establish the unique existence of the equilibrium strategy over a global time horizon. Both conditions emphasize the importance of sufficiently small coefficients of sensitivity for the cross term, of state and control, and mean field term. To determine the required magnitude of these coefficients, we utilize the singular values of appropriate matrices and Weyl's inequalities. The present proposed theory is consistent with the classical one, namely, our theoretical framework encompasses classical linear-quadratic stochastic control problems as particular cases. Additionally, we establish sufficient conditions for the unique existence of solutions to a particular class of non-symmetric Riccati equations, and we illustrate a counterexample to the existence of equilibrium strategies. Furthermore, we also apply the stochastic maximum principle approach to examine linear-quadratic extended mean field type stochastic control problems. Comparing to the game setting, the unique existence of the corresponding adjoint equations of the extended mean field type control problems can always be warranted without the requirement of any sufficiently small coefficients; indeed, it can be established under a natural convexity condition. Finally, we conduct a comparative analysis between our method and the alternative master equation approach, specifically addressing the efficacy of the present proposed approach in solving common practical problems, for which the explicit forms of the equilibrium strategies can be obtained directly, even over any global time horizon.
	
	\end{abstract}
	\vfill
	
	
	\noindent
	{\bf Keywords.} \rm  Extended mean field; Forward-backward system; Global in time analysis; Linear quadratic; Non-symmetric Riccati equation; Stochastic maximum principle; Weyl's inequality.
	
	\newpage
	\tableofcontents
	
\section{Introduction}

Over the past two decades, the theory of mean field games (MFG for short) has gained significant attention in the academic community (see \cite{lasry2007mean} \cite{carmona2018probabilistic} \cite{bensoussan2013mean} for more details), and it now becomes a powerful tool for a variety of disciplines in science and engineering. For example, in sociological research, the mean field theory has been applied to the study of pedestrian evacuation problem (see \cite{vermuyten2016review},\cite{aurell2018mean},\cite{salhab2017dynamic,salhab2018dynamic} for more details). In epidemiology, this method can be used to study the spread of infectious diseases in crowds (see \cite{aurell2022optimal} for more details). For other works, we refer to \cite{kizilkale2012mean}, \cite{nourian2013mean}, \cite{nourian2013epsilon}, \cite{caines2016epsilon}, \cite{lin2018open}, \cite{barreiro2019linear}, \cite{salhab2019collective}, \cite{firoozi2020epsilon}, \cite{stella2021mean}, \cite{fu2021mean}, \cite{bayraktar2022finite} and \cite{feng2021unified}.



A MFG is the approximation of the $N$ number of players/agents Nash game as $N$ tend to infinity, in which any negligible agents interact via a mean field term which represents the population distribution of the state process. To capture the phenomenon of individual agents being directly influenced by the policies of other agents, \cite{gomes2014existence} and \cite{gomes2016extended} introduced an enhanced and more encompassing variant of MFG known as {\em extended} MFG (EMFG for short). Within this framework, the mean field analogue of agent policies is incorporated into the model. In the context of EMFG, several notable works have contributed to our understanding of this field. For instance, {\iffalse\cite{carmona2015probabilistic} studies the EMFG using a probabilistic method with the assumption that no mean field term is in the drift and the running cost is separable in the mean field of controls. \fi}\cite{cardaliaguet2018mean} considers an optimal liquidation problem with the price impact generated by the purchases of all market participants; furthermore, they employ the method inherited in \cite{carmona2018probabilistic} to a class of EMFGs; it points out the necessity of certain uniform boundedness conditions in order to have their theory to be valid, which, however, are not met in the common linear-quadratic modelings. Another work \cite{bonnans2021schauder} focuses on investigating the Fokker–Planck-Hamilton-Jacobi-Bellman (FP-HJB) system, which corresponds to a setting with the absence of a mean field term in the drift while the cost function is separable in the state and control. \cite{carmona2021probabilistic} explores the EMFG in the context of a finite state Markov chain model, and they established the corresponding unique existence of the Nash equilibrium in the class of all closed-loop strategies. \cite{motte2022mean} delves into the examination of EMFG driven by discrete-time Markov decision processes (MDPs), with the presence of a common noise, in which they formulated the original problem as another one with a suitable lifted MDP in a Wasserstein space, and then they solved the corresponding Bellman fixed point equation. \cite{li2023linear} focuses on the linear quadratic extended mean field game and its master equation, where all the mean field terms are in some bounded functions and the running cost is separable (in state and control). The authors use a Stochastic Maximum Principle (SMP) to study the problem in one dimension. In contrast, our article allows the unboundedness of the mean field terms in both state and objective functionals, and the presence of the cross terms in the objective functional. More relevant works on this subject can be found in \cite{alasseur2020extended}, \cite{kobeissi2022classical} and \cite{pham2022mean}.



In a cooperative game involving $N$ agents, mean field type control (MFTC for short) can be regarded as an approximation, in which a central planner selects the policies for all agents. The distinctions between MFG and MFTC are elucidated in the work of \cite{bensoussan2013mean}; for further references on MFTC, we also recommend \cite{djehiche2015stochastic}, \cite{bensoussan2017interpretation}, \cite{ismail2019robust}, \cite{moon2020linear}, \cite{bensoussan2020control}, \cite{pham2022portfolio} and \cite{bensoussan2022dynamic}. By incorporating the mean field term of control into MFTC, the resulting extension is known as {\em extended} MFTC (EMFTC for short) problem, as named by \cite{acciaio2019extended}. In EMFTC, the objective functional and state process depend on the joint distribution of both state and control. Furthermore, this new model has been examined in other articles, including \cite{graber2016linear}, \cite{basei2019weak}, \cite{pham2018bellman}, \cite{djete2022mckean} and \cite{bensoussan2022optimization}, and they considered both technical matter and potential applications. For instance, \cite{graber2016linear} investigates the relationship between linear quadratic EMFTC problems and EMFGs. \cite{pham2018bellman} employed the dynamic programming principle and formulated the problem as a deterministic control problem, in which the new state represents the distribution of $X_t$; they derived the Bellman equation for this converted problem and focused on the linear quadratic case, which considers the presence of neither the cross term between the state variable and the mean field term of control, nor the cross term between the control variable and the mean field term of state in the objective function; yet our present work considers all possible scenarios. Moreover, \cite{basei2019weak} applied the martingale optimality principle to a linear quadratic EMFTC problem which is the same as that of \cite{pham2018bellman}, and established the unique existence by studying the corresponding symmetric Riccati equation.  In Chapter 3 of \cite{sun2020stochastic}, they studied a linear quadratic EMFTC problem by using the symmetric Riccati equation approach, and established an equivalence relation between the solvability of these equations and the uniform convexity of the cost functionals; noted that there is no cross interaction terms of the state/control processes and their corresponding mean field terms in their work, while we particularly pay attention here in ours. \cite{alasseur2020extended} focued on the dynamic grid optimization problem for both the EMFGs and EMFTC problems; they related the mean field equilibrium of a EMFG to the optimal control of an associated EMFTC problem, and then provided the explicit solution for a relatively simpler linear quadratic case in which there is no mean-field term of states in either state processes or the objective functional, while the mean-field term of control is also not present in the state processes; to this end, due to their application setting, it is truly not necessary to consider these terms. Instead in our framework, a more sophisticated model involving all these terms is intended to construct. \cite{li2021mean} considered a two-person non-zero sum game composed of two linear quadratic EMFTC problem each with a same setting as Chapter 3 of \cite{sun2020stochastic}; they also obtained similar results to that of \cite{sun2020stochastic}. \cite{bensoussan2022optimization} employed the master equation approach to investigate the application of EMFTC in the dynamic grid problem of renewable energy storage and the environmental risk problem. \cite{djete2022mckean} investigates the EMFTC problem with common noise, particularly emphasizing the equivalence of strong, weak, and relaxed formulations, as well as the limiting convergence of a large population problem to the EMFTC problem.

In the context of Nash games involving multiple players, solving for equilibrium strategy becomes particularly challenging when the state processes and objective functionals of each player are influenced by the actions of other players, both in terms of their state processes and controls. The works of \cite{carmona2018probabilistic}, \cite{djete2020mean}, \cite{bonnans2021schauder} and \cite{carmona2021probabilistic} have demonstrated that these problems can be reformulated as EMFGs in the limiting case. In the present study, we adopt this approach and propose a comprehensive solution under the common linear quadratic setting, where each player's multidimensional state process is linear in state, control, mean field term of both states and controls, while the objective functional is quadratic in them. In contrast to the works of \cite{carmona2018probabilistic}, \cite{djete2020mean}, \cite{bonnans2021schauder} and \cite{carmona2021probabilistic}, we allow the objective functional to be non-separable in state, control, mean field terms of states and controls, in particular, the objective functional is allowed to include a cross-term that consisting all of them. Our problem differs from those investigated in \cite{cardaliaguet2018mean} and \cite{djete2020mean}, since their works impose uniform boundedness assumptions on state processes or objective functionals, which do certainly fail to hold in the common linear quadratic models. To tackle our problems of study, we utilize the fixed point technique and the stochastic maximum principle to derive a system of forward-backward stochastic differential equations (FBSDEs). We then prove the unique existence of a global solution to the FBSDEs under two different assumptions of small coefficients of sensitivity. Particularly, the first proof enhanced the method first proposed by \cite{bensoussan2016linear}, in which there was only a local solution for the vanilla linear quadratic MFG. This method reveals that if the coefficients of mean field terms in the state process and those of objective functional are sufficiently small, a unique global solution can be guaranteed; besides, our newly proposed theory is consistent with that of \cite{bensoussan2016linear} in the absence of mean field term of controls. 
    \par For the second proof, we obtain the unique existence of a global solution under an alternative assumption by employing the homotopy method first introduced in \cite{peng1999fully}; for this second assumption, it is different from the first so that we can allow the coefficients of mean field term of controls to be relatively large in either the objective functional or the driving dynamics; the same large sensitivity for the mean field term of state can be allowed in objective functional too. Instead, we here assume the coefficient of cross-term in the objective functional to be relatively small together with certain positive-definiteness conditions on some other coefficients of the objective functional. To fulfill this demand of positive-definiteness conditions, we resort to the celebrated Weyl's inequalities and provide a tractable condition in relation to the singular values of the coefficient matrices. As a side-product of our main theory, we also enrich the study on a wide class of non-symmetric Riccati equations first studied in \cite{bensoussan2016linear}. We also illustrate a counterexample that shows, for certain initial state, that leads to non-uniqueness which results in two possible solutions; furthermore, for certain choice of initial state, even worse, the equilibrium strategies may not exist. Moreover, we shall present the usual limiting relationship between the EMFG and the Nash game with a large number of players in the linear quadratic setting. The convergence rates of both the equilibrium strategy and the objective functionals of the finite player Nash game to their EMFG counterparts are precisely obtained in Section \ref{lqmf.section.convergence_rate}.
    \par In addition, we extend our analysis to the linear-quadratic EMFTC problems. By employing the stochastic maximum principle approach, the EMFTC is characterized by a system of FBSDEs. We observe that in contrast to the game setting, the unique existence of the corresponding system of FBSDEs in linear-quadratic EMFTC problems can always be guaranteed without imposing further requirement of sufficiently small coefficients. Instead, we establish their unique existence under a natural convexity condition. To indicate the effectiveness of our proposed method, we conduct a comparison study between our proposed approach and the alternative master equation approach. We specifically demonstrate the effectiveness of our method in solving practical problems, so that the explicit forms of the equilibrium strategies can be obtained directly, even over a global time setting. Though we are not going to consider the common noise in the present article to avoid any unnecessary technicalities, yet our arguments here can be readily extended to scenarios involving common noise in both EMFG and EMFTC problems. An example including common noise will still be illustrated in Appendix \ref{lqmf.section.appendixB} to show that the same approach is indeed easily extendable.




The organization of our paper is as follows. Section \ref{lqmf.section.emfg} formulates a Nash game involving $N$ players. We then consider an EMFG which corresponds to the limiting Nash game as the number of agents tends to infinity. By employing fixed point analysis and the stochastic maximum principle, we derive the mean field equilibrium in terms of the unique solution of a system of FBSDEs including the mean field terms of both the state and controls as the fixed point, which can be further represented by the unique solution of a system of forward backward ordinary differential equations (FBODEs). Section \ref{lqmf.section.unique_existence} contains our novel results, in which we provide two sufficient conditions that ensure the unique existence of the mean field equilibrium strategy. The first condition is for both short time and global time horizons, while the second condition allows unique existence result over a global time horizon, and its truth follows by some application of Weyl's inequality. In Section \ref{lqmf.section.convergence_rate}, we establish the relationship between the $\varepsilon$-Nash equilibrium for the original Nash game involving $N$ players and the mean field equilibrium obtained in our EMFG. Furthermore, in Section \ref{lqmf.section.emftc} we also consider EMFTC problems, by employing the stochastic maximum principle, we obtain the optimal strategy under some natural convexity condition. In addition, we conduct a comparative analysis of our proposed approach with the work \cite{alasseur2020extended} and master equation approach in Appendix \ref{lqmf.section.appendixB} which also demonstrate the effectiveness of our approach when common noise is present. {\iffalse providing insights into the strengths and limitations of each method.\fi} 

\section{Problem Formulation} \label{lqmf.section.emfg}

\subsection{Stochastic Differential Game for $N$ players}

Consider a completed filtered probability space $(\Omega, \mathcal{F}, \mathbb{P})$ with the filtration
\begin{align*}
\mathcal{F}_t:=\sigma\left(\left(x_0^1, \ldots, x_0^N\right),\left(W_s^1, \ldots, W_s^N\right), s \leq t\right),
\end{align*}
where $W^1, \ldots, W^N$ denote $N$ independent $n$-dimensional standard Brownian motions defined on $(\Omega, \mathcal{F}, \mathbb{P})$ and $x_0^1, \ldots, x_0^N$ form a independent sample of size $N$, from a common distribution which is independent of the Brownian motion and these $x_0^i$ stand for the initial point of the following dynamics \eqref{lqmf.N_player.state_process}. Let us consider a $N$ players Nash-game in which the dynamics of the $i$-th player is given by
\begin{align}
\label{lqmf.N_player.state_process}
d y^i_t=\left(A_t y^i_t+B_t v^i_t+\bar{A}_t \frac{1}{N-1}\sum_{j\neq i}y^j_t+\bar{B}_t \frac{1}{N-1}\sum_{j\neq i}v^j_t\right) d t+\sigma_t d W^i_t, \quad y^i(0)=x^i_0,
\end{align}
where $A, B, \bar{A}, \bar{B}$ are bounded deterministic matrix-valued functions in time-$t$ of suitable sizes, $\sigma$ is a square-integrable function on $\mathbb{R}^{n\times n}$, and $v^i$ is the admissible control variable in $L_{\mathcal{F}_{\cdot}}^2\left(0, T ; \mathbb{R}^m\right)$, namely the space of ${\mathcal{F}_{\cdot}}$-adapted stochastic processes such that
\begin{align*}
\mathbb{E} \left(\int_0^T \|v^i_t\|^2 dt\right) < \infty.
\end{align*}
Here, $A$ measures the effect of the player's current state on his own dynamics, $B$ measures the impact of the player's self-regulation controlled by himself, $\bar{A}$ represents the overall influence of the current states of the other players, and $\bar{B}$ reflects the {\em spillover effect} of the other players' controls. Let $v^{-i}:=(v^1,...,v^{i-1},v^{i+1},...,v^N) \in \mathbb{R}^{m\times (N-1)}$ be the vector of the controls of all players except the $i$-th player, the objective functional for each player $i$ is formulated as follows:
\begin{align}
\nonumber
\mathcal{J}^i(v^i,v^{-i}) :=& \mathbb{E}\left[\frac{1}{2} \int_0^T (y^i_t)^{\top} Q_t y^i_t+(v^i_t)^{\top} P_t v^i_t+\left(y^i_t-S_t \frac{1}{N-1}\sum_{j\neq i}y^j_t\right)^{\top} \bar{Q}_t\left(y^i_t-S_t \frac{1}{N-1}\sum_{j\neq i}y^j_t\right) \right.\\
\nonumber
&\qquad \quad+\left. 2\left(y^i_t-\bar{S}_t \frac{1}{N-1}\sum_{j\neq i}y^j_t\right)^{\top} N_t\left(v^i_t-\bar{R}_t \frac{1}{N-1}\sum_{j\neq i}v^j_t\right) \right.\\
\nonumber
&\qquad \quad+\left. \left(v^i_t-R_t \frac{1}{N-1}\sum_{j\neq i}v^j_t\right)^{\top} \bar{P}_t\left(v^i_t-R_t \frac{1}{N-1}\sum_{j\neq i}v^j_t\right) d t\right] \\
\label{lqmf.N_player.objective}
&+\mathbb{E}\left[\frac{1}{2} (x^i_T)^{\top} Q_T x^i_T+\frac{1}{2}\left(x^i_T-S_T \frac{1}{N-1}\sum_{j\neq i}y^j_T\right)^{\top} \bar{Q}_T\left(x^i_T-S_T \frac{1}{N-1}\sum_{j\neq i}y^j_T\right)\right],
\end{align}
where $Q, \bar{Q}$, $P, \bar{P}$, $S, \bar{S}$, $R, \bar{R}$ and $N$ are bounded, deterministic matrix-valued functions in time-$t$ of suitable sizes. Here, $Q$ measures the quadratic cost of staying at the current state $y^i_t$, $P$ measures the quadratic cost of taking an action $v^i_t$, $\bar{Q}$ represents the cost of the state deviating from a benchmark, scaled by $S$, calibrated as the average of other players' optimal states. Similarly, $\bar{P}$ measures the penalty for an action deviating from a benchmark, scaled by $R$, of the average of the other players' optimal controls, and $N$ along with $\bar{S}$ and $\bar{R}$ represent the overall cross effect/interaction between deviations of state and control from the average of the crowd. We suppose that $Q, \bar{Q}, P, \bar{P}$ are non-negative definite such that $Q+\bar{Q} \geq \delta I$, $P+\bar{P} \geq \delta I$, for some $\delta>0$. If we take $\bar{B}=\bar{P}=\bar{R}=0$, the present model reduces to the linear quadratic mean field game in the literature; if we further take $\bar{A}=\bar{Q}=\bar{S}=0$, the present model reduces to the classical linear quadratic control problem.


The principal objective of each player is to minimize his own objective functional by properly controlling his own dynamics. In this classical nonzero-sum stochastic differential game framework, we aim to establish a Nash equilibrium $\left(v^1, \ldots, v^N\right)$ (see, for example, \cite{bensoussan2000stochastic}):

\begin{definition}
	The group of admissible controls $\{\hat{v}^{i}\}_{i\in N} \subset L_{\mathcal{F}_{\cdot}}^{2}\left(0, T ; \mathbb{R}^m\right)$ is a Nash equilibrium if it satisfies the inequalities
	\begin{align*}
	\mathcal{J}^i(\hat{v}^i,\hat{v}^{-i}) \leq \mathcal{J}^i(v^i,\hat{v}^{-i}), \quad \text{for} \quad 1 \leq i \leq N,
	\end{align*}
	where $\hat{v}^{-i} = (\hat{v}^1,\cdots,\hat{v}^{i-1},\hat{v}^{i+1},\hat{v}^N)$.
\end{definition}

\begin{problem} \label{lqmf.mfg.problem.N_player}
	Find the Nash equilibrium $(\hat{v}^1,\cdots,\hat{v}^N)$ with $\hat{v}_t^i \in L_{\mathcal{F}_{\cdot}}^{2}\left(0, T ; \mathbb{R}^m\right)$ for all $1\leq i \leq N$, such that it can minimize \eqref{lqmf.N_player.objective} which is governed by the dynamics \eqref{lqmf.N_player.state_process}.	
\end{problem}

Even though we have omitted the consideration of common noise in our current paper, our arguments can be readily extended to scenarios involving common noise. A relevant practical example including the common noise will still be presented in Section \ref{lqmf.section.master_equation} to illustrate the applicability of our approach in the presence of common noise.

	
\subsection{Extended Mean Field Game}

We first introduce a limiting mean field game problem, and then we shall show that the equilibrium problem of \eqref{lqmf.N_player.state_process} and \eqref{lqmf.N_player.objective} converges to this limiting problem in Section \ref{lqmf.section.epsilon-nash_equilibrium}.
\begin{problem} \label{lqmf.mfg.problem.mfg}
	Find a mean field equilibrium $\hat{v}_t \in L_{\mathcal{G}_{\cdot}}^{2}\left(0, T ; \mathbb{R}^m\right)$ where $\mathcal{G}_t$ is the smallest completed $\sigma$-field, with all null sets, including $\sigma\left(x_0, W_s, s \leq t\right)$ such that it can minimize
	\begin{align}
	\nonumber
	J(v):=& \mathbb{E}\left[\frac{1}{2} \int_0^T x_t^{\top} Q_t x_t+v_t^{\top} P_t v_t+\left(x_t-S_t \mathbb{E}\left[\hat{x}_t\right]\right)^{\top} \bar{Q}_t\left(x_t-S_t \mathbb{E}\left[\hat{x}_t\right]\right) \right.\\
	\nonumber
	&\qquad \quad+\left. 2\left(x_t-\bar{S}_t \mathbb{E}\left[\hat{x}_t\right]\right)^{\top} N_t\left(v_t-\bar{R}_t \mathbb{E}\left[\hat{v}_t\right]\right) + \left(v_t-R_t \mathbb{E}\left[\hat{v}_t\right]\right)^{\top} \bar{P}_t\left(v_t-R_t \mathbb{E}\left[\hat{v}_t\right]\right) d t\right] \\
	\label{lqmf.mfg.objective}
	&+\mathbb{E}\left[\frac{1}{2} x_T^{\top} Q_T x_T+\frac{1}{2}\left(x_T-S_T \mathbb{E}\left[\hat{x}_T\right]\right)^{\top} \bar{Q}_T\left(x_T-S_T \mathbb{E}\left[\hat{x}_T\right]\right)\right],
	\end{align}
	where the controlled dynamics is given by
	\begin{align}
	\label{lqmf.mfg.state_process}
	d x_t=\left(A_t x_t+B_t v_t+\bar{A}_t \mathbb{E}\left[\hat{x}_t\right]+\bar{B}_t \mathbb{E}\left[\hat{v}_t\right]\right) d t+\sigma_t d W_t, \quad x(0)=x_0,
	\end{align}
	and $\hat{x}_t$ is the corresponding dynamics under the control $\hat{v}_t$, and we commonly call this the dynamics of the representation agent.	
\end{problem}

If Problem \ref{lqmf.mfg.problem.mfg} were solvable, then for each $\mathcal{G}_t^i:=\sigma\left(x_0^i, W_s^i, s \leq t\right), 1 \leq i \leq N$, we could obtain an $\varepsilon$-equilibrium strategy $\hat{v}^i$ in $L_{\mathcal{G}^i}^2\left(0, T ; \mathbb{R}^m\right)$. Since $\mathbb{E}\left[\hat{x}_t\right]$ and $\mathbb{E}\left[\hat{v}_t\right]$ are deterministic processes, it is clear that $\hat{v}^1, \ldots, \hat{v}^N$ are i.i.d. due to the independence of $W_s^i$ for $i=1,\cdots,N$. We can anticipate the convergence of the $N$ player Nash game Problem \ref{lqmf.mfg.problem.N_player} to this extended mean field game, and the rigorous arguments will be provided later in Section \ref{lqmf.section.epsilon-nash_equilibrium}.


For any admissible control $v^1$, let $\left(y^1, \ldots, y^N\right)$ denote the dynamics in Problem \eqref{lqmf.N_player.objective} controlled by $\left(v^1, \hat{v}^{-1}\right)$. Similar to Section 2 of \cite{bensoussan2016linear}, we expect $y_t^1 \to x_t^1$ and $\hat{y}_t^i \to \hat{x}_t^i$ for $i \neq 1$. Likewise, as $N \to \infty$, we should also have $\mathcal{J}^1\left(v^1, \hat{v}^{-1}\right) \rightarrow J\left(v^1\right)$. Similarly, let $\left(\hat{y}^1, \ldots, \hat{y}^N\right)$ be the dynamics controlled by $\left(\hat{v}^1, \hat{v}^{-1}\right)$, we can anticipate that $\hat{y}_t^i \to \hat{x}_t^i$ for all $i$ and $\mathcal{J}^1\left(\hat{v}^1, \hat{v}^{-1}\right) \rightarrow J\left(\hat{v}^1\right)$ as $N \to \infty$, and so $\left(\hat{v}^1, \ldots, \hat{v}^N\right)$ is an $\epsilon$-Nash equilibrium as expected.

As a similar argument in Section 3 of \cite{bensoussan2016linear}, Problem \ref{lqmf.mfg.problem.mfg} can be solved by considering a fixed point problem where the mean field terms are taken as some fixed deterministic functions {\it a priori} to this new problem, by then one can apply classical results in the theory linear-quadratic stochastic control problem, whose solution can be found in standard manner; thereafter we look for a unique fixed point such that the expected optimal state process and the expected control both coincide with the pre-determined mean field terms. To this end, consider the following classical linear-quadratic stochastic control problem:
\begin{problem} \label{lqmf.mfg.problem.fixed_point}
	For any deterministic functions $z_t$ in $L^2\left(0, T ; \mathbb{R}^n\right)$ and $w_t$ in $L^2\left(0, T ; \mathbb{R}^m\right)$, find an optimal control $\hat{v}_t \in L_{\mathcal{G}_{\cdot}}^{2}\left(0, T ; \mathbb{R}^m\right)$ such that it minimizes
	\begin{align*}
	J^{z,w}(v):=& \mathbb{E}\left[\frac{1}{2} \int_0^T x_t^{\top} Q_t x_t+v_t^{\top} P_t v_t+\left(x_t-S_t z_t\right)^{\top} \bar{Q}_t\left(x_t-S_t z_t\right) \right.\\
	&\qquad \quad+\left. 2\left(x_t-\bar{S}_t z_t\right)^{\top} N_t\left(v_t-\bar{R}_t w_t\right) + \left(v_t-R_t w_t\right)^{\top} \bar{P}_t\left(v_t-R_t w_t\right) d t\right] \\
	&+\mathbb{E}\left[\frac{1}{2} x_T^{\top} Q_T x_T+\frac{1}{2}\left(x_T-S_T z_T\right)^{\top} \bar{Q}_T\left(x_T-S_T z_T\right)\right]
	\end{align*}
	with the dynamics
	\begin{align*}
	d x_t=\left(A_t x_t+B_t v_t+\bar{A}_t z_t+\bar{B}_t w_t\right) d t+\sigma_t d W_t, \quad x(0)=x_0.
	\end{align*}
\end{problem}
\begin{theorem} \label{lqmf.mfg.theorem_smp}
	Under the assumption that there exists a scalar constant $\delta >0$ such that for all $t \in [0,T]$,
	\begin{align}
        \label{lqmf.mfg.assumption.convexity}
	(Q_t+\bar{Q}_t)  -  N_t (P_t+\bar{P}_t)^{-1} N_t^{\top} > \delta I, \quad \text{and} \quad B_t (P_t+\bar{P}_t)^{-1} B_t^{\top} > \delta I,
	\end{align}
	Problem \ref{lqmf.mfg.problem.fixed_point} is uniquely solvable and the optimal control is given by
	\begin{align}
	\label{lqmf.mfg.fixed_point.optimal_control}
	\hat{v}_t = - (P_t+\bar{P}_t)^{-1} \left(N_t^{\top} \hat{x}_t - N_t^{\top}\bar{S}_t z_t - \bar{P}_t R_t w_t + B_t^{\top} p_t\right),
	\end{align}
	where $\hat{x}$ is the optimal state process:
	\begin{align}
		\label{lqmf.mfg.fixed_point.optimal_state_process}
		\left\{\begin{array}{cl}
			d \hat{x}_t&=\left(A_t \hat{x}_t+B_t \hat{v}_t+\bar{A}_t z_t+\bar{B}_t w_t\right) d t+\sigma_t d W_t, \\
			\hat{x}(0)&=x_0,
		\end{array}\right.
	\end{align}
	and $p$ is the corresponding adjoint process:
	\begin{align}
		\label{lqmf.mfg.fixed_point.adjoint_process}
		\left\{\begin{array}{ccl}
			-d p_t &=& \left(A_t^{\top}p_t +(Q_t+\bar{Q}_t)\hat{x}_t - \bar{Q}_t S_t z_t +  N_t \hat{v}_t - N_t \bar{R}_t w_t\right)dt - \theta_t d W_t, \\
			p_T &=& (Q_T+\bar{Q}_T)\hat{x}_T - \bar{Q}_T S_T z_T.
		\end{array}\right.
	\end{align}
\end{theorem}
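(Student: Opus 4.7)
The plan is to invoke the stochastic maximum principle (SMP) and complement it with a strict convexity argument to get uniqueness. Because the diffusion coefficient $\sigma_t$ is control-independent and $z_t, w_t$ are given deterministic functions, Problem \ref{lqmf.mfg.problem.fixed_point} is a standard LQ stochastic control problem, so the SMP computation reduces to differentiating the Hamiltonian
$$
H(t,x,v,p) := p^{\top}(A_t x + B_t v + \bar A_t z_t + \bar B_t w_t) + L(t,x,v),
$$
where $L(t,x,v)$ is the integrand of the running cost in $J^{z,w}$. First I would compute $\partial_v H$, set it to zero, and solve for $v$ using the invertibility of $P_t + \bar P_t$ built into the standing assumption $P + \bar P \geq \delta I$; this produces precisely formula \eqref{lqmf.mfg.fixed_point.optimal_control}. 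The adjoint BSDE \eqref{lqmf.mfg.fixed_point.adjoint_process} then follows from $-dp_t = \partial_x H\, dt - \theta_t\, dW_t$ with terminal condition $p_T = \nabla_x \Phi(\hat x_T)$, where $\Phi$ denotes the terminal-cost integrand, and substituting $\hat v$ into the state equation yields \eqref{lqmf.mfg.fixed_point.optimal_state_process}.

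For uniqueness, I would observe that the Hessian of the running-cost integrand in $(x,v)$ is the block matrix
$$
M_t := \begin{pmatrix} Q_t + \bar Q_t & N_t \\ N_t^{\top} & P_t + \bar P_t \end{pmatrix},
$$
which by the Schur-complement criterion is uniformly positive definite iff $P_t+\bar P_t \geq \delta I$ together with the first inequality in \eqref{lqmf.mfg.assumption.convexity}. Combined with positive definiteness of the terminal Hessian $Q_T+\bar Q_T$, this yields strict (indeed uniform) convexity of $v \mapsto J^{z,w}(v)$ on $L^2_{\mathcal G_\cdot}(0,T;\mathbb{R}^m)$. Consequently at most one minimizer can exist, and the SMP first-order conditions are also sufficient. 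Existence of a minimizer then follows either directly by coercivity plus lower semicontinuity on this Hilbert space, or equivalently by constructing a solution to the coupled linear FBSDE \eqref{lqmf.mfg.fixed_point.optimal_state_process}-\eqref{lqmf.mfg.fixed_point.adjoint_process} via the decoupling ansatz $p_t = \Pi_t \hat x_t + r_t$, which reduces the problem to a symmetric Riccati ODE for $\Pi$ together with a linear inhomogeneous ODE for $r$ driven by the deterministic inputs $z, w$.

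The main obstacle will be global solvability of the Riccati equation on the whole interval $[0,T]$: local existence is immediate from Cauchy-Lipschitz, but ruling out finite-time blow-up demands an a priori bound on $\Pi_t$, typically obtained by identifying $\Pi_t$ with the Hessian of the value function of a shifted LQ problem and using the uniform convexity of $J^{z,w}$ to bound that value function. The second condition $B_t(P_t+\bar P_t)^{-1}B_t^{\top} > \delta I$ in \eqref{lqmf.mfg.assumption.convexity} enters precisely here, controlling the quadratic-in-$\Pi$ term of the Riccati equation and thus preventing blow-up, while the first condition ensures the coercivity that furnishes the a priori estimate.
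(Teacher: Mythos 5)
Your proposal is correct and follows essentially the same route as the paper: the paper's Appendix proof derives exactly your first-order condition $\partial_v H=0$ by computing the G\^ateaux derivative of $J^{z,w}$, pairing the resulting Euler inequality with the adjoint process through It\^o's formula applied to $\widetilde{x}_t^{\top}p_t$, and concluding by the arbitrariness of the perturbation $\widetilde{v}$; this is the explicit form of your Hamiltonian computation, and it produces the same control formula, adjoint equation and terminal condition. You go somewhat further than the written proof by making sufficiency, uniqueness (uniform convexity of $J^{z,w}$ via the Schur complement of your block matrix $M_t$) and existence (coercivity on $L^2_{\mathcal G_\cdot}$, or the decoupling ansatz $p_t=\Pi_t\hat{x}_t+r_t$) explicit, whereas the paper leaves the convexity argument implicit and records the Riccati/linear-ODE decomposition only in a remark following the theorem; that extra material is welcome and consistent with the paper.

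One point in your last paragraph is off, though it does not break the proof. The hypothesis $B_t(P_t+\bar{P}_t)^{-1}B_t^{\top}>\delta I$ is not what prevents finite-time blow-up of the Riccati equation: the quadratic term $-\Pi B(P+\bar{P})^{-1}B^{\top}\Pi$ is negative semidefinite for symmetric $\Pi$, and classical LQ theory already gives global solvability on $[0,T]$ once the effective state weight $(Q+\bar{Q})-N(P+\bar{P})^{-1}N^{\top}$ and the terminal weight $Q_T+\bar{Q}_T$ are nonnegative and the control weight $P+\bar{P}$ is uniformly positive definite, all of which hold here. The second inequality in \eqref{lqmf.mfg.assumption.convexity} plays no role in this theorem; it is invoked later in the paper, in the a priori estimates for the convergence-rate lemmas, where a uniform lower bound on $\lambda_{\min}\bigl(B_t(P_t+\bar{P}_t)^{-1}B_t^{\top}\bigr)$ is needed.
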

Its proof is deferred to Appendix A.

\begin{remark}
The solution of \eqref{lqmf.mfg.fixed_point.adjoint_process} is given by $p_t = \Xi_t \hat{x}_t + \zeta_t$, where

\begin{align}
	\label{lqmf.mfg.fixed_point.equation.riccati.Xi}
	\left\{\begin{array}{ccl}
		0&=&\frac{d\Xi_t}{dt} + \Xi_t\left(A_t - B_t (P_t+\bar{P}_t)^{-1} N_t^{\top}\right) + \left(A_t^{\top} -  N_t (P_t+\bar{P}_t)^{-1} B_t^{\top} \right)\Xi_t \\
		& &- \Xi_t B_t (P_t+\bar{P}_t)^{-1} B_t^{\top} \Xi_t + (Q_t+\bar{Q}_t)  -  N_t (P_t+\bar{P}_t)^{-1} N_t^{\top}, \\
		\Xi_T &=& Q_T+\bar{Q}_T,
	\end{array}\right.
\end{align}

\begin{align}
	\label{lqmf.mfg.fixed_point.equation.ode.zeta}
	\left\{\begin{array}{ccl}
		0&=&\frac{d\zeta_t}{dt}+(A_t^{\top}- N_t (P_t+\bar{P}_t)^{-1}B_t^{\top} - \Xi_tB_t(P_t+\bar{P}_t)^{-1}B_t^{\top}) \zeta_t\\
		& &+\left(\Xi_t(B_t(P_t+\bar{P}_t)^{-1}N_t^{\top}\bar{S}_t +\bar{A}_t)+(N_t (P_t+\bar{P}_t)^{-1}N_t^{\top}\bar{S}_t- \bar{Q}_t S_t)\right) z_t \\
		& &+ \left(\Xi_t(B_t(P_t+\bar{P}_t)^{-1}\bar{P}_t R_t+\bar{B}_t)+ (N_t (P_t+\bar{P}_t)^{-1}\bar{P}_t R_t-N_t \bar{R}_t)\right) w_t, \\
		\zeta_T &=& -\bar{Q}_T S_T z_T.
	\end{array}\right.
\end{align}
Given $z_t$ and $w_t$, the symmetric Riccati equation \eqref{lqmf.mfg.fixed_point.equation.riccati.Xi} stands alone in this system of \eqref{lqmf.mfg.fixed_point.equation.riccati.Xi}-\eqref{lqmf.mfg.fixed_point.equation.ode.zeta} and it has a unique solution if the convexity condition \eqref{lqmf.mfg.assumption.convexity} is satisfied, by then the first order linear ordinary equation \eqref{lqmf.mfg.fixed_point.equation.ode.zeta} can be solved separately.
\end{remark}

Problem \ref{lqmf.mfg.problem.fixed_point} proposes a well-defined map $\mathcal{T}:(z,w) \mapsto (\mathbb{E}[\hat{x}],\mathbb{E}[\hat{v}])$, where $\hat{x}$ and $\hat{v}$ are the corresponding optimal solutions of Problem \ref{lqmf.mfg.problem.fixed_point} given $z_t$ and $w_t$. The objective of this paper is to find two continuous functions\footnote{Note that $\hat{x}_t$ is a continuous process as it is adapted to $\mathcal{G}_t$ which is the continuous filtration generated by the initial value $x_0$ and Brownian motion $\{W_s\}_{0\leq s\leq t}$. Meanwhile, if we assume $z_t$ and $w_t$ to be continuous then  $\hat{v}_t$ is also continuous as it is derived from the Hamiltonian \eqref{lqmf.mfg.fixed_point.optimal_control}} $(z_{\cdot},w_{\cdot})$ such that
\begin{align*}
\mathbb{E}\left[\hat{x}_t\right] = z_t, \quad \text{and} \quad \mathbb{E}\left[\hat{v}_t\right] = w_t, \quad \text{for all} \quad t \in [0,T].
\end{align*}
If the fixed point $(z_t,w_t)=(\mathbb{E}[\hat{x}_t],\mathbb{E}[\hat{v}_t])$ uniquely existed, then Problem \ref{lqmf.mfg.problem.mfg} is uniquely solvable with the mean field equilibrium given by $\hat{v}$ with the corresponding controlled process given by $\hat{x}$.

By taking expectation on both sides of \eqref{lqmf.mfg.fixed_point.optimal_state_process}, \eqref{lqmf.mfg.fixed_point.adjoint_process}, and denoting $
\bar{x}_t := \mathbb{E}\left[\hat{x}_t\right]$, $\bar{p}_t = \mathbb{E} \left[p_t\right]$, $\bar{v}_t := \mathbb{E}\left[\hat{v}_t\right]$, the forward-backward system \eqref{lqmf.mfg.fixed_point.optimal_state_process} and \eqref{lqmf.mfg.fixed_point.adjoint_process} becomes:
\begin{align}
\label{lqmf.mfg.fixed_point.expected.fbode}
\left\{\begin{array}{cclccl}
d \bar{x}_t&=&\left(A_t \bar{x}_t+B_t \bar{v}_t+\bar{A}_t z_t+\bar{B}_t w_t\right) d t, &\bar{x}(0)&=&\mathbb{E}[x_0]\\
-d \bar{p}_t &=& \left(A_t^{\top}\bar{p}_t +(Q_t+\bar{Q}_t)\bar{x}_t - \bar{Q}_t S_t z_t +  N_t \bar{v}_t - N_t \bar{R}_t w_t\right)dt, &\bar{p}_T &=& (Q_T+\bar{Q}_T)\bar{x}_T - \bar{Q}_T S_T z_T,\\
\end{array}\right.
\end{align}
where $\bar{v}_t$ is the expected value of \eqref{lqmf.mfg.fixed_point.optimal_control}:
\begin{align}
\label{lqmf.mfg.fixed_point.expected.optimal_control}
\bar{v}_t = - (P_t+\bar{P}_t)^{-1} \left(N_t^{\top} \bar{x}_t - N_t^{\top}\bar{S}_t z_t - \bar{P}_t R_t w_t + B_t^{\top} \bar{p}_t\right).
\end{align}
The original problem is solvable if and only if the map $\mathcal{T}:(z,w) \mapsto (\bar{x},\bar{v})$ admits a fixed point. By replacing both the input $(z,w)$ and the output $(\bar{x},\bar{v})$ in \eqref{lqmf.mfg.fixed_point.expected.fbode} and \eqref{lqmf.mfg.fixed_point.expected.optimal_control} with $(\xi,\upsilon)$ and working backwards, it is sufficient and necessary that the following forward-backward differential equation system admits a unique solution:
\begin{align}
\label{lqmf.mfg.expected.fbode_with_control}
\left\{\begin{array}{cclccl}
d \xi_t&=&\left((A_t+\bar{A}_t) \xi_t+(B_t +\bar{B}_t) \upsilon_t\right) d t, \quad &\xi(0)&=&\mathbb{E}[x_0]\\
-d \eta_t &=& \left(A_t^{\top}\eta_t +(Q_t+\bar{\mathcal{Q}}_t)\xi_t + \bar{\mathcal{R}}_t \upsilon_t\right)dt, \quad &\eta_T &=& (Q_T+\bar{\mathcal{Q}}_T)\xi_T,
\end{array}\right.
\end{align}
where
\begin{align}
\label{lqmf.mfg.expected.optimal_control}
\upsilon_t =  - (P_t+\bar{\mathcal{P}}_t)^{-1} \left(\bar{\mathcal{S}}_t \xi_t  + B_t^{\top} \eta_t\right),
\end{align}
where, for the sake of notational simplicity, we denote
\begin{align}
\label{lqmf.mfg.equalities.notational_implicity}
\bar{\mathcal{P}}_t := \bar{P}_t (I-R_t), \quad \bar{\mathcal{Q}}_t := \bar{Q}_t (I-S_t), \quad \bar{\mathcal{Q}}_T := \bar{Q}_T (I-S_T), \quad \bar{\mathcal{R}}_t := N_t (I-\bar{R}_t), \quad \bar{\mathcal{S}}_t := N_t^{\top}(I-\bar{S}_t).
\end{align}
By substituting \eqref{lqmf.mfg.expected.optimal_control} into \eqref{lqmf.mfg.expected.fbode_with_control}, we then aim to look for the solvability of
\begin{align}
\label{lqmf.mfg.expected.fbode}
\left\{\begin{array}{ccl}
	\frac{d}{d t}\left(\begin{array}{c}
		\xi_t \\
		-\eta_t
	\end{array}\right) &=&\left(\begin{array}{cc}
		A_t+\bar{A}_t -  (B_t+\bar{B}_t) (P_t+\bar{\mathcal{P}}_t)^{-1}\bar{\mathcal{S}}_t  & -(B_t+\bar{B}_t) (P_t+\bar{\mathcal{P}}_t)^{-1}B_t^{\top} \\
		Q_t+\bar{\mathcal{Q}}_t - \bar{\mathcal{R}}_t(P_t+\bar{\mathcal{P}}_t)^{-1}\bar{\mathcal{S}}_t & A_t^{\top}-\bar{\mathcal{R}}_t(P_t+\bar{\mathcal{P}}_t)^{-1}B_t^{\top}
	\end{array}\right)\left(\begin{array}{l}
		\xi_t \\
		\eta_t
	\end{array}\right), \\
	\xi(0) &=&\mathbb{E}\left[x_0\right], \quad \eta_T=\left(Q_T+\bar{\mathcal{Q}}_T\right) \xi_T.
\end{array}\right. 
\end{align}
Once the unique existence of \eqref{lqmf.mfg.expected.fbode} is established, the mean field equilibrium $\hat{v}$, by plugging \eqref{lqmf.mfg.expected.optimal_control} into the right hand side of \eqref{lqmf.mfg.fixed_point.optimal_control}, it can be expressed as
\begin{align}
    \hat{v}_t 
    \label{lqmf.mfg.mf_equilibrium}
    =& - (P_t+\bar{P}_t)^{-1} \left(N_t^{\top} \hat{x}_t + B_t^{\top} p_t + \left( \bar{P}_t R_t (P_t+\bar{\mathcal{P}}_t)^{-1}\bar{\mathcal{S}}_t- N_t^{\top}\bar{S}_t\right) \xi_t  + \bar{P}_t R_t (P_t+\bar{\mathcal{P}}_t)^{-1}B_t^{\top} \eta_t\right),
\end{align}
where $\hat{x}_t$ is the corresponding state process, $p_t$ is the adjoint process, which can be expressed as
\begin{align}
    \label{lqmf.mfg.adjoint_process_optimal}
    \left\{\begin{array}{ccl}
    -d p_t &=& \left(A_t^{\top}p_t +(Q_t+\bar{Q}_t)\hat{x}_t - \bar{Q}_t S_t \xi_t +  N_t \hat{v}_t - N_t \bar{R}_t \upsilon_t\right)dt - \theta_t d W_t, \\
    p_T &=& (Q_T+\bar{Q}_T)\hat{x}_T - \bar{Q}_T S_T \xi_T,
    \end{array}\right.
\end{align}
where $\xi_t = \mathbb{E}[\hat{x}_t]$, $\eta_t = \mathbb{E}[p_t]$ and $\upsilon_t = \mathbb{E}[\hat{v}_t]$.

\section{Unique Existence of the Solution for \eqref{lqmf.mfg.expected.fbode}} \label{lqmf.section.unique_existence}

\subsection{Over Small Time Duration}

We first provide a sufficient condition for the unique existence of the solution to Equation \eqref{lqmf.mfg.expected.fbode} over a small time duration by applying the standard Banach fixed point argument. This result is certainly not strong enough, we put it here just to compare it with the more motivating conditions to be introduced in Subsections \ref{lqmf.section.refined_condition} and \ref{lqmf.section.global}. Also define
{\small\begin{align*}
\begin{array}{ccl}
\alpha &:=& 2 + 2\sup_{0\leq t\leq T}\|A_t+\bar{A}_t -  (B_t+\bar{B}_t) (P_t+\bar{\mathcal{P}}_t)^{-1}\bar{\mathcal{S}}_t\| + 2\sup_{0\leq t\leq T}\|(B_t+\bar{B}_t) (P_t+\bar{\mathcal{P}}_t)^{-1}B_t^{\top}\|^2\max\{1,\|Q_T+\bar{\mathcal{Q}}_T\|^2\}, \\
\beta &:=& 2 + 2\sup_{0 \leq t \leq T}\|A_t^{\top}-\bar{\mathcal{R}}_t(P_t+\bar{\mathcal{P}}_t)^{-1}B_t^{\top}\|+\sup_{0\leq t\leq T}\|Q_t+\bar{\mathcal{Q}}_t - \bar{\mathcal{R}}_t(P_t+\bar{\mathcal{P}}_t)^{-1}\bar{\mathcal{S}}_t\|^2,
\end{array}
\end{align*}}
where $\|\cdot\|$ is the spectral matrix norm: $\|A\| = \sqrt{\tr{\left(A^{\top}A\right)}}$.
\begin{theorem} \label{lqmf.mfg.theorem.local.solution}
For all small enough time duration $T$ such that
\begin{align}
\label{lqmf.mfg.condition.local.solution}
e^{(\alpha+\beta)T} < 2,
\end{align}
then the forward-backward equation system \eqref{lqmf.mfg.expected.fbode} admits a unique solution.
\end{theorem}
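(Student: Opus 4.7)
The plan is to apply the Banach fixed point theorem to a Picard iteration on the adjoint variable. Define a map $\Gamma : C([0,T];\mathbb{R}^n) \to C([0,T];\mathbb{R}^n)$ by the following two-step procedure. Given a continuous $\eta$, first solve the linear forward ODE (the top row of \eqref{lqmf.mfg.expected.fbode}) with initial condition $\xi(0)=\mathbb{E}[x_0]$ and with $\eta$ plugged into the right-hand side to obtain a continuous $\xi$; then solve the linear backward ODE (the bottom row) with terminal condition $\tilde\eta_T=(Q_T+\bar{\mathcal{Q}}_T)\xi_T$ using this $\xi$ on the right-hand side to obtain a continuous $\tilde\eta$, and set $\Gamma(\eta):=\tilde\eta$. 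By construction a fixed point of $\Gamma$ yields a solution of \eqref{lqmf.mfg.expected.fbode}, and uniqueness of the fixed point implies uniqueness of the solution.

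\textbf{Estimates.} Take two candidates $\eta^1,\eta^2$, let $\xi^1,\xi^2$ be the corresponding forward outputs and $\tilde\eta^1,\tilde\eta^2$ the backward outputs, and set $u:=\xi^1-\xi^2$, $v:=\tilde\eta^1-\tilde\eta^2$. Differentiating $|u_t|^2$ along the forward ODE and applying $2a^{\top}b\le |a|^2+|b|^2$ to the cross term involving $-(B_t+\bar{B}_t)(P_t+\bar{\mathcal{P}}_t)^{-1}B_t^{\top}(\eta^1-\eta^2)$ produces the drift coefficient $1+2\|A_t+\bar A_t-(B_t+\bar B_t)(P_t+\bar{\mathcal{P}}_t)^{-1}\bar{\mathcal{S}}_t\|$ multiplying $|u_t|^2$, plus a forcing bounded by $\|(B_t+\bar B_t)(P_t+\bar{\mathcal{P}}_t)^{-1}B_t^{\top}\|^2|\eta^1_t-\eta^2_t|^2$; Gronwall together with $u_0=0$ then yields, for both $\sup_t|u_t|^2$ and $|u_T|^2$, a bound of the form $\text{const}\cdot e^{\alpha T}\,T\sup_t|\eta^1_t-\eta^2_t|^2$, where the appearance of $\max\{1,\|Q_T+\bar{\mathcal{Q}}_T\|^2\}$ in $\alpha$ is designed precisely to absorb the two outputs $|u_T|^2$ and $\sup_t|u_t|^2$ into a single exponential factor. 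An entirely analogous Gronwall argument run backward on $|v_t|^2$, using the terminal bound $|v_T|^2\le \|Q_T+\bar{\mathcal{Q}}_T\|^2 |u_T|^2$, the drift contribution from $A_t^{\top}-\bar{\mathcal{R}}_t(P_t+\bar{\mathcal{P}}_t)^{-1}B_t^{\top}$, and the forcing contribution $\|Q_t+\bar{\mathcal{Q}}_t-\bar{\mathcal{R}}_t(P_t+\bar{\mathcal{P}}_t)^{-1}\bar{\mathcal{S}}_t\|^2|u_t|^2$, delivers
\begin{equation*}
\sup_{t\in[0,T]}|v_t|^2 \;\le\; e^{\beta T}\Bigl(\|Q_T+\bar{\mathcal{Q}}_T\|^2|u_T|^2 + T\sup_{t\in[0,T]}|u_t|^2\Bigr).
\end{equation*}

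\textbf{Contraction and conclusion.} Chaining the forward and backward estimates, and using $\max\{1,\|Q_T+\bar{\mathcal{Q}}_T\|^2\}$ to dominate both $\|Q_T+\bar{\mathcal{Q}}_T\|^2$ and the loose $T$-multiplier uniformly in small $T$, one arrives at
\begin{equation*}
\sup_{t\in[0,T]}|\tilde\eta^1_t-\tilde\eta^2_t|^2 \;\le\; \tfrac{1}{2}\,e^{(\alpha+\beta)T}\sup_{t\in[0,T]}|\eta^1_t-\eta^2_t|^2,
\end{equation*}
so condition \eqref{lqmf.mfg.condition.local.solution} makes $\Gamma$ a strict contraction on $C([0,T];\mathbb{R}^n)$ with the sup norm, and the Banach fixed point theorem gives a unique $\eta^{\star}$, hence a unique $(\xi^{\star},\eta^{\star})$ solving \eqref{lqmf.mfg.expected.fbode}.

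\textbf{Main obstacle.} The main difficulty is purely bookkeeping: tracking the factors of $2$ that arise from the Young inequality $2a^{\top}b\le |a|^2+|b|^2$, the squaring of the cross-term spectral norms, and the way the terminal bound $\|Q_T+\bar{\mathcal{Q}}_T\|^2|u_T|^2$ must be folded into the $\max\{1,\|Q_T+\bar{\mathcal{Q}}_T\|^2\}$ summand of $\alpha$ so that exactly the factor $\tfrac12 e^{(\alpha+\beta)T}$ (rather than an unrelated constant) appears in the final contraction bound. Once this accounting is carried out, no further obstacle remains; linearity of both ODEs makes the individual forward/backward well-posedness automatic, and Gronwall handles everything else.
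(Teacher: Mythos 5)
Your proposal is correct and rests on the same basic strategy as the paper (decouple the linear FBODE into a Picard scheme and apply the Banach fixed point theorem for small $T$), but the implementation is genuinely different. The paper iterates on the \emph{forward} variable: given an input $z$, the backward equation is driven by $z$ in both its forcing and its terminal condition $\eta_T=(Q_T+\bar{\mathcal{Q}}_T)z_T$, so it is solved first, and the contraction is measured in the exponentially weighted norms $\int_0^T e^{-\alpha t}|z_t|^2dt+e^{-\alpha T}|z_T|^2$ and $\int_0^T e^{\beta t}|\eta_t|^2dt$; there $\alpha$ and $\beta$ arise as the weights that absorb the drift and Young terms when differentiating $e^{-\alpha t}|\xi_t|^2$ and $e^{\beta t}|\eta_t|^2$, and the free Young parameter $L=2\max\{1,\|Q_T+\bar{\mathcal{Q}}_T\|^2\}$ is what produces the factor $\tfrac12$. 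You iterate on the \emph{adjoint} variable in the sup norm with plain Gronwall, which is equally legitimate, since a fixed point of either map solves \eqref{lqmf.mfg.expected.fbode}. The one step you should not dismiss as routine is the claim that the chained estimate lands on exactly $\tfrac12 e^{(\alpha+\beta)T}$: writing $b:=\sup_t\|(B_t+\bar{B}_t)(P_t+\bar{\mathcal{P}}_t)^{-1}B_t^{\top}\|$, $c:=\sup_t\|Q_t+\bar{\mathcal{Q}}_t-\bar{\mathcal{R}}_t(P_t+\bar{\mathcal{P}}_t)^{-1}\bar{\mathcal{S}}_t\|$, $a_1:=1+2\sup_t\|A_t+\bar{A}_t-(B_t+\bar{B}_t)(P_t+\bar{\mathcal{P}}_t)^{-1}\bar{\mathcal{S}}_t\|$ and $\beta_1:=1+2\sup_t\|A_t^{\top}-\bar{\mathcal{R}}_t(P_t+\bar{\mathcal{P}}_t)^{-1}B_t^{\top}\|$, your two Gronwall passes give the contraction constant $e^{(a_1+\beta_1)T}\,b^2T\bigl(\|Q_T+\bar{\mathcal{Q}}_T\|^2+c^2T\bigr)$ (note the coefficient $c^2T$, not $T$, on the running term), and dominating this by $\tfrac12 e^{(\alpha+\beta)T}$ requires using both the first- and second-order terms of $e^{(\alpha-a_1+\beta-\beta_1)T}\ge 1+2b^2MT+2b^2Mc^2T^2$ with $M=\max\{1,\|Q_T+\bar{\mathcal{Q}}_T\|^2\}$. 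This inequality does hold, so your route proves the theorem under exactly condition \eqref{lqmf.mfg.condition.local.solution}; but that verification is precisely where the specific form of $\alpha$ and $\beta$ is earned, and the paper's weighted-norm device is what makes it come out mechanically rather than by an a posteriori expansion of the exponential.
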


\begin{remark}
	In general, $B_t (P_t+\bar{\mathcal{P}}_t)^{-1}B_t^{\top}$ and $Q_t$ can be large and the condition \eqref{lqmf.mfg.condition.local.solution} in Theorem \ref{lqmf.mfg.theorem.local.solution} is too restrictive. It limits the size of $T$ at an exponential rate $e^{(\alpha+\beta)T}$. Even if we reduce the linear-quadratic extended mean field game to a linear-quadratic control problem, which is mostly generically solvable, by setting $\bar{A}_t = \bar{B}_t = \bar{Q}_t = \bar{P}_t = \bar{S}_t = \bar{R}_t = 0$, the condition \eqref{lqmf.mfg.condition.local.solution} is not necessarily guaranteed since all $B_t$, $Q_t$ and $Q_T$ are still present. In the rest of this section, We shall next provide two far better conditions based on the works \cite{bensoussan2016linear} and \cite{peng1999fully} that not only cover the global solvability of most of the classical linear-quadratic stochastic control problems considered in the literature, but also can be applied to a wide class of extended mean field game problems.
\end{remark}

\subsection{A Refined Sufficient Condition} \label{lqmf.section.refined_condition}

We next apply an improved version of method proposed by \cite{bensoussan2016linear} to obtain a far less restrictive condition. Assume the matrix-valued function $Q_{\cdot}$ and $B_{\cdot} P_{\cdot}^{-1}B_{\cdot}^{\top}$ are invertible. We define the weighted norm $\vertiii{\cdot}_{0,T}$ by:
\begin{align*}
	\vertiii{\phi}_{0,T}:=\sup_{0 \leq t \leq T} \sqrt{\left\|(B_t P_t^{-1}B_t^{\top})^{\frac{1}{2}}\phi(T, t) Q_{T}^{1 / 2}\right\|^{2}+\int_t^T\left\|(B_t P_t^{-1}B_t^{\top})^{\frac{1}{2}}\phi(s, t) Q_{s}^{1 / 2}\right\|^{2} ds},
\end{align*}
and for any bounded matrix function $K: [0,T]\to R^{n\times n}$, define the weighted norm $\vertiii{\cdot}_{1,T}$, $\vertiii{\cdot}_{2,T}$ and $\vertiii{\cdot}_{3,T}$ as
\begin{itemize}
	\item[1.] $\vertiii{K}_{1,T}:=\sup_{0 \leq t \leq T} \|Q_t^{-1/2}K(t) Q_t^{-1/2} \|$;
	\item[2.] $\vertiii{K}_{2,T}:= \sup_{0 \leq t \leq T}\|Q_t^{-1/2} K(t)(B_t P_t^{-1}B_t^{\top})^{-\frac{1}{2}}\|$;
	\item[3.] $\vertiii{K}_{3,T} := \sup_{0 \leq t \leq T}\|(B_t P_t^{-1}B_t^{\top})^{-\frac{1}{2}}K(t)(B_t P_t^{-1}B_t^{\top})^{-\frac{1}{2}}\|$.
\end{itemize}

\begin{theorem} \label{lqmf.mfg.theorem.global_local.solution}
Let $\phi(t,s)$ be the fundamental solution associated with $A_t^{\top}$. 
The forward-backward equation system \eqref{lqmf.mfg.expected.fbode} admits a unique solution if either one of following conditions is satisfied:
\begin{enumerate}
	\item Most coefficients related to the mean field terms are small enough such that
	{\scriptsize
	\begin{align}
	\label{lqmf.mfg.condition.refined.global}
	\vertiii{(\bar{A} -  (B+\bar{B}) (P+\bar{\mathcal{P}})^{-1}\bar{\mathcal{S}})^{\top}}_{2,T}^2+\vertiii{\left(B_t P_t^{-1}\bar{\mathcal{P}}_t-\bar{B}_t\right)(P_t+\bar{\mathcal{P}}_t)^{-1}B_t^{\top}}_{3,T}^2+\vertiii{\bar{\mathcal{R}}(P+\bar{\mathcal{P}})^{-1}B^{\top}}_{2,T}^2 +\left(\vertiii{\bar{\mathcal{Q}}}_{1,T} + \vertiii{\bar{\mathcal{R}}(P+\bar{\mathcal{P}})^{-1}\bar{\mathcal{S}}}_{1,T}\right)^2 < 1;
	\end{align}}
	\item for some fixed $T_0 > 0$, $\vertiii{(\bar{A} -  (B+\bar{B}) (P+\bar{\mathcal{P}})^{-1}\bar{\mathcal{S}})^{\top}}_{2,T_0}^2 + \vertiii{\left(B_t P_t^{-1}\bar{\mathcal{P}}_t-\bar{B}_t\right)(P_t+\bar{\mathcal{P}}_t)^{-1}B_t^{\top}}_{3,T_0}^2 \neq 0$, \\$\left(\vertiii{\bar{\mathcal{Q}}}_{1,T_0} + \vertiii{\bar{\mathcal{R}}(P+\bar{\mathcal{P}})^{-1}\bar{\mathcal{S}}}_{1,T_0}\right)^2 + \vertiii{\bar{\mathcal{R}}(P+\bar{\mathcal{P}})^{-1}B^{\top}}_{2,T_0}^2<1$, and
	{\scriptsize
	\begin{align}
	\nonumber T <&  T_0 \bigwedge \left(\frac{1}{\vertiii{\phi}_{0,T_0}\sqrt{\vertiii{(\bar{A} -  (B+\bar{B}) (P+\bar{\mathcal{P}})^{-1}\bar{\mathcal{S}})^{\top}}_{2,T_0}^2 + \vertiii{\left(B_t P_t^{-1}\bar{\mathcal{P}}_t-\bar{B}_t\right)(P_t+\bar{\mathcal{P}}_t)^{-1}B_t^{\top}}_{3,T_0}^2}} \right.\\
	\label{lqmf.mfg.condition.refined.local} &\left. \qquad \cdot \frac{1 - \sqrt{\left(\vertiii{\bar{\mathcal{Q}}}_{1,T_0} + \vertiii{\bar{\mathcal{R}}(P+\bar{\mathcal{P}})^{-1}\bar{\mathcal{S}}}_{1,T_0}\right)^2 + \vertiii{\bar{\mathcal{R}}(P+\bar{\mathcal{P}})^{-1}B^{\top}}_{2,T_0}^2}}{ 1 + \sqrt{\left(\vertiii{\bar{\mathcal{Q}}}_{1,T_0} + \vertiii{\bar{\mathcal{R}}(P+\bar{\mathcal{P}})^{-1}\bar{\mathcal{S}}}_{1,T_0}\right)^2 + \vertiii{\bar{\mathcal{R}}(P+\bar{\mathcal{P}})^{-1}B^{\top}}_{2,T_0}^2}}\right)^2.
	\end{align}
	}
\end{enumerate}
\end{theorem}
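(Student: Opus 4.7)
The plan is to recast the FBODE \eqref{lqmf.mfg.expected.fbode} as a fixed point equation on a suitable Banach space and to use the four weighted norms $\vertiii{\cdot}_{0,T}, \vertiii{\cdot}_{1,T}, \vertiii{\cdot}_{2,T}, \vertiii{\cdot}_{3,T}$ to isolate the classical LQ skeleton from the mean-field perturbations, refining the Bensoussan-Frehse approach of \cite{bensoussan2016linear}. Concretely, I would introduce the map $\Psi: \upsilon\mapsto\tilde{\upsilon}$ on $L^2(0,T;\mathbb{R}^m)$ defined as follows: given a candidate $\upsilon$, solve the forward linear ODE in \eqref{lqmf.mfg.expected.fbode_with_control} from $\xi(0)=\mathbb{E}[x_0]$ to obtain $\xi$; then propagate the backward linear ODE with terminal condition $(Q_T+\bar{\mathcal{Q}}_T)\xi_T$ via the fundamental solution $\phi(T,t)$ of $A^{\top}$ to obtain $\eta$; finally set $\tilde{\upsilon}_t=-(P_t+\bar{\mathcal{P}}_t)^{-1}(\bar{\mathcal{S}}_t\xi_t+B_t^{\top}\eta_t)$ via \eqref{lqmf.mfg.expected.optimal_control}. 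A fixed point of $\Psi$ is precisely a solution to \eqref{lqmf.mfg.expected.fbode}, and uniqueness follows from the linearity of the map once contraction is established.

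For condition 1, I would estimate $\Psi(\upsilon^{(1)})-\Psi(\upsilon^{(2)})$ in a weighted $L^2$ norm. Writing the forward difference as $\Delta\xi_t=\int_0^t\Phi(t,s)\bigl[(\bar{A}_s-(B_s+\bar{B}_s)(P_s+\bar{\mathcal{P}}_s)^{-1}\bar{\mathcal{S}}_s)\Delta\xi_s+(B_s+\bar{B}_s)\Delta\upsilon_s\bigr]\,ds$ and the backward difference through the propagator $\phi$, I would insert the identities $(B_tP_t^{-1}B_t^{\top})^{-1/2}(B_tP_t^{-1}B_t^{\top})^{1/2}$ and $Q_t^{-1/2}Q_t^{1/2}$ at the junction between each mean-field coefficient and the forward/backward data. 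This dualizes the heavy quantities $B(P+\bar{\mathcal{P}})^{-1}B^{\top}$, $(BP^{-1}B^{\top})^{1/2}\phi(T,t)Q_T^{1/2}$, etc.\ into bounded objects absorbed by $\vertiii{\phi}_{0,T}$ and its structural analogues, while the residual coefficients are exactly $\vertiii{\bar{\mathcal{Q}}}_{1,T}$, $\vertiii{\bar{\mathcal{R}}(P+\bar{\mathcal{P}})^{-1}\bar{\mathcal{S}}}_{1,T}$, $\vertiii{\bar{\mathcal{R}}(P+\bar{\mathcal{P}})^{-1}B^{\top}}_{2,T}$, $\vertiii{(\bar{A}-(B+\bar{B})(P+\bar{\mathcal{P}})^{-1}\bar{\mathcal{S}})^{\top}}_{2,T}$ and $\vertiii{(BP^{-1}\bar{\mathcal{P}}-\bar{B})(P+\bar{\mathcal{P}})^{-1}B^{\top}}_{3,T}$. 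A Cauchy-Schwarz combination of these five pieces produces a contraction coefficient whose square is exactly the left-hand side of \eqref{lqmf.mfg.condition.refined.global}; the hypothesis then gives $\|\Psi(\upsilon^{(1)})-\Psi(\upsilon^{(2)})\|<\|\upsilon^{(1)}-\upsilon^{(2)}\|$, and the Banach fixed point theorem yields the unique solution.

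For condition 2, no single weighted-norm contraction works on the whole $[0,T_0]$, so I would work instead on a shorter interval $[0,T]$. Writing the same contraction estimate but now extracting the non-weighted factors from $\vertiii{(\bar{A}-(B+\bar{B})(P+\bar{\mathcal{P}})^{-1}\bar{\mathcal{S}})^{\top}}_{2,T_0}$ and $\vertiii{(BP^{-1}\bar{\mathcal{P}}-\bar{B})(P+\bar{\mathcal{P}})^{-1}B^{\top}}_{3,T_0}$ through Cauchy-Schwarz picks up a $\sqrt{T}$ factor, while the propagator in the pure-LQ part contributes $\vertiii{\phi}_{0,T_0}$. The resulting bound is of the form $\sqrt{c_1}+\sqrt{T}\cdot\vertiii{\phi}_{0,T_0}\cdot\sqrt{c_2}(1+\sqrt{c_1})/(1-\sqrt{c_1})$, where $c_1,c_2$ are the two groupings appearing in \eqref{lqmf.mfg.condition.refined.local}; setting this $<1$ and solving for $T$ yields the stated threshold.

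The main obstacle is the combinatorial bookkeeping: each of $A,\bar{A},B,\bar{B},Q,\bar{Q},P,\bar{P},S,\bar{S},R,\bar{R},N$ appears in multiple locations across the forward dynamics, backward dynamics, and new control, and the correct sandwich insertions of $(BP^{-1}B^{\top})^{\pm 1/2}$ and $Q^{\pm 1/2}$ must be chosen so that the \emph{classical} LQ skeleton (i.e.\ the terms present even when $\bar{A}=\bar{B}=\bar{Q}=\bar{P}=\bar{S}=\bar{R}=0$) is dualized into bounded propagator-type norms while the \emph{mean-field} residuals are precisely the five small norms in \eqref{lqmf.mfg.condition.refined.global}. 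Once this algebraic separation is performed cleanly, the Cauchy-Schwarz pairing and the Banach contraction argument are routine; conversely, any sloppy insertion would bleed the large quantities into the residual and destroy the smallness assumption. I would therefore spend the bulk of the proof verifying this separation identity-by-identity before invoking the fixed point theorem.
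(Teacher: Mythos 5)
There is a genuine gap in your set-up of the fixed-point map, and it is fatal for condition~1. You iterate on the control alone, $\Psi:\upsilon\mapsto\tilde\upsilon$, solving the forward and backward equations \emph{sequentially} and then closing the loop through $\tilde\upsilon_t=-(P_t+\bar{\mathcal{P}}_t)^{-1}(\bar{\mathcal{S}}_t\xi_t+B_t^{\top}\eta_t)$. But this output contains the classical feedback $-(P_t+\bar{\mathcal{P}}_t)^{-1}B_t^{\top}\eta_t$, and the difference $\Delta\eta$ is driven by $Q_t\Delta\xi_t$ and the terminal datum $Q_T\Delta\xi_T$, with $\Delta\xi$ driven by $B_t\Delta\upsilon_t$ --- none of which are mean-field terms. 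This classical loop contributes a Lipschitz coefficient of order $\vertiii{\phi}_{0,T}$ (times quantities involving $B$, $Q$, $Q_T$), which is finite but nowhere assumed small: $\vertiii{\phi}_{0,T}$ does not even appear in \eqref{lqmf.mfg.condition.refined.global}. Your claim that the ``heavy quantities'' are ``dualized into bounded objects'' conflates bounded with less than one; setting all mean-field coefficients to zero, your map reduces to the naive Picard iteration $\upsilon\mapsto -P^{-1}B^{\top}\eta[\upsilon]$ for a classical LQ problem, which is not a contraction on $L^2(0,T)$ for general $T$ --- precisely the failure mode that condition~1 is designed to avoid.

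The paper sidesteps this by choosing a different splitting: the map acts on pairs $X=(z,q)$ in the weighted product space $L^2_{Q\times BP^{-1}B}$, and the \emph{classical skeleton} $\bigl(\begin{smallmatrix}A & -BP^{-1}B^{\top}\\ Q & A^{\top}\end{smallmatrix}\bigr)$ is kept inside the coupled inner system \eqref{lqmf.mfg.expected.fbode.mapT} acting on the output $(\xi,\eta)$ (solvable by the Peng--Wu monotonicity condition), while only the mean-field perturbations act on the input $(z,q)$. The duality identity $\xi_T^{\top}\eta_T=\int\eta^{\top}d\xi+\int\xi^{\top}d\eta$ then cancels the $A$-terms exactly and moves $\int\xi^{\top}Q\xi$, $\xi_T^{\top}Q_T\xi_T$ and $\int\eta^{\top}BP^{-1}B^{\top}\eta$ to the \emph{left-hand side}, where they constitute $\|Y\|^2_{Q\times BP^{-1}B}$ rather than a Lipschitz constant; only the five mean-field norms survive on the right, which is why condition~1 suffices. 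Your condition~2 sketch has the right quadratic-in-$\alpha$ flavour, but it inherits the same unaccounted classical loop. To repair the proof you must abandon the sequential control iteration and reproduce the energy-identity cancellation on the coupled system.
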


\begin{proof}
Let $L^2_Q(0,T;\mathbb{R}^n)$ and $L^2_{B P^{-1}B}(0,T;\mathbb{R}^n)$ be two Hilbert spaces of functions endowed with the respective inner products, $
\braket{z,z'}_Q :=\int_0^T z_t^{\top}Q_tz'_t dt +z_T^{\top}Q_Tz'_T$, for any $z,z' \in L^2_Q(0,T;\mathbb{R}^n)$, and $\braket{q,q'}_{B P^{-1}B} := \int_0^T q_t^{\top}B_t P_t^{-1}B_t^{\top}q'_t dt$, for any $q,q' \in L^2_{B P^{-1}B}(0,T;\mathbb{R}^n)$.
We consider the Cartesian product space $L^2_{Q\times B P^{-1}B}(0,T;\mathbb{R}^{2n}) := L^2_Q(0,T;\mathbb{R}^n)\times L^2_{B P^{-1}B}(0,T;\mathbb{R}^n)$, where the inner product is defined as follows: for any $X:=(z,q)$ and $X':=(z',q')$ in $L^2_{Q\times B P^{-1}B}(0,T;\mathbb{R}^{2n})$, $
\braket{X,X'}_{Q\times B P^{-1}B} := \braket{z,z'}_Q + \braket{q,q'}_{B P^{-1}B}$.

Since the upper right corner block of the matrix in \eqref{lqmf.mfg.expected.fbode} can be rewritten as
\begin{align*}
-(B_t+\bar{B}_t) (P_t+\bar{\mathcal{P}}_t)^{-1}B_t^{\top} = - B_t P_t^{-1}B_t^{\top} +  \left(B_t P_t^{-1}\bar{\mathcal{P}}_t-\bar{B}_t\right)(P_t+\bar{\mathcal{P}}_t)^{-1}B_t^{\top},
\end{align*}
for any $X:=(z,q)^{\top} \in L^2_{Q\times B P^{-1}B}(0,T;\mathbb{R}^{2n})$, we consider the map $X:=(z,q)^{\top} \mapsto (\xi,\eta)^{\top}=:Y$, where $(\xi,\eta)$ satisfies
\begin{align}
\label{lqmf.mfg.expected.fbode.mapT}
\left\{\begin{array}{rcl}
\frac{d}{d t}\left(\begin{array}{c}
\xi_t \\
-\eta_t
\end{array}\right) &=&\left(\begin{array}{cc}
A_t  & -B_t P_t^{-1}B_t^{\top} \\
Q_t & A_t^{\top}
\end{array}\right)\left(\begin{array}{l}
\xi_t \\
\eta_t
\end{array}\right) \\
& &+ \left(\begin{array}{cc}
\bar{A}_t -  (B_t+\bar{B}_t) (P_t+\bar{\mathcal{P}}_t)^{-1}\bar{\mathcal{S}}_t  & \left(B_t P_t^{-1}\bar{\mathcal{P}}_t-\bar{B}_t\right)(P_t+\bar{\mathcal{P}}_t)^{-1}B_t^{\top} \\
\bar{\mathcal{Q}}_t - \bar{\mathcal{R}}_t(P_t+\bar{\mathcal{P}}_t)^{-1}\bar{\mathcal{S}}_t & -\bar{\mathcal{R}}_t(P_t+\bar{\mathcal{P}}_t)^{-1}B_t^{\top}
\end{array}\right)\left(\begin{array}{l}
z_t \\
q_t
\end{array}\right), \\
\xi(0) &=&\mathbb{E}\left[x_0\right], \quad \eta_T=Q_T\xi_T+\bar{\mathcal{Q}}_Tz_T.
\end{array}\right.
\end{align}
It is easy to check that the equation system \eqref{lqmf.mfg.expected.fbode.mapT} satisfies the monotonicity condition in \cite{peng1999fully}, thus it admits a unique solution pair $(\xi,\eta)$. Similar to the proof of Theorem \ref{lqmf.mfg.theorem.local.solution}, we shall also apply the Banach fixed point theorem to show that the map $X \mapsto Y$ has a unique fixed point, and it suffices to show that the map $X \mapsto Y$ is a contraction if $\mathbb{E}\left[x_0\right] = 0$.
It follows by the chain rule that
\begin{align*}
\xi_T^{\top}\eta_T =& \xi_T^{\top}Q_T \xi_T+\xi_T^{\top}\bar{\mathcal{Q}}_T z_T = \int_0^T \eta_t^{\top} d\xi_t + \int_0^T \xi_t^{\top} d\eta_t \\
=& \int_0^T \left(\eta_t^{\top}A_t \xi_t + \eta_t^{\top}\left(\bar{A}_t -  (B_t+\bar{B}_t) (P_t+\bar{\mathcal{P}}_t)^{-1}\bar{\mathcal{S}}_t\right) z_t  - \eta_t^{\top}B_t P_t^{-1}B_t^{\top}\eta_t +  \eta_t^{\top}\left(B_t P_t^{-1}\bar{\mathcal{P}}_t-\bar{B}_t\right)(P_t+\bar{\mathcal{P}}_t)^{-1}B_t^{\top} q_t\right) dt \\
&- \int_0^T \left(\xi_t^{\top} A_t^{\top}\eta_t -\xi_t^{\top}\bar{\mathcal{R}}_t(P_t+\bar{\mathcal{P}}_t)^{-1}B_t^{\top}q_t +\xi_t^{\top}Q_t\xi_t+\xi_t^{\top}\left(\bar{\mathcal{Q}}_t - \bar{\mathcal{R}}_t(P_t+\bar{\mathcal{P}}_t)^{-1}\bar{\mathcal{S}}_t \right)z_t\right)dt.
\end{align*}
By some rearrangments, we have
\begin{align}
\nonumber \|Y\|_{Q\times B P^{-1}B}^2 =& \|\xi\|_Q^2 + \|\eta\|_{B P^{-1}B^{\top}}^2 \\
\nonumber =& \int_0^T \xi_t^{\top}Q_t\xi_tdt + \xi_T^{\top}Q_T \xi_T + \int_0^T \eta_t^{\top}B_t P_t^{-1}B_t^{\top}\eta_tdt \\
\nonumber =& \int_0^T \left(\eta_t^{\top}\left(\bar{A}_t -  (B_t+\bar{B}_t) (P_t+\bar{\mathcal{P}}_t)^{-1}\bar{\mathcal{S}}_t\right) z_t +  \eta_t^{\top}\left(B_t P_t^{-1}\bar{\mathcal{P}}_t-\bar{B}_t\right)(P_t+\bar{\mathcal{P}}_t)^{-1}B_t^{\top} q_t\right) dt \\
\label{lqmf.mfg.inequality.Y_norm} &+ \int_0^T \left(\xi_t^{\top}\bar{\mathcal{R}}_t(P_t+\bar{\mathcal{P}}_t)^{-1}B_t^{\top}q_t-\xi_t^{\top}\left(\bar{\mathcal{Q}}_t - \bar{\mathcal{R}}_t(P_t+\bar{\mathcal{P}}_t)^{-1}\bar{\mathcal{S}}_t \right)z_t\right)dt - \xi_T^{\top}\bar{\mathcal{Q}}_T z_T.
\end{align}
We apply the Cauchy-Schwarz inequality and obtain
\begin{align*}
\|Y\|_{Q\times B P^{-1}B}^2
\leq& \vertiii{(\bar{A} -  (B+\bar{B}) (P+\bar{\mathcal{P}})^{-1}\bar{\mathcal{S}})^{\top}}_{2,T} \cdot \left|\int_0^T\eta_t^{\top}(B_t P_t^{-1}B_t^{\top})^{\frac{1}{2}}Q^{\frac{1}{2}}z_tdt\right| \\
&+ \vertiii{\left(B_t P_t^{-1}\bar{\mathcal{P}}_t-\bar{B}_t\right)(P_t+\bar{\mathcal{P}}_t)^{-1}B_t^{\top}}_{3,T} \cdot \left|\int_0^T\eta_t^{\top}B_t P_t^{-1}B_t^{\top}q_tdt\right| \\
&+ \vertiii{\bar{\mathcal{R}}(P+\bar{\mathcal{P}})^{-1}B^{\top}}_{2,T} \cdot\left|\int_0^T\xi_t^{\top}Q^{\frac{1}{2}}(B_t P_t^{-1}B_t^{\top})^{\frac{1}{2}}q_tdt\right| \\
&+\left(\vertiii{\bar{\mathcal{Q}}}_{1,T} + \vertiii{\bar{\mathcal{R}}(P+\bar{\mathcal{P}})^{-1}\bar{\mathcal{S}}}_{1,T}\right) \cdot\left(\left|\int_0^T\xi_t^{\top}Qz_tdt\right| + \left|\xi_T^{\top}Q_Tz_T\right|\right) \\
\leq& \vertiii{(\bar{A} -  (B+\bar{B}) (P+\bar{\mathcal{P}})^{-1}\bar{\mathcal{S}})^{\top}}_{2,T} \cdot \|\eta\|_{B P^{-1}B^{\top}}\|z\|_{Q} \\
&+ \vertiii{\left(B_t P_t^{-1}\bar{\mathcal{P}}_t-\bar{B}_t\right)(P_t+\bar{\mathcal{P}}_t)^{-1}B_t^{\top}}_{3,T} \cdot \|\eta\|_{B P^{-1}B^{\top}} \cdot \|q\|_{B P^{-1}B^{\top}} \\
&+ \vertiii{\bar{\mathcal{R}}(P+\bar{\mathcal{P}})^{-1}B^{\top}}_{2,T} \cdot \|\xi\|_{Q} \cdot \|q\|_{B P^{-1}B^{\top}} \\
&+\left(\vertiii{\bar{\mathcal{Q}}}_{1,T} + \vertiii{\bar{\mathcal{R}}(P+\bar{\mathcal{P}})^{-1}\bar{\mathcal{S}}}_{1,T}\right) \cdot \|\xi\|_{Q} \cdot \|z\|_{Q} \\
<& \bigg\{\vertiii{(\bar{A} -  (B+\bar{B}) (P+\bar{\mathcal{P}})^{-1}\bar{\mathcal{S}})^{\top}}_{2,T}^2+\vertiii{\left(B_t P_t^{-1}\bar{\mathcal{P}}_t-\bar{B}_t\right)(P_t+\bar{\mathcal{P}}_t)^{-1}B_t^{\top}}_{3,T}^2 \\
&\quad +\vertiii{\bar{\mathcal{R}}(P+\bar{\mathcal{P}})^{-1}B^{\top}}_{2,T}^2+\left(\vertiii{\bar{\mathcal{Q}}}_{1,T} + \vertiii{\bar{\mathcal{R}}(P+\bar{\mathcal{P}})^{-1}\bar{\mathcal{S}}}_{1,T}\right)^2\bigg\}^{\frac{1}{2}} \cdot \|Y\|_{Q\times B P^{-1}B} \cdot \|X\|_{Q\times B P^{-1}B},
\end{align*}
which gives, under the condition \eqref{lqmf.mfg.condition.refined.global}, that $\|Y\|_{Q\times B P^{-1}B}< \|X\|_{Q\times B P^{-1}B}$, and therefore, the map $X \mapsto Y$ is a contractive one. We next prove that the condition \eqref{lqmf.mfg.condition.refined.local} also introduces the unique existence of the solution of \eqref{lqmf.mfg.expected.fbode}. Let $\phi(t,s)$ be the fundamental solution associated with $A_t^{\top}$, then we have
\begin{align*}
\eta_t = \phi(T,t) \left(Q_T \xi_T+\bar{\mathcal{Q}}_T z_T\right) + \int_t^T \phi(s,t) \left(Q_s\xi_s+\left(\bar{\mathcal{Q}}_s - \bar{\mathcal{R}}_s(P_s+\bar{\mathcal{P}}_s)^{-1}\bar{\mathcal{S}}_s \right)z_s-\bar{\mathcal{R}}_t(P_t+\bar{\mathcal{P}}_t)^{-1}B_t^{\top}q_s\right) ds, \quad \text{for any} \quad t\in [0,T].
\end{align*}
By the Cauchy-Schwarz inequality, we have
{\small
\begin{align}
\nonumber
(B_t P_t^{-1}B_t^{\top})^{\frac{1}{2}}\phi(T,t) Q_T \xi_T+ \int_t^T (B_t P_t^{-1}B_t^{\top})^{\frac{1}{2}}\phi(s,t) Q_s\xi_s ds &\leq \left(\sqrt{\|(B_t P_t^{-1}B_t^{\top})^{\frac{1}{2}}\phi(T,t) Q_T^{\frac{1}{2}}\|^2 + \int_t^T \|(B_t P_t^{-1}B_t^{\top})^{\frac{1}{2}}\phi(s,t) Q_s^{\frac{1}{2}}\|^2 ds}\right) \cdot\|\xi\|_Q \\
\label{lamf.mfg.inequality1.refined}
&= \vertiii{\phi}_{0,T} \cdot \|\xi\|_Q, \quad \text{for any} \quad t\in [0,T],
\end{align}}
and similarly, we have
\begin{align}
\nonumber
&(B_t P_t^{-1}B_t^{\top})^{\frac{1}{2}}\phi(T,t) \bar{\mathcal{Q}}_T z_T + \int_t^T (B_t P_t^{-1}B_t^{\top})^{\frac{1}{2}}\phi(s,t) \bar{\mathcal{Q}}_sz_s ds \\
\nonumber
&\leq \vertiii{\bar{\mathcal{Q}}}_{1,T} \cdot\left(\left|(B_t P_t^{-1}B_t^{\top})^{\frac{1}{2}}\phi(T,t) Q_T z_T\right| + \left|\int_t^T (B_t P_t^{-1}B_t^{\top})^{\frac{1}{2}}\phi(s,t) Q_sz_s ds\right|\right) \\
\label{lamf.mfg.inequality2.refined}
&\leq \vertiii{\bar{\mathcal{Q}}}_{1,T} \cdot \vertiii{\phi}_{0,T} \cdot \|z\|_Q,
\end{align}
and
\begin{align}
\nonumber
&\int_t^T (B_t P_t^{-1}B_t^{\top})^{\frac{1}{2}}\phi(s,t) \left(\left( \bar{\mathcal{R}}_s(P_s+\bar{\mathcal{P}}_s)^{-1}\bar{\mathcal{S}}_s \right)z_s+\bar{\mathcal{R}}_t(P_t+\bar{\mathcal{P}}_t)^{-1}B_t^{\top}q_s\right) ds \\
\nonumber
&\leq\int_t^T (B_t P_t^{-1}B_t^{\top})^{\frac{1}{2}}\phi(s,t)  \bar{\mathcal{R}}_s(P_s+\bar{\mathcal{P}}_s)^{-1}\bar{\mathcal{S}}_s z_sds+\int_t^T(B_t P_t^{-1}B_t^{\top})^{\frac{1}{2}}\phi(s,t)\bar{\mathcal{R}}_t(P_t+\bar{\mathcal{P}}_t)^{-1}B_t^{\top}q_s ds \\
\nonumber
&=\vertiii{\bar{\mathcal{R}}(P+\bar{\mathcal{P}})^{-1}\bar{\mathcal{S}}}_{1,T} \cdot \left|\int_t^T (B_t P_t^{-1}B_t^{\top})^{\frac{1}{2}}\phi(s,t)  Q_s z_sds\right|\\
\nonumber
&\quad+\vertiii{\bar{\mathcal{R}}(P+\bar{\mathcal{P}})^{-1}B^{\top}}_{2,T} \cdot \left|\int_t^T(B_t P_t^{-1}B_t^{\top})^{\frac{1}{2}}\phi(s,t)Q_s^{\frac{1}{2}}(B_t P_t^{-1}B_t^{\top})^{\frac{1}{2}}q_s ds\right| \\
\label{lamf.mfg.inequality3.refined}
&\leq \vertiii{\bar{\mathcal{R}}(P+\bar{\mathcal{P}})^{-1}\bar{\mathcal{S}}}_{1,T} \cdot \vertiii{\phi}_{0,T} \cdot \|z\|_Q + \vertiii{\bar{\mathcal{R}}(P+\bar{\mathcal{P}})^{-1}B^{\top}}_{2,T} \cdot \|\vertiii{\phi}_{0,T} \cdot \|q\|_{B P^{-1}B^{\top}}.
\end{align}
Hence, combining \eqref{lamf.mfg.inequality1.refined}, \eqref{lamf.mfg.inequality2.refined} and \eqref{lamf.mfg.inequality3.refined} together, we obtain
{\footnotesize
\begin{align*}
\eta_t^{\top}B_t P_t^{-1}B_t^{\top}\eta_t \leq& \vertiii{\phi}_{0,T}^2 \cdot\bigg( \|\xi\|_Q + \left(\vertiii{\bar{\mathcal{Q}}}_{1,T} + \vertiii{\bar{\mathcal{R}}(P+\bar{\mathcal{P}})^{-1}\bar{\mathcal{S}}}_{1,T}\right) \|z\|_Q + \vertiii{\bar{\mathcal{R}}(P+\bar{\mathcal{P}})^{-1}B^{\top}}_{2,T} \cdot \|q\|_{B P^{-1}B^{\top}}\bigg)^2 \\
\leq& \vertiii{\phi}_{0,T}^2 \cdot\bigg( \|\xi\|_Q + \sqrt{\left(\vertiii{\bar{\mathcal{Q}}}_{1,T} + \vertiii{\bar{\mathcal{R}}(P+\bar{\mathcal{P}})^{-1}\bar{\mathcal{S}}}_{1,T}\right)^2 + \vertiii{\bar{\mathcal{R}}(P+\bar{\mathcal{P}})^{-1}B^{\top}}_{2,T}^2}\cdot \sqrt{\|z\|_Q^2 + \|q\|_{B P^{-1}B^{\top}}^2}\bigg)^2 \\
\leq& \vertiii{\phi}_{0,T}^2 \cdot\bigg( \|Y\|_{Q\times B P^{-1}B} + \sqrt{\left(\vertiii{\bar{\mathcal{Q}}}_{1,T} + \vertiii{\bar{\mathcal{R}}(P+\bar{\mathcal{P}})^{-1}\bar{\mathcal{S}}}_{1,T}\right)^2 + \vertiii{\bar{\mathcal{R}}(P+\bar{\mathcal{P}})^{-1}B^{\top}}_{2,T}^2} \cdot \|X\|_{Q\times B P^{-1}B}\bigg)^2,
\end{align*}
}
that is
{\footnotesize
\begin{align*}
\|\eta\|_{B P^{-1}B^{\top}} =& \sqrt{\int_0^T \eta_t^{\top}B_t P_t^{-1}B_t^{\top}\eta_t dt} \\
\leq& \sqrt{T}\vertiii{\phi}_{0,T} \cdot\bigg( \|Y\|_{Q\times B P^{-1}B} +\sqrt{\left(\vertiii{\bar{\mathcal{Q}}}_{1,T} + \vertiii{\bar{\mathcal{R}}(P+\bar{\mathcal{P}})^{-1}\bar{\mathcal{S}}}_{1,T}\right)^2 + \vertiii{\bar{\mathcal{R}}(P+\bar{\mathcal{P}})^{-1}B^{\top}}_{2,T}^2} \cdot \|X\|_{Q\times B P^{-1}B}\bigg).
\end{align*}
}
Hence, by \eqref{lqmf.mfg.inequality.Y_norm} and the condition \eqref{lqmf.mfg.condition.refined.local}, we deduce that
{\footnotesize
\begin{align*}
\|Y\|_{Q\times B P^{-1}B}^2 \leq& \vertiii{(\bar{A} -  (B+\bar{B}) (P+\bar{\mathcal{P}})^{-1}\bar{\mathcal{S}})^{\top}}_{2,T} \cdot \|\eta\|_{B P^{-1}B^{\top}} \cdot \|z\|_{Q} \\
&+ \vertiii{\left(B_t P_t^{-1}\bar{\mathcal{P}}_t-\bar{B}_t\right)(P_t+\bar{\mathcal{P}}_t)^{-1}B_t^{\top}}_{3,T}  \cdot \|\eta\|_{B P^{-1}B^{\top}} \cdot \|q\|_{B P^{-1}B^{\top}} \\
&+ \vertiii{\bar{\mathcal{R}}(P+\bar{\mathcal{P}})^{-1}B^{\top}}_{2,T} \cdot \|\xi\|_{Q} \cdot \|q\|_{B P^{-1}B^{\top}} +\left(\vertiii{\bar{\mathcal{Q}}}_{1,T} + \vertiii{\bar{\mathcal{R}}(P+\bar{\mathcal{P}})^{-1}\bar{\mathcal{S}}}_{1,T}\right)\|\xi\|_{Q} \cdot \|z\|_{Q} \\
\leq& \|\eta\|_{B P^{-1}B^{\top}} \cdot\sqrt{\vertiii{(\bar{A} -  (B+\bar{B}) (P+\bar{\mathcal{P}})^{-1}\bar{\mathcal{S}})^{\top}}_{2,T}^2 + \vertiii{\left(B_t P_t^{-1}\bar{\mathcal{P}}_t-\bar{B}_t\right)(P_t+\bar{\mathcal{P}}_t)^{-1}B_t^{\top}}_{3,T}^2} \cdot \|X\|_{Q\times B P^{-1}B}  \\
&+ \sqrt{\left(\vertiii{\bar{\mathcal{Q}}}_{1,T} + \vertiii{\bar{\mathcal{R}}(P+\bar{\mathcal{P}})^{-1}\bar{\mathcal{S}}}_{1,T}\right)^2 + \vertiii{\bar{\mathcal{R}}(P+\bar{\mathcal{P}})^{-1}B^{\top}}_{2,T}^2} \cdot \|Y\|_{Q\times B P^{-1}B} \cdot \|X\|_{Q\times B P^{-1}B} \\
\leq& \sqrt{T}\cdot\vertiii{\phi}_{0,T} \cdot\bigg( \|Y\|_{Q\times B P^{-1}B} + \sqrt{\left(\vertiii{\bar{\mathcal{Q}}}_{1,T} + \vertiii{\bar{\mathcal{R}}(P+\bar{\mathcal{P}})^{-1}\bar{\mathcal{S}}}_{1,T}\right)^2 + \vertiii{\bar{\mathcal{R}}(P+\bar{\mathcal{P}})^{-1}B^{\top}}_{2,T}^2} \cdot \|X\|_{Q\times B P^{-1}B}\bigg) \\
 &\cdot \sqrt{\vertiii{(\bar{A} -  (B+\bar{B}) (P+\bar{\mathcal{P}})^{-1}\bar{\mathcal{S}})^{\top}}_{2,T}^2 + \vertiii{\left(B_t P_t^{-1}\bar{\mathcal{P}}_t-\bar{B}_t\right)(P_t+\bar{\mathcal{P}}_t)^{-1}B_t^{\top}}_{3,T}^2} \cdot \|X\|_{Q\times B P^{-1}B} \\
&+ \sqrt{\left(\vertiii{\bar{\mathcal{Q}}}_{1,T} + \vertiii{\bar{\mathcal{R}}(P+\bar{\mathcal{P}})^{-1}\bar{\mathcal{S}}}_{1,T}\right)^2 + \vertiii{\bar{\mathcal{R}}(P+\bar{\mathcal{P}})^{-1}B^{\top}}_{2,T}^2} \cdot \|Y\|_{Q\times B P^{-1}B} \cdot \|X\|_{Q\times B P^{-1}B} \\
\leq& \sqrt{T}\cdot\vertiii{\phi}_{0,T_0} \cdot\bigg( \|Y\|_{Q\times B P^{-1}B} + \sqrt{\left(\vertiii{\bar{\mathcal{Q}}}_{1,T_0} + \vertiii{\bar{\mathcal{R}}(P+\bar{\mathcal{P}})^{-1}\bar{\mathcal{S}}}_{1,T_0}\right)^2 + \vertiii{\bar{\mathcal{R}}(P+\bar{\mathcal{P}})^{-1}B^{\top}}_{2,T_0}^2} \cdot \|X\|_{Q\times B P^{-1}B}\bigg) \\
 &\cdot \sqrt{\vertiii{(\bar{A} -  (B+\bar{B}) (P+\bar{\mathcal{P}})^{-1}\bar{\mathcal{S}})^{\top}}_{2,T_0}^2 + \vertiii{\left(B_t P_t^{-1}\bar{\mathcal{P}}_t-\bar{B}_t\right)(P_t+\bar{\mathcal{P}}_t)^{-1}B_t^{\top}}_{3,T_0}^2} \cdot \|X\|_{Q\times B P^{-1}B} \\
&+ \sqrt{\left(\vertiii{\bar{\mathcal{Q}}}_{1,T_0} + \vertiii{\bar{\mathcal{R}}(P+\bar{\mathcal{P}})^{-1}\bar{\mathcal{S}}}_{1,T_0}\right)^2 + \vertiii{\bar{\mathcal{R}}(P+\bar{\mathcal{P}})^{-1}B^{\top}}_{2,T_0}^2} \cdot \|Y\|_{Q\times B P^{-1}B} \cdot \|X\|_{Q\times B P^{-1}B}.
\end{align*}
}
Divide both sides by $\|X\|_{Q\times B P^{-1}B}$, and denote $\alpha:=\frac{\|Y\|_{Q\times B P^{-1}B}}{\|X\|_{Q\times B P^{-1}B}}$, it can be seem that $f(\alpha):=\alpha^2 - b(T)\alpha -c(T) < 0$
where
\begin{align}
    \nonumber b(T) :=& \sqrt{T} \cdot\vertiii{\phi}_{0,T_0} \cdot \sqrt{\vertiii{(\bar{A} -  (B+\bar{B}) (P+\bar{\mathcal{P}})^{-1}\bar{\mathcal{S}})^{\top}}_{2,T_0}^2 + \vertiii{\left(B_t P_t^{-1}\bar{\mathcal{P}}_t-\bar{B}_t\right)(P_t+\bar{\mathcal{P}}_t)^{-1}B_t^{\top}}_{3,T_0}^2} \\
    &+ \sqrt{\left(\vertiii{\bar{\mathcal{Q}}}_{1,T_0} + \vertiii{\bar{\mathcal{R}}(P+\bar{\mathcal{P}})^{-1}\bar{\mathcal{S}}}_{1,T_0}\right)^2 + \vertiii{\bar{\mathcal{R}}(P+\bar{\mathcal{P}})^{-1}B^{\top}}_{2,T_0}^2} > 0,
\end{align}
and
\begin{align}
    \nonumber c(T) :=& \sqrt{T} \cdot\vertiii{\phi}_{0,T_0} \cdot \sqrt{\left(\vertiii{\bar{\mathcal{Q}}}_{1,T_0} + \vertiii{\bar{\mathcal{R}}(P+\bar{\mathcal{P}})^{-1}\bar{\mathcal{S}}}_{1,T_0}\right)^2 + \vertiii{\bar{\mathcal{R}}(P+\bar{\mathcal{P}})^{-1}B^{\top}}_{2,T_0}^2} \\
    &\cdot \sqrt{\vertiii{(\bar{A} -  (B+\bar{B}) (P+\bar{\mathcal{P}})^{-1}\bar{\mathcal{S}})^{\top}}_{2,T_0}^2 + \vertiii{\left(B_t P_t^{-1}\bar{\mathcal{P}}_t-\bar{B}_t\right)(P_t+\bar{\mathcal{P}}_t)^{-1}B_t^{\top}}_{3,T_0}^2} > 0.
\end{align}
Note that $f(\alpha)$ is strictly convex (see Figure.\ref{lqmf.mfg.figure.quadratic_function}) and it attains its minimum at point $\frac{b(T)}{2}>0$, and it has an intercept $c(T)>0$, therefore we can deduce that all the positive solutions of the inequality $f(\alpha)<0$ lie in $(0,1)$ if and only if $f(1)>0$, that is to say,
\begin{align*}
&\sqrt{T}\vertiii{\phi}_{0,T_0}\bigg( 1 + \sqrt{\left(\vertiii{\bar{\mathcal{Q}}}_{1,T_0} + \vertiii{\bar{\mathcal{R}}(P+\bar{\mathcal{P}})^{-1}\bar{\mathcal{S}}}_{1,T_0}\right)^2 + \vertiii{\bar{\mathcal{R}}(P+\bar{\mathcal{P}})^{-1}B^{\top}}_{2,T_0}^2}\bigg) \\
\cdot& \sqrt{\vertiii{(\bar{A} -  (B+\bar{B}) (P+\bar{\mathcal{P}})^{-1}\bar{\mathcal{S}})^{\top}}_{2,T_0}^2 + \vertiii{\left(B_t P_t^{-1}\bar{\mathcal{P}}_t-\bar{B}_t\right)(P_t+\bar{\mathcal{P}}_t)^{-1}B_t^{\top}}_{3,T_0}^2}  \\
&+ \sqrt{\left(\vertiii{\bar{\mathcal{Q}}}_{1,T_0} + \vertiii{\bar{\mathcal{R}}(P+\bar{\mathcal{P}})^{-1}\bar{\mathcal{S}}}_{1,T_0}\right)^2 + \vertiii{\bar{\mathcal{R}}(P+\bar{\mathcal{P}})^{-1}B^{\top}}_{2,T_0}^2} < 1,
\end{align*}
which completes the proof.
\begin{figure}[t]
	\centering
	\begin{tabular}{cc}
		\begin{minipage}[t]{3.5in}
			\includegraphics[width=3in]{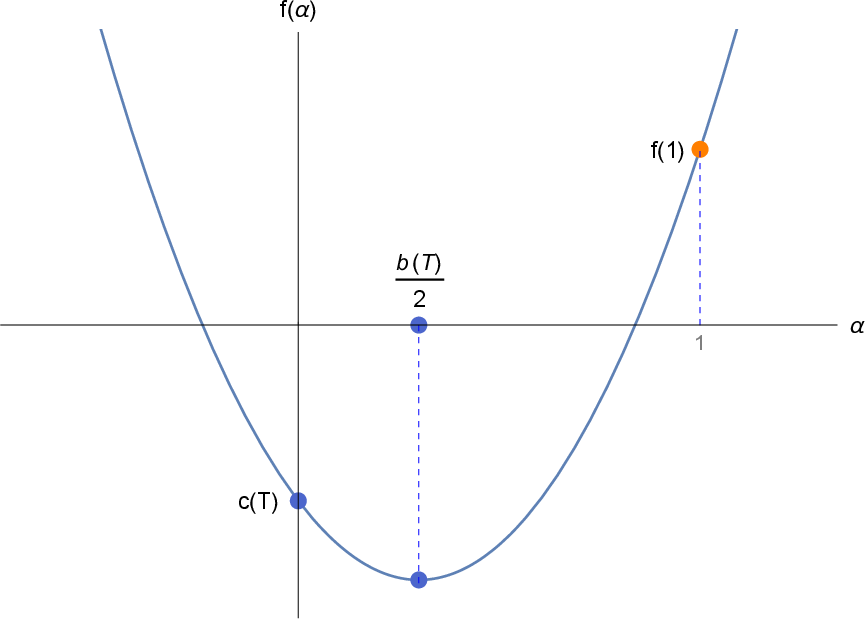}
			\caption{The graph of $f(\alpha)$ against $\alpha$.}
			\label{lqmf.mfg.figure.quadratic_function}
		\end{minipage}
		\begin{minipage}[t]{3,5in}
			\includegraphics[width=3in]{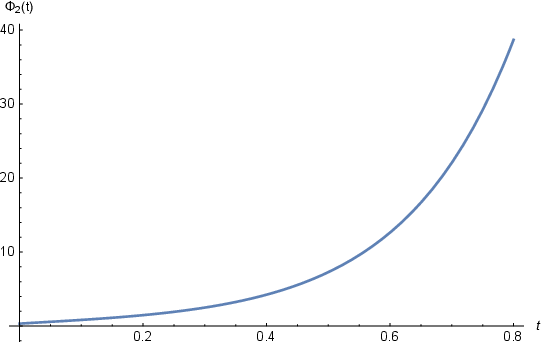}
			\caption{The graph of $\Phi_2$ against $t$ when parameters are chosen as \eqref{lqmf.counterexample.parameter1} and \eqref{lqmf.counterexample.parameter2}.}
			\label{lqmf.fig.Phi2}
		\end{minipage}
	\end{tabular}
\end{figure}


\end{proof}
\begin{remark}
	Comparing with the condition \eqref{lqmf.mfg.condition.local.solution}, the conditions in Theorem \ref{lqmf.mfg.theorem.global_local.solution} has three advantages as follows:
	\begin{enumerate}
	\item The conditions \eqref{lqmf.mfg.condition.refined.global} and \eqref{lqmf.mfg.condition.refined.local} do not require small value of $B_t$ and $Q_t$ as demanded by in Theorem \ref{lqmf.mfg.theorem.local.solution};
	\item The condition \eqref{lqmf.mfg.condition.refined.global} provides the global existence of the solution, and when we set $\bar{A}_t = \bar{B}_t = \bar{Q}_t = \bar{P}_t = \bar{S}_t = \bar{R}_t = 0$, it becomes 
	\begin{align}
	2\vertiii{B P^{-1}N^{\top}}_T^2+ \vertiii{NP^{-1}N^{\top}}_{T}^2 < 1,
	\end{align}
	for instance, the linear-quadratic control problem is uniquely solvable if the cross term $N$ is small enough, which is consistent with the results obtained before in \cite{nisio2015stochastic} and \cite{bensoussan2018estimation};
	\item The condition \eqref{lqmf.mfg.condition.refined.local} relaxes the constraints on the mean field coefficients $\bar{A}_t$, $\bar{B}_t$ and $\bar{P}_t$, but results in a small time duration. Since $T$ appears under a square root symbol, it can be much larger then that required in condition \eqref{lqmf.mfg.condition.local.solution} in Theorem \ref{lqmf.mfg.theorem.local.solution}.
\end{enumerate}	
\end{remark}
\begin{remark}
	The condition \eqref{lqmf.mfg.condition.refined.local} is a generalization of (5) in \cite{bensoussan2016linear}; indeed, if we take $\bar{B}_t = \bar{P}_t = N_t = 0$, \eqref{lqmf.mfg.condition.refined.local} becomes $\vertiii{\bar{A}}_{T_0}^2 \neq 0$, $\vertiii{\bar{\mathcal{Q}}}_{1,T_0}^2<1$, with $\vertiii{\bar{\mathcal{Q}}}_{1,T_0}:=\sup_{0 \leq t \leq T_0} \|Q_t^{-1/2}\bar{\mathcal{Q}}_t Q_t^{-1/2} \|$  and $\vertiii{\bar{A}}_{T_0}:=\sup_{0 \leq t \leq T_0}\| (B_t P_t^{-1}B_t^{\top})^{-\frac{1}{2}}\bar{A}_t Q_{t}^{-1 / 2} \|$,
	\begin{align*}
	T < T_0 \bigwedge \left(\frac{1 - \vertiii{\bar{\mathcal{Q}}}_{1,T_0}}{ \vertiii{\phi}_{0,T_0}\vertiii{\bar{A}}_{T_0}\left(1 + \vertiii{\bar{\mathcal{Q}}}_{1,T_0}\right)}\right)^2.
	\end{align*}
\end{remark}

\subsection{Global Unique Existence with Small Mean Field Sensitivities} \label{lqmf.section.global}

We shall apply the homotopy method first used in \cite{peng1999fully} (also see \cite{bensoussan2017linear}) so as to find another weaker sufficient condition for warranting unique existence of a global solution. This condition does not require the coefficients $\bar{Q}$ and $\bar{P}$ in the objective functional to be small enough or $S$ and $R$ to be very close to $I$ as that demanded in Theorem \ref{lqmf.mfg.theorem.global_local.solution}.

For any symmetric matrix $M_{e,1}$, we denote by $\lambda(M_{e,1})$ as any of its eigenvalues of $M_{e,1}$ with $\lambda_{min}(M_{e,1})$ being the smallest one and $\lambda_{max}(M_{e,1})$ being the largest one. For any matrix (not necessarily symmetric) $M_{e,2}$, we also denote by $\sigma(M_{e,2}) = \sqrt{\lambda(M_{e,2}^{\top}M_{e,2})}$ as any one of singular values of $M_{e,2}$ with $\sigma_{min}(M_{e,2})$ being the smallest one and $\sigma_{max}(M_{e,2})$ being the largest one. We further define:
\begin{align}
\label{lqmf.condition.K1}
K_1 :=& \inf_{t \in [0,T]}\lambda_{min}\left(\frac{Q_t+\bar{\mathcal{Q}}_t  +\left(Q_t+\bar{\mathcal{Q}}_t \right)^{\top}}{2}\right) > 0, \\
\label{lqmf.condition.K2}
K_2 :=& \inf_{t \in [0,T]}\lambda_{min}\left(\frac{(B_t+\bar{B}_t) (P_t+\bar{\mathcal{P}}_t)^{-1}B_t^{\top} + \left((B_t+\bar{B}_t) (P_t+\bar{\mathcal{P}}_t)^{-1}B_t^{\top}\right)^{\top}}{2}\right) > 0,
\end{align}
and
\begin{align*}
K_3 :=& \sup_{t \in [0,T]}\left(\sigma_{max}\left(\bar{A}_t -  (B_t+\bar{B}_t) (P_t+\bar{\mathcal{P}}_t)^{-1}\bar{\mathcal{S}}_t\right)\right)^2, \\
K_4 :=& \sup_{t \in [0,T]}\left(\sigma_{max}\left( \bar{\mathcal{R}}_t(P_t+\bar{\mathcal{P}}_t)^{-1}B_t^{\top} \right)\right)^2, \\
K_5 :=& \sup_{t \in [0,T]}\lambda_{max}\left( \frac{\bar{\mathcal{R}}_t(P_t+\bar{\mathcal{P}}_t)^{-1}\bar{\mathcal{S}}_t + \bar{\mathcal{S}}_t^{\top}(P_t^{\top}+\bar{\mathcal{P}}_t^{\top})^{-1}\bar{\mathcal{R}}_t^{\top}}{2} \right),
\end{align*}
note that all of them are independent of time $T$. We first focus on a result about the uniqueness of the solution.
\begin{theorem} \label{lqmf.mfg.theorem.global.uniqueness}
	(Uniqueness) Under conditions \eqref{lqmf.condition.K1} and \eqref{lqmf.condition.K2} such that
	\begin{align}
	\label{lqmf.condition.Final} &K_3+K_4+2K_2K_5 < 2K_1K_2,
	\end{align}
	the forward-backward equation system \eqref{lqmf.mfg.expected.fbode} has at most one solution.
\end{theorem}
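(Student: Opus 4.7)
The plan is the standard FBSDE duality argument. Suppose $(\xi^j,\eta^j)$ for $j=1,2$ are two solutions of \eqref{lqmf.mfg.expected.fbode}, and set $\tilde{\xi}:=\xi^1-\xi^2$, $\tilde{\eta}:=\eta^1-\eta^2$, which then satisfy the same coupled linear ODE with $\tilde{\xi}(0)=0$ and $\tilde{\eta}_T=(Q_T+\bar{\mathcal{Q}}_T)\tilde{\xi}_T$. Denote the four blocks of the coefficient matrix in \eqref{lqmf.mfg.expected.fbode} by $M_{11},M_{12},M_{21},M_{22}$. Integrating $\tfrac{d}{dt}(\tilde{\xi}_t^{\top}\tilde{\eta}_t)$ from $0$ to $T$ and using the boundary conditions yields
\begin{align*}
\tilde{\xi}_T^{\top}(Q_T+\bar{\mathcal{Q}}_T)\tilde{\xi}_T \;=\; \int_0^T\Bigl[\tilde{\xi}_t^{\top}(M_{11}^{\top}-M_{22})\tilde{\eta}_t + \tilde{\eta}_t^{\top}M_{12}\tilde{\eta}_t - \tilde{\xi}_t^{\top}M_{21}\tilde{\xi}_t\Bigr]\,dt.
\end{align*}
The crucial simplification is that the $A_t^{\top}$ contributions to $M_{11}^{\top}$ and $M_{22}$ cancel each other exactly, so that $M_{11}^{\top}-M_{22}=(\bar{A}_t-(B_t+\bar{B}_t)(P_t+\bar{\mathcal{P}}_t)^{-1}\bar{\mathcal{S}}_t)^{\top}+\bar{\mathcal{R}}_t(P_t+\bar{\mathcal{P}}_t)^{-1}B_t^{\top}$, and these two mean-field-induced blocks have largest singular values $\sqrt{K_3}$ and $\sqrt{K_4}$ respectively.

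Next I symmetrise every quadratic form via $x^{\top}Mx=x^{\top}\tfrac{M+M^{\top}}{2}x$ and read off one-sided bounds directly from the $K_i$'s. The definition of $K_2$ yields $\tilde{\eta}_t^{\top}M_{12}\tilde{\eta}_t\le-K_2\|\tilde{\eta}_t\|^2$; expanding $M_{21}=(Q_t+\bar{\mathcal{Q}}_t)-\bar{\mathcal{R}}_t(P_t+\bar{\mathcal{P}}_t)^{-1}\bar{\mathcal{S}}_t$ and combining the defining bounds of $K_1$ and $K_5$ gives $\tilde{\xi}_t^{\top}M_{21}\tilde{\xi}_t\ge(K_1-K_5)\|\tilde{\xi}_t\|^2$; and the same $K_1$ bound evaluated at $t=T$ makes the left-hand boundary quadratic form satisfy $\tilde{\xi}_T^{\top}(Q_T+\bar{\mathcal{Q}}_T)\tilde{\xi}_T\ge K_1\|\tilde{\xi}_T\|^2\ge 0$. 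For the surviving cross term, each of the two pieces obeys $|\tilde{\xi}_t^{\top}U\tilde{\eta}_t|\le\sigma_{\max}(U)\|\tilde{\xi}_t\|\|\tilde{\eta}_t\|$, giving contributions bounded by $\sqrt{K_3}\|\tilde{\xi}_t\|\|\tilde{\eta}_t\|$ and $\sqrt{K_4}\|\tilde{\xi}_t\|\|\tilde{\eta}_t\|$.

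The delicate step is to apply Young's inequality $ab\le\tfrac{a^2}{2\epsilon}+\tfrac{\epsilon b^2}{2}$ \emph{separately} to the two cross pieces, each time with the common choice $\epsilon=K_2$. Each application deposits $\tfrac{K_j}{2K_2}\|\tilde{\xi}_t\|^2$ and $\tfrac{K_2}{2}\|\tilde{\eta}_t\|^2$ ($j=3,4$), so that the two $\|\tilde{\eta}\|^2$ contributions sum to exactly $K_2\|\tilde{\eta}_t\|^2$ and are absorbed completely by the $-K_2\|\tilde{\eta}_t\|^2$ coming from $M_{12}$. Assembling everything then produces
\begin{align*}
0 \;\le\; \tilde{\xi}_T^{\top}(Q_T+\bar{\mathcal{Q}}_T)\tilde{\xi}_T \;\le\; \frac{K_3+K_4+2K_2K_5-2K_1K_2}{2K_2}\int_0^T\|\tilde{\xi}_t\|^2\,dt,
\end{align*}
whose right-hand side is strictly negative unless $\tilde{\xi}\equiv 0$ by hypothesis \eqref{lqmf.condition.Final}. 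This forces $\tilde{\xi}\equiv 0$; substituting back into either line of the system (using invertibility of $M_{12}$ implied by $K_2>0$ for the forward equation, or the zero terminal condition for the backward one) then yields $\tilde{\eta}\equiv 0$.

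The main point requiring care is the Young splitting itself: it is the matched choice $\epsilon=K_2$ on each of the two cross pieces, together with the $A_t^{\top}$ cancellation identified in the first paragraph, that makes the constants line up precisely with the sum $K_3+K_4$ and the factor $2K_2$ appearing in \eqref{lqmf.condition.Final}. Any alternative splitting or any lumping of the two cross blocks into a single operator-norm estimate before Young's inequality would instead produce a different constant (e.g.\ $(\sqrt{K_3}+\sqrt{K_4})^2$ in place of $2(K_3+K_4)$) and would not match the hypothesis of the theorem.
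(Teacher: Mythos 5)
Your proof is correct and follows essentially the same route as the paper: the duality identity for $\frac{d}{dt}(\tilde{\xi}_t^{\top}\tilde{\eta}_t)$, the bounds from $K_1,\dots,K_5$, and Young's inequality on the two cross terms. The only difference is that you take the Young parameter to be exactly $K_2$ (so the $\|\tilde{\eta}\|^2$ terms cancel and you need the short extra step to recover $\tilde{\eta}\equiv 0$ from $\tilde{\xi}\equiv 0$, which you supply correctly via invertibility of $M_{12}$), whereas the paper keeps $\varepsilon$ strictly inside $\bigl(\tfrac{K_3+K_4}{2(K_1-K_5)},K_2\bigr)$ and kills both components at once; this is an immaterial variation.
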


\begin{remark} \label{lqmf.remark.global}
	The condition \eqref{lqmf.condition.Final} can be fulfilled as long as $K_3$, $K_4$ and $K_5$ are small enough, that is to say, the coefficients of mean field term $\bar{A}_t$ and the cross term $N_t$ in $\bar{\mathcal{S}}_t$ and $\bar{\mathcal{R}}_t$ are small enough, while the conditions \eqref{lqmf.condition.K1} and \eqref{lqmf.condition.K2} can hold especially when one has the positive-definiteness of $\bar{\mathcal{P}}_t = \bar{P}_t (I-R_t)$, $\bar{\mathcal{Q}}_t = \bar{Q}_t (I-S_t)$ and $\bar{B}_t (P_t+\bar{\mathcal{P}}_t)^{-1}B_t^{\top}$, for all $t\in[0,T]$. It is a much more relaxed condition compared with Condition \eqref{lqmf.mfg.condition.refined.global} in Theorem \ref{lqmf.mfg.theorem.global_local.solution}, where $\bar{B}$, $\bar{\mathcal{Q}}$ and $\bar{\mathcal{P}}$ are supposed to be small (see \eqref{lqmf.mfg.equalities.notational_implicity}, a small enough $\bar{\mathcal{Q}}$ implies that either $\bar{Q}$ is small enough or $S$ is close enough to $I$). We further give an alternative more tractable sufficient condition for \eqref{lqmf.condition.K1} and \eqref{lqmf.condition.K2} in light of the celebrated Weyl's inequality (see \cite{fulton2000eigenvalues} and \cite{knutson2001honeycombs}).
\end{remark}

\begin{proof}
	We consider the Hilbert space $L^2(0,T;\mathbb{R}^n)$ equipped with the inner product $\braket{z,z'}_T := z_T^{\top}z'_T + \int_0^T z_t^{\top}z'_tdt$.	Let $(\xi,\eta)$ and $(\xi',\eta')$ be two different solutions of \eqref{lqmf.mfg.expected.fbode}, and define $\delta\xi=\xi-\xi'$, $\delta\eta=\eta-\eta'$. It is clear that $(\delta\xi,\delta\eta)$ solves the system:
\begin{align*}
\left\{\begin{array}{rl}
\nonumber
\frac{d}{d t}\left(\begin{array}{c}
\delta\xi_t \\
-\delta\eta_t
\end{array}\right) &=\left(\begin{array}{cc}
A_t+\bar{A}_t -  (B_t+\bar{B}_t) (P_t+\bar{\mathcal{P}}_t)^{-1}\bar{\mathcal{S}}_t  & -(B_t+\bar{B}_t) (P_t+\bar{\mathcal{P}}_t)^{-1}B_t^{\top} \\
Q_t+\bar{\mathcal{Q}}_t - \bar{\mathcal{R}}_t(P_t+\bar{\mathcal{P}}_t)^{-1}\bar{\mathcal{S}}_t & A_t^{\top}-\bar{\mathcal{R}}_t(P_t+\bar{\mathcal{P}}_t)^{-1}B_t^{\top}
\end{array}\right)\left(\begin{array}{l}
\delta\xi_t \\
\delta\eta_t
\end{array}\right), \\
\delta\xi(0) &=0, \quad \delta\eta_T=\left(Q_T+\bar{\mathcal{Q}}_T\right) \delta\xi_T.
\end{array}\right.
\end{align*}
By the chain rule, we have
\begin{align}
\nonumber &\delta\xi_T^{\top}\left(Q_T+\bar{\mathcal{Q}}_T\right) \delta\xi_T + \int_0^T \delta\xi_t^{\top} \left(Q_t+\bar{\mathcal{Q}}_t \right) \delta\xi_t dt + \int_0^T \delta\eta_t^{\top}(B_t+\bar{B}_t) (P_t+\bar{\mathcal{P}}_t)^{-1}B_t^{\top} \delta\eta_t dt \\
\label{lqmf.equality.temp1} &= \int_0^T \delta\eta_t^{\top}\left(\bar{A}_t -  (B_t+\bar{B}_t) (P_t+\bar{\mathcal{P}}_t)^{-1}\bar{\mathcal{S}}_t\right) \delta\xi_t dt + \int_0^T \delta\xi_t^{\top} \bar{\mathcal{R}}_t(P_t+\bar{\mathcal{P}}_t)^{-1}B_t^{\top} \delta\eta_t dt + \int_0^T \delta\xi_t^{\top} \bar{\mathcal{R}}_t(P_t+\bar{\mathcal{P}}_t)^{-1}\bar{\mathcal{S}}_t \delta\xi_t dt.
\end{align}
For the left hand side, it follows from the conditions \eqref{lqmf.condition.K1} and \eqref{lqmf.condition.K2} that
\begin{align*}
\delta\xi_T^{\top}\left(Q_T+\bar{\mathcal{Q}}_T\right) \delta\xi_T + \int_0^T \delta\xi_t^{\top} \left(Q_t+\bar{\mathcal{Q}}_t \right) \delta\xi_t dt + \int_0^T \delta\eta_t^{\top}(B_t+\bar{B}_t) (P_t+\bar{\mathcal{P}}_t)^{-1}B_t^{\top} \delta\eta_t dt \geq K_1 \|\delta\xi\|_T^2 + K_2 \|\delta\eta\|_T^2.
\end{align*}
On the other hand, for the right hand side of \eqref{lqmf.equality.temp1}, recall the definitions of $K_3$, $K_4$ and $K_5$, and by the Young's inequality, we have, for all $\varepsilon > 0$,
\begin{align}
\label{lqmf.mfg.inequality1.homotopy}
\int_0^T \delta\eta_t^{\top}\left(\bar{A}_t -  (B_t+\bar{B}_t) (P_t+\bar{\mathcal{P}}_t)^{-1}\bar{\mathcal{S}}_t\right) \delta\xi_t dt &\leq \frac{K_3}{2 \varepsilon} \|\delta\xi\|_T^2 + \frac{\varepsilon}{2}\|\delta\eta\|_T^2, \\
\label{lqmf.mfg.inequality2.homotopy}
\int_0^T \delta\xi_t^{\top} \bar{\mathcal{R}}_t(P_t+\bar{\mathcal{P}}_t)^{-1}B_t^{\top} \delta\eta_t dt &\leq \frac{K_4}{2 \varepsilon} \|\delta\xi\|_T^2 + \frac{\varepsilon}{2}\|\delta\eta\|_T^2,
\end{align}
and
\begin{align}
\label{lqmf.mfg.inequality3.homotopy}
\int_0^T \delta\xi_t^{\top} \bar{\mathcal{R}}_t(P_t+\bar{\mathcal{P}}_t)^{-1}\bar{\mathcal{S}}_t \delta\xi_t dt \leq K_5 \int_0^T \delta\xi_t^{\top}\delta\xi_t dt \leq K_5 \|\delta\xi\|_T^2.
\end{align}
Combining these estimates of \eqref{lqmf.mfg.inequality1.homotopy}, \eqref{lqmf.mfg.inequality2.homotopy} and \eqref{lqmf.mfg.inequality3.homotopy}, we have
\begin{align*}
K_1 \|\delta\xi\|_T^2 + K_2 \|\delta\eta\|_T^2 \leq \frac{K_3}{2 \varepsilon} \|\delta\xi\|_T^2 + \frac{\varepsilon}{2}\|\delta\eta\|_T^2 + \frac{K_4}{2 \varepsilon} \|\delta\xi\|_T^2 + \frac{\varepsilon}{2}\|\delta\eta\|_T^2 + K_5 \|\delta\xi\|_T^2,
\end{align*}
which further implies
\begin{align*}
\left(K_1 - K_5 - \frac{K_3+K_4}{2 \varepsilon} \right) \|\delta\xi\|_T^2 + \left(K_2-\varepsilon\right)\|\delta\eta\|_T^2 \leq 0.
\end{align*}
To guarantee the uniqueness, it suffices to set $\varepsilon>0$ such that
\begin{align}
\label{lqmf.mfg.epsilon.choose}
0 < \frac{K_3+K_4}{2(K_1-K_5)} < \varepsilon < K_2,
\end{align}
which is ensured by the condition \eqref{lqmf.condition.Final}.
\end{proof}

\begin{theorem} \label{lqmf.mfg.theorem.global.existence}
	Under the conditions \eqref{lqmf.condition.K1}, \eqref{lqmf.condition.K2} and \eqref{lqmf.condition.Final} of Theorem \ref{lqmf.mfg.theorem.global.uniqueness}, the forward-backward equation system \eqref{lqmf.mfg.expected.fbode} admits a unique solution.
\end{theorem}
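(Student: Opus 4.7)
Since Theorem \ref{lqmf.mfg.theorem.global.uniqueness} already delivers uniqueness, the remaining task is existence. The plan is to apply the homotopy (method of continuation) of \cite{peng1999fully}. Let $\mathcal{M}_{ij}(t)$, $i,j\in\{1,2\}$, denote the four block entries of the coefficient matrix in \eqref{lqmf.mfg.expected.fbode}. For $\lambda \in [0,1]$ and any source triple $(\phi,\psi,\gamma) \in L^2(0,T;\mathbb{R}^n) \times L^2(0,T;\mathbb{R}^n) \times \mathbb{R}^n$, introduce the interpolating family of linear FBODEs
\begin{align*}
\tfrac{d\xi_t}{dt} &= \lambda\,\mathcal{M}_{11}(t)\xi_t + [\lambda\,\mathcal{M}_{12}(t) - (1-\lambda)I]\eta_t + \phi_t, \\
-\tfrac{d\eta_t}{dt} &= [\lambda\,\mathcal{M}_{21}(t) + (1-\lambda)I]\xi_t + \lambda\,\mathcal{M}_{22}(t)\eta_t + \psi_t, \\
\xi(0) &= \mathbb{E}[x_0], \qquad \eta_T = [\lambda(Q_T+\bar{\mathcal{Q}}_T) + (1-\lambda)I]\xi_T + \gamma.
\end{align*}
At $\lambda = 0$ the system decouples into a standard ODE problem which is trivially uniquely solvable for every source, while at $\lambda = 1$ with zero source data it coincides with \eqref{lqmf.mfg.expected.fbode}.

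The core of the argument is a uniform a priori estimate. Applying the chain rule to $\delta\xi_t^\top\delta\eta_t$ for two solutions $(\xi^j,\eta^j)$ of the $\lambda$-system driven by sources $(\phi^j,\psi^j,\gamma^j)$, $j=1,2$, and using $\delta\xi_0 = 0$, one replicates verbatim the energy identity from the proof of Theorem \ref{lqmf.mfg.theorem.global.uniqueness}; the $(1-\lambda)I$ contributions land on the left-hand side with the sign of positive-definiteness, producing a lower bound
\[
[\lambda K_1 + (1-\lambda)]\|\delta\xi\|_T^2 + [\lambda K_2 + (1-\lambda)]\|\delta\eta\|_T^2,
\]
whereas the nuisance cross-terms on the right-hand side carry a factor of $\lambda$ and are controlled by the same $K_3,K_4,K_5$ with $\varepsilon$ chosen as in \eqref{lqmf.mfg.epsilon.choose}. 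The residual coefficients, namely $\lambda[K_1-K_5-(K_3+K_4)/(2\varepsilon)] + (1-\lambda)$ for $\|\delta\xi\|_T^2$ and $\lambda(K_2-\varepsilon) + (1-\lambda)$ for $\|\delta\eta\|_T^2$, are strictly positive for every $\lambda \in [0,1]$ by virtue of \eqref{lqmf.condition.Final}. Absorbing the additional source inner products by one more Young step then yields
\[
\|\delta\xi\|_T^2 + \|\delta\eta\|_T^2 \le C\bigl(\|\delta\phi\|_T^2 + \|\delta\psi\|_T^2 + |\delta\gamma|^2\bigr),
\]
with $C$ independent of $\lambda \in [0,1]$.

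Equipped with this uniform estimate, the classical bridging argument closes the proof. Let $\Lambda \subseteq [0,1]$ be the set of $\lambda$ for which the $\lambda$-system is uniquely solvable for arbitrary source triples; then $0 \in \Lambda$. For any $\lambda_0 \in \Lambda$, the plan is to solve the $(\lambda_0+\mu)$-system by a Banach fixed-point iteration that feeds a guess $(\bar\xi,\bar\eta)$ back into the $\lambda_0$-system as additional source data $\mu(\mathcal{M}-M^0)(\bar\xi,\bar\eta)^\top$, where $M^0$ is the base matrix appearing at $\lambda = 0$; the a priori bound turns this into a contraction as soon as $\mu \le \mu_0$, with $\mu_0$ depending only on $C$ and the uniform norms of the coefficients, hence independent of $\lambda_0$. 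A finite sequence of such steps connects $0$ to $1$, producing the desired solution of \eqref{lqmf.mfg.expected.fbode}. The principal technical obstacle is guaranteeing that $C$ remains uniform along the homotopy; this is precisely why the interpolation is taken as a convex combination with a positive-definite base system, so that the $(1-\lambda)$ contribution only reinforces, and never erodes, the margin secured at $\lambda = 1$ by the strict inequality \eqref{lqmf.condition.Final}.
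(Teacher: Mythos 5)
Your proposal is correct and follows essentially the same route as the paper: a Peng--Wu continuation between a trivially solvable positive-definite base system and \eqref{lqmf.mfg.expected.fbode}, a $\lambda$-uniform a priori estimate obtained by applying the chain rule to $\delta\xi_t^{\top}\delta\eta_t$ with the same $\varepsilon$ from \eqref{lqmf.mfg.epsilon.choose}, and a Banach fixed-point bridging step of uniform size. The only (immaterial) difference is that the paper's base system uses the scalars $\beta_1=K_1-K_5-\tfrac{K_3+K_4}{2\varepsilon}$ and $\beta_2=K_2-\tfrac{\varepsilon}{2}$ in place of your identity blocks $(1-\lambda)I$, and your explicit inclusion of a terminal source $\gamma$ in the interpolating family is a harmless (arguably cleaner) bookkeeping choice.
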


\begin{proof}
We choose the same $\varepsilon>0$ as in \eqref{lqmf.mfg.epsilon.choose}, and define
\begin{align*}
\beta_1 := K_1 - K_5 -\frac{K_3+K_4}{2 \varepsilon}, \quad \beta_2 := K_2-\frac{\varepsilon}{2}.
\end{align*}
We apply the homotopy method as shown in \cite{peng1999fully} and \cite{bensoussan2017linear}. Consider a family of equation systems indexed by the parameter $\alpha_0 \in [0,1]$ as follows:
\begin{align}
\label{lqmf.mfg.expected.fbode.family_indexed_by_alpha}
\left\{\begin{array}{rcl}
\frac{d \xi_t}{dt}&=& -(1-\alpha_0)\beta_2\eta_t + \alpha_0\left(\left(A_t+\bar{A}_t -  (B_t+\bar{B}_t) (P_t+\bar{\mathcal{P}}_t)^{-1}\bar{\mathcal{S}}_t\right) \xi_t  - (B_t+\bar{B}_t) (P_t+\bar{\mathcal{P}}_t)^{-1}B_t^{\top} \eta_t\right) + \phi_t , \\
-\frac{d \eta_t}{dt} &=& (1-\alpha_0)\beta_1\xi_t + \alpha_0\left(\left(A_t^{\top} -\bar{\mathcal{R}}_t(P_t+\bar{\mathcal{P}}_t)^{-1}B_t^{\top}\right) \eta_t +\left(\left(Q_t+\bar{\mathcal{Q}}_t\right) - \bar{\mathcal{R}}_t(P_t+\bar{\mathcal{P}}_t)^{-1}\bar{\mathcal{S}}_t\right) \xi_t\right) + \psi_t, \\
\xi_0 &=&\mathbb{E}\left[x_0\right], \quad \eta_T =(1-\alpha_0)\beta_1\xi_T + \alpha_0\left(Q_T+\bar{\mathcal{Q}}_T\right) \xi_T,
\end{array}\right.
\end{align}
where $\phi_{\cdot}$ and $\psi_{\cdot}$ are two fixed deterministic functions belonging to $L^2(0,T;\mathbb{R}^n)$. Our objective is to prove the existence of the solution of \eqref{lqmf.mfg.expected.fbode.family_indexed_by_alpha} for $\alpha_0 = 1$. It is clear that when $\alpha_0 = 0$ the existence is immediate, so we can apply the inductive argument to extend the existence result up to $\alpha_0 = 1$ by assuming, for a fixed parameter $0 \leq \alpha_0 < 1$, the existence is true and then checking whether the existence still holds for a slightly larger parameter $\alpha_0 + \delta_0$ for some of a uniform magnitude $\delta_0 >0$. We consider a map $\mathcal{S}: (z,q) \mapsto (\xi,\eta)$ such that
\begin{align}
\label{lqmf.mfg.expected.fbode.mapS}
\left\{\begin{array}{rcl}
\frac{d \xi_t}{dt}&=& -(1-\alpha_0)\beta_2\eta_t + \alpha_0\left(\left(A_t+\bar{A}_t -  (B_t+\bar{B}_t) (P_t+\bar{\mathcal{P}}_t)^{-1}\bar{\mathcal{S}}_t\right) \xi_t  - (B_t+\bar{B}_t) (P_t+\bar{\mathcal{P}}_t)^{-1}B_t^{\top} \eta_t\right) + \phi_t \\
& &+ \delta_0\left(\beta_2q_t + \left(A_t+\bar{A}_t -  (B_t+\bar{B}_t) (P_t+\bar{\mathcal{P}}_t)^{-1}\bar{\mathcal{S}}_t\right) z_t  - (B_t+\bar{B}_t) (P_t+\bar{\mathcal{P}}_t)^{-1}B_t^{\top} q_t\right), \\
-\frac{d \eta_t}{dt} &=& (1-\alpha_0)\beta_1\xi_t + \alpha_0\left(\left(A_t^{\top} -\bar{\mathcal{R}}_t(P_t+\bar{\mathcal{P}}_t)^{-1}B_t^{\top}\right) \eta_t +\left(\left(Q_t+\bar{\mathcal{Q}}_t\right) - \bar{\mathcal{R}}_t(P_t+\bar{\mathcal{P}}_t)^{-1}\bar{\mathcal{S}}_t\right) \xi_t\right) + \psi_t \\
& &-\delta_0\left(\beta_1z_t -\left(A_t^{\top} -\bar{\mathcal{R}}_t(P_t+\bar{\mathcal{P}}_t)^{-1}B_t^{\top}\right) q_t -\left(\left(Q_t+\bar{\mathcal{Q}}_t\right) - \bar{\mathcal{R}}_t(P_t+\bar{\mathcal{P}}_t)^{-1}\bar{\mathcal{S}}_t\right) z_t \right), \\
\xi_0 &=&\mathbb{E}\left[x_0\right], \quad \eta_T =(1-\alpha_0)\beta_1\xi_T + \alpha_0\left(Q_T+\bar{\mathcal{Q}}_T\right) \xi_T - \delta_0\left(\beta_1z_T-\left(Q_T+\bar{\mathcal{Q}}_T\right) z_T\right),
\end{array}\right.
\end{align}
especially if the map $\mathcal{S}$ admits a fixed point for a properly chosen $\delta$ of uniform size independent of the value of $\alpha_0$ at all, then the result follows by induction. We choose two pairs $\mathcal{S}(z,q)=(\xi,\eta)$ and $\mathcal{S}(z',q')=(\xi',\eta')$ and take the difference of them. The linearity of \eqref{lqmf.mfg.expected.fbode.mapS} implies the differences $\delta z:=z-z'$, $\delta q:=q-q'$, $\delta\xi:=\xi-\xi'$ and $\delta\eta:=\eta-\eta'$ satisfy
\begin{align*}
\left\{\begin{array}{ccl}
\frac{d \delta\xi_t}{dt}&=& -(1-\alpha_0)\beta_2\delta\eta_t + \alpha_0\left(\left(A_t+\bar{A}_t -  (B_t+\bar{B}_t) (P_t+\bar{\mathcal{P}}_t)^{-1}\bar{\mathcal{S}}_t\right) \delta\xi_t  - (B_t+\bar{B}_t) (P_t+\bar{\mathcal{P}}_t)^{-1}B_t^{\top} \delta\eta_t\right) \\
& &+ \delta_0\left(\beta_2\delta q_t + \left(A_t+\bar{A}_t -  (B_t+\bar{B}_t) (P_t+\bar{\mathcal{P}}_t)^{-1}\bar{\mathcal{S}}_t\right) \delta z_t  - (B_t+\bar{B}_t) (P_t+\bar{\mathcal{P}}_t)^{-1}B_t^{\top} \delta q_t\right), \\
-\frac{d \delta\eta_t}{dt} &=& (1-\alpha_0)\beta_1\delta\xi_t + \alpha_0\left(\left(A_t^{\top} -\bar{\mathcal{R}}_t(P_t+\bar{\mathcal{P}}_t)^{-1}B_t^{\top}\right) \delta\eta_t +\left(\left(Q_t+\bar{\mathcal{Q}}_t\right) - \bar{\mathcal{R}}_t(P_t+\bar{\mathcal{P}}_t)^{-1}\bar{\mathcal{S}}_t\right) \delta\xi_t\right) \\
& &-\delta_0\left(\beta_1\delta z_t -\left(A_t^{\top} -\bar{\mathcal{R}}_t(P_t+\bar{\mathcal{P}}_t)^{-1}B_t^{\top}\right) \delta q_t -\left(\left(Q_t+\bar{\mathcal{Q}}_t\right) - \bar{\mathcal{R}}_t(P_t+\bar{\mathcal{P}}_t)^{-1}\bar{\mathcal{S}}_t\right) \delta z_t \right), \\
\delta\xi_0 &=&0, \quad \delta\eta_T =(1-\alpha_0)\beta_1\delta\xi_T + \alpha_0\left(Q_T+\bar{\mathcal{Q}}_T\right) \delta\xi_T - \delta_0\left(\beta_1\delta z_T-\left(Q_T+\bar{\mathcal{Q}}_T\right) \delta z_T\right).
\end{array}\right.
\end{align*}
Applying the chain rule to $\delta\xi_t^{\top}\delta\eta_t$, we obtain
\begin{align}
\nonumber
& (1-\alpha_0)\beta_1\delta\xi_T^{\top}\delta\xi_T + \alpha_0\delta\xi_T^{\top}\left(Q_T+\bar{\mathcal{Q}}_T\right) \delta\xi_T = \delta\left(\beta_1\delta\xi_T^{\top}\delta z_T-\delta\xi_T^{\top}\left(Q_T+\bar{\mathcal{Q}}_T\right) \delta z_T\right) + \int_0^T d\delta\xi_t^{\top} \cdot \delta\eta_t + \int_0^T \delta\xi_t^{\top}d\delta\eta_t \\
\nonumber
&= \delta_0\left(\beta_1\delta\xi_T^{\top}\delta z_T-\delta\xi_T^{\top}\left(Q_T+\bar{\mathcal{Q}}_T\right) \delta z_T\right) \\
\nonumber
&\quad + \int_0^T \bigg\{-(1-\alpha_0)\beta_2\delta\eta_t + \alpha_0\left(\left(A_t+\bar{A}_t -  (B_t+\bar{B}_t) (P_t+\bar{\mathcal{P}}_t)^{-1}\bar{\mathcal{S}}_t\right) \delta\xi_t  - (B_t+\bar{B}_t) (P_t+\bar{\mathcal{P}}_t)^{-1}B_t^{\top} \delta\eta_t\right) \\
\nonumber
&\quad + \delta_0\left(\beta_2\delta q_t + \left(A_t+\bar{A}_t -  (B_t+\bar{B}_t) (P_t+\bar{\mathcal{P}}_t)^{-1}\bar{\mathcal{S}}_t\right) \delta z_t  - (B_t+\bar{B}_t) (P_t+\bar{\mathcal{P}}_t)^{-1}B_t^{\top} \delta q_t\right)\bigg\} \cdot \delta\eta_t dt \\
\nonumber
&\quad - \int_0^T \delta\xi_t^{\top}\bigg\{(1-\alpha_0)\beta_1\delta\xi_t + \alpha_0\left(\left(A_t^{\top} -\bar{\mathcal{R}}_t(P_t+\bar{\mathcal{P}}_t)^{-1}B_t^{\top}\right) \delta\eta_t +\left(\left(Q_t+\bar{\mathcal{Q}}_t\right) - \bar{\mathcal{R}}_t(P_t+\bar{\mathcal{P}}_t)^{-1}\bar{\mathcal{S}}_t\right) \delta\xi_t\right) \\
\nonumber
&\quad -\delta_0\left(\beta_1\delta z_t -\left(A_t^{\top} -\bar{\mathcal{R}}_t(P_t+\bar{\mathcal{P}}_t)^{-1}B_t^{\top}\right) \delta q_t -\left(\left(Q_t+\bar{\mathcal{Q}}_t\right) - \bar{\mathcal{R}}_t(P_t+\bar{\mathcal{P}}_t)^{-1}\bar{\mathcal{S}}_t\right) \delta z_t \right)\bigg\}dt \\
\nonumber
&= \delta_0\left(\beta_1\delta\xi_T^{\top}\delta z_T-\delta\xi_T^{\top}\left(Q_T+\bar{\mathcal{Q}}_T\right) \delta z_T\right) \\
\nonumber
&\quad -\alpha_0 \int_0^T \left(\delta\eta_t^{\top}(B_t+\bar{B}_t) (P_t+\bar{\mathcal{P}}_t)^{-1}B_t^{\top} \delta\eta_t + \delta\xi_t^{\top} \left(Q_t+\bar{\mathcal{Q}}_t\right)  \delta\xi_t\right) dt \\
\nonumber
&\quad +\alpha_0 \int_0^T \left(\delta\eta_t^{\top} \left(\bar{A}_t-(B_t+\bar{B}_t) (P_t+\bar{\mathcal{P}}_t)^{-1}\bar{\mathcal{S}}_t\right) \delta\xi_t + \delta\xi_t^{\top} \bar{\mathcal{R}}_t(P_t+\bar{\mathcal{P}}_t)^{-1}B_t^{\top} \delta\eta_t + \delta\xi_t^{\top}\bar{\mathcal{R}}_t(P_t+\bar{\mathcal{P}}_t)^{-1}\bar{\mathcal{S}}_t\delta\xi_t\right) dt \\
\nonumber
&\quad -(1-\alpha_0) \int_0^T \left(\beta_1\delta\xi_t^{\top}\delta\xi_t +\beta_2\delta\eta_t^{\top}\delta\eta_t\right) dt \\
\nonumber
&\quad +\delta_0 \int_0^T \bigg(\beta_1\delta\xi_t^{\top}\delta z_t+\beta_2\delta\eta_t^{\top}\delta q_t+\delta\eta_t^{\top}\left(A_t+\bar{A}_t -  (B_t+\bar{B}_t) (P_t+\bar{\mathcal{P}}_t)^{-1}\bar{\mathcal{S}}_t\right) \delta z_t-\delta\eta_t^{\top}(B_t+\bar{B}_t) (P_t+\bar{\mathcal{P}}_t)^{-1}B_t^{\top} \delta q_t \\
\label{lqmf.mfg.equation.xi_eta.delta}
&\quad -\delta\xi_t^{\top}\left(A_t^{\top} -\bar{\mathcal{R}}_t(P_t+\bar{\mathcal{P}}_t)^{-1}B_t^{\top}\right) \delta q_t + \delta\xi_t^{\top}\left(\left(Q_t+\bar{\mathcal{Q}}_t\right) - \bar{\mathcal{R}}_t(P_t+\bar{\mathcal{P}}_t)^{-1}\bar{\mathcal{S}}_t\right) \delta z_t\bigg) dt.
\end{align}
To estimate \eqref{lqmf.mfg.equation.xi_eta.delta}, we shall first introduce some results that as follows: first, by conditions \eqref{lqmf.condition.K1} and \eqref{lqmf.condition.K2}, we have
\begin{align}
\label{lqmf.global_existence.inequaility.2}
\int_0^T \left(\delta\eta_t^{\top}(B_t+\bar{B}_t) (P_t+\bar{\mathcal{P}}_t)^{-1}B_t^{\top} \delta\eta_t + \delta\xi_t^{\top} \left(Q_t+\bar{\mathcal{Q}}_t\right)  \delta\xi_t\right) dt \geq \int_0^T \left(K_1\delta\xi_t^{\top}\delta\xi_t +K_2\delta\eta_t^{\top}\delta\eta_t\right) dt.
\end{align}
Besides, by using the same estimates \eqref{lqmf.mfg.inequality1.homotopy}, \eqref{lqmf.mfg.inequality2.homotopy} and \eqref{lqmf.mfg.inequality3.homotopy} as in the proof of uniqueness, we obtain
\begin{align}
\nonumber &\int_0^T \left(\delta\eta_t^{\top} \left(\bar{A}_t-(B_t+\bar{B}_t) (P_t+\bar{\mathcal{P}}_t)^{-1}\bar{\mathcal{S}}_t\right) \delta\xi_t + \delta\xi_t^{\top} \bar{\mathcal{R}}_t(P_t+\bar{\mathcal{P}}_t)^{-1}B_t^{\top} \delta\eta_t + \delta\xi_t^{\top}\bar{\mathcal{R}}_t(P_t+\bar{\mathcal{P}}_t)^{-1}\bar{\mathcal{S}}_t\delta\xi_t\right) dt \\
\label{lqmf.global_existence.inequaility.3} \leq& \left(\frac{K_3+K_4}{2 \varepsilon} + K_5\right)\int_0^T \delta\xi_t^{\top}\delta\xi_t dt + \varepsilon\int_0^T \delta\eta_t^{\top}\delta\eta_t dt.
\end{align}
Hence, combining \eqref{lqmf.global_existence.inequaility.2} and \eqref{lqmf.global_existence.inequaility.3} and noticing the definitions of $\beta_1$ and $\beta_2$, we have
\begin{align}
\nonumber &-\alpha_0 \int_0^T \left(\delta\eta_t^{\top}(B_t+\bar{B}_t) (P_t+\bar{\mathcal{P}}_t)^{-1}B_t^{\top} \delta\eta_t + \delta\xi_t^{\top} \left(Q_t+\bar{\mathcal{Q}}_t\right)  \delta\xi_t\right) dt \\
\nonumber &+\alpha_0 \int_0^T \left(\delta\eta_t^{\top} \left(\bar{A}_t-(B_t+\bar{B}_t) (P_t+\bar{\mathcal{P}}_t)^{-1}\bar{\mathcal{S}}_t\right) \delta\xi_t + \delta\xi_t^{\top} \bar{\mathcal{R}}_t(P_t+\bar{\mathcal{P}}_t)^{-1}B_t^{\top}\delta \eta_t + \delta\xi_t^{\top}\bar{\mathcal{R}}_t(P_t+\bar{\mathcal{P}}_t)^{-1}\bar{\mathcal{S}}_t\delta\xi_t\right) dt \\
\nonumber &\leq -\alpha_0 \left(K_1 - K_5 - \frac{K_3+K_4}{2 \varepsilon}\right) \int_0^T \delta\xi_t^{\top}\delta\xi_t dt -\alpha_0 \left(K_2-\varepsilon\right)\int_0^T \delta\eta_t^{\top}\delta\eta_t dt \\
\label{lqmf.global_existence.inequaility.4} &= -\alpha_0 \beta_1 \int_0^T \delta\xi_t^{\top}\delta\xi_t dt -\alpha_0 \beta_2\int_0^T \delta\eta_t^{\top}\delta\eta_t dt.
\end{align}
In addition, by conditions \eqref{lqmf.condition.K1} and definition of $\beta_1$, we also have 
\begin{align}
	\label{lqmf.global_existence.inequaility.1}
	\delta\xi_T^{\top}\left(Q_T+\bar{\mathcal{Q}}_T\right) \delta\xi_T \geq K_1\delta\xi_T^{\top}\delta\xi_T \geq \beta_1\delta\xi_T^{\top}\delta\xi_T.
\end{align}
Therefore, substituting \eqref{lqmf.global_existence.inequaility.4} and \eqref{lqmf.global_existence.inequaility.1} into \eqref{lqmf.mfg.equation.xi_eta.delta}, we obtain
\begin{align*}
&\int_0^T \left(\beta_1\delta\xi_t^{\top}\delta\xi_t +\beta_2\delta\eta_t^{\top}\delta\eta_t\right) dt + \beta_1\delta\xi_T^{\top}\delta\xi_T \leq \delta_0\left(\beta_1\delta\xi_T^{\top}\delta z_T-\delta\xi_T^{\top}\left(Q_T+\bar{\mathcal{Q}}_T\right) \delta z_T\right) \\
&+\delta_0 \int_0^T \bigg(\beta_1\delta\xi_t^{\top}\delta z_t+\beta_2\delta\eta_t^{\top}\delta q_t+\delta\eta_t^{\top}\left(A_t+\bar{A}_t -  (B_t+\bar{B}_t) (P_t+\bar{\mathcal{P}}_t)^{-1}\bar{\mathcal{S}}_t\right) \delta z_t-\delta\eta_t^{\top}(B_t+\bar{B}_t) (P_t+\bar{\mathcal{P}}_t)^{-1}B_t^{\top} \delta q_t \\
&-\delta\xi_t^{\top}\left(A_t^{\top} -\bar{\mathcal{R}}_t(P_t+\bar{\mathcal{P}}_t)^{-1}B_t^{\top}\right) \delta q_t + \delta\xi_t^{\top}\left(\left(Q_t+\bar{\mathcal{Q}}_t\right) - \bar{\mathcal{R}}_t(P_t+\bar{\mathcal{P}}_t)^{-1}\bar{\mathcal{S}}_t\right) \delta z_t\bigg) dt \\
&\leq\delta_0 \mathcal{C}_0 \int_0^T (\delta\xi_t^{\top}\delta\xi_t + \delta\eta_t^{\top}\delta\eta_t) dt + \delta_0 \mathcal{C}_0 \delta\xi_T^{\top}\delta\xi_T + \delta_0 \mathcal{C}_0 \int_0^T (\delta z_t^{\top}\delta z_t + \delta q_t^{\top}\delta q_t) dt + \delta_0 \mathcal{C}_0 \delta z_T^{\top}\delta z_T,
\end{align*}
where 
\begin{align*}
\mathcal{C}_0 :=& \frac{1}{2}\left(1+\beta_1+\beta_2+\sup_{0\leq t\leq T}\|A_t+\bar{A}_t -  (B_t+\bar{B}_t) (P_t+\bar{\mathcal{P}}_t)^{-1}\bar{\mathcal{S}}_t\|+\sup_{0\leq t\leq T}\|(B_t+\bar{B}_t) (P_t+\bar{\mathcal{P}}_t)^{-1}B_t^{\top}\|\right) \\
&+\frac{1}{2}\left(\sup_{0\leq t\leq T}\|A_t^{\top} -\bar{\mathcal{R}}_t(P_t+\bar{\mathcal{P}}_t)^{-1}B_t^{\top}\|+\sup_{0\leq t\leq T}\|\left(Q_t+\bar{\mathcal{Q}}_t\right) - \bar{\mathcal{R}}_t(P_t+\bar{\mathcal{P}}_t)^{-1}\bar{\mathcal{S}}_t\|\right).
\end{align*}
We take $\mathcal{C}_1:=\frac{\mathcal{C}_0}{\min\{\beta_1,\beta_2\}}$, then
\begin{align*}
\int_0^T \left(\delta\xi_t^{\top}\delta\xi_t +\delta\eta_t^{\top}\delta\eta_t\right) dt + \delta\xi_T^{\top}\delta\xi_T \leq& \delta_0 \mathcal{C}_1 \int_0^T (\delta\xi_t^{\top}\delta\xi_t + \delta\eta_t^{\top}\delta\eta_t) dt + \delta_0 \mathcal{C}_1 \delta\xi_T^{\top}\delta\xi_T + \delta_0 \mathcal{C}_1 \int_0^T (\delta z_t^{\top}\delta z_t + \delta q_t^{\top}\delta q_t) dt + \delta_0 \mathcal{C}_1 \delta z_T^{\top}\delta z_T.
\end{align*}
Take $\delta_0 := \frac{1}{3 \mathcal{C}_1}$, then
\begin{align*}
\int_0^T \left(\delta\xi_t^{\top}\delta\xi_t +\delta\eta_t^{\top}\delta\eta_t\right) dt + \delta\xi_T^{\top}\delta\xi_T \leq  \frac{1}{2}\left(\int_0^T (\delta z_t^{\top}\delta z_t + \delta q_t^{\top}\delta q_t) dt + \delta z_T^{\top}\delta z_T\right).
\end{align*}
This proves that $\mathcal{S}$ is a contractive map on $L^2(0,T;\mathbb{R}^n)$, thus it admits a unique fixed point. By mathematical inductive approach, when we chose $\alpha_0=1$, the existence of solutions of \eqref{lqmf.mfg.expected.fbode.family_indexed_by_alpha} established. We then combine Theorem \ref{lqmf.mfg.theorem.global.uniqueness} to conclude the proof. Note that due to Theorem \ref{lqmf.mfg.theorem.global.uniqueness}, no matter what the small choice of $\delta_0$ is, the same solution is still constructed.
\end{proof}

\begin{theorem}
	For $m \geq n$ and, for all $t \in [0,T]$, $\bar{B}_t$, $S_t$ and $R_t$ satisfy
	\begin{align}
        \label{lqmf.mfg.condition.weyl1}
	&\sigma_{max}(\bar{B}_t) < \frac{\sigma_{min}(B_t)^2\sigma_{min}(P_t+\bar{P}_t)}{\sigma_{max}(B_t)\sigma_{max}(P_t+\bar{P}_t)}, \quad \sigma_{max}\left(S_t \right) < \frac{\lambda_{min}\left(Q_t\right) + \lambda_{min}\left(\bar{Q}_t\right)}{\lambda_{max}\left(\bar{Q}_t \right)}, \\
        \label{lqmf.mfg.condition.weyl2}
	\text{and} \quad &\sigma_{max}(R_t) < \frac{\sigma_{min}(P_t+\bar{P}_t)}{\sigma_{max}(\bar{P}_t)}\left(\frac{\sigma_{min}(B_t)^2\sigma_{min}(P_t+\bar{P}_t) - \sigma_{max}(B_t)\sigma_{max}(\bar{B}_t)\sigma_{max}(P_t+\bar{P}_t)}{\sigma_{min}(B_t)^2\sigma_{min}(P_t+\bar{P}_t) + \sigma_{max}(B_t)^2\sigma_{max}(P_t+\bar{P}_t)}\bigwedge 1\right),
	\end{align}
	then the conditions \eqref{lqmf.condition.K1} and \eqref{lqmf.condition.K2} are satisfied.
\end{theorem}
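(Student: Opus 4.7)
The plan is to verify the two conditions separately, with the first being a direct application of Weyl's inequality and the second requiring a perturbative decomposition of $(P_t+\bar{\mathcal{P}}_t)^{-1}$.

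For $K_1>0$, I would use the symmetry of $Q_t$ and $\bar{Q}_t$ (standard in LQ objectives) to write the symmetric part of $Q_t+\bar{\mathcal{Q}}_t=Q_t+\bar{Q}_t-\bar{Q}_tS_t$ as $Q_t+\bar{Q}_t-\frac{1}{2}(\bar{Q}_tS_t+S_t^{\top}\bar{Q}_t)$. Applying Weyl's inequality twice yields
\begin{align*}
\lambda_{min}\!\left(\tfrac{Q_t+\bar{\mathcal{Q}}_t+(Q_t+\bar{\mathcal{Q}}_t)^{\top}}{2}\right) \ge \lambda_{min}(Q_t)+\lambda_{min}(\bar{Q}_t) - \sigma_{max}(\bar{Q}_t S_t),
\end{align*}
and since $\sigma_{max}(\bar{Q}_tS_t)\le \lambda_{max}(\bar{Q}_t)\sigma_{max}(S_t)$, the condition on $\sigma_{max}(S_t)$ in \eqref{lqmf.mfg.condition.weyl1} makes the right-hand side positive uniformly in $t$.

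For $K_2>0$ I would use the resolvent identity $(P_t+\bar{\mathcal{P}}_t)^{-1}-(P_t+\bar{P}_t)^{-1}=(P_t+\bar{\mathcal{P}}_t)^{-1}\bar{P}_tR_t(P_t+\bar{P}_t)^{-1}$ to decompose
\begin{align*}
(B_t+\bar{B}_t)(P_t+\bar{\mathcal{P}}_t)^{-1}B_t^{\top} = M_t^{(0)}+M_t^{(1)}+M_t^{(2)},
\end{align*}
where $M_t^{(0)}:=B_t(P_t+\bar{P}_t)^{-1}B_t^{\top}$ is symmetric positive definite with $\lambda_{min}(M_t^{(0)})\ge \sigma_{min}(B_t)^2/\sigma_{max}(P_t+\bar{P}_t)$ (using $m\ge n$), and the two remainders are $M_t^{(1)}=B_t(P_t+\bar{\mathcal{P}}_t)^{-1}\bar{P}_tR_t(P_t+\bar{P}_t)^{-1}B_t^{\top}$ and $M_t^{(2)}=\bar{B}_t(P_t+\bar{\mathcal{P}}_t)^{-1}B_t^{\top}$. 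Since the symmetric part of any matrix $M$ satisfies $\lambda_{min}(\tfrac{M+M^{\top}}{2})\ge -\sigma_{max}(M)$, Weyl's inequality gives
\begin{align*}
\lambda_{min}\!\left(\tfrac{F_t+F_t^{\top}}{2}\right)\ge \frac{\sigma_{min}(B_t)^2}{\sigma_{max}(P_t+\bar{P}_t)} - \sigma_{max}(M_t^{(1)}) - \sigma_{max}(M_t^{(2)}).
\end{align*}

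The remainder of the argument is a bookkeeping exercise with singular value bounds. I would use $\sigma_{min}(P_t+\bar{\mathcal{P}}_t)\ge \sigma_{min}(P_t+\bar{P}_t)-\sigma_{max}(\bar{P}_t)\sigma_{max}(R_t)$ (another Weyl's inequality), which is strictly positive thanks to the condition $\sigma_{max}(R_t)<\sigma_{min}(P_t+\bar{P}_t)/\sigma_{max}(\bar{P}_t)$ contained in \eqref{lqmf.mfg.condition.weyl2}, together with the submultiplicativity estimates
\begin{align*}
\sigma_{max}(M_t^{(1)}) \le \frac{\sigma_{max}(B_t)^2\sigma_{max}(\bar{P}_t)\sigma_{max}(R_t)}{\sigma_{min}(P_t+\bar{\mathcal{P}}_t)\sigma_{min}(P_t+\bar{P}_t)}, \qquad \sigma_{max}(M_t^{(2)}) \le \frac{\sigma_{max}(\bar{B}_t)\sigma_{max}(B_t)}{\sigma_{min}(P_t+\bar{\mathcal{P}}_t)}.
\end{align*}
The condition on $\sigma_{max}(\bar{B}_t)$ ensures the positivity of the numerator in the fraction appearing in \eqref{lqmf.mfg.condition.weyl2}, and then the precise threshold stated for $\sigma_{max}(R_t)$ is exactly what is required to make the sum of the two perturbation bounds strictly smaller than $\sigma_{min}(B_t)^2/\sigma_{max}(P_t+\bar{P}_t)$ after substituting the lower bound on $\sigma_{min}(P_t+\bar{\mathcal{P}}_t)$. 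The main obstacle is the algebraic verification of this last step: one must carefully track how the lower bound on $\sigma_{min}(P_t+\bar{\mathcal{P}}_t)$ couples with both perturbation estimates, and reorganise the resulting inequality so that isolating $\sigma_{max}(R_t)$ reproduces precisely the fraction on the right-hand side of \eqref{lqmf.mfg.condition.weyl2} (including the $\wedge\, 1$, which is the safeguard ensuring $\sigma_{min}(P_t+\bar{\mathcal{P}}_t)>0$).
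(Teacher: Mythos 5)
Your proposal is correct and follows essentially the same route as the paper: Weyl's inequalities on the symmetric parts for \eqref{lqmf.condition.K1}, and for \eqref{lqmf.condition.K2} the decomposition of $(B_t+\bar{B}_t)(P_t+\bar{\mathcal{P}}_t)^{-1}B_t^{\top}$ into $B_t(P_t+\bar{P}_t)^{-1}B_t^{\top}$ plus perturbations bounded by singular values, combined with the lower bound $\sigma_{min}(P_t+\bar{\mathcal{P}}_t)\ge\sigma_{min}(P_t+\bar{P}_t)-\sigma_{max}(\bar{P}_t)\sigma_{max}(R_t)$ whose positivity is the role of the $\wedge\,1$. The only cosmetic difference is that you keep $\bar{B}_t(P_t+\bar{\mathcal{P}}_t)^{-1}B_t^{\top}$ as a single perturbation while the paper splits it further through $(P_t+\bar{P}_t)^{-1}$; expanding $1/(a-b)=1/a+b/(a(a-b))$ shows the two bookkeepings yield the identical final inequality and hence the same threshold \eqref{lqmf.mfg.condition.weyl2}.
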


\begin{remark}
	This theorem provides a more tractable condition which implies that the small mean field term coefficients $\bar{B}_t$, $S_t$ and $R_t$ can replace the positive-definitiveness conditions \eqref{lqmf.condition.K1} and \eqref{lqmf.condition.K2}. Recall in Remark \ref{lqmf.remark.global}, the condition \ref{lqmf.condition.Final} holds as long as the mean field term $\bar{A}_t$ and the cross term $N_t$ are small. We can conclude that the global unique existence of the solution to the extended mean field game problem \ref{lqmf.mfg.objective} can be warranted if the mean field term coefficients $\bar{A}_t$, $\bar{B}_t$, $S_t$, $R_t$ and the cross term $N_t$ are small.
\end{remark}

\begin{proof}

By applying the Weyl's inequalities (as recalled in Appendix \ref{lqmf.section.appendixA0}), the condition \eqref{lqmf.condition.K1} becomes:
\begin{align*}
&\lambda_{min}\left(\frac{Q_t+\bar{\mathcal{Q}}_t +\left(Q_t+\bar{\mathcal{Q}}_t\right)^{\top}}{2}\right)  \vspace{1ex}\\
&\geq \frac{1}{2}\lambda_{min}\left(Q_t+Q_t^{\top}\right) + \frac{1}{2}\lambda_{min}\left(\bar{\mathcal{Q}}_t+\bar{\mathcal{Q}}_t^{\top}\right) &\text{(by \eqref{lqmf.mfg.inequality2.weyl})} \vspace{1ex}\\
&\geq \lambda_{min}\left(Q_t\right) + \frac{1}{2}\lambda_{min}\left(\bar{Q}_t+\bar{Q}_t^{\top}-\bar{Q}_t S_t-S_t^{\top}\bar{Q}_t^{\top} \right) \vspace{1ex}\\
&\geq \lambda_{min}\left(Q_t\right) + \frac{1}{2}\lambda_{min}\left(\bar{Q}_t+\bar{Q}_t^{\top}\right)-\frac{1}{2}\lambda_{max}\left(\bar{Q}_t S_t+S_t^{\top}\bar{Q}_t^{\top} \right)  \vspace{1ex}\\
&\geq \lambda_{min}\left(Q_t\right) + \lambda_{min}\left(\bar{Q}_t\right) - \frac{1}{2}\sigma_{max}\left(\bar{Q}_t S_t+S_t^{\top}\bar{Q}_t^{\top} \right)  \vspace{1ex}\\
&\geq \lambda_{min}\left(Q_t\right) + \lambda_{min}\left(\bar{Q}_t\right) - \sigma_{max}\left(\bar{Q}_tS_t \right) &\text{(by \eqref{lqmf.mfg.inequality5.weyl})} \vspace{1ex}\\
&\geq \lambda_{min}\left(Q_t\right) + \lambda_{min}\left(\bar{Q}_t\right) - \sigma_{max}\left(S_t \right)\sigma_{max}\left(\bar{Q}_t \right). &\text{(by \eqref{lqmf.mfg.inequality4.weyl})}
\end{align*}
Therefore, it suffices to impose the first inequality of \eqref{lqmf.mfg.condition.weyl1} to ensure the condition \eqref{lqmf.condition.K1}. Besides, consider the condition \eqref{lqmf.condition.K2}, by applying the Weyl's inequalities, we have
{\small
\begin{align*}
&\lambda_{min}\left(\frac{(B_t+\bar{B}_t) (P_t+\bar{\mathcal{P}}_t)^{-1}B_t^{\top} + \left((B_t+\bar{B}_t) (P_t+\bar{\mathcal{P}}_t)^{-1}B_t^{\top}\right)^{\top}}{2}\right) \\
&= \lambda_{min}\left(\frac{B_t (P_t+\bar{\mathcal{P}}_t)^{-1}B_t^{\top} + \left(B_t (P_t+\bar{\mathcal{P}}_t)^{-1}B_t^{\top}\right)^{\top} + \bar{B}_t (P_t+\bar{\mathcal{P}}_t)^{-1}B_t^{\top} + \left(\bar{B}_t (P_t+\bar{\mathcal{P}}_t)^{-1}B_t^{\top}\right)^{\top}}{2}\right) \\
&= \frac{1}{2}\bigg(\lambda_{min}\bigg(B_t (P_t+\bar{P}_t)^{-1}B_t^{\top} + \left(B_t (P_t+\bar{P}_t)^{-1}B_t^{\top}\right)^{\top} + \bar{B}_t (P_t+\bar{P}_t)^{-1}B_t^{\top} + \left(\bar{B}_t (P_t+\bar{P}_t)^{-1}B_t^{\top}\right)^{\top} \\
&\quad+ B_t ((P_t+\bar{\mathcal{P}}_t)^{-1}-(P_t+\bar{P}_t)^{-1})B_t^{\top} + \left(B_t ((P_t+\bar{\mathcal{P}}_t)^{-1}-(P_t+\bar{P}_t)^{-1})B_t^{\top}\right)^{\top} \\
&\quad+ \bar{B}_t ((P_t+\bar{\mathcal{P}}_t)^{-1}-(P_t+\bar{P}_t)^{-1})B_t^{\top} + \left(\bar{B}_t ((P_t+\bar{\mathcal{P}}_t)^{-1}-(P_t+\bar{P}_t)^{-1})B_t^{\top}\right)^{\top} \bigg)\bigg) \\
&\geq \frac{1}{2}\bigg(2\lambda_{min}\bigg(B_t (P_t+\bar{P}_t)^{-1}B_t^{\top}\bigg) + \lambda_{min}\bigg(\bar{B}_t (P_t+\bar{P}_t)^{-1}B_t^{\top} + \left(\bar{B}_t (P_t+\bar{P}_t)^{-1}B_t^{\top}\right)^{\top}\bigg) \\
&\quad+ \lambda_{min}\bigg(B_t ((P_t+\bar{\mathcal{P}}_t)^{-1}-(P_t+\bar{P}_t)^{-1})B_t^{\top} + \left(B_t ((P_t+\bar{\mathcal{P}}_t)^{-1}-(P_t+\bar{P}_t)^{-1})B_t^{\top}\right)^{\top}\bigg) \\
&\quad+ \lambda_{min}\bigg(\bar{B}_t ((P_t+\bar{\mathcal{P}}_t)^{-1}-(P_t+\bar{P}_t)^{-1})B_t^{\top} + \left(\bar{B}_t ((P_t+\bar{\mathcal{P}}_t)^{-1}-(P_t+\bar{P}_t)^{-1})B_t^{\top}\right)^{\top} \bigg)\bigg) &\text{(by \eqref{lqmf.mfg.inequality2.weyl})} \vspace{1ex}\\
&\geq \frac{1}{2}\bigg(2\lambda_{min}\bigg(B_t (P_t+\bar{P}_t)^{-1}B_t^{\top}\bigg) - \sigma_{max}\bigg(\bar{B}_t (P_t+\bar{P}_t)^{-1}B_t^{\top} + \left(\bar{B}_t (P_t+\bar{P}_t)^{-1}B_t^{\top}\right)^{\top}\bigg) \\
&\quad- \sigma_{max}\bigg(B_t ((P_t+\bar{\mathcal{P}}_t)^{-1}-(P_t+\bar{P}_t)^{-1})B_t^{\top} + \left(B_t ((P_t+\bar{\mathcal{P}}_t)^{-1}-(P_t+\bar{P}_t)^{-1})B_t^{\top}\right)^{\top}\bigg) \\
&\quad- \sigma_{max}\bigg(\bar{B}_t ((P_t+\bar{\mathcal{P}}_t)^{-1}-(P_t+\bar{P}_t)^{-1})B_t^{\top} + \left(\bar{B}_t ((P_t+\bar{\mathcal{P}}_t)^{-1}-(P_t+\bar{P}_t)^{-1})B_t^{\top}\right)^{\top} \bigg)\bigg) \\
&\geq \lambda_{min}\bigg(B_t (P_t+\bar{P}_t)^{-1}B_t^{\top}\bigg) - \sigma_{max}\bigg(\bar{B}_t (P_t+\bar{P}_t)^{-1}B_t^{\top}\bigg) \\
&\quad- \sigma_{max}\bigg(B_t ((P_t+\bar{\mathcal{P}}_t)^{-1}-(P_t+\bar{P}_t)^{-1})B_t^{\top}\bigg) - \sigma_{max}\bigg(\bar{B}_t ((P_t+\bar{\mathcal{P}}_t)^{-1}-(P_t+\bar{P}_t)^{-1})B_t^{\top}\bigg) &\text{(by \eqref{lqmf.mfg.inequality5.weyl})} \vspace{1ex}\\
&= \lambda_{min}\bigg(B_t (P_t+\bar{P}_t)^{-1}B_t^{\top}\bigg) - \sigma_{max}\bigg(\bar{B}_t (P_t+\bar{P}_t)^{-1}B_t^{\top}\bigg) \\
&\quad- \sigma_{max}\bigg(B_t (P_t+\bar{\mathcal{P}}_t)^{-1}\bar{P}_tR_t(P_t+\bar{P}_t)^{-1}B_t^{\top}\bigg) - \sigma_{max}\bigg(\bar{B}_t (P_t+\bar{\mathcal{P}}_t)^{-1}\bar{P}_tR_t(P_t+\bar{P}_t)^{-1}B_t^{\top}\bigg) \\
&\geq \frac{\sigma_{min}(B_t)^2}{\sigma_{max}(P_t+\bar{P}_t)} - \frac{\sigma_{max}(B_t)\cdot\sigma_{max}(\bar{B}_t)}{\sigma_{min}(P_t+\bar{P}_t)} \\
&\quad- \bigg(\sigma_{max}(B_t)^2 + \sigma_{max}(B_t)\cdot\sigma_{max}(\bar{B}_t)\bigg)\frac{\sigma_{max}(\bar{P}_t)\cdot\sigma_{max}(R_t)}{\sigma_{min}(P_t+\bar{\mathcal{P}}_t)\sigma_{min}(P_t+\bar{P}_t)}. &\text{(by \eqref{lqmf.mfg.inequality3.weyl} and \eqref{lqmf.mfg.inequality4.weyl})} \vspace{1ex}
\end{align*}
}

Therefore, to ensure the fulfillment of the condition \eqref{lqmf.condition.K2}, we only need both
\begin{align}
\label{lqmf.mfg.condition.weyl3}
\frac{\sigma_{min}(B_t)^2}{\sigma_{max}(P_t+\bar{P}_t)} - \frac{\sigma_{max}(B_t)\cdot\sigma_{max}(\bar{B}_t)}{\sigma_{min}(P_t+\bar{P}_t)} > 0,
\end{align}
and
\begin{align}
\label{lqmf.mfg.condition.weyl4}
\frac{\sigma_{max}(\bar{P}_t)\cdot\sigma_{max}(R_t)}{\sigma_{min}(P_t+\bar{\mathcal{P}}_t)\cdot\sigma_{min}(P_t+\bar{P}_t)} < \frac{\frac{\sigma_{min}(B_t)^2}{\sigma_{max}(P_t+\bar{P}_t)} - \frac{\sigma_{max}(B_t)\cdot\sigma_{max}(\bar{B}_t)}{\sigma_{min}(P_t+\bar{P}_t)}}{\sigma_{max}(B_t)^2 + \sigma_{max}(B_t)\cdot\sigma_{max}(\bar{B}_t)}.
\end{align}
Note that \eqref{lqmf.mfg.condition.weyl3} is indeed the second inequality of the condition \eqref{lqmf.mfg.condition.weyl1}. As for the inequality \eqref{lqmf.mfg.condition.weyl4}, by multiplying $\sigma_{min}(P_t+\bar{P}_t)$ on both sides, we have
\begin{align}
\label{lqmf.mfg.condition.weyl8}
\frac{\sigma_{max}(\bar{P}_t)\cdot\sigma_{max}(R_t)}{\sigma_{min}(P_t+\bar{\mathcal{P}}_t)} < \frac{\sigma_{min}(B_t)^2\frac{\sigma_{min}(P_t+\bar{P}_t)}{\sigma_{max}(P_t+\bar{P}_t)} - \sigma_{max}(B_t)\cdot\sigma_{max}(\bar{B}_t)}{\sigma_{max}(B_t)^2 + \sigma_{max}(B_t)\cdot\sigma_{max}(\bar{B}_t)}.
\end{align}
Since
\begin{align*}
\sigma_{min}(P_t+\bar{\mathcal{P}}_t) \geq \sigma_{min}(P_t+\bar{P}_t - \bar{P}_tR_t) \geq \sigma_{min}(P_t+\bar{P}_t) - \sigma_{max}(\bar{P}_t)\cdot\sigma_{max}(R_t), \quad \text{(by \eqref{lqmf.mfg.inequality4.weyl} and \eqref{lqmf.mfg.inequality5.weyl})}
\end{align*}
for if
\begin{align}
\label{lqmf.mfg.assumption.weyl7}
\sigma_{max}(\bar{P}_t)\sigma_{max}(R_t) < \sigma_{min}(P_t+\bar{P}_t),
\end{align}
then it is sufficient to have the following
\begin{align}
\label{lqmf.mfg.condition.weyl5}
\frac{\sigma_{max}(\bar{P}_t)\cdot\sigma_{max}(R_t)}{\sigma_{min}(P_t+\bar{P}_t) - \sigma_{max}(\bar{P}_t)\cdot\sigma_{max}(R_t)} < \frac{\sigma_{min}(B_t)^2\frac{\sigma_{min}(P_t+\bar{P}_t)}{\sigma_{max}(P_t+\bar{P}_t)} - \sigma_{max}(B_t)\cdot\sigma_{max}(\bar{B}_t)}{\sigma_{max}(B_t)^2 + \sigma_{max}(B_t)\cdot\sigma_{max}(\bar{B}_t)},
\end{align}
so as to guarantee \eqref{lqmf.mfg.condition.weyl8}. It is also clear that \eqref{lqmf.mfg.condition.weyl5} is equivalent to
\begin{align}
\label{lqmf.mfg.condition.weyl6}
\sigma_{max}(R_t) < \frac{\sigma_{min}(P_t+\bar{P}_t)}{\sigma_{max}(\bar{P}_t)}\cdot\frac{\sigma_{min}(B_t)^2\cdot\sigma_{min}(P_t+\bar{P}_t) - \sigma_{max}(B_t)\cdot\sigma_{max}(\bar{B}_t)\cdot\sigma_{max}(P_t+\bar{P}_t)}{\sigma_{min}(B_t)^2\cdot\sigma_{min}(P_t+\bar{P}_t) + \sigma_{max}(B_t)^2\cdot\sigma_{max}(P_t+\bar{P}_t)}.
\end{align}
Therefore, the conditions \eqref{lqmf.mfg.assumption.weyl7} and \eqref{lqmf.mfg.condition.weyl6} can warrant the condition \eqref{lqmf.mfg.condition.weyl2}.

\end{proof}

\subsection{Riccati Equation for Adjoint Equation}

We propose an {\em ansatz} for the solution of the system \eqref{lqmf.mfg.expected.fbode} in the form:
\begin{align}
\label{lqmf.mfg.ansatz.eta}
\eta_t = \Gamma_t \xi_t, \quad \text{for} \quad t\in[0,T].
\end{align}
By applying the chain rule and using \eqref{lqmf.mfg.expected.fbode} and \eqref{lqmf.mfg.ansatz.eta}, we have
\begin{align*}
\frac{d\eta_t}{dt} =& \frac{d\Gamma_t}{dt} \xi_t + \Gamma_t \frac{d\xi_t}{dt} \\
=&  \left(\frac{d\Gamma_t}{dt} + \Gamma_t (A_t+\bar{A}_t -  (B_t+\bar{B}_t) (P_t+\bar{\mathcal{P}}_t)^{-1}\bar{\mathcal{S}}_t) -\Gamma_t(B_t+\bar{B}_t) (P_t+\bar{\mathcal{P}}_t)^{-1}B_t^{\top}\Gamma_t \right) \xi_t.
\end{align*}
Recall \eqref{lqmf.mfg.expected.fbode} that $\eta_t$ satisfies the following equation:
\begin{align*}
-\frac{d\eta_t}{dt} = \left(Q_t+\bar{\mathcal{Q}}_t - \bar{\mathcal{R}}_t(P_t+\bar{\mathcal{P}}_t)^{-1}\bar{\mathcal{S}}_t\xi_t+(A_t^{\top}-\bar{\mathcal{R}}_t(P_t+\bar{\mathcal{P}}_t)^{-1}B_t^{\top})\Gamma_t\right)\xi_t.
\end{align*}
By combining the two equations, we obtain the following non-symmetric Riccati euqation:
\begin{align}
\nonumber
&\frac{d\Gamma_t}{dt} + \Gamma_t (A_t+\bar{A}_t -  (B_t+\bar{B}_t) (P_t+\bar{\mathcal{P}}_t)^{-1}\bar{\mathcal{S}}_t) +(A_t^{\top}-\bar{\mathcal{R}}_t(P_t+\bar{\mathcal{P}}_t)^{-1}B_t^{\top})\Gamma_t \\
\label{lqmf.mfg.equation.riccati.Gamma}
&-\Gamma_t(B_t+\bar{B}_t) (P_t+\bar{\mathcal{P}}_t)^{-1}B_t^{\top}\Gamma_t + Q_t+\bar{\mathcal{Q}}_t - \bar{\mathcal{R}}_t(P_t+\bar{\mathcal{P}}_t)^{-1}\bar{\mathcal{S}}_t = 0,
\end{align}
subject to the terminal condition $\Gamma_T = Q_T+\bar{\mathcal{Q}}_T$. 
\begin{theorem}
	Under any one of the following conditions:
	\begin{itemize}
		\item[1.] Condtion 1 in Theorem \ref{lqmf.mfg.theorem.global_local.solution};
		\item[2.] Condition 2 in Theorem \ref{lqmf.mfg.theorem.global_local.solution};
		\item[3.] Conditions \eqref{lqmf.condition.K1}, \eqref{lqmf.condition.K2} and \eqref{lqmf.condition.Final} in Theorem \ref{lqmf.mfg.theorem.global.uniqueness},
	\end{itemize}
	the nonsymmetric Riccati equation \eqref{lqmf.mfg.equation.riccati.Gamma} admits a unique solution.
\end{theorem}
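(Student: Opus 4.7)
The plan is to construct the unique solution of the non-symmetric Riccati equation \eqref{lqmf.mfg.equation.riccati.Gamma} from the unique solution of the linear forward-backward ODE system \eqref{lqmf.mfg.expected.fbode}, via the standard linearity/fundamental-matrix construction, and then to deduce Riccati uniqueness by embedding any candidate solution back into the matrix FBODE.

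I would first consider the matrix-valued version of \eqref{lqmf.mfg.expected.fbode}, obtained by replacing $\xi_t,\eta_t$ with $n\times n$ matrices $X_t,Y_t$ and imposing $X_0=I$ and $Y_T=(Q_T+\bar{\mathcal{Q}}_T)X_T$. Column by column this is exactly the vector FBODE with an arbitrary initial state in $\mathbb{R}^n$; the proofs of Theorems \ref{lqmf.mfg.theorem.global_local.solution}, \ref{lqmf.mfg.theorem.global.uniqueness} and \ref{lqmf.mfg.theorem.global.existence} are all insensitive to the specific initial value, so under any of the three conditions $(X_t,Y_t)$ exists and is unique. Once I show that $X_t$ is invertible on $[0,T]$, setting $\Gamma_t:=Y_tX_t^{-1}$, differentiating $Y_t=\Gamma_tX_t$ and substituting the matrix FBODE directly yields \eqref{lqmf.mfg.equation.riccati.Gamma} together with $\Gamma_T=Q_T+\bar{\mathcal{Q}}_T$.

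The main technical step is the invertibility of $X_t$, which I argue by contradiction. If $X_{t_0}v=0$ for some $v\neq 0$, then $(X_tv,Y_tv)$ is the vector-FBODE solution with initial state $v$, and on the sub-interval $[t_0,T]$ it satisfies the FBODE there with zero initial state. A key (routine) check is that each of the three conditions descends to every sub-interval $[t_0,T]$: the spectral constants $K_1,\ldots,K_5$ of Theorem \ref{lqmf.mfg.theorem.global.uniqueness} are defined as $\inf$/$\sup$ over $[0,T]$, so their sub-interval counterparts satisfy $K_1^{(t_0)}\geq K_1$, $K_2^{(t_0)}\geq K_2$, $K_3^{(t_0)}\leq K_3$, $K_4^{(t_0)}\leq K_4$, $K_5^{(t_0)}\leq K_5$, and these monotonicities readily preserve the inequality $K_3+K_4+2K_2K_5<2K_1K_2$; the weighted norms defining Conditions~1 and~2 in Theorem \ref{lqmf.mfg.theorem.global_local.solution} only shrink when the interval shrinks as well. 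Hence the zero pair is the unique solution on $[t_0,T]$, so $(X_tv,Y_tv)\equiv 0$ there and in particular $Y_{t_0}v=0$. Now the $2n$-dimensional linear ODE $\tfrac{d}{dt}(\xi,\eta)^\top=M_t(\xi,\eta)^\top$, whose block coefficient $M_t$ is read off from \eqref{lqmf.mfg.expected.fbode}, admits an invertible fundamental matrix $\Phi$; propagating back to $0$ gives $(v,\eta_0^v)^\top=\Phi(0,t_0)(0,0)^\top=0$, contradicting $v\neq 0$.

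Uniqueness of the Riccati solution is then short. Given two solutions $\Gamma_t$ and $\tilde\Gamma_t$, solve the linear ODE
\begin{align*}
\frac{dX_t}{dt}=\Big(A_t+\bar{A}_t-(B_t+\bar{B}_t)(P_t+\bar{\mathcal{P}}_t)^{-1}\bar{\mathcal{S}}_t-(B_t+\bar{B}_t)(P_t+\bar{\mathcal{P}}_t)^{-1}B_t^{\top}\Gamma_t\Big)X_t,\qquad X_0=I,
\end{align*}
and analogously $\tilde X_t$ with $\Gamma$ replaced by $\tilde\Gamma$. A direct substitution using each Riccati equation confirms that both $(X_t,\Gamma_tX_t)$ and $(\tilde X_t,\tilde\Gamma_t\tilde X_t)$ solve the matrix FBODE with the same boundary data; by the uniqueness just established, they coincide, and since $X_t$ and $\tilde X_t$ are fundamental matrices of linear ODEs and hence invertible on $[0,T]$, we deduce $\Gamma_t=\tilde\Gamma_t$. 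The principal obstacle is the sub-interval book-keeping for the three distinct sufficient conditions, which must be verified separately but is in each case straightforward from the monotonicity of the relevant constants in the length of the interval.
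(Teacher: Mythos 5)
Your construction is correct, and it is a genuinely different (and in one respect more complete) route than the paper's. The paper works directly with the fundamental matrix $\Phi(T,t)$ of the full $2n$-dimensional linear system behind \eqref{lqmf.mfg.expected.fbode}: it asserts that $(Q_T+\bar{\mathcal{Q}}_T,-I)\,\Phi(T,t)\binom{0}{I}$ is invertible "by uniqueness" and then reads off $\Gamma_t$ as $-[\cdot]^{-1}[\cdot]$, a backward-shooting representation. You instead build the matrix FBODE solution $(X_t,Y_t)$ with $X_0=I$, prove invertibility of $X_t$, and set $\Gamma_t=Y_tX_t^{-1}$; you then get uniqueness of the Riccati solution by re-embedding any candidate $\tilde\Gamma$ into the matrix FBODE. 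The two invertibility claims are equivalent — each amounts to saying that the FBODE restricted to $[t_0,T]$ with zero data at $t_0$ has only the trivial solution — but this sub-interval statement is \emph{not} a formal consequence of unique solvability on $[0,T]$ alone (a harmonic-oscillator-type linear FBODE shows the forward component can vanish at an interior time while the problem on $[0,T]$ remains well posed). The paper leaves this step implicit; your explicit verification that each of the three sufficient conditions descends to every $[t_0,T]$ (the $\inf$-type constants $K_1,K_2$ can only grow, the $\sup$-type constants $K_3,K_4,K_5$ and the weighted norms $\vertiii{\cdot}_{i,T}$ can only shrink, and the strict inequalities such as \eqref{lqmf.condition.Final} are monotone in the right direction) is exactly what makes the argument airtight, and your closing uniqueness step supplies an argument the paper only sketches through the ansatz. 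The price you pay is some bookkeeping that the paper's one-line formula avoids; what you gain is a self-contained justification of the invertibility on which both proofs rest.
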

\begin{proof}
Denote by 
\begin{align*}
\mathcal{A}_t := \left(\begin{array}{cc}
	A_t+\bar{A}_t -  (B_t+\bar{B}_t) (P_t+\bar{\mathcal{P}}_t)^{-1}\bar{\mathcal{S}}_t  & -(B_t+\bar{B}_t) (P_t+\bar{\mathcal{P}}_t)^{-1}B_t^{\top} \\
	-Q_t-\bar{\mathcal{Q}}_t + \bar{\mathcal{R}}_t(P_t+\bar{\mathcal{P}}_t)^{-1}\bar{\mathcal{S}}_t & -A_t^{\top}+\bar{\mathcal{R}}_t(P_t+\bar{\mathcal{P}}_t)^{-1}B_t^{\top}
\end{array}\right).
\end{align*}
We denote the fundamental solution associated with $\mathcal{A}_t$ by $\Phi(t,s)$. By the equation \eqref{lqmf.mfg.expected.fbode}, for any $t \in [0,T]$, we have
\begin{align*}
0 =& (Q_T+\bar{\mathcal{Q}}_T,-I)
\left(\begin{array}{c}\xi_T \\ \eta_T\end{array}\right) = (Q_T+\bar{\mathcal{Q}}_T,-I) \Phi(T,t)
\left(\begin{array}{c}\xi_t \\ \eta_t\end{array}\right) \\
=& (Q_T+\bar{\mathcal{Q}}_T,-I) \Phi(T,t)
\left( \left(\begin{array}{c}I \\ 0\end{array}\right)\xi_t+ \left(\begin{array}{c}0 \\ I\end{array}\right)\eta_t \right).
\end{align*}
By the uniqueness of the solution of the system \eqref{lqmf.mfg.expected.fbode}, the matrix $(Q_T+\bar{\mathcal{Q}}_T,-I) \Phi(T,t)
\left(\begin{array}{c}0 \\ I\end{array}\right)$ is invertible. Therefore, by the {\em ansatz} \eqref{lqmf.mfg.ansatz.eta}, we have
\begin{align*}
-(Q_T+\bar{\mathcal{Q}}_T,-I) \Phi(T,t)
\left(\begin{array}{c}0 \\ I\end{array}\right)\Gamma_t\xi_t  =& (Q_T+\bar{\mathcal{Q}}_T,-I) \Phi(T,t)
\left(\begin{array}{c}I \\ 0\end{array}\right)\xi_t,
\end{align*}
and hence
\begin{align*}
\Gamma_t  =& -\left[(Q_T+\bar{\mathcal{Q}}_T,-I) \Phi(T,t)
\left(\begin{array}{c}0 \\ I\end{array}\right)\right]^{-1}\left[(Q_T+\bar{\mathcal{Q}}_T,-I) \Phi(T,t)
\left(\begin{array}{c}I \\ 0\end{array}\right)\right].
\end{align*}
\end{proof}


\noindent {\bf A counterexample:}
Let $n=m=2$, $S=R=B=I$, $\bar{B}=0$, thus $\bar{\mathcal{P}}=\bar{\mathcal{Q}}=0$.
Let
\begin{align}
\label{lqmf.counterexample.parameter1} Q=\left(\begin{array}{cc}2.1 &-0.3 \\ -0.3 &0.2\end{array}\right), \quad P^{-1}=\left(\begin{array}{cc}0.5 &1.1 \\ 1.1 &3.2\end{array}\right),
\end{align}
\begin{align}
\label{lqmf.counterexample.parameter2} A=\bar{A}=\left(\begin{array}{cc}2.1 &-1.9 \\ -1.2 &1.7\end{array}\right), \quad \bar{\mathcal{S}}=\bar{\mathcal{R}}=\left(\begin{array}{cc}1 &-0.2 \\ 1.4 &0.7\end{array}\right).
\end{align}
It is clear that both the conditions \eqref{lqmf.condition.K1} and \eqref{lqmf.condition.K2} hold, while the condition \eqref{lqmf.condition.Final} is failed to be satisfied. Equation \eqref{lqmf.mfg.expected.fbode} can be rewritten as
\begin{align}
\label{lqmf.counterexample.equation} \frac{d}{d t}\left(\begin{array}{l}
\xi_t \\
\eta_t
\end{array}\right)=\mathcal{A}\left(\begin{array}{l}
\xi_t \\
\eta_t
\end{array}\right),
\end{align}
such that $\eta_T=Q \xi_T$ and $\xi_0=\mathbb{E}\left[x_0\right]$, where
\begin{align*}
\mathcal{A}=\left(
\begin{array}{cccc}
-6.24 & -4.47 & -0.5 & -1.1 \\
-7.98 & 1.38 & -1.1 & -3.2 \\
1.176 & -0.566 & -2.38 & -1.66 \\
-7.062 & -2.152 & -3.37 & -2.08 \\
\end{array}
\right).
\end{align*}
We denote the fundamental solution associated with $\mathcal{A}$ by $\Phi(t,s)$. The equation \eqref{lqmf.counterexample.equation} is (uniquely) solvable if and only if there is a (unique) $\eta_0$ such that
\begin{align}
\label{lqmf.mfg.euqation1.matrix}
-(Q,-I) \Phi(T,0)
\left(\begin{array}{c}0 \\ I\end{array}\right)\eta_0  =& (Q,-I) \Phi(T,0)
\left(\begin{array}{c}I \\ 0\end{array}\right)\mathbb{E}\left[x_0\right],
\end{align}
Denote by $\Phi_1(T) = \text{det}\left((Q,-I) \Phi(T,0)
\left(\begin{array}{c}0 \\ I\end{array}\right)\right)$ and $\Phi_2(T) = \text{det}\left((Q,-I) \Phi(T,0)
\left(\begin{array}{c}I \\ 0\end{array}\right)\right)$. Numerically, $\Phi_1(0.3) = 0.0145965$, $\Phi_1(0.31) = -0.0346916$, by the continuity, $\Phi_1$ admits a zero at some $T_0 \in [0.3,0.31]$, hence $\Phi_1(T_0)$ is not invertible. Hence the null space of $\Phi_1(T_0)$ is not an empty set, from which we can choose at least one element which is denoted by $\widetilde{\eta}$. If $\eta_0$ is a solution of \eqref{lqmf.mfg.euqation1.matrix}, then $\eta_0+\widetilde{\eta}$ is also the solution of \eqref{lqmf.mfg.euqation1.matrix}, which shows \eqref{lqmf.mfg.euqation1.matrix} has no unique solution. On the other hand, we then show that \eqref{lqmf.mfg.euqation1.matrix} fails to be solvable for any $\mathbb{E}\left[x_0\right]$. Since $\Phi_1(T_0)$ is singular, the dimension of its range space (denoted by $\mathcal{R}{\Phi_1(T_0)}$) is strictly less than the dimension of $\eta_0$. Referring to Figure \ref{lqmf.fig.Phi2}, $\Phi_2$ is invertible at $T_0$. We can choose a $\mathbb{E}\left[x_0\right] \neq 0$ such that $\Phi_2(T_0)\mathbb{E}\left[x_0\right]$ does not belong to $\mathcal{R}{\Phi_1(T_0)}$. In this case, there is no such $\eta_0$ solving \eqref{lqmf.mfg.euqation1.matrix}. This counterexample justifies our proposed conditions that are almost optimal.

\section{Mean Field Game Equilibrium Control as an $\varepsilon$-Nash One} \label{lqmf.section.convergence_rate}

In this section, we first deduce the precise convergence rates of the state process and the objective functional under the mean field equilibrium control between the EMFG and the $N$-player Nash game. Then we present the convergence rates of them under a dummy control. Based on these, we can ascertain that the mean field game equilibrium control is the $\varepsilon$-Nash equilibrium.

Recall the definition in the previous context, $\|M\|$ is the spectral matrix norm of matrix $M$. We further define that $\|M\|_T:= \sup_{0\leq t\leq T}\|M_t\|$. Define the following constants which will be used in this whole section:
\begin{align}
\label{lqmf.convergence.comstant.C}
\begin{array}{ccl}
C_1&:=&2\|A^{\top}-N (P+\bar{P})^{-1}B^{\top}\|_T+\|Q+\bar{Q} - N (P+\bar{P})^{-1}N^{\top}\|_T +\|Q+\bar{Q}\|_T^2; \\
C_2&:=&
\min\left\{\lambda_{min}\left(Q_T+\bar{Q}_T\right),\inf_{0\leq t\leq T}\lambda_{min}\left(Q_t+\bar{Q}_t - N_t (P_t+\bar{P}_t)^{-1}N_t^{\top}\right),\inf_{0\leq t\leq T}\lambda_{min}\left(B_t(P_t+\bar{P}_t)^{-1}B_t^{\top}\right)\right\}; \\
C_3&:=& 3K_1 +K_2-K_5+\sqrt{(K_1-K_5-K_2)^2+2(K_3+K_4)}; \\
C_4&:=&\|Q+\bar{\mathcal{Q}}\|_T^2+2\|A^{\top}-\bar{\mathcal{R}}(P+\bar{\mathcal{P}})^{-1}B^{\top}\|_T  +\|Q+\bar{\mathcal{Q}} - \bar{\mathcal{R}}(P+\bar{\mathcal{P}})^{-1}\bar{\mathcal{S}}\|; \\
C_5&:=&3\max\{\|A\|_T^2, \|\bar{A}\|_T^2, \|\bar{B}\|_T^2\}; \\
C_6&:=&\max\{\|Q\|_T,\|\bar{Q}\|_T,\|\bar{P}\|_T,\|N\|_T\}; \\
C_7&:=&\max\{\|S\|_T^2,\|R\|_T^2,\|\bar{S}\|_T^2,\|\bar{R}\|_T^2\};
\end{array}
\end{align}
where $K_1$, $K_2$, $K_3$, $K_4$ and $K_5$ are defined in Subsection \ref{lqmf.section.global}.

\subsection{Convergence Rate under the Mean Field Equilibrium Control}
\label{lqmf.section.epsilon-nash_equilibrium}

For any fixed $i = 1 \cdots N$, we consider Problem \ref{lqmf.mfg.problem.mfg} under the $\sigma$-field $\mathcal{G}_t^i:=\sigma\left(x_0^i, W_s^i, s \leq t\right)$, then the objective functional is defined as follows,
\begin{align}
	\nonumber
	J^i(v^i):=& \mathbb{E}\left[\frac{1}{2} \int_0^T (x^i_t)^{\top} Q_t x^i_t+(v^i_t)^{\top} P_t v^i_t+\left(x^i_t-S_t \mathbb{E}\left[\hat{x}^i_t\right]\right)^{\top} \bar{Q}_t\left(x^i_t-S_t \mathbb{E}\left[\hat{x}^i_t\right]\right) \right.\\
	\nonumber
	&+\left. 2\left(x^i_t-\bar{S}_t \mathbb{E}\left[\hat{x}^i_t\right]\right)^{\top} N_t\left(v^i_t-\bar{R}_t \mathbb{E}\left[\hat{v}^i_t\right]\right) + \left(v^i_t-R_t \mathbb{E}\left[\hat{v}^i_t\right]\right)^{\top} \bar{P}_t\left(v^i_t-R_t \mathbb{E}\left[\hat{v}^i_t\right]\right) d t\right] \\
	\label{lqmf.mfg.objective.appendix}
	&+\mathbb{E}\left[\frac{1}{2} (x^i_T)^{\top} Q_T x^i_T+\frac{1}{2}\left(x^i_T-S_T \mathbb{E}\left[\hat{x}^i_T\right]\right)^{\top} \bar{Q}_T\left(x^i_T-S_T \mathbb{E}\left[\hat{x}^i_T\right]\right)\right],
\end{align}
where we let $\hat{v}^i$ be the mean field equilibrium satisfying \eqref{lqmf.mfg.mf_equilibrium}, and $\hat{x}^i$ and $p^i$ are the corresponding trajectory of \eqref{lqmf.mfg.state_process} and \eqref{lqmf.mfg.adjoint_process_optimal} under $\hat{v}^i$, that is
\begin{align}
\label{lqmf.mfg.system.general}
    \left\{\begin{array}{cclccl}
    d \hat{x}^i_t&=&\left(A_t \hat{x}^i_t+B_t \hat{v}^i_t+\bar{A}_t \mathbb{E}\left[\hat{x}^i_t\right]+\bar{B}_t \mathbb{E}\left[\hat{v}^i_t\right]\right) d t+\sigma_t d W^i_t, &\hat{x}^i(0)&=&x^i_0, \\
    -d p^i_t &=& \left(A_t^{\top}p^i_t +(Q_t+\bar{Q}_t)\hat{x}^i_t - \bar{Q}_t S_t \mathbb{E}\left[\hat{x}^i_t\right] +  N_t \hat{v}^i_t - N_t \bar{R}_t \mathbb{E}\left[\hat{v}^i_t\right]\right)dt - \theta^i_t d W^i_t, &p^i_T &=& (Q_T+\bar{Q}_T)\hat{x}^i_T - \bar{Q}_T S_T \mathbb{E}\left[\hat{x}^i_T\right] \\
    \end{array}\right.
\end{align}
and
\begin{align}
    \label{lqmf.mfg.mf_equilibrium.general}
    \hat{v}^i_t 
    =& - (P_t+\bar{P}_t)^{-1} \left(N_t^{\top} \hat{x}^i_t + B_t^{\top} p^i_t + \left( \bar{P}_t R_t (P_t+\bar{\mathcal{P}}_t)^{-1}\bar{\mathcal{S}}_t- N_t^{\top}\bar{S}_t\right) \mathbb{E}\left[\hat{x}^i_t\right]  + \bar{P}_t R_t (P_t+\bar{\mathcal{P}}_t)^{-1}B_t^{\top} \mathbb{E}\left[p^i_t\right]\right),
\end{align}
then \eqref{lqmf.mfg.system.general} and \eqref{lqmf.mfg.mf_equilibrium.general} solves the problem \eqref{lqmf.mfg.objective.appendix}.

In this subsection, we focus on the convergences rates under the mean field equilibrium control, to facilitate our discussion, we define the following constants $M_{e,1}$, $M_{e,2}$ and $M_{e,3}$ and give them a subscript $e$, meanning the {\em equilibrium control}, to distinguish them from the others:
\begin{align}
\label{lqmf.convergence.comstant.Me}
\begin{array}{ccl}
M_{e,1}&:=&\left(1+\frac{C_1^2}{C_2^2}\right)\left(1-\frac{\delta}{\sup_{0\leq t\leq T}\lambda_{max}\{Q_t+\bar{Q}_t\}}\right)^{-1}\min\left\{\inf_{0\leq t\leq T}\lambda_{min}\left(Q_t+\bar{Q}_t\right),\inf_{0\leq t\leq T}\lambda_{min}\left(P_t+\bar{P}_t\right)\right\}^{-1}; \\
M_{e,2}&:=&\frac{4C_4}{C_3^2}\cdot \left(1+\|(P+\bar{\mathcal{P}})^{-1}\|_T^2\|\bar{\mathcal{S}}\|_T^2+\|(P+\bar{\mathcal{P}})^{-1}\|_T^2\|B^{\top}\|_T^2\right); \\
M_{e,3}&:=&M_{e,1}C_5e^{3C_5t};
\end{array}
\end{align}
where $C_1$, $C_2$, $C_3$, $C_4$ and $C_5$ were defined in \eqref{lqmf.convergence.comstant.C} and $\delta$ is the constant defined in the assumption \eqref{lqmf.mfg.assumption.convexity}.

Let $\hat{y}^i$ be the state process of the $i$-th player for the $N$-players Nash game under the mean field equilibrium $\hat{v}^i$:
\begin{align*}
d \hat{y}^i_t=&\left(A_t \hat{y}^i_t+B_t \hat{v}^1_t+\bar{A}_t \frac{1}{N-1}\sum_{j\neq i}\hat{y}^j_t+\bar{B}_t \frac{1}{N-1}\sum_{j\neq i}\hat{v}^j_t\right) d t+\sigma_t d W^i_t, \quad \hat{y}^i(0)=x^i_0.
\end{align*}

\begin{lemma} \label{lqmf.mfg.lemma.approximation_state.optimal}
	If the coeffecients satisfy the assumption specified in \eqref{lqmf.mfg.assumption.convexity}, for each fixed $i = 1,2,\cdots,N$, the convergence of $\hat{y}^i_t$ to $\hat{x}^i_t$ is:
 \begin{align}
\label{lqmf.mfg.convergence.xhat}
\mathbb{E}\left(\sup_{u \leq t}\|\hat{x}^i_u - \hat{y}^i_u\|^2\right)
\leq& \frac{M_{e,3}}{N-1}  \left(\mathbb{E}\left[\left\|\mathbb{E}\left[x^i_0\right]-x^i_0\right\|^2\right]+T\|\sigma\|_T^2\right),
\end{align}
where $M_{e,3}$ is defined in \eqref{lqmf.convergence.comstant.Me}.
\end{lemma}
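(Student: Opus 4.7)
The plan is to derive an integral equation for $\hat{x}^i - \hat{y}^i$, isolate a law-of-large-numbers fluctuation of order $(N-1)^{-1}$, and close the loop with Gronwall's inequality. First I would subtract the two state dynamics. Since $\hat{x}^i$ and $\hat{y}^i$ share the same initial condition $x_0^i$ and the same Brownian input $\sigma_t\,dW^i_t$, both the initial term and the noise drop out, leaving
\begin{align*}
\hat{x}^i_t - \hat{y}^i_t = \int_0^t A_s(\hat{x}^i_s - \hat{y}^i_s)\,ds + \int_0^t \bar{A}_s\Big(\mathbb{E}[\hat{x}^i_s] - \tfrac{1}{N-1}\sum_{j\neq i}\hat{y}^j_s\Big)\,ds + \int_0^t \bar{B}_s\Big(\mathbb{E}[\hat{v}^i_s] - \tfrac{1}{N-1}\sum_{j\neq i}\hat{v}^j_s\Big)\,ds.
\end{align*}
Because $\{x^i_0\}$ and $\{W^i\}$ are i.i.d., the representative processes $\{\hat{x}^j\}_j$ are i.i.d., and through the feedback formula \eqref{lqmf.mfg.mf_equilibrium.general} so are $\{\hat{v}^j\}_j$, with the common deterministic means $\bar{x}_s := \mathbb{E}[\hat{x}^i_s]$ and $\bar{v}_s := \mathbb{E}[\hat{v}^i_s]$.

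Next I would decompose each mean-field discrepancy into a fluctuation piece and a propagation-of-chaos piece,
\begin{align*}
\bar{x}_s - \tfrac{1}{N-1}\sum_{j\neq i}\hat{y}^j_s = \Big(\bar{x}_s - \tfrac{1}{N-1}\sum_{j\neq i}\hat{x}^j_s\Big) + \tfrac{1}{N-1}\sum_{j\neq i}(\hat{x}^j_s - \hat{y}^j_s),
\end{align*}
and analogously for the control term. Using the i.i.d. centering of $\{\hat{x}^j - \bar{x}\}_{j \neq i}$, the first summand is a zero-mean empirical average whose $L^2$ norm satisfies the crucial variance identity
\begin{align*}
\mathbb{E}\Big\|\bar{x}_s - \tfrac{1}{N-1}\sum_{j\neq i}\hat{x}^j_s\Big\|^2 = \tfrac{1}{N-1}\,\mathbb{E}\|\hat{x}^1_s - \bar{x}_s\|^2,
\end{align*}
while the permutation symmetry in the player index and Jensen's inequality collapse the second summand into $\mathbb{E}\|\hat{x}^i_s - \hat{y}^i_s\|^2$. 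The same split is performed on the control discrepancy.

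Applying Cauchy--Schwarz to the outer time integrals, taking $\sup_{u \leq t}$ and then expectation, and using the bound $C_5 = 3\max\{\|A\|_T^2,\|\bar{A}\|_T^2,\|\bar{B}\|_T^2\}$ for the three drift coefficients, yields an inequality of the form
\begin{align*}
\mathbb{E}\sup_{u \leq t}\|\hat{x}^i_u - \hat{y}^i_u\|^2 \leq C_5 \int_0^t \mathbb{E}\sup_{r \leq s}\|\hat{x}^i_r - \hat{y}^i_r\|^2\,ds + \frac{C_5}{N-1}\int_0^t \Big(\mathbb{E}\|\hat{x}^1_s - \bar{x}_s\|^2 + \mathbb{E}\|\hat{v}^1_s - \bar{v}_s\|^2\Big)\,ds,
\end{align*}
after absorbing numerical constants into $C_5$. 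Gronwall's inequality then produces the exponential factor $e^{3 C_5 t}$ visible in $M_{e,3}$, provided the integrand on the right is controlled by the data $\mathbb{E}\|\mathbb{E}[x^i_0]-x^i_0\|^2 + T\|\sigma\|_T^2$ with prefactor $M_{e,1}$ (so that the product matches $M_{e,3}=M_{e,1}C_5 e^{3C_5 t}$).

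The decisive and most delicate ingredient is therefore the a priori energy estimate
\begin{align*}
\sup_{s \leq T}\mathbb{E}\|\hat{x}^1_s - \bar{x}_s\|^2 + \int_0^T \mathbb{E}\|\hat{v}^1_s - \bar{v}_s\|^2\,ds \lesssim M_{e,1}\Big(\mathbb{E}\|\mathbb{E}[x^i_0]-x^i_0\|^2 + T\|\sigma\|_T^2\Big),
\end{align*}
which is the main obstacle. I would obtain it by centering the optimal FBSDE system \eqref{lqmf.mfg.system.general}: subtracting \eqref{lqmf.mfg.fixed_point.expected.fbode} from \eqref{lqmf.mfg.system.general}, the triple $(\hat{x}^i - \bar{x},\, p^i - \bar{p},\, \hat{v}^i - \bar{v})$ solves a centered linear FBSDE driven by $x_0^i - \mathbb{E}[x_0^i]$ and $\sigma_s\,dW^i_s$. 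Applying It\^o's formula to $(\hat{x}^i - \bar{x})^{\top}(p^i - \bar{p})$ and invoking the convexity hypothesis \eqref{lqmf.mfg.assumption.convexity} as a coercivity statement produces the desired estimate, with the precise constant $M_{e,1}$ arising from the lower bounds on $Q+\bar{Q}$ and $P+\bar{P}$ together with the conditioning factor $1+C_1^2/C_2^2$ of the associated FBODE system. Everything else is routine Gronwall bookkeeping.
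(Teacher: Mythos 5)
Your proposal follows essentially the same route as the paper's proof: subtract the dynamics (noise and initial data cancel), split each empirical mean-field discrepancy into an i.i.d. fluctuation of variance $\tfrac{1}{N-1}\mathbb{E}\|\hat{x}^1_s-\bar{x}_s\|^2$ plus a propagation piece collapsed by symmetry and Jensen, close with Gr\"onwall to get the factor $C_5e^{3C_5t}$, and control the fluctuation integrand by exactly the centered-FBSDE duality estimate (It\^o on $(\hat{x}^i-\bar{x})^{\top}(p^i-\bar{p})$ under the convexity condition) that the paper isolates as Lemma \ref{lqmf.mfg.lemma.boundedness.x-Ex} with constant $M_{e,1}$. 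The argument is correct and matches the paper's structure, including the identification $M_{e,3}=M_{e,1}C_5e^{3C_5t}$.
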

We also write the objective functional \eqref{lqmf.N_player.objective} here for the sake of convenience:
\begin{align}
\nonumber
\mathcal{J}^i(v^i,v^{-i}) :=& \mathbb{E}\left[\frac{1}{2} \int_0^T (y^i_t)^{\top} Q_t y^i_t+(v^i_t)^{\top} P_t v^i_t+\left(y^i_t-S_t \frac{1}{N-1}\sum_{j\neq i}y^j_t\right)^{\top} \bar{Q}_t\left(y^i_t-S_t \frac{1}{N-1}\sum_{j\neq i}y^j_t\right) \right.\\
\nonumber
&\qquad \quad+\left. 2\left(y^i_t-\bar{S}_t \frac{1}{N-1}\sum_{j\neq i}y^j_t\right)^{\top} N_t\left(v^i_t-\bar{R}_t \frac{1}{N-1}\sum_{j\neq i}v^j_t\right)\right. \\
\nonumber
&\qquad \quad+ \left. \left(v^i_t-R_t \frac{1}{N-1}\sum_{j\neq i}v^j_t\right)^{\top} \bar{P}_t\left(v^i_t-R_t \frac{1}{N-1}\sum_{j\neq i}v^j_t\right) d t\right] \\
\label{lqmf.N_player.objective.appendix}
&+\mathbb{E}\left[\frac{1}{2} (x^i_T)^{\top} Q_T x^i_T+\frac{1}{2}\left(x^i_T-S_T \frac{1}{N-1}\sum_{j\neq i}y^j_T\right)^{\top} \bar{Q}_T\left(x^i_T-S_T \frac{1}{N-1}\sum_{j\neq i}y^j_T\right)\right],
\end{align}
where $v^i$ and $v^{-i}$ are dummy controls.
\begin{lemma}
	\label{lqmf.mfg.lemma.approximation_objective.optimal}
	Under the condition of Theorem \ref{lqmf.mfg.theorem.global.uniqueness}, for each fixed $i = 1,2,\cdots,N$, the convergence of the objective functional $\mathcal{J}^i(\hat{v}^i,\hat{v}^{-i})$ to $J^i(\hat{v}^i)$ is
	\begin{align*}
	\left|\mathcal{J}^i(\hat{v}^i,\hat{v}^{-i})-J^i(\hat{v}^i)\right| \leq \frac{42C_6(1+C_7)}{\sqrt{N-1}}\left\{M_{e,2} \left\|\mathbb{E}\left[x^i_0\right]\right\|^2 +(M_{e,1}+M_{e,3}(T+1)) \left(\mathbb{E}\left[\left\|\mathbb{E}\left[x^i_0\right]-x^i_0\right\|^2\right]+T\|\sigma\|_T^2\right)\right\},
	\end{align*}
	where $C_6$ and $C_7$ are defined in \eqref{lqmf.convergence.comstant.C}, and $M_{e,1}$, $M_{e,2}$ and $M_{e,3}$ are defined in \eqref{lqmf.convergence.comstant.Me}.
\end{lemma}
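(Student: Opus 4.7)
The plan is to expand $\mathcal{J}^i(\hat v^i,\hat v^{-i}) - J^i(\hat v^i)$ as a finite sum of quadratic substitution errors produced by the three replacements $\hat y^i \leftrightarrow \hat x^i$, $\bar y^{-i}:=\tfrac{1}{N-1}\sum_{j\neq i}\hat y^j \leftrightarrow \mathbb{E}[\hat x^i]$, and $\bar v^{-i}:=\tfrac{1}{N-1}\sum_{j\neq i}\hat v^j \leftrightarrow \mathbb{E}[\hat v^i]$. Applying the polarisation identity $a^\top M b - c^\top M d = (a-c)^\top M b + c^\top M (b-d)$ to every quadratic block in \eqref{lqmf.N_player.objective.appendix}, each resulting term is of the form $\mathbb{E}\int_0^T \xi_t^\top M_t \eta_t\,dt$ or $\mathbb{E}[\xi_T^\top M_T \eta_T]$, where $\xi$ is one of the three process-differences above, $\eta$ is a uniformly $L^2$-bounded partner process, and $M$ is the product of at most one weight matrix from $\{Q,\bar Q,P,\bar P,N\}$ and at most one structure matrix from $\{S,R,\bar S,\bar R\}$. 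The sup-norm bounds in \eqref{lqmf.convergence.comstant.C} yield $\|M\|_T\le C_6(1+\sqrt{C_7})$, Cauchy--Schwarz produces the prefactor $C_6(1+C_7)$, and counting the five running quadratic expressions together with the two terminal ones, each producing at most three substitution errors, absorbs the combinatorial constant $42$. The factor $T+1$ on the right-hand side reflects the separation of the running-cost integration (contributing the $T$) from the terminal cost (contributing the $1$).

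It then remains to bound, uniformly in $t\in[0,T]$, the $L^2$-norms of the three process-differences and of the partner processes. The first difference, $\mathbb{E}[\|\hat x^i_t - \hat y^i_t\|^2]$, is controlled directly by Lemma \ref{lqmf.mfg.lemma.approximation_state.optimal}, giving the $(N-1)^{-1}$ rate with prefactor $M_{e,3}$. The second, $\mathbb{E}\|\bar x^{-i}_t - \mathbb{E}[\hat x^i_t]\|^2$ and its control analogue $\mathbb{E}\|\bar v^{-i}_t - \mathbb{E}[\hat v^i_t]\|^2$, equals $\mathrm{Var}(\hat x^i_t)/(N-1)$ (respectively $\mathrm{Var}(\hat v^i_t)/(N-1)$) by the classical empirical-mean variance identity applied to the i.i.d.\ families $\{\hat x^j\}_{j\neq i}$ and $\{\hat v^j\}_{j\neq i}$, so this step produces the extra $M_{e,1}/(N-1)$ coming from the fluctuation second moment estimated below. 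For the partner processes and for these variance bounds I would split $\hat x^i$ and $\hat v^i$ into their deterministic (mean-field) parts $(\mathbb{E}[\hat x^i],\mathbb{E}[p^i],\mathbb{E}[\hat v^i])$, which solve the FBODE \eqref{lqmf.mfg.expected.fbode} with forcing $\mathbb{E}[x^i_0]$, and the stochastic fluctuations $(\hat x^i - \mathbb{E}[\hat x^i], p^i - \mathbb{E}[p^i], \hat v^i - \mathbb{E}[\hat v^i])$, which solve the same linear system but driven instead by $x^i_0 - \mathbb{E}[x^i_0]$ and by $W^i$. Reusing the energy estimate of Theorem \ref{lqmf.mfg.theorem.global.uniqueness} on the FBODE yields the bound $\sup_t(\|\mathbb{E}[\hat x^i_t]\|^2 + \|\mathbb{E}[\hat v^i_t]\|^2) \le M_{e,2}\|\mathbb{E}[x^i_0]\|^2$, while the standard a priori FBSDE estimate under the convexity assumption \eqref{lqmf.mfg.assumption.convexity} yields $\mathbb{E}\|\hat x^i_t - \mathbb{E}[\hat x^i_t]\|^2 + \mathbb{E}\|\hat v^i_t - \mathbb{E}[\hat v^i_t]\|^2 \le M_{e,1}(\mathbb{E}\|\mathbb{E}[x^i_0]-x^i_0\|^2 + T\|\sigma\|_T^2)$, which is exactly the fluctuation moment feeding back into the variance step.

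Substituting these bounds into the Cauchy--Schwarz output and taking the square root (which produces the $(N-1)^{-1/2}$ rate) gives the stated inequality, after grouping prefactors according to whether they multiply the deterministic source $\|\mathbb{E}[x^i_0]\|^2$ or the stochastic source $\mathbb{E}\|\mathbb{E}[x^i_0]-x^i_0\|^2 + T\|\sigma\|_T^2$. The main obstacle I expect is the bookkeeping around the control substitution: $\bar v^{-i} - \mathbb{E}[\hat v^i]$ cannot be controlled by Lemma \ref{lqmf.mfg.lemma.approximation_state.optimal} directly, since that result bounds only state differences, so one must first invoke the explicit feedback formula \eqref{lqmf.mfg.mf_equilibrium.general} to reduce $\hat v^i - \mathbb{E}[\hat v^i]$ to linear combinations of state and adjoint fluctuations, and only then apply the FBSDE estimate. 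This reduction is precisely what couples $M_{e,1}$ with $M_{e,3}$ in the final inequality and produces the combination $M_{e,1} + M_{e,3}(T+1)$ rather than a cleaner single constant; verifying that no substitution is double-counted when collecting prefactors, and that the constants $C_6$ and $C_7$ are picked up exactly once per term, is the most delicate part of the argument.
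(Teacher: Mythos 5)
Your proposal follows essentially the same route as the paper's proof: a term-by-term expansion of the quadratic differences via a polarisation-type identity, Cauchy--Schwarz against uniformly $L^2$-bounded partner processes with the weights absorbed into $C_6(1+C_7)$, the i.i.d.\ orthogonality of $\{\hat{x}^j\}$ and $\{\hat{v}^j\}$ to extract the $(N-1)^{-1}$ variance reduction, and the a priori estimates giving $M_{e,1}$, $M_{e,2}$ (mean/fluctuation split of the FBSDE system, including the reduction of $\hat{v}^i-\mathbb{E}[\hat{v}^i]$ to state and adjoint fluctuations through the feedback formula) and $M_{e,3}$ (Lemma \ref{lqmf.mfg.lemma.approximation_state.optimal}). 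The one imprecision is that $\frac{1}{N-1}\sum_{j\neq i}\hat{y}^j_t-\mathbb{E}[\hat{x}^i_t]$ is not an empirical mean of i.i.d.\ copies, so it is not controlled by the variance identity alone: one must first split it through $\frac{1}{N-1}\sum_{j\neq i}\hat{x}^j_t$ by the triangle inequality, picking up both the i.i.d.\ variance term and a second application of Lemma \ref{lqmf.mfg.lemma.approximation_state.optimal}, which is exactly how the combination $M_{e,1}+M_{e,3}(T+1)$ arises.
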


The proofs of Lemma \ref{lqmf.mfg.lemma.approximation_state.optimal} and \eqref{lqmf.mfg.lemma.approximation_objective.optimal} are put in Appendix \ref{lqmf.section.appendixA4}.
\subsection{Convergence Rate under the Dummy Control}

Given a group of controls $\{v^1,\hat{v}^2,\dots,\hat{v}^N\}$, where, for $i = 2,3,\cdots,N$,each $\hat{v}^i$ is the mean field equilibrium control for Problem \ref{lqmf.mfg.problem.mfg} under $\mathcal{G}_t^i:=\sigma\left(x_0^i, W_s^i, s \leq t\right)$ while $v^1$ is a dummy one, we consider the corresponding dynamics for each $2 \leq i \leq N$:
\begin{align}
\label{lqmf.mfg.state_process.dummy}
\begin{array}{ccl}
d x^1_t&=&\left(A_t x^1_t+B_t v^1_t+\bar{A}_t \mathbb{E}\left[\hat{x}^1_t\right]+\bar{B}_t \mathbb{E}\left[\hat{v}^1_t\right]\right) d t+\sigma_t d W^1_t, \quad x^1(0)=x^1_0; \\
d y^1_t&=&\left(A_t y^1_t+B_t v^1_t+\bar{A}_t \frac{1}{N-1}\sum_{j\neq 1}\hat{y}^j_t+\bar{B}_t \frac{1}{N-1}\sum_{j\neq 1}\hat{v}^j_t\right) d t+\sigma_t d W^1_t, \quad y^1(0)=x^1_0; \\
d \hat{x}^i_t&=&\left(A_t \hat{x}^i_t+B_t \hat{v}^i_t+\bar{A}_t \mathbb{E}\left[\hat{x}^1_t\right]+\bar{B}_t \mathbb{E}\left[\hat{v}^1_t\right]\right) d t+\sigma_t d W^i_t, \quad \hat{x}^i(0)=x^i_0; \\
d \hat{y}^i_t&=&\left(A_t \hat{y}^i_t+B_t \hat{v}^1_t+\bar{A}_t \frac{1}{N-1}\left(y^1_t+\sum_{j\neq 1,i}\hat{y}^j_t\right)+\bar{B}_t \frac{1}{N-1}\left(v^1_t+\sum_{j\neq 1,i}\hat{v}^j_t\right)\right) d t+\sigma_t d W^i_t, \quad \hat{y}^i(0)=x^i_0, \quad i \neq 1;
\end{array}
\end{align}
where $\hat{v}^1_t$ is the mean field equilibrium for Problem \ref{lqmf.mfg.problem.mfg} with respect to $\mathcal{G}_t^1:=\sigma\left(x_0^1, W_s^1, s \leq t\right)$ and $\hat{x}^1_t$ is the corresponding optimal state. The rest of this subsection concerns the convergences rates for a given dummy control $v^1$, we define the constants $M_{d,4}$, $M_{d,5}$, $M_{d,6}$, and $M_{d,7}$ as follows and give them a subscript $d$, meaning {\em dummy}, to distinguish them from the others:
\begin{align}
\label{lqmf.convergence.comstant.Md}
\begin{array}{ccl}
M_{d,4}&:=&(1+\|B\|_T)\exp\{T(2\|A\|_T+\|B\|_T)\}; \\
M_{d,5}&:=&\left(1+\|B\|_T+M_{e,2}\|\bar{A}\|_T+M_{e,2}\|\bar{B}\|_T\right)\exp\{T(2\|A\|_T+\|B\|_T+\|\bar{A}\|_T+\|\bar{B}\|_T)\}; \\
M_{d,6}&:=&6C_5e^{7C_5t}(M_{e,1}+M_{d,4}); \\
M_{d,7}&:=&6C_5e^{7C_5t}M_{d,4};
\end{array}
\end{align}
where $C_5$ is defined in \eqref{lqmf.convergence.comstant.C}, and $M_{e,1}$ and $M_{e,2}$ were defined in \eqref{lqmf.convergence.comstant.Me}.

\begin{lemma} \label{lqmf.mfg.lemma.approximation_state.pertubed}
	For each fixed $i = 1,2,\cdots,N$, the convergence of $x^1$ and $\hat{x}^i$ to $y^1$ and $\hat{y}^i$ together satisfy:
	\begin{align*}
	&\mathbb{E}\sup_{u\leq t}\|x^1_t-y^1_t\|^2 + \mathbb{E}\sup_{u\leq t}\|\hat{x}^i_t-\hat{y}^i_t\|^2 \\
	&\leq\frac{1}{N-1}\bigg\{M_{d,6}\left(\mathbb{E}\left[\left\|\mathbb{E}\left[x^i_0\right]-x^i_0\right\|^2\right]+T\|\sigma\|_T^2\right) +M_{d,7}\int_0^t\mathbb{E}\left[\left\|\mathbb{E}\left[v^1_s\right]-v^1_s\right\|^2\right] ds\bigg\},
	\end{align*}
	where $M_{d,6}$ and $M_{d,7}$ were defined in \eqref{lqmf.convergence.comstant.Md}.
\end{lemma}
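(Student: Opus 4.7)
The plan is to derive coupled linear difference equations for the pair $(x^1_t-y^1_t,\,\hat x^i_t-\hat y^i_t)$, estimate the inhomogeneous forcing terms by separating them into an i.i.d.\ empirical-mean fluctuation (of order $1/\sqrt{N-1}$) plus a propagated error, and then close the system by a coupled Gr\"onwall argument. Specifically, subtracting the first and second equations of \eqref{lqmf.mfg.state_process.dummy} eliminates the noise and control $v^1$ to give
$$d(x^1_t-y^1_t)=A_t(x^1_t-y^1_t)\,dt+\bar A_t\Big(\mathbb E[\hat x^1_t]-\tfrac{1}{N-1}\sum_{j\neq 1}\hat y^j_t\Big)dt+\bar B_t\Big(\mathbb E[\hat v^1_t]-\tfrac{1}{N-1}\sum_{j\neq 1}\hat v^j_t\Big)dt,$$
and subtracting the third and fourth gives an analogous equation for $\hat x^i_t-\hat y^i_t$ ($i\neq 1$) where the empirical averages now replace one sample $\hat y^i,\hat v^i$ by the off-policy $y^1,v^1$.

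Next, I would decompose every occurring empirical-minus-true mean as
$$\tfrac{1}{N-1}\sum_{j\neq 1}\hat y^j_t-\mathbb E[\hat x^1_t]=\tfrac{1}{N-1}\sum_{j\neq 1}(\hat y^j_t-\hat x^j_t)+\tfrac{1}{N-1}\sum_{j\neq 1}(\hat x^j_t-\mathbb E[\hat x^1_t]).$$
The second piece is an i.i.d.\ mean (the $\hat x^j$, $j\neq 1$, share the law of $\hat x^1$ since the Brownian motions and initial states are i.i.d.), so its $L^2$-norm equals $\mathrm{Var}(\hat x^1_t)/(N-1)$, which by a standard moment estimate for \eqref{lqmf.mfg.system.general} is controlled by $(M_{e,1}+M_{e,3})(\mathbb E\|\mathbb E[x_0^i]-x_0^i\|^2+T\|\sigma\|_T^2)/(N-1)$, as used already in Lemma \ref{lqmf.mfg.lemma.approximation_state.optimal}. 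The same treatment applies to $\hat v^j-\hat v^1$ after invoking the linear representation \eqref{lqmf.mfg.mf_equilibrium.general} to express the control fluctuation by the state and adjoint fluctuations. The first piece, the propagated error, is handled by Cauchy--Schwarz and exchangeability of $\{\hat y^j-\hat x^j\}_{j\neq 1}$, yielding a bound of the form $\mathbb E\sup_{u\leq t}\|\hat x^i_u-\hat y^i_u\|^2$ uniformly in $i$.

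Finally, the off-policy index contributes a $v^1-\hat v^1$ and $y^1-\hat x^1$ term of order $1/(N-1)$ in the $\hat y^i$ equation, which produces the variance $\mathbb E\|\mathbb E[v^1_s]-v^1_s\|^2$ after one Jensen step. Taking $\sup_{u\le t}$ and $\mathbb E$ on both difference equations, applying the elementary bound $\|\int_0^t F_s\,ds\|^2\le C_5 t\int_0^t\|F_s\|^2ds$ with $C_5$ from \eqref{lqmf.convergence.comstant.C} to absorb the coefficient norms $\|A\|_T,\|\bar A\|_T,\|\bar B\|_T$, and summing the two resulting inequalities gives a scalar Gr\"onwall inequality for $\Psi(t):=\mathbb E\sup_{u\le t}\|x^1_u-y^1_u\|^2+\mathbb E\sup_{u\le t}\|\hat x^i_u-\hat y^i_u\|^2$. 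Invoking the integral form of Gr\"onwall's lemma produces an exponential factor $e^{7C_5 t}$, yielding precisely the stated constants $M_{d,6}=6C_5 e^{7C_5 t}(M_{e,1}+M_{d,4})$ and $M_{d,7}=6C_5 e^{7C_5 t}M_{d,4}$.

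The main obstacle is the coupling: $x^1-y^1$ feeds $\hat x^i-\hat y^i$ through the term $y^1_t$ in the empirical average of the fourth equation of \eqref{lqmf.mfg.state_process.dummy}, while $\hat x^i-\hat y^i$ feeds back into $x^1-y^1$ through the average $\tfrac1{N-1}\sum_{j\neq 1}(\hat y^j-\hat x^j)$. I would resolve this by exploiting the exchangeability of $\{\hat x^j,\hat y^j\}_{j\neq 1}$ so that a single quantity $\mathbb E\sup_{u\le t}\|\hat x^i_u-\hat y^i_u\|^2$ (independent of the choice of $i\neq 1$) captures the entire propagated error, allowing the pair of inequalities to be added into a single scalar Gr\"onwall estimate rather than a genuine vector one.
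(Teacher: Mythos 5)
Your proposal is correct and follows essentially the same route as the paper: write the two coupled difference ODEs (noise and $Bv^1$ cancel), split each empirical-mean discrepancy into an i.i.d.\ fluctuation of order $1/(N-1)$ (handled by Lemma \ref{lqmf.mfg.lemma.boundedness.x-Ex} and, for the off-policy player, Lemma \ref{lqmf.mfg.lemma.boundedness.x-Ex.dummy}) plus a propagated error controlled by exchangeability, then add the two inequalities and apply a single scalar Gr\"onwall estimate to obtain the factor $e^{7C_5 t}$ and exactly the constants $M_{d,6}$ and $M_{d,7}$. The only slight imprecision is attributing the i.i.d.\ fluctuation bound to $M_{e,1}+M_{e,3}$ rather than just $M_{e,1}$, but this does not affect the argument or the stated constants.
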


\begin{lemma}
	\label{lqmf.mfg.lemma.approximation_objective.pertubed}
	The convergence of the objective functional is:
	\begin{align*}
	&\left|\mathcal{J}(v^1,\hat{v}^{-1})-J(v^1)\right| \\
	&\leq \frac{36C_6(1+C_7)}{\sqrt{N-1}}\bigg((M_{e,2}+M_{d,5}(T+1))\left\|\mathbb{E}\left[x^1_0\right]\right\|^2 +(M_{e,1}+M_{d,4}(T+1)+M_{d,6}(T+1))\left(\mathbb{E}\left[\left\|\mathbb{E}\left[x^1_0\right]-x^1_0\right\|^2\right] +T\|\sigma\|_T^2\right) \\
	&\quad+M_{d,5}(T+1)\int_0^t\left\|\mathbb{E}\left[v^1_s\right]\right\|^2 ds + \int_0^T\mathbb{E}\left[\left\|v^1_t\right\|^2\right]dt+(M_{d,4}+M_{d,7})(T+1) \int_0^t\mathbb{E}\left[\left\|\mathbb{E}\left[v^1_s\right]-v^1_s\right\|^2\right] ds\bigg),
	\end{align*}
	where $C_6$ and $C_7$ are defined in \eqref{lqmf.convergence.comstant.C}, and $M_{e,1}$ and $M_{e,2}$ are defined in \eqref{lqmf.convergence.comstant.Me}, and $M_{d,4}$, $M_{d,5}$, $M_{d,6}$ and $M_{d,7}$ are defined in \eqref{lqmf.convergence.comstant.Md}.
\end{lemma}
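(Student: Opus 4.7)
The plan is to expand $\left|\mathcal{J}(v^1,\hat{v}^{-1})-J(v^1)\right|$ term by term across the running cost and terminal cost appearing in \eqref{lqmf.N_player.objective.appendix} and \eqref{lqmf.mfg.objective.appendix}, and to bound each resulting piece by combining (i) the forward state estimate of Lemma \ref{lqmf.mfg.lemma.approximation_state.pertubed} for the gaps $y^1-x^1$ and $\hat{y}^i-\hat{x}^i$, (ii) the law-of-large-numbers rate for i.i.d. data, namely
$\mathbb{E}\bigl\|\tfrac{1}{N-1}\sum_{j\neq 1}\hat{x}^j_t-\mathbb{E}[\hat{x}^1_t]\bigr\|^2\le \tfrac{1}{N-1}\mathrm{Var}(\hat{x}^1_t)$ and the analogue for $\hat{v}^j_t$ (the $\hat{x}^j, \hat{v}^j$ for $j\neq 1$ are i.i.d.\ copies of $\hat{x}^1,\hat{v}^1$ by independence of $(x^i_0, W^i)$), and (iii) standard a priori $L^2$-moment bounds on $x^1, \hat{x}^1, \hat{v}^1, y^1$ coming from Grönwall's inequality applied to \eqref{lqmf.mfg.state_process.dummy}, which produce the growth constants $M_{d,4}$ and $M_{d,5}$.

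Concretely, each quadratic or bilinear integrand has the schematic form $(\alpha-K\bar\alpha)^{\top} M(\beta-L\bar\beta)$ where $(\alpha,\beta)$ ranges over $\{y^1,v^1\}$ on the $N$-player side or $\{x^1,v^1\}$ on the MFG side, and $(\bar\alpha,\bar\beta)$ is either the empirical mean $\frac{1}{N-1}\sum_{j\neq 1}(\cdot)$ or the deterministic expectation $\mathbb{E}[\cdot]$. I would telescope the substitution in four stages: $y^1\to x^1$, the empirical state average $\to \mathbb{E}[\hat{x}^1]$, $v^1$ (which is the same on both sides, so no change), and the empirical control average $\to\mathbb{E}[\hat{v}^1]$. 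Each stage produces bilinear terms with exactly one ``small'' factor; an application of Cauchy--Schwarz then pulls out the spectral norms of $Q,\bar Q,P,\bar P,N,S,\bar S,R,\bar R$, jointly bounded by $C_6(1+C_7)$, and couples the estimate to an $L^2$ gap controlled either by Lemma \ref{lqmf.mfg.lemma.approximation_state.pertubed} (contributing $M_{d,6}, M_{d,7}$) or by the i.i.d.\ variance bound (contributing $M_{e,1}$ via $\mathrm{Var}(\hat{x}^1_t)\le M_{e,1}(\mathbb{E}\|\mathbb{E}[x^1_0]-x^1_0\|^2+T\|\sigma\|_T^2)$ and $M_{e,2}$ via $\|\mathbb{E}[\hat{x}^1_t]\|^2\lesssim M_{e,2}\|\mathbb{E}[x^1_0]\|^2$, both consequences already encoded in the proof of Lemma \ref{lqmf.mfg.lemma.approximation_objective.optimal}).

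To close the loop, I would separately estimate $\mathbb{E}\sup_{u\le t}\|y^1_u\|^2$ and $\mathbb{E}\sup_{u\le t}\|x^1_u\|^2$, each by Grönwall applied to its own SDE, producing factors proportional to $M_{d,4}$ (for $y^1$, which depends on the perturbed empirical mean) and $M_{d,5}$ (for $x^1$, whose dependence on $v^1$ is the whole source of the $\int_0^T\mathbb{E}\|v^1_t\|^2dt$ and $\int_0^t\|\mathbb{E}[v^1_s]\|^2ds$ terms). After assembling all contributions and sorting them by the data source they multiply -- $\|\mathbb{E}[x^1_0]\|^2$, $\mathbb{E}\|\mathbb{E}[x^1_0]-x^1_0\|^2 + T\|\sigma\|_T^2$, $\|\mathbb{E}[v^1_s]\|^2$, $\mathbb{E}\|v^1_s\|^2$ and $\mathbb{E}\|\mathbb{E}[v^1_s]-v^1_s\|^2$ -- one reads off exactly the five groupings displayed in the statement, with the combined constant $36C_6(1+C_7)/\sqrt{N-1}$ absorbing all the numerical prefactors (the $36$ counts the four stages of telescoping across the nine integrand pieces, after the Young's inequality splittings).

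The main obstacle is not conceptual but bookkeeping: the running cost contains ten bilinear contributions mixing state, control, and their mean field counterparts, and the Cauchy--Schwarz/Young splittings must be done consistently so that the ``small factor'' of order $1/\sqrt{N-1}$ is always preserved without accidentally generating an $N$-dependent moment bound. In particular, one must compare $y^1$ to $x^1$ first via Lemma \ref{lqmf.mfg.lemma.approximation_state.pertubed} (which is where the $M_{d,6}$ and $M_{d,7}$ enter through the hypothesis that $v^1$ is $\mathcal{G}^1_t$-adapted but otherwise arbitrary) before invoking any Grönwall bound on $y^1$, lest the dummy control's dependence on the empirical mean re-introduce a $\sqrt{N-1}$-blow-up. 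Once this order of operations is respected, the cross term $N_t$ and the asymmetric matrices $S_t, R_t, \bar S_t, \bar R_t$ are handled uniformly by $C_6(1+C_7)$ and the final estimate follows.
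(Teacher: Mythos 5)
Your proposal follows essentially the same route as the paper: a term-by-term comparison of the two objective functionals via difference-of-squares/telescoping insertions, Cauchy--Schwarz and Young to extract the coefficient norms $C_6(1+C_7)$, the i.i.d./symmetry variance bound for replacing empirical averages by expectations, and the a priori moment lemmas to assemble the constants $M_{e,1},M_{e,2},M_{d,4},\dots,M_{d,7}$. The only slight misattribution is that $M_{d,4}$ arises from the centered estimate $\mathbb{E}\|\mathbb{E}[x^1_t]-x^1_t\|^2$ for the MFG-side state $x^1$ under the dummy control (Lemma \ref{lqmf.mfg.lemma.boundedness.x-Ex.dummy}), with $y^1$ then controlled indirectly through $x^1$ and the gap $x^1-y^1$, but this does not affect the validity of the argument.
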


The proofs of Lemma \ref{lqmf.mfg.lemma.approximation_state.pertubed} and \eqref{lqmf.mfg.lemma.approximation_objective.pertubed} are put in Appendix \ref{lqmf.section.appendixA4}.

\subsection{Mean Field Game Equilibrium is the $\varepsilon$-Nash Equilibrium}

\begin{theorem}
	$\hat{v}$ is an $\varepsilon$-Nash equilibrium for the Nash game of a finite number of players, the convergence rate is as follows
	\begin{align*}
	\mathcal{J}(\hat{v}^1,\hat{v}^{-1}) \leq& \mathcal{J}(v^1,\hat{v}^{-1}) + \frac{1}{\sqrt{N-1}}\bigg\{L_1\left\|\mathbb{E}\left[x^1_0\right]\right\|^2+L_2\left(\mathbb{E}\left[\left\|\mathbb{E}\left[x^1_0\right]-x^1_0\right\|^2\right] +T\|\sigma\|_T^2\right) \\
	&+L_3\int_0^T\left\|\mathbb{E}\left[v^1_t\right]\right\|^2dt +L_4\int_0^T\mathbb{E}\left[\left\|v^1_t\right\|^2\right]dt + L_5\int_0^T\mathbb{E}\left[\left\|\mathbb{E}\left[v^1_t\right]-v^1_t\right\|^2\right] dt \bigg\},
	\end{align*}
	where
	\begin{align*}
	\begin{array}{ccl}
	L_1&:=&78C_6(1+C_7)M_{e,2}+36C_6(1+C_7)M_{d,5}(T+1); \\
	L_2&:=&78C_6(1+C_7)M_{e,1}+42C_6(1+C_7)M_{e,3}(T+1)+36C_6(1+C_7)M_{d,4}(T+1)+36C_6(1+C_7)M_{d,6}(T+1); \\
	L_3&:=&36C_6(1+C_7)M_{d,5}(T+1); \\
	L_4&:=&36C_6(1+C_7); \\
	L_5&:=&36C_6(1+C_7)(M_{d,4}+M_{d,7})(T+1);
	\end{array}
	\end{align*}
	where $C_6$ and $C_7$ were defined in \eqref{lqmf.convergence.comstant.C}, and $M_{e,1}$, $M_{e,2}$ and $M_{e,3}$ were defined in \eqref{lqmf.convergence.comstant.Me}, and $M_{d,4}$, $M_{d,5}$, $M_{d,6}$ and $M_{d,7}$ were defined in \eqref{lqmf.convergence.comstant.Md}.
\end{theorem}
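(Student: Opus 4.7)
The plan is a textbook three-step $\varepsilon$-Nash argument powered by the two "objective convergence" lemmas already in place. First I would exploit the fact that, by construction, $\hat{v}^1$ minimizes the limiting cost $J(\cdot)$ of Problem \ref{lqmf.mfg.problem.mfg} for player $1$ (in which the mean field terms have been frozen as the deterministic equilibrium processes $\mathbb{E}[\hat{x}_t^1]$ and $\mathbb{E}[\hat{v}_t^1]$). Thus for any admissible dummy control $v^1$, we have $J(\hat{v}^1) \leq J(v^1)$. Combining this optimality with the triangle inequality gives the chain
\begin{align*}
\mathcal{J}(\hat{v}^1,\hat{v}^{-1}) &\leq J(\hat{v}^1) + \bigl|\mathcal{J}(\hat{v}^1,\hat{v}^{-1}) - J(\hat{v}^1)\bigr| \\
&\leq J(v^1) + \bigl|\mathcal{J}(\hat{v}^1,\hat{v}^{-1}) - J(\hat{v}^1)\bigr| \\
&\leq \mathcal{J}(v^1,\hat{v}^{-1}) + \bigl|\mathcal{J}(\hat{v}^1,\hat{v}^{-1}) - J(\hat{v}^1)\bigr| + \bigl|\mathcal{J}(v^1,\hat{v}^{-1}) - J(v^1)\bigr|,
\end{align*}
so the theorem will follow once the two absolute-value discrepancies are controlled at rate $1/\sqrt{N-1}$.

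The first discrepancy is handled by Lemma \ref{lqmf.mfg.lemma.approximation_objective.optimal} applied with $i=1$, which delivers a bound of order $1/\sqrt{N-1}$ with prefactor $42 C_6(1+C_7)$ and coefficients involving only the equilibrium-side constants $M_{e,1}, M_{e,2}, M_{e,3}$ together with the initial data and the noise amplitude. The second discrepancy is handled by Lemma \ref{lqmf.mfg.lemma.approximation_objective.pertubed}, which gives the analogous bound with prefactor $36C_6(1+C_7)$; here the dependence on the dummy control $v^1$ enters through the three $L^2$-integrals of $\mathbb{E}[v^1]$, $v^1$ and $\mathbb{E}[v^1]-v^1$, weighted respectively by $M_{d,4}, M_{d,5}, M_{d,6}$ and $M_{d,7}$. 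No new analytic ingredient is needed at this stage: the SMP/BSDE coupling, Gr\"onwall estimates and crowd-averaging estimates have all been absorbed into Lemmas \ref{lqmf.mfg.lemma.approximation_state.optimal}--\ref{lqmf.mfg.lemma.approximation_objective.pertubed} of the preceding two subsections.

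The remaining work is bookkeeping: adding the two bounds and regrouping by the data they depend on in order to read off $L_1,\ldots,L_5$. The $\|\mathbb{E}[x_0^1]\|^2$ terms from both lemmas combine to $(42+36)C_6(1+C_7)M_{e,2} + 36C_6(1+C_7)M_{d,5}(T+1)=L_1$; the $\bigl(\mathbb{E}[\|\mathbb{E}[x_0^1]-x_0^1\|^2]+T\|\sigma\|_T^2\bigr)$ terms combine to $78 C_6(1+C_7)M_{e,1}+42 C_6(1+C_7)M_{e,3}(T+1)+36 C_6(1+C_7)(M_{d,4}+M_{d,6})(T+1)=L_2$; and each of the three $v^1$-dependent integrals, which appear only in the dummy-control bound, yields $L_3$, $L_4$ and $L_5$ respectively. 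The only mildly delicate point is to keep track of the $(T+1)$ factors attached to the "dummy" constants and to verify the arithmetic $42+36=78$ on the $M_{e,1}$ and $M_{e,2}$ coefficients; beyond this elementary assembly there is no genuine analytic obstacle left.
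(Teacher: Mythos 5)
Your proposal is correct and follows essentially the same route as the paper: the optimality $J(\hat{v}^1)\leq J(v^1)$ combined with the telescoping/triangle inequality, the two objective-convergence lemmas (Lemma \ref{lqmf.mfg.lemma.approximation_objective.optimal} and Lemma \ref{lqmf.mfg.lemma.approximation_objective.pertubed}), and the final regrouping of coefficients into $L_1,\ldots,L_5$, with the arithmetic $42+36=78$ checking out. No substantive difference from the paper's argument.
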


\begin{proof}
	Recall the results of Lemma \ref{lqmf.mfg.lemma.approximation_objective.optimal} and Lemma \ref{lqmf.mfg.lemma.approximation_objective.pertubed}:
	\begin{align*}
	\left|\mathcal{J}(\hat{v}^1,\hat{v}^{-1})-J(\hat{v}^1)\right| = O\left(\frac{1}{\sqrt{N}}\right), \quad
	\left|\mathcal{J}(v^1,\hat{v}^{-1})-J(v^1)\right| = O\left(\frac{1}{\sqrt{N}}\right),
	\end{align*}
	by applying which we can conclude the present claim; indeed, since $\hat{v}^1$ is the optimal control for the mean field game, we have 
	\begin{align*}
	J(\hat{v}^1) \leq J(v^1).
	\end{align*}
	Therefore, 	by noting that $J(\hat{v}^1)-J(v^1) \leq 0$,
	\begin{align*}
	\mathcal{J}(\hat{v}^1,\hat{v}^{-1}) =& \mathcal{J}(\hat{v}^1,\hat{v}^{-1})-J(\hat{v}^1)+J(\hat{v}^1)-J(v^1)+J(v^1)-\mathcal{J}(v^1,\hat{v}^{-1})+\mathcal{J}(v^1,\hat{v}^{-1}) \\
	\leq& \left|\mathcal{J}(\hat{v}^1,\hat{v}^{-1})-J(\hat{v}^1)\right|+\left|J(v^1)-\mathcal{J}(v^1,\hat{v}^{-1})\right| + \mathcal{J}(v^1,\hat{v}^{-1}).
	\end{align*}
\end{proof}

\section{Extended Mean Field Type Control Problems} \label{lqmf.section.emftc}
Note that the extended mean field game is an approximation of the equilibrium strategies of a Nash game with a huge number of players, and the mean field of states and controls are defined exogenously by the equilibrium states and controls of other players. In contrast to the extended mean field game, mean field terms of the {\em extended mean field type control} are controlled {\em endogenously} by the current dynamics of state process and control. The celebrated Markowitz's mean variance problem of an individual investor is one of the special case. We refer to \cite{bensoussan2013mean}, \cite{bensoussan2017interpretation}, \cite{ismail2019robust}, \cite{alasseur2020extended}, \cite{bensoussan2020control}, \cite{pham2022portfolio} and \cite{bensoussan2022dynamic} for different well-received examples. To this end, we here study the linear quadratic extended mean field type control problem in a generic form as follows:
\begin{problem} \label{lqmf.mfc.problem.mfc}
	Find an optimal control $\hat{v}_t \in L_{\mathcal{G}_{\cdot}}^{2}\left(0, T ; \mathbb{R}^m\right)$ which minimizes
	\begin{align*}
	J(v):=& \mathbb{E}\left[\frac{1}{2} \int_0^T x_t^{\top} \mathsf{Q}_t x_t+v_t^{\top} \mathsf{P}_t v_t+\left(x_t-\mathsf{S}_t \mathbb{E}\left[x_t\right]\right)^{\top} \bar{\mathsf{Q}}_t\left(x_t-\mathsf{S}_t \mathbb{E}\left[x_t\right]\right) \right.\\
	&\qquad \quad+\left. 2\left(x_t-\bar{\mathsf{S}}_t \mathbb{E}\left[x_t\right]\right)^{\top} \mathsf{N}_t\left(v_t-\bar{\mathsf{R}}_t \mathbb{E}\left[v_t\right]\right) + \left(v_t-\mathsf{R}_t \mathbb{E}\left[v_t\right]\right)^{\top} \bar{\mathsf{P}}_t\left(v_t-\mathsf{R}_t \mathbb{E}\left[v_t\right]\right) d t\right] \\
	&+\mathbb{E}\left[\frac{1}{2} x_T^{\top} \mathsf{Q}_T x_T+\frac{1}{2}\left(x_T-\mathsf{S}_T \mathbb{E}\left[x_T\right]\right)^{\top} \bar{\mathsf{Q}}_T\left(x_T-\mathsf{S}_T \mathbb{E}\left[x_T\right]\right)\right]
	\end{align*}
	where the controlled dynamics is governed by
	\begin{align*}
	d x_t=\left(\mathsf{A}_t x_t+\mathsf{B}_t v_t+\bar{\mathsf{A}}_t \mathbb{E}\left[x_t\right]+\bar{\mathsf{B}}_t \mathbb{E}\left[v_t\right]\right) d t+\sigma_t d W_t, \quad x(0)=x_0.
	\end{align*}
\end{problem}
It is worth to mention that, in the extended mean field type control problem, the mean field terms are the expectation of \emph{admissible} control $\mathbb{E}\left[v_t\right]$ and its corresponding expectation of the state process $\mathbb{E}\left[x_t\right]$. It is not the case in the extended mean field game (see Problem \ref{lqmf.mfg.problem.mfg}), where the mean field terms are the expectation of the \emph{mean field equilibrium} $\mathbb{E}\left[\hat{v}_t\right]$ and that of the \emph{optimal} state process $\mathbb{E}\left[\hat{x}_t\right]$ of other players.

For simplicity, we denote by
\begin{align*}
\widetilde{\mathscr{Q}}_t:=(I-\mathsf{S}_t^{\top})\bar{\mathsf{Q}}_t(I-\mathsf{S}_t), \quad \widetilde{\mathscr{P}}_t:=(I-\bar{\mathsf{R}}_t^{\top})\bar{\mathsf{P}}_t(I-\bar{\mathsf{R}}_t), \quad
\widetilde{\mathscr{N}}_t := (I-\bar{\mathsf{S}}_t^{\top})\mathsf{N}_t(I-\bar{\mathsf{R}}_t).
\end{align*}

Define an admissible control variable
\begin{align}
    \nonumber \hat{v}_t =& (\mathsf{P}_t + \bar{\mathsf{P}}_t)^{-1}\mathsf{B}_t^{\top}\left(\mathbb{E}\left[p_t\right]-p_t\right)-\left(\mathsf{P}_t + \widetilde{\mathscr{P}}_t\right)^{-1}(\mathsf{B}_t^{\top}+\bar{\mathsf{B}}_t^{\top})\mathbb{E}\left[p_t\right]  \\
    \label{lqmf.mfc.optimal_control} &+(\mathsf{P}_t + \bar{\mathsf{P}}_t)^{-1}\mathsf{N}_t^{\top}\left(\mathbb{E}\left[\hat{x}_t\right]-\hat{x}_t\right)-\left(\mathsf{P}_t + \widetilde{\mathscr{P}}_t\right)^{-1} \widetilde{\mathscr{N}}_t^{\top} \mathbb{E}\left[\hat{x}_t\right],
 \end{align}
where $\hat{x}$ is the corresponding state process:
 \begin{align}
    \label{lqmf.mfc.optimal_state}
    d \hat{x}_t=\left(\mathsf{A}_t \hat{x}_t+\mathsf{B}_t \hat{v}_t+\bar{\mathsf{A}}_t \mathbb{E}\left[\hat{x}_t\right]+\bar{\mathsf{B}}_t \mathbb{E}\left[\hat{v}_t\right]\right) d t+\sigma_t d W_t, \quad \hat{x}(0)=x_0.
\end{align}
and $p$ is the adjoint process satisfying the BSDE:
\begin{align}
    \label{lqmf.mfc.adjoint_process}
    \left\{\begin{array}{ccl}
    -d p_t &=& \left(\mathsf{A}_t^{\top} p_t  + \bar{\mathsf{A}}_t^{\top} \mathbb{E}\left[p_t\right] +(\mathsf{Q}_t + \bar{\mathsf{Q}}_t) \hat{x}_t + ( - \bar{\mathsf{Q}}_t + \widetilde{\mathscr{Q}}_t  ) \mathbb{E}\left[\hat{x}_t\right]  +\mathsf{N}_t\hat{v}_t + ( - \mathsf{N}_t + \widetilde{\mathscr{N}}_t ) \mathbb{E}\left[\hat{v}_t\right]\right)dt - \theta_t d W_t, \\
    p_T &=& (\mathsf{Q}_T + \bar{\mathsf{Q}}_T) \hat{x}_T +   ( - \bar{\mathsf{Q}}_T + \widetilde{\mathscr{Q}}_t  ) \mathbb{E}\left[\hat{x}_T\right],
    \end{array}\right.
\end{align}

\begin{theorem} \label{lqmf.mfc.theorem_smp}
	If the fully-coupled FBSDE system \eqref{lqmf.mfc.optimal_control}, \eqref{lqmf.mfc.optimal_state} and \eqref{lqmf.mfc.adjoint_process} has a unique solution, then Problem \eqref{lqmf.mfc.problem.mfc} is solvable with the optimal control given by \eqref{lqmf.mfc.optimal_control}.	
\end{theorem}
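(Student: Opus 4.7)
The plan is to apply the stochastic maximum principle in its mean field variant and then to conclude via convexity. First I would introduce an admissible perturbation $v^{\varepsilon}=\hat v+\varepsilon(u-\hat v)$ for any admissible $u$ and $\varepsilon\in(0,1]$, let $x^\varepsilon$ be the corresponding state, and set $\delta x_t:=\lim_{\varepsilon\downarrow 0}(x^\varepsilon_t-\hat x_t)/\varepsilon$, which by linearity of \eqref{lqmf.mfc.optimal_state} solves a deterministic-coefficient linear ODE driven by $u-\hat v$ and $\mathbb{E}[u-\hat v]$ with $\delta x_0=0$. With the Hamiltonian $H(t,x,v,\mu_x,\mu_v,p) := p^\top(\mathsf{A}_t x+\mathsf{B}_t v+\bar{\mathsf{A}}_t\mu_x+\bar{\mathsf{B}}_t\mu_v)+\ell(t,x,v,\mu_x,\mu_v)$ (where $\ell$ is the integrand of the running cost), a direct expansion using $\mathbb{E}[\delta x_t^\top M \mathbb{E}[\hat x_t]]=\mathbb{E}[\delta x_t]^\top M\mathbb{E}[\hat x_t]$ for any deterministic $M$ identifies $\partial_x H+\mathbb{E}[\partial_{\mu_x}H]$ exactly with the driver of the candidate adjoint equation \eqref{lqmf.mfc.adjoint_process}; in particular the identities $-\bar{\mathsf{Q}}_t+\widetilde{\mathscr Q}_t=-\bar{\mathsf Q}_t\mathsf S_t-\mathsf S_t^\top\bar{\mathsf Q}_t+\mathsf S_t^\top\bar{\mathsf Q}_t\mathsf S_t$ and $-\mathsf N_t+\widetilde{\mathscr N}_t=-\mathsf N_t\bar{\mathsf R}_t-\bar{\mathsf S}_t^\top\mathsf N_t+\bar{\mathsf S}_t^\top\mathsf N_t\bar{\mathsf R}_t$ (together with the analogous one at the terminal time) account for the appearance of $\widetilde{\mathscr Q}$ and $\widetilde{\mathscr N}$ as the sum of direct and mean-field partial derivatives of $\ell$.

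Next, applying It\^o's formula to $\langle p_t,\delta x_t\rangle$, using \eqref{lqmf.mfc.optimal_state} and \eqref{lqmf.mfc.adjoint_process}, and invoking the identity $\mathbb{E}[\mathbb{E}[p_t]^\top \bar{\mathsf B}_t\mathbb{E}[u_t-\hat v_t]]=\mathbb{E}[p_t^\top\bar{\mathsf B}_t\mathbb{E}[u_t-\hat v_t]]$, the Gâteaux derivative $\tfrac{d}{d\varepsilon}|_{0^+}J(v^\varepsilon)$ collapses to a single expression in $u-\hat v$ of the form $\mathbb{E}\int_0^T\langle\, \partial_v H+\mathbb{E}[\partial_{\mu_v}H]\,,\,u_t-\hat v_t\rangle\,dt$. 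Setting this to zero for all $u$ yields the pointwise first order condition
\begin{align*}
0 = \mathsf B_t^\top p_t+\bar{\mathsf B}_t^\top\mathbb{E}[p_t]+(\mathsf P_t+\bar{\mathsf P}_t)\hat v_t+\mathsf N_t^\top\hat x_t - \mathsf N_t^\top\bar{\mathsf S}_t\mathbb{E}[\hat x_t] - \bar{\mathsf P}_t\mathsf R_t\mathbb{E}[\hat v_t] - \bar{\mathsf R}_t^\top\mathsf N_t^\top(I-\bar{\mathsf S}_t)\mathbb{E}[\hat x_t] - \mathsf R_t^\top\bar{\mathsf P}_t(I-\mathsf R_t)\mathbb{E}[\hat v_t].
\end{align*}
Taking expectation on both sides, the $\mathbb{E}[\hat v_t]$-coefficient collapses neatly to $\mathsf P_t+(I-\mathsf R_t^\top)\bar{\mathsf P}_t(I-\mathsf R_t)=\mathsf P_t+\widetilde{\mathscr P}_t$ and the $\mathbb{E}[\hat x_t]$-coefficient to $(I-\bar{\mathsf R}_t^\top)\mathsf N_t^\top(I-\bar{\mathsf S}_t)=\widetilde{\mathscr N}_t^\top$, yielding the mean equation $(\mathsf P_t+\widetilde{\mathscr P}_t)\mathbb{E}[\hat v_t]=-(\mathsf B_t^\top+\bar{\mathsf B}_t^\top)\mathbb{E}[p_t]-\widetilde{\mathscr N}_t^\top\mathbb{E}[\hat x_t]$; subtracting from the pointwise condition isolates the zero-mean part $(\mathsf P_t+\bar{\mathsf P}_t)(\hat v_t-\mathbb{E}[\hat v_t])=\mathsf B_t^\top(\mathbb{E}[p_t]-p_t)+\mathsf N_t^\top(\mathbb{E}[\hat x_t]-\hat x_t)$. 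Solving both and summing reproduces formula \eqref{lqmf.mfc.optimal_control} exactly, so the unique FBSDE solution indeed realizes this first order condition.

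Finally, to upgrade necessity to sufficiency I would write the duality identity for arbitrary admissible $u$ with state $x^u$:
\begin{align*}
J(u)-J(\hat v) = \mathbb{E}\!\int_0^T\!\!\big[\ell(t,x^u_t,u_t,\mathbb{E}[x^u_t],\mathbb{E}[u_t])-\ell(t,\hat x_t,\hat v_t,\mathbb{E}[\hat x_t],\mathbb{E}[\hat v_t])\big]\,dt + \mathbb{E}\!\big[g(x^u_T,\mathbb{E}[x^u_T])-g(\hat x_T,\mathbb{E}[\hat x_T])\big],
\end{align*}
apply It\^o to $\langle p_t, x^u_t-\hat x_t\rangle$ to eliminate the linear-in-$(x^u-\hat x)$ part via the adjoint equation, use the established first order condition to eliminate the linear-in-$(u-\hat v)$ part, and collect the remaining purely quadratic remainder, which under the natural convexity hypothesis (non-negative definiteness of the blocks $\mathsf Q+\bar{\mathsf Q}$, $\mathsf P+\bar{\mathsf P}$, and their mean-field corrected counterparts, together with the Schur complement condition ensuring convexity of $\ell$ jointly in $(x,v,\mu_x,\mu_v)$) is a non-negative quadratic form in $(x^u-\hat x,u-\hat v,\mathbb{E}[x^u-\hat x],\mathbb{E}[u-\hat v])$; thus $J(u)\ge J(\hat v)$.

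The main obstacle is the bookkeeping of the mean-field partial-derivative corrections: showing that $\partial_x H+\mathbb{E}[\partial_{\mu_x}H]$ and $\partial_v H+\mathbb{E}[\partial_{\mu_v}H]$ condense precisely into the $\widetilde{\mathscr Q}, \widetilde{\mathscr P}, \widetilde{\mathscr N}$-structured coefficients of \eqref{lqmf.mfc.adjoint_process} and \eqref{lqmf.mfc.optimal_control}, and verifying that the purely quadratic remainder in the sufficiency step is indeed non-negative after the mean/centered splitting --- in contrast to the EMFG setting, no further smallness on cross or mean-field coefficients is needed, only joint convexity.
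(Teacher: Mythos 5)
Your proposal is correct and follows essentially the same route as the paper's proof in Appendix F: a Gâteaux/variational perturbation, the expectation-swapping identity $\mathbb{E}[\widetilde{x}_t^{\top}M\,\mathbb{E}[\hat{x}_t]]=\mathbb{E}[\widetilde{x}_t]^{\top}M\,\mathbb{E}[\hat{x}_t]$ to absorb the mean-field terms, duality via It\^{o} applied to $\widetilde{x}_t^{\top}p_t$, and then projection of the pointwise first-order condition onto its mean and centered parts to recover \eqref{lqmf.mfc.optimal_control}. The only difference is that you make the sufficiency direction (convexity of the quadratic remainder) explicit, which the paper leaves implicit in its appeal to the convexity of $J$; that is a welcome but not essentially different addition.
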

Its proof is provided in Appendix \ref{lqmf.section.appendixA5}.

\begin{theorem}
	Suppose that there exists a scale constant $\delta >0$ such that for all $t \in [0,T]$, 
    \begin{align}
    \label{lqmf.mfc.assumption1.convexity}
    (\mathsf{Q}_t+\bar{\mathsf{Q}}_t)  -  \mathsf{N}_t (\mathsf{P}_t+\bar{\mathsf{P}}_t)^{-1} \mathsf{N}_t^{\top} \geq 0 \quad \text{and}\quad \mathsf{B}_t (\mathsf{P}_t + \bar{\mathsf{P}}_t)^{-1}\mathsf{B}_t^{\top} > \delta I,
    \end{align}
    and
    \begin{align}
    \label{lqmf.mfc.assumption2.convexity}
    \mathsf{Q}_t+\bar{\mathscr{Q}}_t - \widetilde{\mathscr{N}}_t\left(\mathsf{P}_t + \widetilde{\mathscr{P}}_t\right)^{-1}\widetilde{\mathscr{N}}_t^{\top} \geq 0 \quad \text{and}\quad (\mathsf{B}_t+\bar{\mathsf{B}}_t) \left(\mathsf{P}_t + \widetilde{\mathscr{P}}_t\right)^{-1}(\mathsf{B}_t^{\top}+\bar{\mathsf{B}}_t^{\top}) > \delta I,
    \end{align}
    are satisfied, the system \eqref{lqmf.mfc.optimal_control}, \eqref{lqmf.mfc.optimal_state} and \eqref{lqmf.mfc.adjoint_process} admits a unique solution, .
\end{theorem}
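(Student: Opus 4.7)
The plan is to decouple the McKean--Vlasov FBSDE \eqref{lqmf.mfc.optimal_control}--\eqref{lqmf.mfc.adjoint_process} into two independent linear systems by separating the mean part from the fluctuation part; each piece can then be solved by the classical linear-quadratic Riccati theory recalled in Theorem \ref{lqmf.mfg.theorem_smp}, provided the appropriate convexity condition holds. By linearity of the whole system, the two pieces reconstruct to a solution of the original system, and uniqueness of each piece will give uniqueness of the original system.

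First I would take expectations on both sides of \eqref{lqmf.mfc.optimal_state} and \eqref{lqmf.mfc.adjoint_process}. Writing $\bar x_t := \mathbb{E}[\hat x_t]$ and $\bar p_t := \mathbb{E}[p_t]$ and substituting the expected form of \eqref{lqmf.mfc.optimal_control}, namely
\begin{equation*}
\mathbb{E}[\hat v_t] \;=\; -\left(\mathsf{P}_t+\widetilde{\mathscr{P}}_t\right)^{-1}\!\left[(\mathsf{B}_t^{\top}+\bar{\mathsf{B}}_t^{\top})\,\bar p_t \;+\; \widetilde{\mathscr{N}}_t^{\top}\,\bar x_t\right],
\end{equation*}
one obtains a deterministic FBODE for $(\bar x_t,\bar p_t)$ whose coefficient structure is exactly that of a standard (non mean field) linear-quadratic control problem with drift matrices $(\mathsf{A}_t+\bar{\mathsf{A}}_t,\,\mathsf{B}_t+\bar{\mathsf{B}}_t)$ and cost matrices $(\mathsf{Q}_t+\widetilde{\mathscr{Q}}_t,\,\mathsf{P}_t+\widetilde{\mathscr{P}}_t,\,\widetilde{\mathscr{N}}_t)$. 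Assumption \eqref{lqmf.mfc.assumption2.convexity} is precisely the convexity hypothesis \eqref{lqmf.mfg.assumption.convexity} transported to this reduced problem, so by Theorem \ref{lqmf.mfg.theorem_smp} (or directly by the associated symmetric Riccati equation, as in \eqref{lqmf.mfg.fixed_point.equation.riccati.Xi}), the FBODE admits a unique solution $(\bar x_t,\bar p_t)$.

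Next, with $(\bar x_t,\bar p_t)$ now a known deterministic input, I would subtract expectations from \eqref{lqmf.mfc.optimal_state} and \eqref{lqmf.mfc.adjoint_process} to write an FBSDE for the fluctuations $\tilde x_t := \hat x_t-\bar x_t$ and $\tilde p_t := p_t-\bar p_t$. All terms involving $\bar{\mathsf{A}}_t\bar x_t$, $\bar{\mathsf{B}}_t \mathbb{E}[\hat v_t]$, and the $\widetilde{\mathscr{Q}}_t$, $\widetilde{\mathscr{N}}_t$, $\widetilde{\mathscr{P}}_t$ blocks in \eqref{lqmf.mfc.adjoint_process} cancel with their expectations, and the remaining feedback from \eqref{lqmf.mfc.optimal_control} reduces to $\hat v_t - \mathbb{E}[\hat v_t] = -(\mathsf{P}_t+\bar{\mathsf{P}}_t)^{-1}(\mathsf{N}_t^{\top}\tilde x_t + \mathsf{B}_t^{\top}\tilde p_t)$. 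The resulting FBSDE is therefore a standard LQ forward-backward system with parameters $(\mathsf{A}_t,\mathsf{B}_t,\mathsf{Q}_t+\bar{\mathsf{Q}}_t,\mathsf{P}_t+\bar{\mathsf{P}}_t,\mathsf{N}_t)$, initial condition $\tilde x_0 = x_0-\mathbb{E}[x_0]$, and terminal condition $\tilde p_T = (\mathsf{Q}_T+\bar{\mathsf{Q}}_T)\tilde x_T$. Assumption \eqref{lqmf.mfc.assumption1.convexity} is now exactly the convexity condition \eqref{lqmf.mfg.assumption.convexity} for this fluctuation problem, so Theorem \ref{lqmf.mfg.theorem_smp} again yields a unique adapted solution $(\tilde x_t,\tilde p_t,\tilde\theta_t)$.

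Setting $\hat x_t := \bar x_t+\tilde x_t$ and $p_t := \bar p_t+\tilde p_t$ and defining $\hat v_t$ by \eqref{lqmf.mfc.optimal_control}, one verifies by linear superposition that $(\hat x_t,\hat v_t,p_t,\tilde\theta_t)$ solves the full system \eqref{lqmf.mfc.optimal_control}--\eqref{lqmf.mfc.adjoint_process}, while uniqueness of each decoupled piece yields uniqueness of the whole. The only delicate point I anticipate is the algebraic bookkeeping that shows the ``mean'' part of \eqref{lqmf.mfc.adjoint_process} genuinely collapses to the cost structure $(\mathsf{Q}_t+\widetilde{\mathscr{Q}}_t,\mathsf{P}_t+\widetilde{\mathscr{P}}_t,\widetilde{\mathscr{N}}_t)$ after substituting \eqref{lqmf.mfc.optimal_control}: this requires carefully combining the $\bar{\mathsf{Q}}_t-\widetilde{\mathscr{Q}}_t$, $-\mathsf{N}_t+\widetilde{\mathscr{N}}_t$ and $-\bar{\mathsf{P}}_t+\widetilde{\mathscr{P}}_t$ contributions, but these are routine matrix manipulations once the decomposition $\mathbb{E}[\hat v_t]$ above is used, and the resulting convexity of the reduced problem is precisely what assumption \eqref{lqmf.mfc.assumption2.convexity} asserts.
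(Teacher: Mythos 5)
Your proposal is correct and follows essentially the same route as the paper: the paper likewise splits the system into the deterministic FBODE for $(\mathbb{E}[\hat x_t],\mathbb{E}[p_t])$, solved under \eqref{lqmf.mfc.assumption2.convexity}, and a standard LQ FBSDE for the remaining (fluctuation) part, solved under \eqref{lqmf.mfc.assumption1.convexity}, invoking Theorem 2.2 of Peng--Wu for each piece; your mean-plus-fluctuation bookkeeping is exactly the decomposition the paper carries out in \eqref{lqmf.mfc.expected.fbode} and \eqref{lqmf.mfc.fbsde.bias}. The only caveat is that Theorem \ref{lqmf.mfg.theorem_smp} as stated assumes the strict bound $(Q_t+\bar Q_t)-N_t(P_t+\bar P_t)^{-1}N_t^{\top}>\delta I$, whereas \eqref{lqmf.mfc.assumption1.convexity}--\eqref{lqmf.mfc.assumption2.convexity} only give $\geq 0$ for the corresponding quantities, so you should rely on your fallback (the symmetric Riccati equation with nonnegative effective state cost, or Peng--Wu's monotonicity conditions as the paper does) rather than on that theorem verbatim.
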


\begin{proof}
For simplicity, we denote by $\bar{x}_t := \mathbb{E}\left[\hat{x}_t\right]$, $\bar{p}_t := \mathbb{E}\left[p_t\right]$, and $\bar{v}_t := \mathbb{E}\left[\hat{v}_t\right]$.
By substituting \eqref{lqmf.mfc.optimal_control} into \eqref{lqmf.mfc.optimal_state} and \eqref{lqmf.mfc.adjoint_process}, we have
\begin{align}
\label{lqmf.mfc.fbsde}
\left\{\begin{array}{ccl}
d\left(\begin{array}{c}
\hat{x}_t \\
-p_t
\end{array}\right) &=&\left(\begin{array}{cc}
\mathsf{A}_t-\mathsf{B}_t(\mathsf{P}_t + \bar{\mathsf{P}}_t)^{-1}\mathsf{N}_t^{\top}  &  -\mathsf{B}_t (\mathsf{P}_t + \bar{\mathsf{P}}_t)^{-1}\mathsf{B}_t^{\top} \\
\mathsf{Q}_t + \bar{\mathsf{Q}}_t-\mathsf{N}_t(\mathsf{P}_t + \bar{\mathsf{P}}_t)^{-1}\mathsf{N}_t^{\top} & \mathsf{A}_t^{\top}-\mathsf{N}_t(\mathsf{P}_t + \bar{\mathsf{P}}_t)^{-1}\mathsf{B}_t^{\top}
\end{array}\right)\left(\begin{array}{l}
\hat{x}_t \\
p_t
\end{array}\right)dt +\mathcal{M}\left(\begin{array}{l}
\bar{x}_t \\
\bar{p}_t
\end{array}\right)dt +\left(\begin{array}{l}
\sigma_t \\
-\theta_t
\end{array}\right)dW_t, \\
\hat{x}(0) &=&x_0, \quad p_T =(\mathsf{Q}_T + \bar{\mathsf{Q}}_T) \hat{x}_T +   ( - \bar{\mathsf{Q}}_T + \widetilde{\mathscr{Q}}_t  ) \bar{x}_T,
\end{array}\right.
\end{align}
where $\mathcal{M} := \left(\begin{array}{cl}
\mathcal{M}_{11}  & \mathcal{M}_{12}  \\
\mathcal{M}_{21}  & \mathcal{M}_{22}
\end{array}\right)$ is a block matrix such that
\begin{align*}
\begin{array}{ccl}
\mathcal{M}_{11} &:=& \bar{\mathsf{A}}_t+\mathsf{B}_t(\mathsf{P}_t + \bar{\mathsf{P}}_t)^{-1}\mathsf{N}_t^{\top}-(\mathsf{B}_t+\bar{\mathsf{B}}_t)\left(\mathsf{P}_t + \widetilde{\mathscr{P}}_t\right)^{-1} \widetilde{\mathscr{N}}_t^{\top}; \\
\mathcal{M}_{12} &:=& \mathsf{B}_t (\mathsf{P}_t + \bar{\mathsf{P}}_t)^{-1}\mathsf{B}_t^{\top}-(\mathsf{B}_t+\bar{\mathsf{B}}_t)\left(\mathsf{P}_t + \widetilde{\mathscr{P}}_t\right)^{-1}(\mathsf{B}_t^{\top}+\bar{\mathsf{B}}_t^{\top}); \\
\mathcal{M}_{21} &:=& - \bar{\mathsf{Q}}_t + \widetilde{\mathscr{Q}}_t+\mathsf{N}_t(\mathsf{P}_t + \bar{\mathsf{P}}_t)^{-1}\mathsf{N}_t^{\top}-\widetilde{\mathscr{N}}_t\left(\mathsf{P}_t + \widetilde{\mathscr{P}}_t\right)^{-1} \widetilde{\mathscr{N}}_t^{\top}; \\
\mathcal{M}_{22} &:=& \bar{\mathsf{A}}_t^{\top}+\mathsf{N}_t(\mathsf{P}_t + \bar{\mathsf{P}}_t)^{-1}\mathsf{B}_t^{\top}- \widetilde{\mathscr{N}}_t\left(\mathsf{P}_t + \widetilde{\mathscr{P}}_t\right)^{-1}(\mathsf{B}_t^{\top}+\bar{\mathsf{B}}_t^{\top}).
\end{array}
\end{align*}
Since both $\mathsf{P}_t+\bar{\mathsf{P}}_t$ and $\mathsf{Q}_t+\bar{\mathsf{Q}}_t$ are positive definite matrices, by Theorem 2.2 of \cite{peng1999fully}, for any fixed $(\bar{x},\bar{p})$, the FBSDE system \eqref{lqmf.mfc.fbsde} possesses a unique solution if the conditions in \eqref{lqmf.mfc.assumption1.convexity} are satisfied. To this end, we need $\bar{x}_t = \mathbb{E}\left[\hat{x}_t\right]$ and $\bar{p}_t = \mathbb{E}\left[p_t\right]$, which can be guaranteed if there is a unique pair $(\bar{x},\bar{p})$ solving the following FBODEs:
\begin{align}
\label{lqmf.mfc.expected.fbode}
\left\{\begin{array}{ccl}
\frac{d}{d t}\left(\begin{array}{c}
\bar{x}_t \\
-\bar{p}_t
\end{array}\right) &=&\left(\begin{array}{cc}
\mathsf{A}_t+\bar{\mathsf{A}}_t -(\mathsf{B}_t+\bar{\mathsf{B}}_t) \left(\mathsf{P}_t + \widetilde{\mathscr{P}}_t\right)^{-1}  \widetilde{\mathscr{N}}_t^{\top}  &  -(\mathsf{B}_t+\bar{\mathsf{B}}_t) \left(\mathsf{P}_t + \widetilde{\mathscr{P}}_t\right)^{-1}(\mathsf{B}_t^{\top}+\bar{\mathsf{B}}_t^{\top}) \\
\mathsf{Q}_t+\widetilde{\mathscr{Q}}_t - \widetilde{\mathscr{N}}_t\left(\mathsf{P}_t + \widetilde{\mathscr{P}}_t\right)^{-1}\widetilde{\mathscr{N}}_t^{\top} & \mathsf{A}_t^{\top}+\bar{\mathsf{A}}_t^{\top}  - \widetilde{\mathscr{N}}_t\left(\mathsf{P}_t + \widetilde{\mathscr{P}}_t\right)^{-1}(\mathsf{B}_t^{\top}+\bar{\mathsf{B}}_t^{\top})
\end{array}\right)\left(\begin{array}{l}
\bar{x}_t \\
\bar{p}_t
\end{array}\right), \\
\bar{x}_0 &=&\mathbb{E}\left[x_0\right], \quad\bar{p}_T =\left(\mathsf{Q}_T + \widetilde{\mathscr{Q}}_t\right)\bar{x}_T.
\end{array}\right.
\end{align}
Note that both $\mathsf{P}_t+\widetilde{\mathscr{P}}_t$ and $\mathsf{Q}_t+\widetilde{\mathscr{Q}}_t$ are positive definite, by using Theorem 2.2 of \cite{peng1999fully} again, the system \eqref{lqmf.mfc.expected.fbode} has a unique solution if the conditions in \eqref{lqmf.mfc.assumption2.convexity} hold.
\end{proof}

\subsection{Closed-form Solutions for Extended Mean Field Type Control Problems}

To obtain the explicit solution for Problem \ref{lqmf.mfc.problem.mfc}, we can first examine the equation system \eqref{lqmf.mfc.expected.fbode} and suppose it has a solution in the form: $\bar{p}_t = \bar{\Gamma}_t\bar{x}_t$. By substituting the {\em ansatz} back into \eqref{lqmf.mfc.expected.fbode}, we obtain the following symmetric Riccati equation:
\begin{align}
	\nonumber
	&\frac{d\bar{\Gamma}_t}{dt} + \bar{\Gamma}_t \left(\mathsf{A}_t+\bar{\mathsf{A}}_t -(\mathsf{B}_t+\bar{\mathsf{B}}_t) \left(\mathsf{P}_t + \widetilde{\mathscr{P}}_t\right)^{-1}  \widetilde{\mathscr{N}}_t^{\top}\right) +\left(\mathsf{A}_t^{\top}+\bar{\mathsf{A}}_t^{\top}  - \widetilde{\mathscr{N}}_t\left(\mathsf{P}_t + \widetilde{\mathscr{P}}_t\right)^{-1}(\mathsf{B}_t^{\top}+\bar{\mathsf{B}}_t^{\top})\right)\bar{\Gamma}_t \\
	\label{lqmf.mfc.equation.riccati.Gamma}
	&-\bar{\Gamma}_t(\mathsf{B}_t+\bar{\mathsf{B}}_t) \left(\mathsf{P}_t + \widetilde{\mathscr{P}}_t\right)^{-1}(\mathsf{B}_t^{\top}+\bar{\mathsf{B}}_t^{\top})\bar{\Gamma}_t + \mathsf{Q}_t+\widetilde{\mathscr{Q}}_t - \widetilde{\mathscr{N}}_t\left(\mathsf{P}_t + \widetilde{\mathscr{P}}_t\right)^{-1}\widetilde{\mathscr{N}}_t^{\top} = 0,
\end{align}
with the terminal condition $\bar{\Gamma}_T = \mathsf{Q}_T + \widetilde{\mathscr{Q}}_t$. By the classical Riccati equation theory, \eqref{lqmf.mfc.equation.riccati.Gamma} admits a unique solution under the condition \eqref{lqmf.mfc.assumption2.convexity}. Denote $x^b_t:=\hat{x}_t-\bar{x}_t$ and $p^b_t:=p_t-\bar{p}_t$, then the difference of \eqref{lqmf.mfc.expected.fbode} from \eqref{lqmf.mfc.fbsde} gives the following FBSDEs:
\begin{align}
	\label{lqmf.mfc.fbsde.bias}
	\left\{\begin{array}{ccl}
		d\left(\begin{array}{c}
			x^b_t \\
			-p^b_t
		\end{array}\right) &=&\left(\begin{array}{cc}
			\mathsf{A}_t-\mathsf{B}_t(\mathsf{P}_t + \bar{\mathsf{P}}_t)^{-1}\mathsf{N}_t^{\top}  &  -\mathsf{B}_t (\mathsf{P}_t + \bar{\mathsf{P}}_t)^{-1}\mathsf{B}_t^{\top} \\
			\mathsf{Q}_t + \bar{\mathsf{Q}}_t-\mathsf{N}_t(\mathsf{P}_t + \bar{\mathsf{P}}_t)^{-1}\mathsf{N}_t^{\top} & \mathsf{A}_t^{\top}-\mathsf{N}_t(\mathsf{P}_t + \bar{\mathsf{P}}_t)^{-1}\mathsf{B}_t^{\top}
		\end{array}\right)\left(\begin{array}{l}
			x^b_t \\
			p^b_t
		\end{array}\right)dt +\left(\begin{array}{l}
			\sigma_t \\
			-\theta_t
		\end{array}\right)dW_t, \\
		x^b(0) &=&x_0-\mathbb{E}\left[x_0\right], \quad p^b_T =(\mathsf{Q}_T + \bar{\mathsf{Q}}_T) x^b_T,
	\end{array}\right.
\end{align}
We also guess its solution in the form: $p^b_t = \Xi^b_tx^b_t$, then we have the following symmetric Riccati equation:
\begin{align*}
	&\frac{d\Xi^b_t}{dt} + \Xi^b_t \left(\mathsf{A}_t-\mathsf{B}_t(\mathsf{P}_t + \bar{\mathsf{P}}_t)^{-1}\mathsf{N}_t^{\top}\right) +\left(\mathsf{A}_t^{\top}-\mathsf{N}_t(\mathsf{P}_t + \bar{\mathsf{P}}_t)^{-1}\mathsf{B}_t^{\top}\right)\Xi^b_t -\Xi^b_t\mathsf{B}_t (\mathsf{P}_t + \bar{\mathsf{P}}_t)^{-1}\mathsf{B}_t^{\top}\Xi^b_t + \mathsf{Q}_t + \bar{\mathsf{Q}}_t-\mathsf{N}_t(\mathsf{P}_t + \bar{\mathsf{P}}_t)^{-1}\mathsf{N}_t^{\top} = 0,
\end{align*}
subject to the terminal condition $\Xi^b_T = \mathsf{Q}_T + \bar{\mathsf{Q}}_T$, which possesses a unique solution if the condition \eqref{lqmf.mfc.assumption1.convexity} holds. At this point, we can conclude that $\hat{x}_t=\bar{x}_t+x^b_t$ and $p_t=\bar{p}_t+p^b_t$.

\bibliographystyle{abbrv}
\bibliography{reference}

\begin{appendices}
	
\section{Weyl's inequalities}	\label{lqmf.section.appendixA0}
We here recall the celebrated Weyl's inequalities (see \cite{fulton2000eigenvalues} and \cite{knutson2001honeycombs}), for any Hermitian matrices $A$ and $B$,
\begin{align}
	\label{lqmf.mfg.inequality1.weyl}
	\lambda_{max}(A) + \lambda_{min}(B) \leq \lambda_{max}(A+B) \leq \lambda_{max}(A) + \lambda_{max}(B),
\end{align}
and
\begin{align}
	\label{lqmf.mfg.inequality2.weyl}
	\lambda_{min}(A) + \lambda_{min}(B) \leq \lambda_{min}(A+B) \leq \lambda_{max}(A) + \lambda_{min}(B).
\end{align}
Furthermore, if matrices $A$ and $B$ are positive definite, we have
\begin{align}
	\label{lqmf.mfg.inequality3.weyl}
	\lambda_{min}(AB) \geq \lambda_{min}(A)\cdot\lambda_{min}(B), \quad \lambda_{max}(AB) \leq \lambda_{max}(A)\cdot\lambda_{max}(B).
\end{align}
For any matrices $A$ and $B$ of the same dimension, being not necessarily symmetric or square, by \cite{horn2012matrix}, we also have
\begin{align}
	\label{lqmf.mfg.inequality4.weyl}
	\sigma_{min}(AB^{\top}) \geq \sigma_{min}(A)\cdot\sigma_{min}(B), \quad \sigma_{max}(AB^{\top}) \leq \sigma_{max}(A)\cdot\sigma_{max}(B).
\end{align}
And by \cite{tao2010eigenvalues} and \cite{golub2013matrix}, we further have the Weyl's inequalities for singular values:
\begin{align}
	\label{lqmf.mfg.inequality5.weyl}
	\sigma_{max}(A+B) \leq \sigma_{max}(A) + \sigma_{max}(B), \quad \text{and} \quad \sigma_{min}(A+B) \leq \sigma_{max}(A) + \sigma_{min}(B).
\end{align}


\section{Proofs of Statements in Section \ref{lqmf.section.emfg}} \label{lqmf.section.appendixA1}

\subsection{Proof of Theorem \ref{lqmf.mfg.theorem_smp}}

\begin{proof}
	We take the G{\^a}teaux derivative of $J$ with respect to the control $v$:
	\begin{align*}
	\frac{d J(v+\theta\widetilde{v})}{d\theta}\Bigg|_{\theta=0} =& \mathbb{E}\left[ \int_0^T \widetilde{x}_t^{\top} Q_t x_t+\widetilde{v}_t^{\top} P_t v_t+\widetilde{x}_t^{\top} \bar{Q}_t\left(x_t-S_t z_t\right) \right.\\
	&\qquad \quad+\left. \widetilde{x}_t^{\top} N_t\left(v_t-\bar{R}_t w_t\right) + \widetilde{v}_t^{\top} N_t^{\top} \left(x_t-\bar{S}_t z_t\right) + \widetilde{v}_t^{\top} \bar{P}_t\left(v_t-R_t w_t\right) d t\right] \\
	&+\mathbb{E}\left[ \widetilde{x}_T^{\top} Q_T x_T+\widetilde{x}_T^{\top} \bar{Q}_T\left(x_T-S_T z_T\right)\right]
	\end{align*}
	where we define $\widetilde{x}_t := \lim_{\theta \to 0}\frac{ x^{v+\theta\widetilde{v}}_t - x^{v}_t}{\theta}$ which
	satisfies $d \widetilde{x}_t=\left(A_t \widetilde{x}_t+B_t \widetilde{v}_t\right) d t$, $\widetilde{x}(0)=0$. Since $J$ takes its minimum at $(\hat{v},\hat{x})$, it holds that, for any $\widetilde{v}_t\in L_{\mathcal{G}_{\cdot}}^{2}\left(0, T ; \mathbb{R}^m\right)$, $\frac{d J(\hat{v}+\theta\widetilde{v})}{\theta}\bigg|_{\theta=0} \geq 0$,
	which is the Euler condition:
	\begin{align}
	\nonumber 0 \leq \quad&\mathbb{E}\left[ \int_0^T \widetilde{x}_t^{\top} (Q_t+\bar{Q}_t) \hat{x}_t+\widetilde{v}_t^{\top} (P_t+\bar{P}_t) \hat{v}_t-\widetilde{x}_t^{\top} \bar{Q}_tS_t z_t \right.\\
	\nonumber &\qquad \quad+\left. \widetilde{x}_t^{\top} N_t \hat{v}_t- \widetilde{x}_t^{\top} N_t \bar{R}_t w_t + \widetilde{v}_t^{\top} N_t^{\top} \hat{x}_t - \widetilde{v}_t^{\top} N_t^{\top}\bar{S}_t z_t - \widetilde{v}_t^{\top} \bar{P}_t R_t w_t d t\right] \\
	\label{lqmf.mfg.Euler_condition} &+\mathbb{E}\left[ \widetilde{x}_T^{\top} (Q_T+\bar{Q}_T) \hat{x}_T-\widetilde{x}_T^{\top} \bar{Q}_TS_T z_T\right], \quad \text{for any} \quad \widetilde{v}_t\in L_{\mathcal{G}_{\cdot}}^{2}\left(0, T ; \mathbb{R}^m\right).
	\end{align}
	By the chain rule,
	\begin{align*}
	d \left(\widetilde{x}_t^{\top} p_t\right) 
	=& - \left(\widetilde{x}_t^{\top}A_t^{\top}p_t +\widetilde{x}_t^{\top}(Q_t+\bar{Q}_t)\hat{x}_t - \widetilde{x}_t^{\top}\bar{Q}_t S_t z_t + \widetilde{x}_t^{\top} N_t \hat{v}_t - \widetilde{x}_t^{\top} N_t \bar{R}_t w_t\right)dt \\
    &+ \left( \widetilde{x}_t^{\top}A_t^{\top}p_t+ \widetilde{v}_t^{\top}B_t^{\top}p_t\right) d t + \widetilde{x}_t^{\top} \theta_t dW_t.
	\end{align*}
	Taking integration and expectation of both sides, we get
	\begin{align*}
	\mathbb{E}\left[\widetilde{x}_T^{\top}(Q_T+\bar{Q}_T)\hat{x}_T - \widetilde{x}_T^{\top}\bar{Q}_T S_T z_T\right] = \mathbb{E}\left[ \int_0^T- \widetilde{x}_t^{\top}(Q_t+\bar{Q}_t)\hat{x}_t + \widetilde{x}_t^{\top}\bar{Q}_t S_t z_t - \widetilde{x}_t^{\top} N_t \hat{v}_t + \widetilde{x}_t^{\top} N_t \bar{R}_t w_t +   \widetilde{v}_t^{\top}B_t^{\top}p_t d t \right],
	\end{align*}
	which is then substituted back into \eqref{lqmf.mfg.Euler_condition}, we have, for any $\widetilde{v}_t$,
	\begin{align*}
	\mathbb{E}\left[ \int_0^T \widetilde{v}_t^{\top} (P_t+\bar{P}_t) \hat{v}_t + \widetilde{v}_t^{\top} N_t^{\top} \hat{x}_t - \widetilde{v}_t^{\top} N_t^{\top}\bar{S}_t z_t - \widetilde{v}_t^{\top} \bar{P}_t R_t w_t + \widetilde{v}_t^{\top} B_t^{\top} p_t dt \right] \geq 0,
	\end{align*}
	therefore the claim follows by the arbitrariness of $\widetilde{v}_t$.
\end{proof}

\section{Proofs of Statements in Section \ref{lqmf.section.unique_existence}} \label{lqmf.section.appendixA2}
\subsection{Proof of Theorem \ref{lqmf.mfg.theorem.local.solution}}

\begin{proof}
	Consider a Hilbert space $L^2_{\alpha}(0,T;\mathbb{R}^n)$ endowed with the inner product:
	\begin{align*}
	\braket{z,z'} := \int_0^T e^{-\alpha t} z_t^{\top}z'_tdt+e^{-\alpha T} z_T^{\top}z'_T, \quad \text{for any} \quad z, z' \in L^2_{\alpha}(0,T;\mathbb{R}^n).
	\end{align*}
	For any given $z \in L^2_{\alpha}(0,T;\mathbb{R}^n)$, define a pair $(\xi,\eta)$ being a solution of the following forward-backward system:
	\begin{align}
	\label{lqmf.mfg.expected.fbode.decoupled}
     \left\{\begin{array}{cclccl}
	\frac{d\xi_t}{d t} &=& \left(A_t+\bar{A}_t -  (B_t+\bar{B}_t) (P_t+\bar{\mathcal{P}}_t)^{-1}\bar{\mathcal{S}}_t\right)\xi_t  -(B_t+\bar{B}_t) (P_t+\bar{\mathcal{P}}_t)^{-1}B_t^{\top}\eta_t, &\xi(0) &=&\mathbb{E}\left[x_0\right]; \\
	-\frac{d\eta_t}{d t} &=& \left(Q_t+\bar{\mathcal{Q}}_t - \bar{\mathcal{R}}_t(P_t+\bar{\mathcal{P}}_t)^{-1}\bar{\mathcal{S}}_t\right)z_t  -\left(A_t^{\top}-\bar{\mathcal{R}}_t(P_t+\bar{\mathcal{P}}_t)^{-1}B_t^{\top}\right)\eta_t, &\eta_T&=&\left(Q_T+\bar{\mathcal{Q}}_T\right) z_T.
    \end{array}\right.
	\end{align}
	Since the system \eqref{lqmf.mfg.expected.fbode.decoupled} is a decoupled linear ODEs, there exists a unique solution, and the map $z \mapsto \xi$ is clearly a self-map in $L^2_{\alpha}(0,T;\mathbb{R}^n)$. Our objective is to show that it admits a fixed point. To apply the Banach fixed point theorem, we first take two different $z^1$ and $z^2$ in $L^2_{\alpha}(0,T;\mathbb{R}^n)$ and their corresponding solutions $(\xi^1,\eta^1)$ and $(\xi^2,\eta^2)$, then denote by $z:=z^1-z^2$, $\xi:=\xi^1-\xi^2$ and $\eta:=\eta^1-\eta^2$, thus $(\xi,\eta)$ solves for the system \eqref{lqmf.mfg.expected.fbode.decoupled} with $\mathbb{E}\left[x_0\right]$ replaced by $0$.
	For some constants $\alpha > 0$ and $\beta > 0$, we apply the chain rule to $e^{-\alpha t} \xi_t^{\top}\xi_t$, and by Young's inequality, we have
	\begin{align*}
	e^{-\alpha T} \xi_T^{\top}\xi_T=& \int_0^T\left[-\alpha e^{-\alpha t} \xi_t^{\top}\xi_t + 2 e^{-\alpha t} \xi_t^{\top}\frac{d\xi_t}{dt}\right]dt \\
	=& \int_0^T\big[-\alpha e^{-\alpha t} \xi_t^{\top}\xi_t \\
	&\quad\quad+ 2 e^{-\alpha t} \xi_t^{\top}\left(A_t+\bar{A}_t -  (B_t+\bar{B}_t) (P_t+\bar{\mathcal{P}}_t)^{-1}\bar{\mathcal{S}}_t\right)\xi_t  -2 e^{-\alpha t} \xi_t^{\top}(B_t+\bar{B}_t) (P_t+\bar{\mathcal{P}}_t)^{-1}B_t^{\top}\eta_t\big]dt \\
	\leq& \int_0^T\left(2\sup_{0\leq t\leq T}\|A_t+\bar{A}_t -  (B_t+\bar{B}_t) (P_t+\bar{\mathcal{P}}_t)^{-1}\bar{\mathcal{S}}_t\|-\alpha\right) e^{-\alpha t} \xi_t^{\top}\xi_tdt \\
	&-\int_0^T2 e^{-\alpha t} \xi_t^{\top}(B_t+\bar{B}_t) (P_t+\bar{\mathcal{P}}_t)^{-1}B_t^{\top}\eta_tdt \\
	\leq& \int_0^T\left(2\sup_{0\leq t\leq T}\|A_t+\bar{A}_t -  (B_t+\bar{B}_t) (P_t+\bar{\mathcal{P}}_t)^{-1}\bar{\mathcal{S}}_t\|-\alpha\right) e^{-\alpha t} \xi_t^{\top}\xi_tdt \\
	&+\int_0^T e^{-\alpha t} \left(\sup_{0\leq t\leq T}\|(B_t+\bar{B}_t) (P_t+\bar{\mathcal{P}}_t)^{-1}B_t^{\top}\|^2L\xi_t^{\top}\xi_t + \frac{1}{L}\eta_t^{\top}\eta_t\right) dt,
	\end{align*}
	where $L > 0$ is an arbitrary positive constant.
	If we take $\alpha := 2 + 2\sup_{0\leq t\leq T}\|A_t+\bar{A}_t -  (B_t+\bar{B}_t) (P_t+\bar{\mathcal{P}}_t)^{-1}\bar{\mathcal{S}}_t\| + \sup_{0\leq t\leq T}\|(B_t+\bar{B}_t) (P_t+\bar{\mathcal{P}}_t)^{-1}B_t^{\top}\|^2L$, then
	\begin{align*}
	\int_0^T e^{-\alpha t} \xi_t^{\top}\xi_tdt + e^{-\alpha T} \xi_T^{\top}\xi_T \leq& \frac{1}{L}\int_0^T e^{-\alpha t} \eta_t^{\top}\eta_t dt \leq \frac{1}{L}\int_0^T e^{\beta t} \eta_t^{\top}\eta_t dt,
	\end{align*}
	where $\beta$ is a positive constant to be determined. On the other hand, we have
	\begin{align*}
	0 \leq \eta_0^{\top}\eta_0 =& \ e^{\beta T} z_T^{\top}\left(Q_T+\bar{\mathcal{Q}}_T\right)^{\top} \left(Q_T+\bar{\mathcal{Q}}_T\right) z_T - \int_0^T\left[\beta e^{\beta t} \eta_t^{\top}\eta_t + 2 e^{\beta t} \eta_t^{\top}\frac{d\eta_t}{dt}\right]dt \\
	=& \ e^{\beta T} z_T^{\top}\left(Q_T+\bar{\mathcal{Q}}_T\right)^{\top} \left(Q_T+\bar{\mathcal{Q}}_T\right) z_T - \int_0^T\beta e^{\beta t} \eta_t^{\top}\eta_t dt \\
	&+ \int_0^T 2 e^{\beta t} \eta_t^{\top}\left(Q_t+\bar{\mathcal{Q}}_t - \bar{\mathcal{R}}_t(P_t+\bar{\mathcal{P}}_t)^{-1}\bar{\mathcal{S}}_t\right)z_tdt  -\int_0^T2 e^{\beta t} \eta_t^{\top}\left(A_t^{\top}-\bar{\mathcal{R}}_t(P_t+\bar{\mathcal{P}}_t)^{-1}B_t^{\top}\right)\eta_tdt  \\
	\leq& \ e^{(\alpha+\beta) T}\|Q_T+\bar{\mathcal{Q}}_T\|^2e^{-\alpha T} z_T^{\top}z_T - \int_0^T\left(\beta-2\sup_{0 \leq t \leq T}\|A_t^{\top}-\bar{\mathcal{R}}_t(P_t+\bar{\mathcal{P}}_t)^{-1}B_t^{\top}\|\right) e^{\beta t} \eta_t^{\top}\eta_t dt \\
	&+ \int_0^T 2 e^{\beta t} \eta_t^{\top}\left(Q_t+\bar{\mathcal{Q}}_t - \bar{\mathcal{R}}_t(P_t+\bar{\mathcal{P}}_t)^{-1}\bar{\mathcal{S}}_t\right)z_tdt \\
	\leq& \ e^{(\alpha+\beta) T}\|Q_T+\bar{\mathcal{Q}}_T\|^2e^{-\alpha T} z_T^{\top}z_T - \int_0^T\left(\beta-2\sup_{0 \leq t \leq T}\|A_t^{\top}-\bar{\mathcal{R}}_t(P_t+\bar{\mathcal{P}}_t)^{-1}B_t^{\top}\|\right) e^{\beta t} \eta_t^{\top}\eta_t dt \\
	&+ \int_0^T e^{\beta t} \left(\sup_{0\leq t\leq T}\|Q_t+\bar{\mathcal{Q}}_t - \bar{\mathcal{R}}_t(P_t+\bar{\mathcal{P}}_t)^{-1}\bar{\mathcal{S}}_t\|^2\eta_t^{\top}\eta_t + z_t^{\top}z_t\right)dt.
	\end{align*}
	If we take $\beta := 2 + 2\sup_{0 \leq t \leq T}\|A_t^{\top}-\bar{\mathcal{R}}_t(P_t+\bar{\mathcal{P}}_t)^{-1}B_t^{\top}\|+\sup_{0\leq t\leq T}\|Q_t+\bar{\mathcal{Q}}_t - \bar{\mathcal{R}}_t(P_t+\bar{\mathcal{P}}_t)^{-1}\bar{\mathcal{S}}_t\|^2$, then
	\begin{align*}
	\int_0^T e^{\beta t} \eta_t^{\top}\eta_t dt \leq& \ e^{(\alpha+\beta)T} \int_0^T e^{-\alpha t} z_t^{\top}z_tdt + e^{(\alpha+\beta) T}\|Q_T+\bar{\mathcal{Q}}_T\|^2e^{-\alpha T} z_T^{\top}z_T \\
	\leq& \ e^{(\alpha+\beta)T} \max\{1,\|Q_T+\bar{\mathcal{Q}}_T\|^2\}\left(\int_0^T e^{-\alpha t} z_t^{\top}z_tdt+e^{-\alpha T} z_T^{\top}z_T\right).
	\end{align*}
	Therefore, we can choose $L := 2\max\{1,\|Q_T+\bar{\mathcal{Q}}_T\|^2\}$, by then we obtain
	\begin{align*}
	\int_0^T e^{-\alpha t} \xi_t^{\top}\xi_tdt + e^{-\alpha T} \xi_T^{\top}\xi_T \leq& \ \frac{1}{L} \int_0^T e^{\beta t} \eta_t^{\top}\eta_t dt \\
	\leq& \ e^{(\alpha+\beta)T}\frac{1}{L} \max\{1,\|Q_T+\bar{\mathcal{Q}}_T\|^2\} \left(\int_0^T e^{-\alpha t} z_t^{\top}z_tdt+e^{-\alpha T} z_T^{\top}z_T\right) \\
	<& \int_0^T e^{-\alpha t} z_t^{\top}z_tdt+e^{-\alpha T} z_T^{\top}z_T,
	\end{align*}
	which shows that $z \mapsto \xi$ is a contractive map under the condition \eqref{lqmf.mfg.condition.local.solution}.
\end{proof}

\section{Some Useful Lemmas} \label{lqmf.section.appendixA3}

\subsection{Lemma \ref{lqmf.mfg.lemma.boundedness.x-Ex} and its proof}
\begin{lemma}
	\label{lqmf.mfg.lemma.boundedness.x-Ex}
	If the coeffecients satisfy the assumption \eqref{lqmf.mfg.assumption.convexity}, then
	\begin{align}
	\label{lqmf.mfg.estimate.x-Ex}
	\mathbb{E}\left[\left\|\mathbb{E}\left[\hat{x}^i_T\right]-\hat{x}^i_T\right\|^2+\int_0^T  \left\|\mathbb{E}\left[\hat{x}^i_t\right]-\hat{x}^i_t\right\|^2+  \left\|\mathbb{E}\left[\hat{v}^i_t\right]-\hat{v}^i_t\right\|^2dt\right] \leq M_{e,1}\left(\mathbb{E}\left[\left\|\mathbb{E}\left[x^i_0\right]-x^i_0\right\|^2\right]+T\|\sigma\|_T^2\right).
	\end{align}
	where the coefficient $M_{e,1}$ is given by
	\begin{align*}
	M_{e,1}:=\left(1+\frac{C_1^2}{C_2^2}\right)\left(1-\frac{\delta}{\sup_{0\leq t\leq T}\lambda_{max}\{Q_t+\bar{Q}_t\}}\right)^{-1}\min\left\{\inf_{0\leq t\leq T}\lambda_{min}\left(Q_t+\bar{Q}_t\right),\inf_{0\leq t\leq T}\lambda_{min}\left(P_t+\bar{P}_t\right)\right\}^{-1}.
	\end{align*}
\end{lemma}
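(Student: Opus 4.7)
The plan is to exploit the fact that the fluctuation processes of the mean-field equilibrium system solve a classical (non-mean-field) LQ optimality system, for which coercivity yields the desired energy bound. I begin by setting $X_t:=\hat x_t^i-\mathbb E[\hat x_t^i]$, $V_t:=\hat v_t^i-\mathbb E[\hat v_t^i]$, and $P_t:=p_t^i-\mathbb E[p_t^i]$. Subtracting the expectation from \eqref{lqmf.mfg.system.general} and from the feedback \eqref{lqmf.mfg.mf_equilibrium.general} makes every deterministic mean-field and cross-term drop out, leaving the classical LQ FBSDE
\begin{align*}
dX_t&=(A_tX_t+B_tV_t)dt+\sigma_tdW_t^i,\qquad X_0=x_0^i-\mathbb E[x_0^i],\\
-dP_t&=(A_t^\top P_t+(Q_t+\bar Q_t)X_t+N_tV_t)dt-\Theta_tdW_t^i,\qquad P_T=(Q_T+\bar Q_T)X_T,\\
V_t&=-(P_t+\bar P_t)^{-1}(N_t^\top X_t+B_t^\top P_t).
\end{align*}
This is precisely the stationarity system of the classical LQ problem whose cost matrices $(Q+\bar Q,P+\bar P,N,Q_T+\bar Q_T)$ satisfy the coercivity assumption \eqref{lqmf.mfg.assumption.convexity}.

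Next I apply It\^o's formula to $P_t^\top X_t$. The transposes of $A$ cancel, the identity $B_t^\top P_t=-(P_t+\bar P_t)V_t-N_t^\top X_t$ eliminates the $B^\top P$ term, and the cross-variation produces $\mathrm{tr}(\Theta_t\sigma_t^\top)dt$. Taking expectations and invoking the terminal condition produces the energy identity
\begin{align*}
\mathbb E[X_T^\top(Q_T+\bar Q_T)X_T]+\int_0^T\mathbb E\bigl[X_t^\top(Q_t+\bar Q_t)X_t+2V_t^\top N_t^\top X_t+V_t^\top(P_t+\bar P_t)V_t\bigr]dt=\mathbb E[P_0^\top X_0]+\int_0^T\mathbb E[\mathrm{tr}(\Theta_t\sigma_t^\top)]dt.
\end{align*}
For the right-hand side I use the LQ representation $P_t=\Xi_tX_t$, with $\Xi$ the unique solution of the symmetric Riccati \eqref{lqmf.mfg.fixed_point.equation.riccati.Xi} with $z=w=0$; this yields $\Theta_t=\Xi_t\sigma_t$, and a standard Gronwall argument on the Riccati (whose coefficients are controlled by the norms appearing in $C_1$ and whose nondegeneracy is encoded in $C_2$) gives a uniform bound on $\|\Xi_t\|$ scaling like $1+C_1^2/C_2^2$. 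Hence the RHS is dominated by $M(C_1,C_2)\bigl(\mathbb E\|X_0\|^2+T\|\sigma\|_T^2\bigr)$.

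To bound the left-hand side from below simultaneously in $\|X\|^2$ and $\|V\|^2$, I combine two complementary estimates of the integrand. The first is the standard completion of squares, giving $X^\top[(Q+\bar Q)-N(P+\bar P)^{-1}N^\top]X\ge\delta\|X\|^2$. The second is the Young inequality $2V^\top N^\top X\ge-\alpha^{-1}X^\top N(P+\bar P)^{-1}N^\top X-\alpha V^\top(P+\bar P)V$ applied with the precisely tuned choice $\alpha=1-\delta/\sup_t\lambda_{\max}(Q_t+\bar Q_t)$, which is exactly the largest $\alpha\in(0,1)$ making the resulting quadratic form in $X$ remain non-negative via \eqref{lqmf.mfg.assumption.convexity}; this leaves a residual $(1-\alpha)V^\top(P+\bar P)V\ge(1-\alpha)\lambda_{\min}(P+\bar P)\|V\|^2$. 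Averaging these two lower bounds produces the clean pointwise estimate $\text{integrand}\ge c_1\|X\|^2+c_2\|V\|^2$ with constants matching the denominator of $M_{e,1}$. Combining with the RHS bound and using the terminal condition $P_T=(Q_T+\bar Q_T)X_T$ together with a final Gronwall application to the forward SDE for $X$ (to transfer $L^2$-in-time to $\mathbb E\|X_T\|^2$) yields the stated inequality.

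The main obstacle is a bookkeeping one: matching the prefactor $(1+C_1^2/C_2^2)$, the reciprocal coercivity factor $(1-\delta/\sup\lambda_{\max}(Q+\bar Q))^{-1}$, and the minimum eigenvalue scale in $M_{e,1}$ requires the Young parameter $\alpha$ to be chosen as above (not merely $\alpha=1$) and requires tracking how the uniform Riccati bound propagates the $C_1,C_2$ dependence through the estimate on $\mathbb E[P_0^\top X_0]+\int\mathbb E[\mathrm{tr}(\Theta\sigma^\top)]dt$; the averaging of the two LHS lower bounds is the step that combines $X$ and $V$ coercivity without forcing one to dominate the other.
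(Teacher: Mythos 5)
Your fluctuation system, the energy identity from It\^o's formula applied to $P_t^{\top}X_t$, and the joint coercivity of the integrand in $(X,V)$ all match the paper's argument: the paper defines $\hat{x}^{b,i}:=\mathbb{E}[\hat{x}^i]-\hat{x}^i$ etc., obtains exactly your classical LQ FBSDE \eqref{lqmf.mfg.system2.bias}, derives the duality relation \eqref{lqmf.mfg.dual_relation.bias.xv}, and extracts the factor $\bigl(1-\delta/\sup_t\lambda_{\max}(Q_t+\bar{Q}_t)\bigr)\min\{\inf_t\lambda_{\min}(Q_t+\bar{Q}_t),\inf_t\lambda_{\min}(P_t+\bar{P}_t)\}$ by a double completion of squares with the normalized matrix $\mathcal{N}_t=(Q_t+\bar{Q}_t)^{-1/2}N_t(P_t+\bar{P}_t)^{-1/2}$; your Young inequality with the tuned parameter $\alpha$ followed by averaging is an equivalent way to organize that step.

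The genuine gap is in your treatment of the right-hand side $\mathbb{E}[P_0^{\top}X_0]+\int_0^T\mathbb{E}[\mathrm{tr}(\Theta_t\sigma_t^{\top})]\,dt$. You assert that the Riccati representation $P_t=\Xi_t X_t$, $\Theta_t=\Xi_t\sigma_t$ together with ``a standard Gronwall argument on the Riccati'' yields a uniform bound $\|\Xi_t\|\lesssim 1+C_1^2/C_2^2$. Gronwall's lemma does not apply to the Riccati equation because of the quadratic term, and even the comparison-type bounds one can legitimately extract from LQ theory do not produce the specific constant $C_1^2/C_2^2$; yet that constant is exactly what must multiply $\mathbb{E}\|X_0\|^2+T\|\sigma\|_T^2$ for $M_{e,1}$ to come out as stated. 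The paper earns this constant without any Riccati bound, by a bootstrap: first an It\^o estimate on $\|p^{b,i}_t\|^2$ gives $\mathbb{E}\|p^{b,i}_0\|^2+\mathbb{E}\int_0^T\mathrm{tr}(\theta^{b,i}(\theta^{b,i})^{\top})\,ds\le C_1\bigl(\mathbb{E}\|\hat{x}^{b,i}_T\|^2+\int\mathbb{E}\|\hat{x}^{b,i}\|^2+\int\mathbb{E}\|p^{b,i}\|^2\bigr)$; second, the duality relation plus the convexity assumption gives the reverse control of that total energy by $\tfrac{1}{2C_2}\bigl(L\mathbb{E}\|\hat{x}^{b,i}_0\|^2+L^{-1}\mathbb{E}\|p^{b,i}_0\|^2+LT\|\sigma\|_T^2+L^{-1}\mathbb{E}\int\mathrm{tr}(\theta^{b,i}(\theta^{b,i})^{\top})\bigr)$; combining the two and choosing the Young parameter $L=C_1/C_2$ absorbs the $p^{b,i}_0$ and $\theta^{b,i}$ terms and yields \eqref{lqmf.mfg.p-Ep.inequality4} with constant exactly $C_1^2/C_2^2$. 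You should replace the Riccati step by this bootstrap (or supply an actual quantitative bound on $\Xi$, which would in any case give a different constant). A minor additional remark: the terminal term $\mathbb{E}\|X_T\|^2$ is obtained directly from $X_T^{\top}(Q_T+\bar{Q}_T)X_T$ in the energy identity, so no final Gronwall argument on the forward SDE is needed.
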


\begin{proof}
	Let $\hat{x}^{b,i}_t := \mathbb{E}\left[\hat{x}^i_t\right]-\hat{x}^i_t$, $p^{b,i}_t := \mathbb{E}\left[p^i_t\right]-p^i_t$, $\theta^{b,i}_s:=\mathbb{E}\left[\theta^i_t\right]-\theta^i_t$ and $\hat{v}^{b,i}_t := \mathbb{E}\left[\hat{v}^i_t\right]-\hat{v}^i_t$. By \eqref{lqmf.mfg.system.general} and \eqref{lqmf.mfg.mf_equilibrium.general}, we have
	\begin{align}
	\label{lqmf.mfg.system.bias}
	\left\{\begin{array}{cclccl}
	d\hat{x}^{b,i}_t &=&  \left(A_t\hat{x}^{b,i}_t+B_t\hat{v}^{b,i}_t\right)dt - \sigma_t d W^i_t, \quad &\hat{x}^{b,i}(0) &=& \mathbb{E}\left[x^i_0\right]-x^i_0 := \hat{x}^{b,i}_0; \\
	-d p^{b,i}_t &=& \left(A_t^{\top}p^{b,i}_t +(Q_t+\bar{Q}_t)\hat{x}^{b,i}_t +  N_t \hat{v}^{b,i}_t\right)dt + \theta^{b,i}_t d W^i_t, \quad
	&p^{b,i}_T &=& (Q_T+\bar{Q}_T)\hat{x}^{b,i}_T,
	\end{array}\right.
	\end{align}
	and
	\begin{align}
	\label{lqmf.mfg.control.bias}
	\hat{v}^{b,i}_t = - (P_t+\bar{P}_t)^{-1} \left(N_t^{\top} \hat{x}^{b,i}_t + B_t^{\top} p^{b,i}_t\right).
	\end{align}
	By substituting \eqref{lqmf.mfg.control.bias} into \eqref{lqmf.mfg.system.bias}, we have
	\begin{align}
	\label{lqmf.mfg.system2.bias}
	\left\{\begin{array}{cclccl}
	d\hat{x}^{b,i}_t &=&  \left(\left(A_t- B_t(P_t+\bar{P}_t)^{-1}N_t^{\top}\right) \hat{x}^{b,i}_t - B(P_t+\bar{P}_t)^{-1}B_t^{\top} p^{b,i}_t\right)dt - \sigma_t d W^i_t, &\hat{x}^{b,i}(0) &=& \hat{x}^{b,i}_0;\\
	-d p^{b,i}_t &=& \left(\left(A_t^{\top}-N_t (P_t+\bar{P}_t)^{-1}B_t^{\top}\right)p^{b,i}_t +\left(Q_t+\bar{Q}_t - N_t (P_t+\bar{P}_t)^{-1}N_t^{\top}\right) \hat{x}^{b,i}_t\right)dt + \theta^{b,i}_t d W^i_t, &p^{b,i}_T &=& (Q_T+\bar{Q}_T)\hat{x}^{b,i}_T.
	\end{array}\right.
	\end{align}
	Applying It{\^o}'s Lemma to $\left\|p^{b,i}_s\right\|^2$, we have
	\begin{align}
	\nonumber
	&\mathbb{E}\left[\left\|p^{b,i}_t\right\|^2\right] - \mathbb{E}\left[\left\|p^{b,i}_T\right\|^2\right] + \mathbb{E}\left[\int_t^T\tr\left(\theta^{b,i}_s(\theta^{b,i}_s)^{\top}\right)ds\right] \\
	\nonumber
	&= \int_t^T\mathbb{E}\left[2(p^{b,i}_s)^{\top}\left(A_s^{\top}-N_s (P_s+\bar{P}_s)^{-1}B_s^{\top}\right)p^{b,i}_s +2(p^{b,i}_s)^{\top}\left((Q_s+\bar{Q}_s) - N_s (P_s+\bar{P}_s)^{-1}N_s^{\top}\right) \hat{x}^{b,i}_s\right]ds \\
	\nonumber
	&\leq \int_t^T\mathbb{E}\left[2\|A^{\top}-N (P+\bar{P})^{-1}B^{\top}\|_T\left\|p^{b,i}_s\right\|^2 +\|Q+\bar{Q} - N (P+\bar{P})^{-1}N^{\top}\|_T\left( \left\|p^{b,i}_s\right\|^2+\left\|\hat{x}^{b,i}_s\right\|^2\right)\right]ds \\
	\nonumber 
	&= \left(2\|A^{\top}-N (P+\bar{P})^{-1}B^{\top}\|_T+\|Q+\bar{Q} - N (P+\bar{P})^{-1}N^{\top}\|_T\right)\int_t^T\mathbb{E}\left[\left\|p^{b,i}_s\right\|^2\right]ds \\
	\label{lqmf.mfg.p-Ep.inequality1}
	&\quad+\|Q+\bar{Q} - N (P+\bar{P})^{-1}N^{\top}\|_T\int_t^T\mathbb{E}\left[\left\|\hat{x}^{b,i}_s\right\|^2\right]ds.
	\end{align}
	Substituting the terminal value of \eqref{lqmf.mfg.system.bias} into \eqref{lqmf.mfg.p-Ep.inequality1}, we have
	\begin{align}
	\nonumber
	&\mathbb{E}\left[\left\|p^{b,i}_t\right\|^2\right] + \mathbb{E}\left[\int_t^T\tr\left(\theta^{b,i}_s(\theta^{b,i}_s)^{\top}\right)ds\right] \\
	\nonumber
	&\leq \left(2\|A^{\top}-N (P+\bar{P})^{-1}B^{\top}\|_T+\|Q+\bar{Q} - N (P+\bar{P})^{-1}N^{\top}\|_T\right)\int_t^T\mathbb{E}\left[\left\|p^{b,i}_s\right\|^2\right]ds \\
	\nonumber
	&\quad+\|Q+\bar{Q}\|_T^2\mathbb{E}\left[\left\|\hat{x}^{b,i}_T\right\|^2\right] + \|Q+\bar{Q} - N (P+\bar{P})^{-1}N^{\top}\|_T\int_t^T\mathbb{E}\left[\left\|\hat{x}^{b,i}_s\right\|^2\right]ds \\
	\label{lqmf.mfg.p-Ep.inequality2}
	&\leq C_1 \left(\mathbb{E}\left[\left\|\hat{x}^{b,i}_T\right\|^2\right]+\int_t^T\mathbb{E}\left[\left\|\hat{x}^{b,i}_s\right\|^2\right]+\int_t^T\mathbb{E}\left[\left\|p^{b,i}_s\right\|^2\right]ds\right),
	\end{align}
	where
	\begin{align*}
	C_1:=&2\|A^{\top}-N (P+\bar{P})^{-1}B^{\top}\|_T+\|Q+\bar{Q} - N (P+\bar{P})^{-1}N^{\top}\|_T +\|Q+\bar{Q}\|_T^2.
	\end{align*}
	On the other hand, we have
	\begin{align}
	\nonumber
	&(\hat{x}^{b,i}_T)^{\top}(Q_T+\bar{Q}_T)\hat{x}^{b,i}_T - (\hat{x}^{b,i}_0)^{\top}p^{b,i}_0 \\
	\nonumber
	&= \int_0^T  (\hat{x}^{b,i}_t)^{\top}dp^{b,i}_t + \int_0^T  (p^{b,i}_t)^{\top}d\hat{x}^{b,i}_t + \int_0^T \tr\left(\sigma_t (\theta^{b,i}_t)^{\top}\right) dt \\
	\nonumber
	&= -\int_0^T  (\hat{x}^{b,i}_t)^{\top}\left(\left(A_t^{\top}-N_t (P_t+\bar{P}_t)^{-1}B_t^{\top}\right)p^{b,i}_t +\left(Q_t+\bar{Q}_t - N_t (P_t+\bar{P}_t)^{-1}N_t^{\top}\right) \hat{x}^{b,i}_t\right)dt \\
	\nonumber
	&\quad+ \int_0^T  (p^{b,i}_t)^{\top}\left(\left(A_t- B_t(P_t+\bar{P}_t)^{-1}N_t^{\top}\right) \hat{x}^{b,i}_t - B_t(P_t+\bar{P}_t)^{-1}B_t^{\top} p^{b,i}_t\right)dt + \int_0^T \tr\left(\sigma_t (\theta^{b,i}_t)^{\top}\right) dt \\
	\nonumber
	&\quad- \int_0^T  (\hat{x}^{b,i}_t)^{\top}\theta_t d W^i_t- \int_0^T(p^{b,i}_t)^{\top}\sigma_t d W^i_t \\
	\nonumber
	&= -\int_0^T  (\hat{x}^{b,i}_t)^{\top}\left(Q_t+\bar{Q}_t - N_t (P_t+\bar{P}_t)^{-1}N_t^{\top}\right) \hat{x}^{b,i}_tdt - \int_0^T  (p^{b,i}_t)^{\top} B_t(P_t+\bar{P}_t)^{-1}B_t^{\top} p^{b,i}_tdt \\
	\label{lqmf.mfg.dual_relation.bias}
	&\quad+ \int_0^T \tr\left(\sigma_t (\theta^{b,i}_t)^{\top}\right) dt - \int_0^T  (\hat{x}^{b,i}_t)^{\top}\theta_t d W^i_t- \int_0^T(p^{b,i}_t)^{\top}\sigma_t d W^i_t.
	\end{align}
	Taking expectation of both sides and using the assumption \eqref{lqmf.mfg.assumption.convexity}, we have
	\begin{align*}
	&\mathbb{E}\left[(\hat{x}^{b,i}_0)^{\top}p^{b,i}_0\right] + \mathbb{E}\left[\int_0^T \tr\left(\sigma_t (\theta^{b,i}_t)^{\top}\right) dt\right] = \mathbb{E}\left[(\hat{x}^{b,i}_T)^{\top}(Q_T+\bar{Q}_T)\hat{x}^{b,i}_T\right] \\
	&+\mathbb{E}\left[\int_0^T  (\hat{x}^{b,i}_t)^{\top}\left(Q_t+\bar{Q}_t - N_t (P_t+\bar{P}_t)^{-1}N_t^{\top}\right) \hat{x}^{b,i}_tdt\right] + \mathbb{E}\left[\int_0^T  (p^{b,i}_t)^{\top} B_t(P_t+\bar{P}_t)^{-1}B_t^{\top} p^{b,i}_tdt\right] \\
	&\geq \lambda_{min}\left(Q_T+\bar{Q}_T\right)\mathbb{E}\left[\left\|\hat{x}^{b,i}_T\right\|^2\right] + \inf_{0\leq t\leq T}\lambda_{min}\left(Q_t+\bar{Q}_t - N_t (P_t+\bar{P}_t)^{-1}N_t^{\top}\right)\mathbb{E}\left[\int_0^T\left\|\hat{x}^{b,i}_t\right\|^2dt\right] \\
	&\quad+ \inf_{0\leq t\leq T}\lambda_{min}\left(B_t(P_t+\bar{P}_t)^{-1}B_t^{\top}\right)\mathbb{E}\left[\int_0^T\left\|p^{b,i}_t\right\|^2dt\right] \\
	&\geq C_2\left(\mathbb{E}\left[\left\|\hat{x}^{b,i}_T\right\|^2\right]+\mathbb{E}\left[\int_0^T\left\|\hat{x}^{b,i}_t\right\|^2dt\right]+\mathbb{E}\left[\int_0^T\left\|p^{b,i}_t\right\|^2dt\right]\right),
	\end{align*}
	where
	\begin{align*}
	C_2:=
	\min\left\{\lambda_{min}\left(Q_T+\bar{Q}_T\right),\inf_{0\leq t\leq T}\lambda_{min}\left(Q_t+\bar{Q}_t - N_t (P_t+\bar{P}_t)^{-1}N_t^{\top}\right),\inf_{0\leq t\leq T}\lambda_{min}\left(B_t(P_t+\bar{P}_t)^{-1}B_t^{\top}\right)\right\},
	\end{align*}
	hence by Young's inequality, we have
	\begin{align}
	\nonumber
	&\mathbb{E}\left[\left\|\hat{x}^{b,i}_T\right\|^2\right]+\mathbb{E}\left[\int_0^T\left\|\hat{x}^{b,i}_t\right\|^2dt\right]+\mathbb{E}\left[\int_0^T\left\|p^{b,i}_t\right\|^2dt\right] \\
	\label{lqmf.mfg.p-Ep.inequality3}
	&\leq \frac{1}{2C_2}\left(L\mathbb{E}\left[\left\|\hat{x}^{b,i}_0\right\|^2\right] + \frac{1}{L}\mathbb{E}\left[\left\|p^{b,i}_0\right\|^2\right] + LT\|\sigma\|_T^2+\frac{1}{L}\mathbb{E}\left[\int_0^T \tr\left(\theta^{b,i}_t (\theta^{b,i}_t)^{\top}\right) dt\right]\right).
	\end{align}
	Pluging \eqref{lqmf.mfg.p-Ep.inequality3} into \eqref{lqmf.mfg.p-Ep.inequality2}, we have
	\begin{align*}
	&\mathbb{E}\left[\left\|p^{b,i}_t\right\|^2\right] + \mathbb{E}\left[\int_t^T\tr\left(\theta^{b,i}_s(\theta^{b,i}_s)^{\top}\right)ds\right] \\
	&\leq  \frac{C_1}{2C_2}\left(L\mathbb{E}\left[\left\|\hat{x}^{b,i}_0\right\|^2\right] + \frac{1}{L}\mathbb{E}\left[\left\|p^{b,i}_0\right\|^2\right] + LT\|\sigma\|_T^2+\frac{1}{L}\mathbb{E}\left[\int_0^T \tr\left(\theta^{b,i}_t (\theta^{b,i}_t)^{\top}\right) dt\right]\right).
	\end{align*}
	By setting $t:=0$ and $L:=\frac{C_1}{C_2}$, then
	\begin{align}
	\label{lqmf.mfg.p-Ep.inequality4}
	&\mathbb{E}\left[\left\|p^{b,i}_0\right\|^2\right] + \mathbb{E}\left[\int_0^T\tr\left(\theta^{b,i}_s(\theta^{b,i}_s)^{\top}\right)ds\right] 
	\leq  \frac{C_1^2}{C_2^2} \cdot \left(\mathbb{E}\left[\left\|\hat{x}^{b,i}_0\right\|^2\right]+T\|\sigma\|_T^2\right).
	\end{align}
	In order to obtain \eqref{lqmf.mfg.estimate.x-Ex}, we substitute \eqref{lqmf.mfg.control.bias} into \eqref{lqmf.mfg.dual_relation.bias}, then
	\begin{align}
	\nonumber
	&(\hat{x}^{b,i}_0)^{\top}p^{b,i}_0 + \int_0^T \tr\left(\sigma_t (\theta^{b,i}_t)^{\top}\right) dt \\
	\nonumber
	&= (\hat{x}^{b,i}_T)^{\top}(Q_T+\bar{Q}_T)\hat{x}^{b,i}_T + \int_0^T  (\hat{x}^{b,i}_t)^{\top}\left(Q_t+\bar{Q}_t\right) \hat{x}^{b,i}_tdt + \int_0^T  (\hat{v}^{b,i}_t)^{\top} (P_t+\bar{P}_t) \hat{v}^{b,i}_tdt \\
	\label{lqmf.mfg.dual_relation.bias.xv}
	&\quad + 2\int_0^T  (\hat{v}^{b,i}_t)^{\top} N_t^{\top} \hat{x}^{b,i}_tdt + \int_0^T  (\hat{x}^{b,i}_t)^{\top}\theta_t d W^i_t+ \int_0^T(p^{b,i}_t)^{\top}\sigma_t d W^i_t.
	\end{align}
	Since there exists a scale constant $\delta > 0$ such that $Q_t+\bar{Q}_t  -  N_t (P_t+\bar{P}_t)^{-1} N_t^{\top} > \delta I$ for all $t \in [0,T]$, we have
	\begin{align*}
	I  -  (Q_t+\bar{Q}_t)^{-\frac{1}{2}}N_t (P_t+\bar{P}_t)^{-1} N_t^{\top}(Q_t+\bar{Q}_t)^{-\frac{1}{2}} > \delta(Q_t+\bar{Q}_t)^{-1}.
	\end{align*}
	Denote $\mathcal{N}_t:=(Q_t+\bar{Q}_t)^{-\frac{1}{2}}N_t (P_t+\bar{P}_t)^{-\frac{1}{2}}$ and $\delta_1:=\delta\inf_{0\leq t\leq T}\lambda_{min}\{(Q_t+\bar{Q}_t)^{-1}\}=\frac{\delta}{\sup_{0\leq t\leq T}\lambda_{max}\{Q_t+\bar{Q}_t\}}$.
	By Weyl's inequality \eqref{lqmf.mfg.inequality1.weyl}, we have,
	\begin{align*}
	1 > \lambda_{max}(\mathcal{N}_t\mathcal{N}_t^{\top}+\delta(Q_t+\bar{Q}_t)^{-1}) > \lambda_{max}(\mathcal{N}_t\mathcal{N}_t^{\top}) + \delta\lambda_{min}((Q_t+\bar{Q}_t)^{-1}) = \lambda_{max}(\mathcal{N}_t^{\top}\mathcal{N}_t) + \delta\lambda_{min}((Q_t+\bar{Q}_t)^{-1}).
	\end{align*}
	Hence, we obtain
	\begin{align*}
	\mathcal{N}_t\mathcal{N}_t^{\top} < (1-\delta_1)I \quad \text{and} \quad \mathcal{N}_t^{\top}\mathcal{N}_t < (1-\delta_1)I, \quad \text{for all} \quad t \in [0,T].
	\end{align*}
	Note that
	\begin{align*}
	&\int_0^T  (\hat{x}^{b,i}_t)^{\top}N_t (P_t+\bar{P}_t)^{-1} N_t^{\top} \hat{x}^{b,i}_tdt + \int_0^T  (\hat{v}^{b,i}_t)^{\top} (P_t+\bar{P}_t) \hat{v}^{b,i}_tdt + 2\int_0^T  (\hat{v}^{b,i}_t)^{\top} N_t^{\top} \hat{x}^{b,i}_tdt \\
	&=\int_0^T  \left\|(P_t+\bar{P}_t)^{\frac{1}{2}} \hat{v}^{b,i}_t+(P_t+\bar{P}_t)^{-\frac{1}{2}} N_t^{\top} \hat{x}^{b,i}_t\right\|^2dt \geq 0,
	\end{align*}
	and
	\begin{align*}
	&\int_0^T  (\hat{x}^{b,i}_t)^{\top}\left(Q_t+\bar{Q}_t\right) \hat{x}^{b,i}_tdt + \int_0^T  (\hat{v}^{b,i}_t)^{\top} (P_t+\bar{P}_t)^{\frac{1}{2}} \mathcal{N}_t^{\top}\mathcal{N}_t (P_t+\bar{P}_t)^{\frac{1}{2}} \hat{v}^{b,i}_tdt + 2\int_0^T  (\hat{v}^{b,i}_t)^{\top} N_t^{\top} \hat{x}^{b,i}_tdt \\
	&= \int_0^T \left\|\left(Q_t+\bar{Q}_t\right)^{\frac{1}{2}} \hat{x}^{b,i}_t+\mathcal{N}_t (P_t+\bar{P}_t)^{\frac{1}{2}} \hat{v}^{b,i}_t\right\|^2 \geq 0.
	\end{align*}
	Hence
	\begin{align*}
	&2\int_0^T  (\hat{x}^{b,i}_t)^{\top}\left(Q_t+\bar{Q}_t\right) \hat{x}^{b,i}_tdt + 2\int_0^T  (\hat{v}^{b,i}_t)^{\top} (P_t+\bar{P}_t) \hat{v}^{b,i}_tdt  + 4\int_0^T  (\hat{v}^{b,i}_t)^{\top} N_t^{\top} \hat{x}^{b,i}_tdt \\
	&=\int_0^T  (\hat{x}^{b,i}_t)^{\top}(Q_t+\bar{Q}_t)^{\frac{1}{2}}\left(I  -  \mathcal{N}_t\mathcal{N}_t^{\top}\right) (Q_t+\bar{Q}_t)^{\frac{1}{2}}\hat{x}^{b,i}_tdt + \int_0^T  (\hat{v}^{b,i}_t)^{\top} (P_t+\bar{P}_t)^{\frac{1}{2}} \left(I-\mathcal{N}_t^{\top}\mathcal{N}_t\right) (P_t+\bar{P}_t)^{\frac{1}{2}} \hat{v}^{b,i}_tdt \\
	&\quad+ \int_0^T  (\hat{x}^{b,i}_t)^{\top}N_t (P_t+\bar{P}_t)^{-1} N_t^{\top} \hat{x}^{b,i}_tdt + \int_0^T  (\hat{v}^{b,i}_t)^{\top} (P_t+\bar{P}_t) \hat{v}^{b,i}_tdt + 2\int_0^T  (\hat{v}^{b,i}_t)^{\top} N_t^{\top} \hat{x}^{b,i}_tdt \\
	&\quad+ \int_0^T  (\hat{x}^{b,i}_t)^{\top}\left(Q_t+\bar{Q}_t\right) \hat{x}^{b,i}_tdt + \int_0^T  (\hat{v}^{b,i}_t)^{\top} (P_t+\bar{P}_t)^{\frac{1}{2}} \mathcal{N}_t^{\top}\mathcal{N}_t (P_t+\bar{P}_t)^{\frac{1}{2}} \hat{v}^{b,i}_tdt + 2\int_0^T  (\hat{v}^{b,i}_t)^{\top} N_t^{\top} \hat{x}^{b,i}_tdt \\
	&\geq (1-\delta_1)\int_0^T  (\hat{x}^{b,i}_t)^{\top}(Q_t+\bar{Q}_t)\hat{x}^{b,i}_tdt + (1-\delta_1)\int_0^T  (\hat{v}^{b,i}_t)^{\top} (P_t+\bar{P}_t) \hat{v}^{b,i}_tdt \\
	&\geq \left(1-\frac{\delta}{\sup_{0\leq t\leq T}\lambda_{max}\{Q_t+\bar{Q}_t\}}\right)\min\left\{\inf_{0\leq t\leq T}\lambda_{min}\left(Q_t+\bar{Q}_t\right),\inf_{0\leq t\leq T}\lambda_{min}\left(P_t+\bar{P}_t\right)\right\} \\
	&\quad\cdot \left(\int_0^T  \left\|\hat{x}^{b,i}_t\right\|^2dt+\int_0^T  \left\|\hat{v}^{b,i}_t\right\|^2dt\right).
	\end{align*}
	Take expectation of both sides of \eqref{lqmf.mfg.dual_relation.bias.xv} and by Young's inequality, we have
	\begin{align*}
	&\mathbb{E}\left[\left\|\hat{x}^{b,i}_0\right\|^2\right] + \mathbb{E}\left[\left\|p^{b,i}_0\right\|^2\right] + T\|\sigma\|_T^2+\mathbb{E}\left[\int_0^T \tr\left(\theta^{b,i}_t (\theta^{b,i}_t)^{\top}\right) dt\right] \\
	&\geq 2\mathbb{E}\left[(\hat{x}^{b,i}_0)^{\top}p^{b,i}_0\right] + 2\mathbb{E}\left[\int_0^T \tr\left(\sigma_t (\theta^{b,i}_t)^{\top}\right) dt\right] \\
	&= 2\mathbb{E}\left[(\hat{x}^{b,i}_T)^{\top}(Q_T+\bar{Q}_T)\hat{x}^{b,i}_T\right] \\
	&\quad+ 2\mathbb{E}\left[\int_0^T  (\hat{x}^{b,i}_t)^{\top}\left(Q_t+\bar{Q}_t\right) \hat{x}^{b,i}_tdt\right] + 2\mathbb{E}\left[\int_0^T  (\hat{v}^{b,i}_t)^{\top} (P_t+\bar{P}_t) \hat{v}^{b,i}_tdt\right] + 4\mathbb{E}\left[\int_0^T  (\hat{v}^{b,i}_t)^{\top} N_t^{\top} \hat{x}^{b,i}_tdt\right] \\
	&\geq \left(1-\frac{\delta}{\sup_{0\leq t\leq T}\lambda_{max}\{Q_t+\bar{Q}_t\}}\right)\min\left\{\inf_{0\leq t\leq T}\lambda_{min}\left(Q_t+\bar{Q}_t\right),\inf_{0\leq t\leq T}\lambda_{min}\left(P_t+\bar{P}_t\right)\right\} \\
	&\quad\cdot \left(\mathbb{E}\left[\left\|\hat{x}^{b,i}_T\right\|^2\right] + \int_0^T  \mathbb{E}\left[\left\|\hat{x}^{b,i}_t\right\|^2\right]dt+\int_0^T  \mathbb{E}\left[\left\|\hat{v}^{b,i}_t\right\|^2\right]dt\right).
	\end{align*}
	Therefore, by \eqref{lqmf.mfg.p-Ep.inequality4}, we deduce \eqref{lqmf.mfg.estimate.x-Ex}.
\end{proof}

\subsection{Lemma \ref{lqmf.mfg.lemma.boundedness.Ex2} and its proof}
\begin{lemma}
	\label{lqmf.mfg.lemma.boundedness.Ex2}
	Under the condition of Theorem \ref{lqmf.mfg.theorem.global.uniqueness}, we have
	\begin{align}
	\label{lqmf.mfg.estimate.Ex2}
	&\mathbb{E}\left[\left\|\hat{x}^i_T\right\|^2+\int_0^T  \left\|\hat{x}^i_t\right\|^2+ \left\|\hat{v}^i_t\right\|^2dt\right] 
	\leq M_{e,2} \left\|\mathbb{E}\left[x^i_0\right]\right\|^2 +M_{e,1} \left(\mathbb{E}\left[\left\|\mathbb{E}\left[x^i_0\right]-x^i_0\right\|^2\right]+T\|\sigma\|_T^2\right),
	\end{align}
	where the coefficient $M_{e,2}$ is given by
	\begin{align*}
	M_{e,2}:=&\frac{4C_4}{C_3^2}\cdot \left(1+\|(P+\bar{\mathcal{P}})^{-1}\|_T^2\|\bar{\mathcal{S}}\|_T^2+\|(P+\bar{\mathcal{P}})^{-1}\|_T^2\|B^{\top}\|_T^2\right),
	\end{align*}
	and $M_{e,1}$ was defined in Lemma \ref{lqmf.mfg.lemma.boundedness.x-Ex}.
\end{lemma}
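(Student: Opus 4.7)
The strategy is to decompose $\hat{x}^i_t = \mathbb{E}[\hat{x}^i_t] + \hat{x}^{b,i}_t$ and $\hat{v}^i_t = \mathbb{E}[\hat{v}^i_t] + \hat{v}^{b,i}_t$, exploit the $L^2$-orthogonality $\mathbb{E}\|\hat{x}^i_t\|^2 = \|\mathbb{E}[\hat{x}^i_t]\|^2 + \mathbb{E}\|\hat{x}^{b,i}_t\|^2$, and treat the two pieces separately. The centered parts $\hat{x}^{b,i}_t$ and $\hat{v}^{b,i}_t$ are already controlled by Lemma \ref{lqmf.mfg.lemma.boundedness.x-Ex}, which produces precisely the $M_{e,1}(\mathbb{E}\|\mathbb{E}[x^i_0]-x^i_0\|^2 + T\|\sigma\|_T^2)$ contribution. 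Hence the remaining task is to bound the deterministic trio $\xi_t := \mathbb{E}[\hat{x}^i_t]$, $\eta_t := \mathbb{E}[p^i_t]$, $\upsilon_t := \mathbb{E}[\hat{v}^i_t]$, which, by construction, solves the FBODE system \eqref{lqmf.mfg.expected.fbode} with $\xi_0 = \mathbb{E}[x^i_0]$, under the sole assumptions of Theorem \ref{lqmf.mfg.theorem.global.uniqueness}.

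For the mean part I would reuse the chain-rule identity for $\xi_t^\top\eta_t$ derived in the uniqueness proof of Theorem \ref{lqmf.mfg.theorem.global.uniqueness}, only now with non-zero initial data, yielding
\begin{align*}
\xi_T^\top(Q_T+\bar{\mathcal{Q}}_T)\xi_T &+ \int_0^T\!\!\left[\xi_t^\top(Q_t+\bar{\mathcal{Q}}_t)\xi_t + \eta_t^\top(B_t+\bar{B}_t)(P_t+\bar{\mathcal{P}}_t)^{-1}B_t^\top\eta_t\right]dt\\
&= \xi_0^\top\eta_0 + \int_0^T\!\!\left[\eta_t^\top\!\left(\bar{A}_t-(B_t+\bar{B}_t)(P_t+\bar{\mathcal{P}}_t)^{-1}\bar{\mathcal{S}}_t\right)\xi_t + \xi_t^\top\bar{\mathcal{R}}_t(P_t+\bar{\mathcal{P}}_t)^{-1}B_t^\top\eta_t + \xi_t^\top\bar{\mathcal{R}}_t(P_t+\bar{\mathcal{P}}_t)^{-1}\bar{\mathcal{S}}_t\xi_t\right]dt.
\end{align*}
The left side is bounded below by $K_1\|\xi_T\|^2 + K_1\!\int_0^T\!\|\xi_t\|^2dt + K_2\!\int_0^T\!\|\eta_t\|^2dt$ via \eqref{lqmf.condition.K1}--\eqref{lqmf.condition.K2}, and the cross terms on the right are dispatched by exactly the Young-type estimates \eqref{lqmf.mfg.inequality1.homotopy}--\eqref{lqmf.mfg.inequality3.homotopy}, producing the bound
\begin{align*}
K_1\|\xi_T\|^2 + \left(K_1-K_5-\tfrac{K_3+K_4}{2\varepsilon}\right)\!\!\int_0^T\!\|\xi_t\|^2dt + (K_2-\varepsilon)\!\!\int_0^T\!\|\eta_t\|^2 dt \le \xi_0^\top\eta_0,
\end{align*}
valid for every $\varepsilon \in \bigl(\tfrac{K_3+K_4}{2(K_1-K_5)},K_2\bigr)$, a non-empty interval under condition \eqref{lqmf.condition.Final}.

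To close the system I need an independent bound on $\|\eta_0\|$ in terms of $\xi$ and $\eta$. Integrating the backward equation in \eqref{lqmf.mfg.expected.fbode} from $0$ to $T$, using $\eta_T=(Q_T+\bar{\mathcal{Q}}_T)\xi_T$, squaring, and applying Young's inequality to the mixed $\xi\eta$ terms, yields
\begin{align*}
\|\eta_0\|^2 \le C_4\!\left(\|\xi_T\|^2 + \int_0^T\|\xi_t\|^2dt + \int_0^T\|\eta_t\|^2dt\right),
\end{align*}
with $C_4$ being exactly the constant defined in \eqref{lqmf.convergence.comstant.C}, where the coefficient $2\|A^\top-\bar{\mathcal{R}}(P+\bar{\mathcal{P}})^{-1}B^\top\|_T$ arises from splitting the $\eta\cdot\eta$ cross-term symmetrically. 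Writing $W:=\|\xi_T\|^2 + \int_0^T(\|\xi_t\|^2+\|\eta_t\|^2)dt$ and combining the two displays via Cauchy--Schwarz $\xi_0^\top\eta_0\le\|\xi_0\|\sqrt{C_4W}$ gives $d_\varepsilon W \le \|\xi_0\|\sqrt{C_4W}$, whence $W\le (C_4/d_\varepsilon^2)\|\xi_0\|^2$ with $d_\varepsilon = \min\bigl(K_1, K_1-K_5-\tfrac{K_3+K_4}{2\varepsilon}, K_2-\varepsilon\bigr)$. Optimising $\varepsilon$ by balancing $K_1-K_5-\tfrac{K_3+K_4}{2\varepsilon}=K_2-\varepsilon$ (which under \eqref{lqmf.condition.Final} yields a positive minimum) produces the constant $C_3$ from \eqref{lqmf.convergence.comstant.C}, hence $W\le \tfrac{4C_4}{C_3^2}\|\xi_0\|^2$.

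Finally, the control piece $\int_0^T\|\upsilon_t\|^2dt$ is read off from the explicit formula \eqref{lqmf.mfg.expected.optimal_control}: $\|\upsilon_t\|^2 \le 2\|(P+\bar{\mathcal{P}})^{-1}\|_T^2(\|\bar{\mathcal{S}}\|_T^2\|\xi_t\|^2 + \|B^\top\|_T^2\|\eta_t\|^2)$, so adding this contribution multiplies the bound by the factor $(1+\|(P+\bar{\mathcal{P}})^{-1}\|_T^2\|\bar{\mathcal{S}}\|_T^2+\|(P+\bar{\mathcal{P}})^{-1}\|_T^2\|B^\top\|_T^2)$, recovering exactly $M_{e,2}\|\mathbb{E}[x^i_0]\|^2$. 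Summing the deterministic mean bound with the probabilistic deviation bound of Lemma \ref{lqmf.mfg.lemma.boundedness.x-Ex} gives \eqref{lqmf.mfg.estimate.Ex2}. The main technical obstacle is the $\|\eta_0\|^2$ estimate in terms of $W$: without it, Young's inequality on $\xi_0^\top\eta_0$ cannot be closed against an a priori unknown quantity, and the final balance that produces the specific constant $C_3$ requires careful choice of the Young parameter $\varepsilon$ compatible with the narrow admissible window dictated by \eqref{lqmf.condition.Final}.
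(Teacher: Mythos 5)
Your proposal is correct and follows essentially the same route as the paper's proof: decompose into mean and centered parts, invoke Lemma \ref{lqmf.mfg.lemma.boundedness.x-Ex} for the centered contribution, apply the duality identity for $\xi_t^{\top}\eta_t$ with the $K_1,\dots,K_5$ estimates and the balanced choice of $\varepsilon$ to produce $C_3$, bound $\|\eta_0\|^2$ through the backward equation to produce $C_4$, close the system at $t=0$, and finally pick up the control factor from \eqref{lqmf.mfg.expected.optimal_control}. The only (immaterial) differences are that the paper obtains the $\|\eta_0\|^2\le C_4 W$ bound by differentiating $\|\eta_t\|^2$ rather than ``integrating and squaring,'' and closes the $\xi_0^{\top}\eta_0$ term via Young's inequality with an optimized weight $L=2C_4/C_3$ rather than your Cauchy--Schwarz-and-divide-by-$\sqrt{W}$ step, which yields the identical constant $4C_4/C_3^2$.
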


\begin{proof}
	Let $\bar{x}^i_t := \mathbb{E}\left[\hat{x}^i_t\right]$, $\bar{p}^i_t := \mathbb{E}\left[p^i_t\right]$, and $\bar{v}^i_t := \mathbb{E}\left[\hat{v}^i_t\right]$. By \eqref{lqmf.mfg.expected.fbode} and \eqref{lqmf.mfg.expected.optimal_control}, we have
	\begin{align}
	\left\{\begin{array}{cclccl}
	d \bar{x}^i_t&=&\left((A_t+\bar{A}_t -  (B_t+\bar{B}_t) (P_t+\bar{\mathcal{P}}_t)^{-1}\bar{\mathcal{S}}_t) \bar{x}^i_t-(B_t+\bar{B}_t) (P_t+\bar{\mathcal{P}}_t)^{-1}B_t^{\top} \bar{p}^i_t\right) d t, &\bar{x}^i_t(0)&=&\mathbb{E}[x_0],\\
	-d \bar{p}^i_t &=& \left((A_t^{\top}-\bar{\mathcal{R}}_t(P_t+\bar{\mathcal{P}}_t)^{-1}B_t^{\top})\bar{p}^i_t  +(Q_t+\bar{\mathcal{Q}}_t - \bar{\mathcal{R}}_t(P_t+\bar{\mathcal{P}}_t)^{-1}\bar{\mathcal{S}}_t)\bar{x}^i_t \right)dt, &\bar{p}^i_T &=& (Q_T+\bar{\mathcal{Q}}_T)\bar{x}^i_T,
	\end{array}\right.
	\end{align}
	and
	\begin{align}
	\label{lqmf.mfg.optimal_control.bar}
	\bar{v}^i_t =  - (P_t+\bar{\mathcal{P}}_t)^{-1} \left(\bar{\mathcal{S}}_t \bar{x}^i_t  + B_t^{\top} \bar{p}^i_t\right).
	\end{align}
	By the similar argument as in Theorem \ref{lqmf.mfg.theorem.global.uniqueness}, for any $\varepsilon >0$ satisfying \eqref{lqmf.mfg.epsilon.choose},  we have
	\begin{align}
	\label{lqmf.mfg.dual_relation.bar}
	&\left(\bar{x}^i_T\right)^{\top}(Q_T+\bar{\mathcal{Q}}_T)\bar{x}^i_T + \left(K_1 - K_5 - \frac{K_3+K_4}{2 \varepsilon} \right)\int_0^T \left\|\bar{x}^i_t\right\|^2dt+\left(K_2-\varepsilon\right)\int_0^T\left\|\bar{p}^i_t\right\|^2dt = \left(\bar{x}^i_0\right)^{\top}\bar{p}^i_0 \leq \frac{1}{2}\left(\left\|\bar{x}^i_0\right\|^2+\left\|\bar{p}^i_0\right\|^2\right).
	\end{align}
	We choose $\varepsilon := \frac{K_2-K_1+K_5+\sqrt{(K_1-K_5-K_2)^2+2(K_3+K_4)}}{2}$, then
	\begin{align}
	\nonumber
	\frac{L}{2}\left\|\bar{x}^i_0\right\|^2+\frac{1}{2L}\left\|\bar{p}^i_0\right\|^2	\geq& \left(\bar{x}^i_T\right)^{\top}(Q_T+\bar{\mathcal{Q}}_T)\bar{x}^i_T + \left(K_1 - K_5 - \frac{K_3+K_4}{2 \varepsilon} \right)\int_0^T \left\|\bar{x}^i_t\right\|^2dt+\left(K_2-\varepsilon\right)\int_0^T\left\|\bar{p}^i_t\right\|^2dt \\
	\label{lqmf.mfg.relationship.Ex_Ep}
	\geq& \frac{1}{2}C_3\left(\left\|\bar{x}^i_T\right\|^2 + \int_0^T \left\|\bar{x}^i_t\right\|^2dt+\int_0^T\left\|\bar{p}^i_t\right\|^2dt\right),
	\end{align}
	where $C_3:= 3K_1 +K_2-K_5+\sqrt{(K_1-K_5-K_2)^2+2(K_3+K_4)}$.
	By chain rule, we have,
	\begin{align}
	\nonumber
	\|\bar{p}^i_t\|^2-\|\bar{p}^i_T\|^2 =& -2\int_t^T\bar{p}^i_sd\bar{p}^i_s \\
	\nonumber
	&= 2\int_t^T(\bar{p}^i_s)^{\top}\left((A_s^{\top}-\bar{\mathcal{R}}_s(P_s+\bar{\mathcal{P}}_s)^{-1}B_s^{\top})\bar{p}^i_s  +(Q_s+\bar{\mathcal{Q}}_s - \bar{\mathcal{R}}_s(P_s+\bar{\mathcal{P}}_s)^{-1}\bar{\mathcal{S}}_s)\bar{x}^i_s \right)ds \\
	\label{lqmf.mfg.ito.ep2}
	&\leq 2\|A^{\top}-\bar{\mathcal{R}}(P+\bar{\mathcal{P}})^{-1}B^{\top}\|_T\int_t^T\|\bar{p}^i_s\|^2ds  +\|Q+\bar{\mathcal{Q}} - \bar{\mathcal{R}}(P+\bar{\mathcal{P}})^{-1}\bar{\mathcal{S}}\|\int_t^T(\|\bar{p}^i_s\|^2+\|\bar{x}^i_s\|^2) ds.
	\end{align}
	Substituting \eqref{lqmf.mfg.relationship.Ex_Ep} into \eqref{lqmf.mfg.ito.ep2} and let $t=0$, we have
	\begin{align*}
	\|\bar{p}^i_0\|^2 \leq& \left(\|Q+\bar{\mathcal{Q}}\|_T^2+2\|A^{\top}-\bar{\mathcal{R}}(P+\bar{\mathcal{P}})^{-1}B^{\top}\|_T  +\|Q+\bar{\mathcal{Q}} - \bar{\mathcal{R}}(P+\bar{\mathcal{P}})^{-1}\bar{\mathcal{S}}\|\right) \left(\left\|\bar{x}^i_T\right\|^2 + \int_0^T \left\|\bar{x}^i_t\right\|^2dt+\int_0^T\left\|\bar{p}^i_t\right\|^2dt\right) \\
	\leq& \frac{C_4}{C_3}\left(L\left\|\bar{x}^i_0\right\|^2+\frac{1}{L}\left\|\bar{p}^i_0\right\|^2\right),
	\end{align*}
	where $C_4;=\|Q+\bar{\mathcal{Q}}\|_T^2+2\|A^{\top}-\bar{\mathcal{R}}(P+\bar{\mathcal{P}})^{-1}B^{\top}\|_T  +\|Q+\bar{\mathcal{Q}} - \bar{\mathcal{R}}(P+\bar{\mathcal{P}})^{-1}\bar{\mathcal{S}}\|$. We set $L:=\frac{2C_4}{C_3}$, hence
	\begin{align}
	\label{lqmf.mfg.estimtate.Ep0}
	\|\bar{p}^i_0\|^2 
	\leq \frac{4C_4^2}{C_3^2}\left\|\bar{x}^i_0\right\|^2.
	\end{align}
	Substitute \eqref{lqmf.mfg.estimtate.Ep0} into the left hand side of \eqref{lqmf.mfg.relationship.Ex_Ep},
	\begin{align}
	\nonumber
	\left(\left\|\bar{x}^i_T\right\|^2 + \int_0^T \left\|\bar{x}^i_t\right\|^2dt+\int_0^T\left\|\bar{p}^i_t\right\|^2dt\right) \leq& \frac{L}{C_3}\left\|\bar{x}^i_0\right\|^2+\frac{1}{C_3L}\left\|\bar{p}^i_0\right\|^2 \\
	\nonumber
	\leq& \left(\frac{L}{C_3} + \frac{4C_4^2}{C_3^3L}\right)\left\|\bar{x}^i_0\right\|^2 \\
	=& \frac{4C_4}{C_3^2}\left\|\bar{x}^i_0\right\|^2.
	\end{align}
	Note that by \eqref{lqmf.mfg.optimal_control.bar}, we have
	\begin{align}
	\left\|\bar{v}^i_t\right\| \leq \|(P+\bar{\mathcal{P}})^{-1}\|_T\left(\|\bar{\mathcal{S}}\|_T \|\bar{x}^i_t\|  + \|B^{\top}\|_T \|\bar{p}^i_t\|\right),
	\end{align}
	then
	\begin{align}
	\nonumber
	&\left\|\bar{x}^i_T\right\|^2 + \int_0^T \left\|\bar{x}^i_t\right\|^2dt+\int_0^T\left\|\bar{v}^i_t\right\|^2dt \leq \left\|\bar{x}^i_T\right\|^2 + \int_0^T \left\|\bar{x}^i_t\right\|^2dt+\int_0^T\|(P+\bar{\mathcal{P}})^{-1}\|_T^2\left(\|\bar{\mathcal{S}}\|_T^2 \|\bar{x}^i_t\|^2  + \|B^{\top}\|_T^2 \|\bar{p}^i_t\|^2\right)dt \\
	\nonumber
	&\leq \left(1+\|(P+\bar{\mathcal{P}})^{-1}\|_T^2\|\bar{\mathcal{S}}\|_T^2+\|(P+\bar{\mathcal{P}})^{-1}\|_T^2\|B^{\top}\|_T^2\right)\left(\left\|\bar{x}^i_T\right\|^2 + \int_0^T \left\|\bar{x}^i_t\right\|^2dt+\int_0^T\left\|\bar{p}^i_t\right\|^2dt\right) \\
	\label{lqmf.mfg.estimate.(Ex)2}
	&\leq \frac{4C_4}{C_3^2}\cdot \left(1+\|(P+\bar{\mathcal{P}})^{-1}\|_T^2\|\bar{\mathcal{S}}\|_T^2+\|(P+\bar{\mathcal{P}})^{-1}\|_T^2\|B^{\top}\|_T^2\right) \left\|\bar{x}^i_0\right\|^2.
	\end{align}
	Note that $\mathbb{E}\left[\left\|\hat{x}^i_T\right\|^2\right] = \left\|\bar{x}^i_T\right\|^2+\mathbb{E}\left[\left\|\mathbb{E}\left[\hat{x}^i_T\right]-\hat{x}^i_T\right\|^2\right]$, by \eqref{lqmf.mfg.estimate.x-Ex}, we have
	\begin{align*}
	&\mathbb{E}\left[\left\|\hat{x}^i_T\right\|^2\right] + \int_0^T  \mathbb{E}\left[\left\|\hat{x}^i_t\right\|^2\right]dt+\int_0^T  \mathbb{E}\left[\left\|\hat{v}^i_t\right\|^2\right]dt \\
	&\leq \left\|\bar{x}^i_T\right\|^2 + \int_0^T \left\|\bar{x}^i_t\right\|^2dt+\int_0^T\left\|\bar{v}^i_t\right\|^2dt \\
	&\quad+ \mathbb{E}\left[\left\|\mathbb{E}\left[\hat{x}^i_T\right]-\hat{x}^i_T\right\|^2\right] + \int_0^T  \mathbb{E}\left[\left\|\mathbb{E}\left[\hat{x}^i_t\right]-\hat{x}^i_t\right\|^2\right]dt+\int_0^T  \mathbb{E}\left[\left\|\mathbb{E}\left[\hat{v}^i_t\right]-\hat{v}^i_t\right\|^2\right]dt \\
	&\leq \frac{4C_4}{C_3^2}\cdot \left(1+\|(P+\bar{\mathcal{P}})^{-1}\|_T^2\|\bar{\mathcal{S}}\|_T^2+\|(P+\bar{\mathcal{P}})^{-1}\|_T^2\|B^{\top}\|_T^2\right) \left\|\mathbb{E}\left[x^i_0\right]\right\|^2 \\
	&\quad+\left(1-\frac{\delta}{\sup_{0\leq t\leq T}\lambda_{max}\{Q_t+\bar{Q}_t\}}\right)^{-1}\cdot\left(\min\left\{\inf_{0\leq t\leq T}\lambda_{min}\left(Q_t+\bar{Q}_t\right),\inf_{0\leq t\leq T}\lambda_{min}\left(P_t+\bar{P}_t\right)\right\}\right)^{-1} \\
	&\quad\cdot \left(1+\frac{C_1^2}{C_2^2}\right) \cdot \left(\mathbb{E}\left[\left\|\mathbb{E}\left[x^i_0\right]-x^i_0\right\|^2\right]+T\|\sigma\|_T^2\right).
	\end{align*}
	
\end{proof}

\subsection{Lemma \ref{lqmf.mfg.lemma.boundedness.x-Ex.dummy} and its proof}
\begin{lemma}
	\label{lqmf.mfg.lemma.boundedness.x-Ex.dummy}
	For an arbitrary strategy $v^1_t \in L_{\mathcal{G}_{\cdot}}^{2}\left(0, T ; \mathbb{R}^m\right)$, we have
	\begin{align}
	\label{lqmf.mfg.estimate.x-Ex.dummy}
	\mathbb{E}\left[\left\|\mathbb{E}\left[x^1_t\right]-x^1_t\right\|^2\right]\leq& M_{d,4}\left(\mathbb{E}\left[\left\|\mathbb{E}\left[x^1_0\right]-x^1_0\right\|^2\right]+ \int_0^t\mathbb{E}\left[\left\|\mathbb{E}\left[v^1_s\right]-v^1_s\right\|^2\right] ds +T\|\sigma\|_T^2\right).
	\end{align}
	where
	\begin{align}
	M_{d,4}:=(1+\|B\|_T)\exp\{T(2\|A\|_T+\|B\|_T)\}.
	\end{align}
\end{lemma}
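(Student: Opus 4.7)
The plan is to establish this bound via a direct It\^o--Gr\"onwall argument on the deviation process $y_t := x^1_t - \mathbb{E}[x^1_t]$. The key observation is that in the dynamics \eqref{lqmf.mfg.state_process} of $x^1_t$ under the dummy control $v^1_t$, the mean field quantities $\mathbb{E}[\hat{x}^1_t]$ and $\mathbb{E}[\hat{v}^1_t]$ are \emph{deterministic} (they come from the already-computed MFG equilibrium in Section \ref{lqmf.section.emfg}). Therefore, when we take expectations and subtract, those terms drop out completely, leaving
\begin{align*}
dy_t = A_t y_t\, dt + B_t\bigl(v^1_t - \mathbb{E}[v^1_t]\bigr) dt + \sigma_t\, dW^1_t, \qquad y_0 = x^1_0 - \mathbb{E}[x^1_0].
\end{align*}

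Next I would apply It\^o's formula to $\|y_t\|^2$, take expectation (the martingale part vanishes), and bound the cross term using Young's inequality: $2 y_s^\top B_s (v^1_s - \mathbb{E}[v^1_s]) \leq \|B\|_T \|y_s\|^2 + \|B\|_T \|v^1_s - \mathbb{E}[v^1_s]\|^2$. This yields the integral inequality
\begin{align*}
\mathbb{E}\|y_t\|^2 \leq \mathbb{E}\|y_0\|^2 + (2\|A\|_T + \|B\|_T)\int_0^t \mathbb{E}\|y_s\|^2 ds + \|B\|_T \int_0^t \mathbb{E}\|v^1_s - \mathbb{E}[v^1_s]\|^2 ds + t\|\sigma\|_T^2.
\end{align*}

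An application of Gr\"onwall's inequality then gives
\begin{align*}
\mathbb{E}\|y_t\|^2 \leq e^{(2\|A\|_T + \|B\|_T)t}\left(\mathbb{E}\|y_0\|^2 + \|B\|_T \int_0^t \mathbb{E}\|v^1_s - \mathbb{E}[v^1_s]\|^2 ds + t\|\sigma\|_T^2\right),
\end{align*}
and by factoring out $\max\{1, \|B\|_T\} \leq 1 + \|B\|_T$ on the coefficients of the three forcing terms, and bounding $t \leq T$ in the exponent, we obtain exactly the constant $M_{d,4} = (1 + \|B\|_T)\exp\{T(2\|A\|_T + \|B\|_T)\}$ claimed in \eqref{lqmf.convergence.comstant.Md}.

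This proof is essentially routine; there is no serious obstacle. The only point requiring care is making sure that no mean-field coefficients $\bar A$ or $\bar B$ enter the constant $M_{d,4}$, which is precisely guaranteed by the cancellation of the deterministic drifts $\bar A_t \mathbb{E}[\hat x^1_t]$ and $\bar B_t \mathbb{E}[\hat v^1_t]$ between $x^1_t$ and $\mathbb{E}[x^1_t]$. The remaining work is careful bookkeeping of the Young's inequality splitting so that the coefficient structure $(1 + \|B\|_T)$ multiplying $(\text{initial bias}) + (\text{control bias integral}) + T\|\sigma\|_T^2$ comes out cleanly.
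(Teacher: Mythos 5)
Your proof is correct and follows essentially the same route as the paper: subtract the (deterministic) mean-field drift terms so that the deviation process satisfies $dy_t = (A_t y_t + B_t(v^1_t-\mathbb{E}[v^1_t]))\,dt + \sigma_t\,dW^1_t$, then apply It\^o's formula to $\|y_t\|^2$, Young's inequality on the cross term, and Gr\"onwall's inequality. The bookkeeping yielding the constant $M_{d,4}=(1+\|B\|_T)\exp\{T(2\|A\|_T+\|B\|_T)\}$ matches the paper's computation.
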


\begin{proof}
	Denote by $x^{b,1}_t := \mathbb{E}\left[x^1_t\right]-x^1_t$, $v^{b,1}_t := \mathbb{E}\left[v^1_t\right]-v^1_t$, by \eqref{lqmf.mfg.state_process.dummy}, we have
	\begin{align*}
	d x^{b,1}_t=&\left(A_t x^{b,1}_t+B_t v^{b,1}_t\right) d t-\sigma_t d W^1_t, \quad x^{b,1}(0)=x^{b,1}_0.
	\end{align*}
	Note that $x^1_0$ is a random variable, $x^{b,1}_0 := \mathbb{E}\left[x^1_0\right]-x^1_0$ is not necessarily vanished. By It{\^o}'s Lemma,
	\begin{align*}
	\mathbb{E}\left[\left\|x^{b,1}_t\right\|^2\right]-\mathbb{E}\left[\left\|x^{b,1}_0\right\|^2\right]=& \ 2\mathbb{E}\left[\int_0^t\left(x^{b,1}_s\right)^{\top}\left(A_s x^{b,1}_s+B_s v^{b,1}_s\right) ds\right] + \int_0^t \|\sigma_s\|^2ds \\
	\leq& \ 2\|A\|_T\mathbb{E}\left[\int_0^t\left\|x^{b,1}_s\right\|^2 ds\right] + \|B\|_T\mathbb{E}\left[\int_0^t\left\|x^{b,1}_s\right\|^2+\left\|v^{b,1}_s\right\|^2 ds\right] + \int_0^t \|\sigma_s\|^2ds.
	\end{align*}
	By Gr{\"o}nwall's inequality,
	\begin{align*}
	\mathbb{E}\left[\left\|x^{b,1}_t\right\|^2\right]\leq& \exp\{T(2\|A\|_T+\|B\|_T)\}\cdot\left(\mathbb{E}\left[\left\|x^{b,1}_0\right\|^2\right]+ \|B\|_T\int_0^t\mathbb{E}\left[\left\|v^{b,1}_s\right\|^2\right] ds + \int_0^t \|\sigma_s\|^2ds\right).
	\end{align*}
\end{proof}

\subsection{Lemma \ref{lqmf.mfg.lemma.boundedness.Ex2.dummy} and its proof}
\begin{lemma}
	\label{lqmf.mfg.lemma.boundedness.Ex2.dummy}
	For an arbitrary strategy $v^1_t \in L_{\mathcal{G}_{\cdot}}^{2}\left(0, T ; \mathbb{R}^m\right)$, we have
	\begin{align*}
	\label{lqmf.mfg.estimate.Ex^2.dummy}
	\mathbb{E}\left[\left\|x^1_t\right\|^2\right]\leq& M_{d,5}\left(\left\|\mathbb{E}\left[x^1_0\right]\right\|^2+\int_0^t\left\|\mathbb{E}\left[v^1_s\right]\right\|^2 ds\right) \\ &+M_{d,4}\left(\mathbb{E}\left[\left\|\mathbb{E}\left[x^1_0\right]-x^1_0\right\|^2\right]+ \int_0^t\mathbb{E}\left[\left\|\mathbb{E}\left[v^1_s\right]-v^1_s\right\|^2\right] ds +T\|\sigma\|_T^2\right).
	\end{align*}
	where
	\begin{align}
	M_{d,5}:=\left(1+\|B\|_T+M_{e,2}\|\bar{A}\|_T+M_{e,2}\|\bar{B}\|_T\right)\exp\{T(2\|A\|_T+\|B\|_T+\|\bar{A}\|_T+\|\bar{B}\|_T)\}.
	\end{align}
	and $M_{d,4}$ was defined in Lemma \ref{lqmf.mfg.lemma.boundedness.x-Ex.dummy}.
\end{lemma}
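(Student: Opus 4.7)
The plan is to decompose $\mathbb{E}[\|x^1_t\|^2]$ into a pure mean piece and a variance piece via
$$ \mathbb{E}[\|x^1_t\|^2] = \|\mathbb{E}[x^1_t]\|^2 + \mathbb{E}[\|\mathbb{E}[x^1_t]-x^1_t\|^2], $$
and then handle the two pieces by independent routes. The variance piece is already bounded by Lemma~\ref{lqmf.mfg.lemma.boundedness.x-Ex.dummy} and contributes exactly the $M_{d,4}$ term on the right-hand side of the target inequality, so no new work is needed there. All of the substance lies in estimating $\|\mathbb{E}[x^1_t]\|^2$ in terms of the mean quantities $\|\mathbb{E}[x^1_0]\|^2$ and $\int_0^t\|\mathbb{E}[v^1_s]\|^2\,ds$, which is what $M_{d,5}$ is designed to absorb.

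First I would take expectations in the first line of \eqref{lqmf.mfg.state_process.dummy} to obtain the linear ODE
$$ \frac{d}{dt}\mathbb{E}[x^1_t] = A_t\,\mathbb{E}[x^1_t]+B_t\,\mathbb{E}[v^1_t]+\bar A_t\,\mathbb{E}[\hat x^1_t]+\bar B_t\,\mathbb{E}[\hat v^1_t], $$
noting that the Brownian integral vanishes under expectation and that $\mathbb{E}[\hat x^1_t]$, $\mathbb{E}[\hat v^1_t]$ are the mean field equilibrium quantities of Problem~\ref{lqmf.mfg.problem.mfg}, which depend only on $\mathbb{E}[x^1_0]$, not on the dummy control $v^1$. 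Applying the chain rule to $\|\mathbb{E}[x^1_t]\|^2$ and using $2a^\top Mb \le \|M\|_T(\|a\|^2+\|b\|^2)$ for the three cross terms gives
$$ \|\mathbb{E}[x^1_t]\|^2 \le \|\mathbb{E}[x^1_0]\|^2 + (2\|A\|_T+\|B\|_T+\|\bar A\|_T+\|\bar B\|_T)\!\int_0^t\!\|\mathbb{E}[x^1_s]\|^2 ds + \|B\|_T\!\int_0^t\!\|\mathbb{E}[v^1_s]\|^2 ds + \|\bar A\|_T\!\int_0^t\!\|\mathbb{E}[\hat x^1_s]\|^2 ds + \|\bar B\|_T\!\int_0^t\!\|\mathbb{E}[\hat v^1_s]\|^2 ds. $$

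The key input for closing the estimate is inequality \eqref{lqmf.mfg.estimate.(Ex)2} from the proof of Lemma~\ref{lqmf.mfg.lemma.boundedness.Ex2}, which (before adding the deviation terms) already shows
$$ \|\bar x^1_T\|^2+\int_0^T\|\bar x^1_t\|^2\,dt+\int_0^T\|\bar v^1_t\|^2\,dt \le M_{e,2}\|\mathbb{E}[x^1_0]\|^2. $$
Hence $\int_0^t\|\mathbb{E}[\hat x^1_s]\|^2 ds$ and $\int_0^t\|\mathbb{E}[\hat v^1_s]\|^2 ds$ are each bounded by $M_{e,2}\|\mathbb{E}[x^1_0]\|^2$. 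Substituting these in and applying Gr\"onwall's inequality produces
$$ \|\mathbb{E}[x^1_t]\|^2 \le e^{T(2\|A\|_T+\|B\|_T+\|\bar A\|_T+\|\bar B\|_T)}\!\left[(1+M_{e,2}\|\bar A\|_T+M_{e,2}\|\bar B\|_T)\|\mathbb{E}[x^1_0]\|^2 + \|B\|_T\!\int_0^t\!\|\mathbb{E}[v^1_s]\|^2 ds\right], $$
which is exactly $M_{d,5}(\|\mathbb{E}[x^1_0]\|^2+\int_0^t\|\mathbb{E}[v^1_s]\|^2\,ds)$ after taking the maximum of the two coefficients. Adding the $M_{d,4}$ variance bound from Lemma~\ref{lqmf.mfg.lemma.boundedness.x-Ex.dummy} closes the argument.

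The only mild obstacle is book-keeping: one has to remember that $\mathbb{E}[\hat x^1]$ and $\mathbb{E}[\hat v^1]$ in the dynamics \eqref{lqmf.mfg.state_process.dummy} are \emph{not} $\mathbb{E}[x^1]$ and $\mathbb{E}[v^1]$ (they are the equilibrium trajectory, independent of the dummy $v^1$), and to realize that one needs the $M_{e,2}$-only part of Lemma~\ref{lqmf.mfg.lemma.boundedness.Ex2} (i.e.\ inequality \eqref{lqmf.mfg.estimate.(Ex)2}), since pulling in the full lemma would spuriously reintroduce $M_{e,1}$-type noise terms into the coefficient of $\|\mathbb{E}[x^1_0]\|^2$ and thus disturb the clean $M_{d,5}/M_{d,4}$ split stated in the conclusion.
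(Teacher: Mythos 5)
Your proposal is correct and follows essentially the same route as the paper's own proof: decompose $\mathbb{E}[\|x^1_t\|^2]$ into $\|\mathbb{E}[x^1_t]\|^2$ plus the variance term handled by Lemma \ref{lqmf.mfg.lemma.boundedness.x-Ex.dummy}, bound the mean via the chain rule, Young's inequality and Gr\"onwall on the averaged dynamics, and control the equilibrium terms $\int\|\mathbb{E}[\hat x^1_s]\|^2\,ds+\int\|\mathbb{E}[\hat v^1_s]\|^2\,ds$ by $M_{e,2}\|\mathbb{E}[x^1_0]\|^2$ using \eqref{lqmf.mfg.estimate.(Ex)2}. Your closing remark about keeping the equilibrium quantities distinct from the dummy-controlled ones, and about using only the $M_{e,2}$ part of Lemma \ref{lqmf.mfg.lemma.boundedness.Ex2}, matches exactly what the paper does.
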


\begin{proof}
	Let $\tilde{x}^1_t := \mathbb{E}\left[x^1_t\right]$, $\tilde{v}^1_t := \mathbb{E}\left[v^1_t\right]$, by \eqref{lqmf.mfg.state_process.dummy}, we have
	\begin{align*}
	d \tilde{x}^1_t=&\left(A_t \tilde{x}^1_t+B_t \tilde{v}^1_t+\bar{A}_t \mathbb{E}\left[\hat{x}^1_t\right]+\bar{B}_t \mathbb{E}\left[\hat{v}^1_t\right]\right) d t, \quad \tilde{x}^1(0)=\tilde{x}^1_0,
	\end{align*}
	where $\tilde{x}^1_0 := \mathbb{E}\left[x^1_0\right]$. By chain rule,
	\begin{align*}
	\left\|\tilde{x}^1_t\right\|^2-\left\|\tilde{x}^1_0\right\|^2=& \ 2\int_0^t\left(\tilde{x}^1_s\right)^{\top}\left(A_s \tilde{x}^1_s+B_s \tilde{v}^1_s+\bar{A}_s \mathbb{E}\left[\hat{x}^1_s\right]+\bar{B}_s \mathbb{E}\left[\hat{v}^1_s\right]\right) ds \\
	\leq& \ 2\|A\|_T\int_0^t\left\|\tilde{x}^1_s\right\|^2 ds + \|B\|_T\int_0^t\left(\left\|\tilde{x}^1_t\right\|^2+\left\|\tilde{v}^1_s\right\|^2\right) ds \\
	&+ \|\bar{A}\|_T\int_0^t\left(\left\|\tilde{x}^1_t\right\|^2+\left\|\mathbb{E}\left[\hat{x}^1_s\right]\right\|^2\right) ds + \|\bar{B}\|_T\int_0^t\left(\left\|\tilde{x}^1_t\right\|^2+\left\|\mathbb{E}\left[\hat{v}^1_s\right]\right\|^2\right) ds
	\end{align*}
	By Gr{\"o}nwall's inequality,
	\begin{align*}
	\left\|\tilde{x}^1_t\right\|^2\leq \exp\{T(2\|A\|_T+\|B\|_T+\|\bar{A}\|_T+\|\bar{B}\|_T)\} \cdot\left(\left\|\tilde{x}^1_0\right\|^2+ \|B\|_T\int_0^t\left\|\tilde{v}^1_s\right\|^2 ds+\|\bar{A}\|_T\int_0^t\left\|\mathbb{E}\left[\hat{x}^1_s\right]\right\|^2 ds+\|\bar{B}\|_T\int_0^t\left\|\mathbb{E}\left[\hat{v}^1_s\right]\right\|^2 ds\right).
	\end{align*}
	By \eqref{lqmf.mfg.estimate.(Ex)2} in Lemma \ref{lqmf.mfg.lemma.boundedness.Ex2}, we have
	\begin{align*}
	\int_0^t\left\|\mathbb{E}\left[\hat{x}^1_s\right]\right\|^2 ds+\int_0^t\left\|\mathbb{E}\left[\hat{v}^1_s\right]\right\|^2 \leq M_{e,2}\|\mathbb{E}\left[\hat{x}^1_0\right]\|^2,
	\end{align*}
	where $M_{e,2}$ was defined in Lemma \ref{lqmf.mfg.lemma.boundedness.Ex2}. Therefore, we have
	\begin{align*}
	\left\|\tilde{x}^1_t\right\|^2\leq \left(1+\|B\|_T+M_{e,2}\|\bar{A}\|_T+M_{e,2}\|\bar{B}\|_T\right)\exp\{T(2\|A\|_T+\|B\|_T+\|\bar{A}\|_T+\|\bar{B}\|_T)\} \cdot\left(\left\|\tilde{x}^1_0\right\|^2+\int_0^t\left\|\tilde{v}^1_s\right\|^2 ds\right).
	\end{align*}
	In summary,
	\begin{align*}
	\mathbb{E}\left[\left\|x^1_t\right\|^2\right]=&\left\|\tilde{x}^1_t\right\|^2+\mathbb{E}\left[\left\|\mathbb{E}\left[x^1_t\right]-x^1_t\right\|^2\right] \\
	\leq& \left(1+\|B\|_T+M_{e,2}\|\bar{A}\|_T+M_{e,2}\|\bar{B}\|_T\right)\exp\{T(2\|A\|_T+\|B\|_T+\|\bar{A}\|_T+\|\bar{B}\|_T)\} \cdot\left(\left\|\mathbb{E}\left[x^1_0\right]\right\|^2+\int_0^t\left\|\mathbb{E}\left[v^1_s\right]\right\|^2 ds\right) \\
	&+M_{d,4}\left(\mathbb{E}\left[\left\|\mathbb{E}\left[x^1_0\right]-x^1_0\right\|^2\right]+ \int_0^t\mathbb{E}\left[\left\|\mathbb{E}\left[v^1_s\right]-v^1_s\right\|^2\right] ds + \int_0^t \|\sigma_s\|^2ds\right).
	\end{align*}
\end{proof}

\section{Proofs of Statements in Section \ref{lqmf.section.convergence_rate}} \label{lqmf.section.appendixA4}

\subsection{Proof of Lemma \ref{lqmf.mfg.lemma.approximation_state.optimal}}

\begin{proof}
	Since the collection of states $\left\{\hat{x}^j_{\cdot}\right\}_{j=1}^N$ and $\left\{\hat{y}^j_{\cdot}\right\}_{j=1}^N$ are symmetric, without loss of generality, we only prove the claim for $\hat{x}^1_{\cdot}$ and $\hat{y}^1_{\cdot}$. Firstly, write the difference, note that the noisy parts are cancelled by one another here:
	\begin{align*}
	(\hat{x}^1_t-\hat{y}^1_t)=\int_0^t \left(A_s (\hat{x}^1_s-\hat{y}^1_s)+\bar{A}_s\frac{1}{N-1}\sum_{j\neq 1} \left(\mathbb{E}\left[\hat{x}^j_s\right]-\hat{y}^j_s\right)+\bar{B}_s\frac{1}{N-1}\sum_{j\neq 1} \left(\mathbb{E}\left[\hat{v}^j_s\right]-\hat{v}^j_s\right) \right) d s.
	\end{align*}
	Taking the norm of both sides and by applying Young's inequality, we have
	\begin{align*}
	\|\hat{x}^1_t-\hat{y}^1_t\|^2\leq C_5\int_0^t \left( \|\hat{x}^1_s-\hat{y}^1_s\|^2+\left\|\frac{1}{N-1}\sum_{j\neq 1} (\mathbb{E}\left[\hat{x}^j_s\right]-\hat{y}^j_s)\right\|^2+\left\|\frac{1}{N-1}\sum_{j\neq 1} (\mathbb{E}\left[\hat{v}^j_s\right]-\hat{v}^j_s)\right\|^2 \right) d s,
	\end{align*}
	where $C_5=3\max\{\|A\|_T^2, \|\bar{A}\|_T^2, \|\bar{B}\|_T^2\}$ for all $s$, clearly it is independent of $N$. Then, we take the supremum and the expectation of both sides, so one obtain:
	\begin{align*}
	\mathbb{E}\left(\sup_{u\leq t}\|\hat{x}^1_u-\hat{y}^1_u\|^2\right) \leq C_5\int_0^t \left( \mathbb{E}\left(\sup_{u\leq s}\|\hat{x}^1_u-\hat{y}^1_u\|^2\right)+\mathbb{E}\left\|\frac{1}{N-1}\sum_{j\neq 1} (\mathbb{E}\left[\hat{x}^j_s\right]-\hat{y}^j_s)\right\|^2+\mathbb{E}\left\|\frac{1}{N-1}\sum_{j\neq 1} (\mathbb{E}\left[\hat{v}^j_s\right]-\hat{v}^j_s)\right\|^2 \right) d s
	\end{align*}
	On the other hand, we have
	{\small
		\begin{align*}
		\mathbb{E}\int_0^t\left\|\frac{1}{N-1}\sum_{j\neq 1} (\mathbb{E}\left[\hat{x}^j_s\right]-\hat{y}^j_s)\right\|^2ds \leq 2\int_0^t\mathbb{E}\left\|\frac{1}{N-1}\sum_{j\neq 1} (\mathbb{E}\left[\hat{x}^j_s\right]-\hat{x}^j_s)\right\|^2ds + 2\int_0^t\mathbb{E}\left\|\frac{1}{N-1}\sum_{j\neq 1} (\hat{x}^j_s-\hat{y}^j_s)\right\|^2ds,
		\end{align*}
	}
	in which, by Jensen's inequality,
	{\small
		\begin{align*}
		\int_0^t\mathbb{E}\left\|\frac{1}{N-1}\sum_{j\neq 1} (\hat{x}^j_s-\hat{y}^j_s)\right\|^2ds \leq \int_0^t\frac{1}{N-1}\sum_{j\neq 1}\mathbb{E}\left\| \hat{x}^j_s-\hat{y}^j_s\right\|^2ds
		= \int_0^t\mathbb{E}\left\| \hat{x}^1_s-\hat{y}^1_s\right\|^2ds.
		\end{align*}
	}
	where the last equality is due to the symmetric nature of $\left\{\hat{x}^j_{\cdot}-\hat{y}^j_{\cdot}\right\}_{j=1}^N$. Thus,
	\begin{align*}
	\mathbb{E}\left(\sup_{u\leq t}\|\hat{x}^1_u-\hat{y}^1_u\|^2\right) \leq C_5\int_0^t \left( 3\mathbb{E}\left(\sup_{u\leq s}\|\hat{x}^1_u-\hat{y}^1_u\|^2\right)+2\mathbb{E}\left\|\frac{1}{N-1}\sum_{j\neq 1} (\mathbb{E}\left[\hat{x}^j_s\right]-\hat{x}^j_s)\right\|^2+2\mathbb{E}\left\|\frac{1}{N-1}\sum_{j\neq 1} (\mathbb{E}\left[\hat{v}^j_s\right]-\hat{v}^j_s)\right\|^2 \right) d s.
	\end{align*}
	By Gr{\"o}nwall's inequality, we have
	\begin{align*}
	\mathbb{E}\left(\sup_{u\leq t}\|\hat{x}^1_u-\hat{y}^1_u\|^2\right) \leq 2C_5e^{3C_5t}\int_0^t \mathbb{E}\left\|\left(\frac{1}{N-1}\sum_{j\neq 1} (\mathbb{E}\left[\hat{x}^j_s\right]-\hat{x}^j_s)\right\|^2+\mathbb{E}\left\|\frac{1}{N-1}\sum_{j\neq 1} (\mathbb{E}\left[\hat{v}^j_s\right]-\hat{v}^j_s)\right\|^2 \right) d s.
	\end{align*}
	Note that the first term on the right hand side is
	\begin{align*}
	\mathbb{E}\int_0^t\left\|\frac{1}{N-1}\sum_{j\neq 1} (\mathbb{E}\left[\hat{x}^j_s\right]-\hat{x}^j_s)\right\|^2ds =& \int_0^t\frac{1}{(N-1)^2}\mathbb{E}\left\|\sum_{j\neq 1} (\mathbb{E}\left[\hat{x}^j_s\right]-\hat{x}^j_s)\right\|^2ds \\
	=& \int_0^t\frac{1}{(N-1)^2}\mathbb{E}\sum_{j\neq 1}\left\| \mathbb{E}\left[\hat{x}^j_s\right]-\hat{x}^j_s\right\|^2ds \\
	=& \int_0^t\frac{1}{N-1}\mathbb{E}\left\| \mathbb{E}\left[\hat{x}^j_s\right]-\hat{x}^j_s\right\|^2ds,
	\end{align*}
	where the second equality is due to the i.i.d. nature of $\hat{x}^i_t$ and $\hat{x}^j_t$, $i \neq j$, so that
	\begin{align*}
	\mathbb{E}\left(\left(\mathbb{E}\left[\hat{x}^i_t\right] -  \hat{x}^i_t\right)^{\top}\left(\mathbb{E}\left[\hat{x}^j_t\right] -  \hat{x}^j_t\right)\right) = 0,
	\end{align*} and the third equality is due to the symmetry again. Recalling that the mean field equilibrium $\hat{v}^i$ satisfies \eqref{lqmf.mfg.fixed_point.optimal_control} and it only depends on the corresponding optimal state process $\hat{x}^i$, we also have, for any $i \neq j$,
	\begin{align*}
	\mathbb{E}\left(\left(\mathbb{E}\left[\hat{v}^i_t\right] -  \hat{v}^i_t\right)^{\top}\left(\mathbb{E}\left[\hat{v}^j_t\right] -  \hat{v}^j_t\right)\right) = 0,
	\end{align*}
	hence we can use the same argument to deduce:
	\begin{align}
	\label{lqmf.mfg.appendix.inequality_in_lemma4_1.v}
	\mathbb{E}\left(\int_0^t\left\|\frac{1}{N-1}\sum_{j\neq 1} (\mathbb{E}\left[\hat{v}^j_s\right]-\hat{v}^j_s)\right\|^2ds\right) =& \int_0^t\frac{1}{N-1}\mathbb{E}\left\| \mathbb{E}\left[\hat{v}^j_s\right]-\hat{v}^j_s\right\|^2ds.
	\end{align}
	Therefore, we derive:
	\begin{align*}
	&\mathbb{E}\left(\sup_{u\leq t}\|\hat{x}^1_u-\hat{y}^1_u\|^2\right) \leq \frac{2C_5e^{3C_5t}}{N-1}\left(\int_0^t\mathbb{E}\left\| \mathbb{E}\left[\hat{x}^j_s\right]-\hat{x}^j_s\right\|^2ds+ \int_0^t\mathbb{E}\left\| \mathbb{E}\left[\hat{v}^j_s\right]-\hat{v}^j_s\right\|^2ds\right).
	\end{align*}
	By \eqref{lqmf.mfg.estimate.x-Ex}, we conclude the claim of \eqref{lqmf.mfg.convergence.xhat}.
\end{proof}

\subsection{Proof of Lemma \ref{lqmf.mfg.lemma.approximation_objective.optimal}}

\begin{proof}
	We only prove the result for the first player, the general cases for all other $i$'s follow by symmetry argument. Recall that $C_6 = \max\{\|Q\|_T,\|\bar{Q}\|_T,\|\bar{P}\|_T,\|N\|_T\}$ and $C_7=\max\{\|S\|_T^2,\|R\|_T^2,\|\bar{S}\|_T^2,\|\bar{R}\|_T^2\}$. We compare all terms of \eqref{lqmf.N_player.objective.appendix} and \eqref{lqmf.mfg.objective.appendix} one by one. By Cauchy-Schwarz inequality and Young's inequality, we have
	\begin{align*}
	\mathbb{E}\left[\int_0^T(\hat{x}^1_t)^{\top} Q_t \hat{x}^1_t-(\hat{y}^1_t)^{\top} Q_t \hat{y}^1_t dt\right] 
	=& \ \mathbb{E}\left[\int_0^T(\hat{x}^1_t+\hat{y}^1_t)^{\top} Q_t (\hat{x}^1_t-\hat{y}^1_t) dt\right] \\
	\leq& \ C_6\left(\mathbb{E}\left[\int_0^T\|\hat{x}^1_t\|^2+\|\hat{y}^1_t\|^2dt\right]\right)^{\frac{1}{2}}\cdot \left(\mathbb{E}\left[\int_0^T\|\hat{x}^1_t-\hat{y}^1_t\|^2dt\right]\right)^{\frac{1}{2}}.
	\end{align*}
	Similarly, we have
	\begin{align*}
	&\quad \ \mathbb{E}\left[\int_0^T\left(\hat{x}^1_t-S_t \mathbb{E}\left[\hat{x}^1_t\right]\right)^{\top} \bar{Q}_t\left(\hat{x}^1_t-S_t \mathbb{E}\left[\hat{x}^1_t\right]\right) - \left(\hat{y}^1_t-S_t \frac{1}{N-1}\sum_{j\neq 1}\hat{y}^j_t\right)^{\top} \bar{Q}_t\left(\hat{y}^1_t-S_t \frac{1}{N-1}\sum_{j\neq 1}\hat{y}^j_t\right)dt\right] \\
	&= \mathbb{E}\left[\int_0^T\left(\hat{x}^1_t-S_t \mathbb{E}\left[\hat{x}^1_t\right]+\hat{y}^1_t-S_t \frac{1}{N-1}\sum_{j\neq 1}\hat{y}^j_t\right)^{\top}\bar{Q}_t\left(\hat{x}^1_t-S_t \mathbb{E}\left[\hat{x}^1_t\right]-\hat{y}^1_t+S_t \frac{1}{N-1}\sum_{j\neq 1}\hat{y}^j_t\right)dt\right] \\
	&\leq \sqrt{4}  C_6\left(\mathbb{E}\left[\int_0^T\|\hat{x}^1_t\|^2+\|\bar{S}\|_T^2\mathbb{E}\left[\|\hat{x}^1_t\|^2\right]+\|\hat{y}^1_t\|^2+\frac{\|\bar{S}\|_T^2}{N-1}\sum_{j\neq 1}\|\hat{y}^j_t\|^2dt\right]\right)^\frac{1}{2} \\
	&\quad \cdot \sqrt{2}\left(\mathbb{E}\left[\int_0^T\|\hat{x}^1_t-\hat{y}^1_t\|^2+ \|\bar{S}\|_T^2\left\|\frac{1}{N-1}\sum_{j\neq 1}\left(\mathbb{E}\left[\hat{x}^j_t\right] -\hat{y}^j_t\right)\right\|^2dt\right]\right)^{\frac{1}{2}} \\
	&\leq 4C_6(1+\|\bar{S}\|_T^2)\left(\mathbb{E}\left[\int_0^T\|\hat{x}^1_t\|^2+\|\hat{y}^1_t\|^2dt\right]\right)^\frac{1}{2} \cdot\left(\mathbb{E}\left[\int_0^T\|\hat{x}^1_t-\hat{y}^1_t\|^2+ \left\|\frac{1}{N-1}\sum_{j\neq 1}\left(\mathbb{E}\left[\hat{x}^1_t\right] -\hat{y}^j_t\right)\right\|^2dt\right]\right)^{\frac{1}{2}},
	\end{align*}
	where the last inequality follows from the symmetry of $\left\{\hat{y}^j_{\cdot}\right\}_{j=1}^N$. By the same argument as in the proof of Lemma \ref{lqmf.mfg.lemma.approximation_state.optimal}, we have
	\begin{align}
	\label{lqmf.mfg.appendix.inequality_in_lemma4_3.x}
	\mathbb{E}\int_0^T\left\|\frac{1}{N-1}\sum_{j\neq 1} (\mathbb{E}\left[\hat{x}^j_t\right]-\hat{y}^j_t)\right\|^2dt \leq 2\mathbb{E}\left[\int_0^T\frac{1}{N-1}\left\| \mathbb{E}\left[\hat{x}^j_t\right]-\hat{x}^j_t\right\|^2 + \left\| \hat{x}^1_t-\hat{y}^1_t\right\|^2dt\right],
	\end{align}
	thus
	\begin{align*}
	&\mathbb{E}\left[\int_0^T\left(\hat{x}^1_t-S_t \mathbb{E}\left[\hat{x}^1_t\right]\right)^{\top} \bar{Q}_t\left(\hat{x}^1_t-S_t \mathbb{E}\left[\hat{x}^1_t\right]\right) - \left(\hat{y}^1_t-S_t \frac{1}{N-1}\sum_{j\neq 1}\hat{y}^j_t\right)^{\top} \bar{Q}_t\left(\hat{y}^1_t-S_t \frac{1}{N-1}\sum_{j\neq 1}\hat{y}^j_t\right)dt\right] \\
	&\leq 12C_6(1+C_7)\left(\mathbb{E}\left[\int_0^T\|\hat{x}^1_t\|^2+\|\hat{y}^1_t\|^2dt\right]\right)^\frac{1}{2} \cdot\left(\mathbb{E}\left[\int_0^T\|\hat{x}^1_t-\hat{y}^1_t\|^2+ \frac{1}{N-1}\left\| \mathbb{E}\left[\hat{x}^j_t\right]-\hat{x}^j_t\right\|^2dt\right]\right)^{\frac{1}{2}},
	\end{align*}
	where the last inequality follows by using the fact that $\|\bar{S}\|_T^2 \leq C_7$.
	By the same argument, we further obtain:
	\begin{align*}
	&\mathbb{E}\left[\int_0^T\left(\hat{v}^1_t-R_t \mathbb{E}\left[\hat{v}^1_t\right]\right)^{\top} \bar{P}_t\left(\hat{v}^1_t-R_t \mathbb{E}\left[\hat{v}^1_t\right]\right) -\left(\hat{v}^1_t-R_t \frac{1}{N-1}\sum_{j\neq 1}\hat{v}^j_t\right)^{\top} \bar{P}_t\left(\hat{v}^1_t-R_t \frac{1}{N-1}\sum_{j\neq 1}\hat{v}^j_t\right)dt\right] \\
	&= \mathbb{E}\left[\int_0^T\left(\hat{v}^1_t-R_t \mathbb{E}\left[\hat{v}^1_t\right]+\hat{v}^1_t-R_t \frac{1}{N-1}\sum_{j\neq 1}\hat{v}^j_t\right)^{\top}\bar{P}_t\left(\hat{v}^1_t-R_t \mathbb{E}\left[\hat{v}^1_t\right]-\hat{v}^1_t+R_t \frac{1}{N-1}\sum_{j\neq 1}\hat{v}^j_t\right)dt\right] \\
	&\leq \sqrt{4}C_6\left(\mathbb{E}\left[\int_0^T2\|\hat{v}^1_t\|^2+\|R\|_T^2\mathbb{E}\left[\|\hat{v}^1_t\|^2\right]+\frac{\|R\|_T^2}{N-1}\sum_{j\neq 1}\|\hat{v}^j_t\|^2dt\right]\right)^\frac{1}{2}\cdot\sqrt{2}\left(\mathbb{E}\left[\int_0^T\frac{\|R\|_T^2}{N-1}\left\|\mathbb{E}\left[\hat{v}^j_t\right] -\hat{v}^j_t\right\|^2dt\right]\right)^\frac{1}{2} \\
	&\leq 4C_6(1+\|R\|_T^2)\left(\mathbb{E}\left[\int_0^T\|\hat{v}^1_t\|^2dt\right]\right)^\frac{1}{2}\cdot\left(\mathbb{E}\left[\int_0^T\frac{1}{N-1}\left\|\mathbb{E}\left[\hat{v}^j_t\right] -\hat{v}^j_t\right\|^2dt\right]\right)^\frac{1}{2} \\
	&\leq 4C_6(1+C_7)\left(\mathbb{E}\left[\int_0^T\|\hat{v}^1_t\|^2dt\right]\right)^\frac{1}{2}\cdot\left(\mathbb{E}\left[\int_0^T\frac{1}{N-1}\left\|\mathbb{E}\left[\hat{v}^j_t\right] -\hat{v}^j_t\right\|^2dt\right]\right)^\frac{1}{2},
	\end{align*}
	where the last inequality follows from the symmetry of $\left\{\hat{v}^j_{\cdot}\right\}_{j=1}^N$. Finally, for the cross term, it is:
	\begin{align*}
	&\mathbb{E}\left[\int_0^T2\left(x^1_t-\bar{S}_t \mathbb{E}\left[\hat{x}^1_t\right]\right)^{\top} N_t\left(\hat{v}^1_t-\bar{R}_t \mathbb{E}\left[\hat{v}^1_t\right]\right) - 2\left(\hat{y}^1_t-\bar{S}_t \frac{1}{N-1}\sum_{j\neq 1}\hat{y}^j_t\right)^{\top} N_t\left(\hat{v}^1_t-\bar{R}_t \frac{1}{N-1}\sum_{j\neq 1}\hat{v}^j_t\right)dt\right] \\
	&= 2\mathbb{E}\left[\int_0^T\left(\hat{x}^1_t-\bar{S}_t \mathbb{E}\left[\hat{x}^1_t\right]\right)^{\top} N_t\left(\hat{v}^1_t-\bar{R}_t \mathbb{E}\left[\hat{v}^1_t\right]\right) - \left(\hat{x}^1_t-\bar{S}_t \mathbb{E}\left[\hat{x}^1_t\right]\right)^{\top} N_t\left(\hat{v}^1_t-\bar{R}_t \frac{1}{N-1}\sum_{j\neq 1}\hat{v}^j_t\right)dt\right] \\
	&\quad+ 2\mathbb{E}\left[\int_0^T\left(\hat{x}^1_t-\bar{S}_t \mathbb{E}\left[\hat{x}^1_t\right]\right)^{\top} N_t\left(\hat{v}^1_t-\bar{R}_t \frac{1}{N-1}\sum_{j\neq 1}\hat{v}^j_t\right) - \left(\hat{y}^1_t-\bar{S}_t \frac{1}{N-1}\sum_{j\neq 1}\hat{y}^j_t\right)^{\top} N_t\left(\hat{v}^1_t-\bar{R}_t \frac{1}{N-1}\sum_{j\neq 1}\hat{v}^j_t\right)dt\right] \\
	&= 2\mathbb{E}\left[\int_0^T\left(\hat{x}^1_t-\bar{S}_t \mathbb{E}\left[\hat{x}^1_t\right]\right)^{\top} N_t\bar{R}_t\left( \frac{1}{N-1}\sum_{j\neq 1}\hat{v}^j_t- \mathbb{E}\left[\hat{v}^1_t\right]\right)dt\right] \\
	&\quad+ 2\mathbb{E}\left[\int_0^T\left(\hat{v}^1_t-\bar{R}_t \frac{1}{N-1}\sum_{j\neq 1}\hat{v}^j_t\right)^{\top} N_t^{\top}\left(\hat{x}^1_t-\bar{S}_t \mathbb{E}\left[\hat{x}^1_t\right]-\hat{y}^1_t+\bar{S}_t \frac{1}{N-1}\sum_{j\neq 1}\hat{y}^j_t\right)dt\right] \\
	&\leq 2\sqrt{2}C_6\|\bar{R}\|_T\left(\mathbb{E}\left[\int_0^T\|\hat{x}^1_t\|^2+ \|\bar{S}\|_T^2\mathbb{E}\left[\|\hat{x}^1_t\|^2\right]dt\right]\right)^{\frac{1}{2}}\cdot\left(\mathbb{E}\left[\int_0^T\left\|\frac{1}{N-1}\sum_{j\neq 1}  \left(\mathbb{E}\left[\hat{v}^j_t\right]-\hat{v}^j_t\right)\right\|^2dt\right]\right)^{\frac{1}{2}} \\
	&\quad+ 2C_6\sqrt{2}\left(\mathbb{E}\left[\int_0^T\|\hat{v}^1_t\|^2+\frac{\|\bar{R}\|_T^2}{N-1}\sum_{j\neq 1}\|\hat{v}^j_t\|^2dt\right]\right)^{\frac{1}{2}}\cdot\sqrt{2}\left(\mathbb{E}\left[\int_0^T\|\hat{x}^1_t-\hat{y}^1_t\|^2+\|\bar{S}\|_T^2\left\|\frac{1}{N-1}\sum_{j\neq 1} \left(\mathbb{E}\left[\hat{x}^j_t\right]- \hat{y}^j_t\right)\right\|^2dt\right]\right)^{\frac{1}{2}} \\
	&\leq 2\sqrt{2}C_6\|\bar{R}\|_T(1+\|\bar{S}\|_T^2)^{\frac{1}{2}}\left(\mathbb{E}\left[\int_0^T\|\hat{x}^1_t\|^2dt\right]\right)^{\frac{1}{2}}\cdot\left(\mathbb{E}\left[\int_0^T\frac{1}{N-1}\left\| \mathbb{E}\left[\hat{v}^j_s\right]-\hat{v}^j_s\right\|^2dt\right]\right)^{\frac{1}{2}} \\
	&\quad+ 4\sqrt{3}C_6(1+\|\bar{R}\|_T^2)^{\frac{1}{2}}(1+\|\bar{S}\|_T^2)^{\frac{1}{2
	}}\left(\mathbb{E}\left[\int_0^T\|\hat{v}^1_t\|^2dt\right]\right)^{\frac{1}{2}}\cdot\left(\mathbb{E}\left[\int_0^T\|\hat{x}^1_t-\hat{y}^1_t\|^2+\frac{1}{N-1}\left\| \mathbb{E}\left[\hat{x}^j_t\right]-\hat{x}^j_t\right\|^2dt\right]\right)^{\frac{1}{2}} \\
	&\leq 8C_6(1+C_7)\left(\mathbb{E}\left[\int_0^T\|\hat{x}^1_t\|^2+\|\hat{v}^1_t\|^2dt\right]\right)^{\frac{1}{2}} \cdot\left(\mathbb{E}\left[\int_0^T\|\hat{x}^1_t-\hat{y}^1_t\|^2+\frac{1}{N-1}\left\| \mathbb{E}\left[\hat{x}^j_t\right]-\hat{x}^j_t\right\|^2+\frac{1}{N-1}\left\| \mathbb{E}\left[\hat{v}^j_s\right]-\hat{v}^j_s\right\|^2dt\right]\right)^{\frac{1}{2}},
	\end{align*}
	where the second last inequality follows from \eqref{lqmf.mfg.appendix.inequality_in_lemma4_1.v} and \eqref{lqmf.mfg.appendix.inequality_in_lemma4_3.x}.
	We take the difference of \eqref{lqmf.N_player.objective.appendix} and \eqref{lqmf.mfg.objective.appendix}, and substitute the above inequalities into it. By Cauchy-Schwarz inequality, we have
	\begin{align*}
	&J(\hat{v}^1)-\mathcal{J}(\hat{v}^1,\hat{v}^{-1}) \\
	&\leq 20C_6(1+C_7)\Bigg\{\left(\mathbb{E}\bigg[ \int_0^T \|\hat{x}^1_t\|^2+\|\hat{y}^1_t\|^2+\|\hat{v}^1_t\|^2 dt\bigg]\right)^{\frac{1}{2}}\cdot \left(\mathbb{E}\bigg[\int_0^T \|\hat{x}^1_t-\hat{y}^1_t\|^2+ \frac{1}{N-1}\left\|\mathbb{E}\left[\hat{x}^1_t\right] -\hat{x}^j_t\right\|^2+\frac{1}{N-1}\left\| \mathbb{E}\left[\hat{v}^1_t\right]-\hat{v}^j_t\right\|^2 d t\bigg]\right)^{\frac{1}{2}} \\
	&\quad\quad\quad\quad\quad\quad\quad\quad+ \left(\mathbb{E}\bigg[\|\hat{x}^1_T\|^2+\|\hat{y}^1_T\|^2\bigg]\right)^{\frac{1}{2}} \cdot \left(\mathbb{E}\bigg[\|\hat{x}^1_T-\hat{y}^1_T\|^2+ \frac{1}{N-1}\left\|\mathbb{E}\left[\hat{x}^1_T\right] -\hat{x}^j_T\right\|^2 \bigg]\right)^{\frac{1}{2}}\Bigg\} \\
	&\leq 20C_6(1+C_7)\left(\mathbb{E}\bigg[ \int_0^T \|\hat{x}^1_t\|^2+\|\hat{y}^1_t\|^2+\|\hat{v}^1_t\|^2 dt\bigg]+\mathbb{E}\bigg[\|\hat{x}^1_T\|^2+\|\hat{y}^1_T\|^2\bigg]\right)^{\frac{1}{2}}\\
	&\quad\cdot \bigg(\mathbb{E}\bigg[\int_0^T \|\hat{x}^1_t-\hat{y}^1_t\|^2+ \frac{1}{N-1}\left\|\mathbb{E}\left[\hat{x}^1_t\right] -\hat{x}^j_t\right\|^2+\frac{1}{N-1}\left\| \mathbb{E}\left[\hat{v}^1_t\right]-\hat{v}^j_t\right\|^2 d t\bigg] \\
	&\quad\quad+\mathbb{E}\bigg[\|\hat{x}^1_T-\hat{y}^1_T\|^2 + \frac{1}{N-1}\left\|\mathbb{E}\left[\hat{x}^1_T\right] -\hat{x}^j_T\right\|^2 \bigg]\bigg)^{\frac{1}{2}} \\
	&\leq 60C_6(1+C_7)\left(\mathbb{E}\left[\left\|\hat{x}^1_T\right\|^2+\int_0^T  \left\|\hat{x}^1_t\right\|^2+ \left\|\hat{v}^1_t\right\|^2dt\right]+\mathbb{E}\bigg[ \|\hat{x}^1_T-\hat{y}^1_T\|^2+\int_0^T \|\hat{x}^1_t-\hat{y}^1_t\|^2 dt\bigg]\right)^{\frac{1}{2}}\\
	&\quad\cdot \bigg(\frac{1}{N-1}\mathbb{E}\bigg[\left\|\mathbb{E}\left[\hat{x}^1_T\right] -\hat{x}^j_T\right\|^2+\int_0^T \left\|\mathbb{E}\left[\hat{x}^1_t\right] -\hat{x}^j_t\right\|^2+\left\| \mathbb{E}\left[\hat{v}^1_t\right]-\hat{v}^j_t\right\|^2 d t\bigg] +\mathbb{E}\bigg[ \|\hat{x}^1_T-\hat{y}^1_T\|^2+\int_0^T \|\hat{x}^1_t-\hat{y}^1_t\|^2 dt\bigg]\bigg)^{\frac{1}{2}} \\
	&\leq \frac{1}{\sqrt{N-1}}60C_6(1+C_7)\left(M_{e,2} \left\|\mathbb{E}\left[x^1_0\right]\right\|^2 +(M_{e,1}+M_{e,3}(T+1)) \left(\mathbb{E}\left[\left\|\mathbb{E}\left[x^1_0\right]-x^1_0\right\|^2\right]+T\|\sigma\|_T^2\right)\right),
	\end{align*}
	where the last inequality follows from Lemma \ref{lqmf.mfg.lemma.boundedness.x-Ex}, Lemma \ref{lqmf.mfg.lemma.boundedness.Ex2} and Lemma \ref{lqmf.mfg.lemma.approximation_state.optimal}.
\end{proof}

\subsection{Proof of Lemma \ref{lqmf.mfg.lemma.approximation_state.pertubed}}

\begin{proof}
	By applying the same argument in the proof of Lemma \ref{lqmf.mfg.lemma.approximation_state.optimal}, we have
	\begin{align}
	\nonumber
	&\mathbb{E}\left(\sup_{u\leq t}\|x^1_u-y^1_u\|^2\right) \\
	\nonumber
	&\leq C_5\int_0^t \left( \mathbb{E}\left(\sup_{u\leq s}\|x^1_u-y^1_u\|^2\right)+\mathbb{E}\left\|\frac{1}{N-1}\sum_{j\neq 1} (\mathbb{E}\left[\hat{x}^j_s\right]-\hat{y}^j_s)\right\|^2+\mathbb{E}\left\|\frac{1}{N-1}\sum_{j\neq 1} (\mathbb{E}\left[\hat{v}^j_s\right]-\hat{v}^j_s)\right\|^2 \right) d s \\
	\label{lqmf.mfg.inequality.x1-y1.dummy}
	&\leq C_5\int_0^t \left( 3\mathbb{E}\left(\sup_{u\leq s}\|x^1_u-y^1_u\|^2\right)+2\mathbb{E}\left\|\frac{1}{N-1}\sum_{j\neq 1} (\mathbb{E}\left[\hat{x}^j_s\right]-\hat{x}^j_s)\right\|^2+2\mathbb{E}\left\|\frac{1}{N-1}\sum_{j\neq 1} (\mathbb{E}\left[\hat{v}^j_s\right]-\hat{v}^j_s)\right\|^2 \right) d s;
	\end{align}
	and since
	\begin{align*}
	&\mathbb{E}\left\|\frac{1}{N-1}\left(\mathbb{E}\left[\hat{x}^1_s\right]-y^1_s+\sum_{j\neq 1,i} (\mathbb{E}\left[\hat{x}^j_s\right]-\hat{y}^j_s)\right)\right\|^2 \\
	&\leq 2\int_0^t\mathbb{E}\left\|\frac{1}{N-1}\left(\mathbb{E}\left[\hat{x}^1_s\right]-x^1_s+\sum_{j\neq 1,i} (\mathbb{E}\left[\hat{x}^j_s\right]-\hat{x}^j_s)\right)\right\|^2ds + 2\int_0^t\mathbb{E}\left\|x^1_s-y^1_s+\frac{1}{N-1}\sum_{j\neq 1,i} (\hat{x}^j_s-\hat{y}^j_s)\right\|^2ds \\
	&\leq 4\int_0^t\left(\mathbb{E}\left\|\frac{1}{N-1}(\mathbb{E}\left[x^1_s\right]-x^1_s)\right\|^2 +\mathbb{E}\left\|\frac{1}{N-1}\sum_{j\neq 1,i} (\mathbb{E}\left[\hat{x}^j_s\right]-\hat{x}^j_s)\right\|^2\right)ds + 4\int_0^t\left(\frac{1}{N-1}\mathbb{E}\left\|x^1_s-y^1_s\right\|^2+\mathbb{E}\left\| \hat{x}^i_s-\hat{y}^i_s\right\|^2\right)ds;
	\end{align*}
	\begin{align}
	\nonumber
	&\mathbb{E}\left(\sup_{u\leq t}\|\hat{x}^i_u-\hat{y}^i_u\|^2\right) \\
	\nonumber
	\leq& C_5\int_0^t \bigg( \mathbb{E}\sup_{u\leq s}\|\hat{x}^i_u-\hat{y}^i_u\|^2+\mathbb{E}\left\|\frac{1}{N-1}\left(\mathbb{E}\left[\hat{x}^1_s\right]-y^1_s+\sum_{j\neq 1,i} (\mathbb{E}\left[\hat{x}^j_s\right]-\hat{y}^j_s)\right)\right\|^2 +\mathbb{E}\left\|\frac{1}{N-1}\left(\mathbb{E}\left[\hat{v}^1_s\right]-v^1_s+\sum_{j\neq 1,i} (\mathbb{E}\left[\hat{v}^j_s\right]-\hat{v}^j_s)\right)\right\|^2 \bigg) d s \\
	\nonumber
	\leq& C_5\int_0^t \bigg( 5\mathbb{E}\sup_{u\leq s}\|\hat{x}^i_u-\hat{y}^i_u\|^2+\frac{4}{N-1}\mathbb{E}\sup_{u\leq s}\left\|x^1_u-y^1_u\right\|^2 +4\mathbb{E}\left\|\frac{1}{N-1}(\mathbb{E}\left[x^1_s\right]-x^1_s)\right\|^2 +4\mathbb{E}\left\|\frac{1}{N-1}\sum_{j\neq 1,i} (\mathbb{E}\left[\hat{x}^j_s\right]-\hat{x}^j_s)\right\|^2 \\
	\label{lqmf.mfg.inequality.xi-yi}
	&\quad+4\mathbb{E}\left\|\frac{1}{N-1}(\mathbb{E}\left[v^1_s\right]-v^1_s)\right\|^2+4\mathbb{E}\left\|\frac{1}{N-1}\sum_{j\neq 1,i} (\mathbb{E}\left[\hat{v}^j_s\right]-\hat{v}^j_s)\right\|^2 \bigg) d s.
	\end{align}
	We combine equalities \eqref{lqmf.mfg.inequality.x1-y1.dummy} and \eqref{lqmf.mfg.inequality.xi-yi}, and by Gr{\"o}nwall's inequality, we have
	\begin{align*}
	&\mathbb{E}\left(\sup_{u\leq t}\|x^1_t-y^1_t\|^2\right) + \mathbb{E}\left(\sup_{u\leq t}\|\hat{x}^i_t-\hat{y}^i_t\|^2\right) \\
	\leq& C_5e^{7C_5t}\int_0^t \Bigg( 2\mathbb{E}\left\|\frac{1}{N-1}\sum_{j\neq 1} (\mathbb{E}\left[\hat{x}^j_s\right]-\hat{x}^j_s)\right\|^2+2\mathbb{E}\left\|\frac{1}{N-1}\sum_{j\neq 1} (\mathbb{E}\left[\hat{v}^j_s\right]-\hat{v}^j_s)\right\|^2 +4\mathbb{E}\left\|\frac{1}{N-1}\sum_{j\neq 1,i} (\mathbb{E}\left[\hat{x}^j_s\right]-\hat{x}^j_s)\right\|^2 \\
	&\quad\quad\quad\quad+4\mathbb{E}\left\|\frac{1}{N-1}\sum_{j\neq 1,i} (\mathbb{E}\left[\hat{v}^j_s\right]-\hat{v}^j_s)\right\|^2+4\mathbb{E}\left\|\frac{1}{N-1}(\mathbb{E}\left[x^1_s\right]-x^1_s)\right\|^2+4\mathbb{E}\left\|\frac{1}{N-1}(\mathbb{E}\left[v^1_s\right]-v^1_s)\right\|^2 \Bigg) d s.
	\end{align*}
	By the same argument as in the proof of Lemma \ref{lqmf.mfg.lemma.approximation_state.optimal}, we obtain
	\begin{align*}
	&\quad \ \mathbb{E}\left(\sup_{u\leq t}\|x^1_t-y^1_t\|^2\right) + \mathbb{E}\left(\sup_{u\leq t}\|\hat{x}^i_t-\hat{y}^i_t\|^2\right) \\
	&\leq 6C_5e^{7C_5t}\bigg(\frac{1}{N-1}\int_0^t \mathbb{E}\left\| \mathbb{E}\left[\hat{x}^i_s\right]-\hat{x}^i_s\right\|^2+\mathbb{E}\left\| \mathbb{E}\left[\hat{v}^i_s\right]-\hat{v}^i_s\right\|^2 d s +\frac{1}{(N-1)^2}\int_0^t\mathbb{E}\left\|\mathbb{E}\left[x^1_s\right]-x^1_s\right\|^2+\mathbb{E}\left\|\mathbb{E}\left[v^1_s\right]-v^1_s\right\|^2 d s\bigg) \\
	&\leq 6C_5e^{7C_5t}\bigg\{\frac{M_{e,1}}{N-1}\left(\mathbb{E}\left[\left\|\mathbb{E}\left[x^i_0\right]-x^i_0\right\|^2\right]+T\|\sigma\|_T^2\right) +\frac{M_{d,4}}{(N-1)^2}\left(\mathbb{E}\left[\left\|\mathbb{E}\left[x^1_0\right]-x^1_0\right\|^2\right]+ \int_0^t\mathbb{E}\left[\left\|\mathbb{E}\left[v^1_s\right]-v^1_s\right\|^2\right] ds +T\|\sigma\|_T^2\right)\bigg\} \\
	&\leq 6C_5e^{7C_5t}\bigg\{\left(\frac{M_{e,1}}{N-1}+\frac{M_{d,4}}{(N-1)^2}\right)\left(\mathbb{E}\left[\left\|\mathbb{E}\left[x^i_0\right]-x^i_0\right\|^2\right]+T\|\sigma\|_T^2\right) +\frac{M_{d,4}}{(N-1)^2}\int_0^t\mathbb{E}\left[\left\|\mathbb{E}\left[v^1_s\right]-v^1_s\right\|^2\right] ds\bigg\},
	\end{align*}
	where the last inequality follows from Lemma \ref{lqmf.mfg.lemma.boundedness.x-Ex} and Lemma \ref{lqmf.mfg.lemma.boundedness.x-Ex.dummy}.
	
\end{proof}

\subsection{Proof of Lemma \ref{lqmf.mfg.lemma.approximation_objective.pertubed}}

\begin{proof}
	By Cauchy-Schwarz inequality and Young's inequality, the general case for every $i$ follows by symmetry argument,
	\begin{align*}
	\mathbb{E}\left[\int_0^T(x^1_t)^{\top} Q_t x^1_t-(y^1_t)^{\top} Q_t y^1_t dt\right] 
	=& \mathbb{E}\left[\int_0^T(x^1_t+y^1_t)^{\top} Q_t (x^1_t-y^1_t) dt\right] \\
	\leq& C_6\left(\mathbb{E}\left[\int_0^T\|x^1_t\|^2+\|y^1_t\|^2dt\right]\right)^{\frac{1}{2}} \cdot \left(\mathbb{E}\left[\int_0^T\|x^1_t-y^1_t\|^2dt\right]\right)^{\frac{1}{2}}.
	\end{align*}
	Similarly, we have
	\begin{align*}
	&\mathbb{E}\left[\int_0^T\left(x^1_t-S_t \mathbb{E}\left[\hat{x}^1_t\right]\right)^{\top} \bar{Q}_t\left(x^1_t-S_t \mathbb{E}\left[\hat{x}^1_t\right]\right) - \left(y^1_t-S_t \frac{1}{N-1}\sum_{j\neq 1}\hat{y}^j_t\right)^{\top} \bar{Q}_t\left(y^1_t-S_t \frac{1}{N-1}\sum_{j\neq 1}\hat{y}^j_t\right)dt\right] \\
	&= \mathbb{E}\left[\int_0^T\left(x^1_t-S_t \mathbb{E}\left[\hat{x}^1_t\right]+y^1_t-S_t \frac{1}{N-1}\sum_{j\neq 1}\hat{y}^j_t\right)^{\top}\bar{Q}_t\left(x^1_t-S_t \mathbb{E}\left[\hat{x}^1_t\right]-y^1_t+S_t \frac{1}{N-1}\sum_{j\neq 1}\hat{y}^j_t\right)dt\right] \\
	&\leq \sqrt{4}C_6\left(\mathbb{E}\left[\int_0^T\|x^1_t\|^2+\|\bar{S}\|_T^2\mathbb{E}\left[\|\hat{x}^1_t\|^2\right]+\|y^1_t\|^2+\frac{\|\bar{S}\|_T^2}{N-1}\sum_{j\neq 1}\|\hat{y}^j_t\|^2dt\right]\right)^\frac{1}{2} \\
	&\quad\cdot\sqrt{2}\left(\mathbb{E}\left[\int_0^T\|x^1_t-y^1_t\|^2+ \|\bar{S}\|_T^2\left\|\frac{1}{N-1}\sum_{j\neq 1}\left(\mathbb{E}\left[\hat{x}^j_t\right] -\hat{y}^j_t\right)\right\|^2dt\right]\right)^{\frac{1}{2}} \\
	&\leq \sqrt{4}C_6(1+\|\bar{S}\|_T^2)\left(\mathbb{E}\left[\int_0^T\|x^1_t\|^2+\|y^1_t\|^2+\|\hat{x}^1_t\|^2+\|\hat{y}^1_t\|^2dt\right]\right)^\frac{1}{2} \cdot\sqrt{2}\left(\mathbb{E}\left[\int_0^T\|x^1_t-y^1_t\|^2+ \left\|\frac{1}{N-1}\sum_{j\neq 1}\left(\mathbb{E}\left[\hat{x}^1_t\right] -\hat{y}^j_t\right)\right\|^2dt\right]\right)^{\frac{1}{2}},
	\end{align*}
	where the last inequality follows from the symmetry of $\left\{\hat{y}^j_{\cdot}\right\}_{j=1}^N$. We apply the same argument as shown in the proof of Lemma \ref{lqmf.mfg.lemma.approximation_state.optimal}, it follows that
	\begin{align}
	\label{lqmf.mfg.appendix.inequality_in_lemma4_3.x.2}
	\mathbb{E}\int_0^T\left\|\frac{1}{N-1}\sum_{j\neq 1} (\mathbb{E}\left[\hat{x}^j_t\right]-\hat{y}^j_t)\right\|^2dt \leq 2\mathbb{E}\left[\int_0^T\frac{1}{N-1}\left\| \mathbb{E}\left[\hat{x}^j_t\right]-\hat{x}^j_t\right\|^2 + \left\| \hat{x}^1_t-\hat{y}^1_t\right\|^2dt\right],
	\end{align}
	thus
	\begin{align*}
	&\mathbb{E}\left[\int_0^T\left(x^1_t-S_t \mathbb{E}\left[\hat{x}^1_t\right]\right)^{\top} \bar{Q}_t\left(x^1_t-S_t \mathbb{E}\left[\hat{x}^1_t\right]\right) - \left(y^1_t-S_t \frac{1}{N-1}\sum_{j\neq 1}\hat{y}^j_t\right)^{\top} \bar{Q}_t\left(y^1_t-S_t \frac{1}{N-1}\sum_{j\neq 1}\hat{y}^j_t\right)dt\right] \\
	&\leq 4C_6(1+C_7)\left(\mathbb{E}\left[\int_0^T\|x^1_t\|^2+\|y^1_t\|^2+\|\hat{x}^1_t\|^2+\|\hat{y}^1_t\|^2dt\right]\right)^\frac{1}{2}\cdot\left(\mathbb{E}\left[\int_0^T\|x^1_t-y^1_t\|^2+ \left\| \hat{x}^1_t-\hat{y}^1_t\right\|^2+ \frac{1}{N-1}\left\| \mathbb{E}\left[\hat{x}^j_t\right]-\hat{x}^j_t\right\|^2dt\right]\right)^{\frac{1}{2}},
	\end{align*}
	where the last inequality follows by using the fact that $\|\bar{S}\|_T^2 \leq C_7$.
	By the same argument, we further obtain:
	\begin{align*}
	&\mathbb{E}\left[\int_0^T\left(v^1_t-R_t \mathbb{E}\left[\hat{v}^1_t\right]\right)^{\top} \bar{P}_t\left(v^1_t-R_t \mathbb{E}\left[\hat{v}^1_t\right]\right) -\left(v^1_t-R_t \frac{1}{N-1}\sum_{j\neq 1}\hat{v}^j_t\right)^{\top} \bar{P}_t\left(v^1_t-R_t \frac{1}{N-1}\sum_{j\neq 1}\hat{v}^j_t\right)dt\right] \\
	&= \mathbb{E}\left[\int_0^T\left(v^1_t-R_t \mathbb{E}\left[\hat{v}^1_t\right]+v^1_t-R_t \frac{1}{N-1}\sum_{j\neq 1}\hat{v}^j_t\right)^{\top}\bar{P}_t\left(v^1_t-R_t \mathbb{E}\left[\hat{v}^1_t\right]-v^1_t+R_t \frac{1}{N-1}\sum_{j\neq 1}\hat{v}^j_t\right)dt\right] \\
	&\leq \sqrt{4}C_6\left(\mathbb{E}\left[\int_0^T2\|v^1_t\|^2+\|R\|_T^2\mathbb{E}\left[\|\hat{v}^1_t\|^2\right]+\frac{\|R\|_T^2}{N-1}\sum_{j\neq 1}\|\hat{v}^j_t\|^2dt\right]\right)^\frac{1}{2}\cdot\sqrt{2}\left(\mathbb{E}\left[\int_0^T\frac{\|R\|_T^2}{N-1}\left\|\mathbb{E}\left[\hat{v}^j_t\right] -\hat{v}^j_t\right\|^2dt\right]\right)^\frac{1}{2} \\
	&\leq 4C_6(1+\|R\|_T^2)\left(\mathbb{E}\left[\int_0^T\|v^1_t\|^2+\|\hat{v}^1_t\|^2dt\right]\right)^\frac{1}{2}\cdot\left(\mathbb{E}\left[\int_0^T\frac{1}{N-1}\left\|\mathbb{E}\left[\hat{v}^j_t\right] -\hat{v}^j_t\right\|^2dt\right]\right)^\frac{1}{2} \\
	&\leq 4C_6(1+C_7)\left(\mathbb{E}\left[\int_0^T\|v^1_t\|^2+\|\hat{v}^1_t\|^2dt\right]\right)^\frac{1}{2}\cdot\left(\mathbb{E}\left[\int_0^T\frac{1}{N-1}\left\|\mathbb{E}\left[\hat{v}^j_t\right] -\hat{v}^j_t\right\|^2dt\right]\right)^\frac{1}{2},
	\end{align*}
	where the last inequality follows from the symmetry of $\left\{\hat{v}^j_{\cdot}\right\}_{j=1}^N$. Finally, for the cross term, it is:
	\begin{align*}
	&\mathbb{E}\left[\int_0^T2\left(x^1_t-\bar{S}_t \mathbb{E}\left[\hat{x}^1_t\right]\right)^{\top} N_t\left(v^1_t-\bar{R}_t \mathbb{E}\left[\hat{v}^1_t\right]\right) - 2\left(y^1_t-\bar{S}_t \frac{1}{N-1}\sum_{j\neq 1}\hat{y}^j_t\right)^{\top} N_t\left(v^1_t-\bar{R}_t \frac{1}{N-1}\sum_{j\neq 1}\hat{v}^j_t\right)dt\right] \\
	&= 2\mathbb{E}\left[\int_0^T\left(x^1_t-\bar{S}_t \mathbb{E}\left[\hat{x}^1_t\right]\right)^{\top} N_t\left(v^1_t-\bar{R}_t \mathbb{E}\left[\hat{v}^1_t\right]\right) - \left(x^1_t-\bar{S}_t \mathbb{E}\left[\hat{x}^1_t\right]\right)^{\top} N_t\left(v^1_t-\bar{R}_t \frac{1}{N-1}\sum_{j\neq 1}\hat{v}^j_t\right)dt\right] \\
	&\quad+ 2\mathbb{E}\left[\int_0^T\left(x^1_t-\bar{S}_t \mathbb{E}\left[\hat{x}^1_t\right]\right)^{\top} N_t\left(v^1_t-\bar{R}_t \frac{1}{N-1}\sum_{j\neq 1}\hat{v}^j_t\right) - \left(y^1_t-\bar{S}_t \frac{1}{N-1}\sum_{j\neq 1}\hat{y}^j_t\right)^{\top} N_t\left(v^1_t-\bar{R}_t \frac{1}{N-1}\sum_{j\neq 1}\hat{v}^j_t\right)dt\right] \\
	&= 2\mathbb{E}\left[\int_0^T\left(x^1_t-\bar{S}_t \mathbb{E}\left[\hat{x}^1_t\right]\right)^{\top} N_t\bar{R}_t\left( \frac{1}{N-1}\sum_{j\neq 1}\hat{v}^j_t- \mathbb{E}\left[\hat{v}^1_t\right]\right)dt\right] \\
	&\quad+ 2\mathbb{E}\left[\int_0^T\left(v^1_t-\bar{R}_t \frac{1}{N-1}\sum_{j\neq 1}\hat{v}^j_t\right)^{\top} N_t^{\top}\left(x^1_t-\bar{S}_t \mathbb{E}\left[\hat{x}^1_t\right]-y^1_t+\bar{S}_t \frac{1}{N-1}\sum_{j\neq 1}\hat{y}^j_t\right)dt\right] \\
	&\leq 2\sqrt{2}C_6\|\bar{R}\|_T\left(\mathbb{E}\left[\int_0^T\|x^1_t\|^2+ \|\bar{S}\|_T^2\mathbb{E}\left[\|\hat{x}^1_t\|^2\right]dt\right]\right)^{\frac{1}{2}}\cdot\left(\mathbb{E}\left[\int_0^T\left\|\frac{1}{N-1}\sum_{j\neq 1}  \left(\mathbb{E}\left[\hat{v}^j_t\right]-\hat{v}^j_t\right)\right\|^2dt\right]\right)^{\frac{1}{2}} \\
	&\quad+ 4C_6\left(\mathbb{E}\left[\int_0^T\|v^1_t\|^2+\frac{\|\bar{R}\|_T^2}{N-1}\sum_{j\neq 1}\|\hat{v}^j_t\|^2dt\right]\right)^{\frac{1}{2}}\cdot\left(\mathbb{E}\left[\int_0^T\|x^1_t-y^1_t\|^2+\|\bar{S}\|_T^2\left\|\frac{1}{N-1}\sum_{j\neq 1} \left(\mathbb{E}\left[\hat{x}^j_t\right]- \hat{y}^j_t\right)\right\|^2dt\right]\right)^{\frac{1}{2}} \\
	&\leq 2\sqrt{2}C_6\|\bar{R}\|_T(1+\|\bar{S}\|_T^2)^{\frac{1}{2}}\left(\mathbb{E}\left[\int_0^T\|x^1_t\|^2+\|\hat{x}^1_t\|^2dt\right]\right)^{\frac{1}{2}}\cdot\left(\mathbb{E}\left[\int_0^T\frac{1}{N-1}\left\| \mathbb{E}\left[\hat{v}^j_s\right]-\hat{v}^j_s\right\|^2dt\right]\right)^{\frac{1}{2}} \\
	&\quad+ 4\sqrt{2}C_6(1+\|\bar{R}\|_T^2)^{\frac{1}{2}}(1+\|\bar{S}\|_T^2)^{\frac{1}{2
	}}\left(\mathbb{E}\left[\int_0^T\|v^1_t\|^2+\|\hat{v}^1_t\|^2dt\right]\right)^{\frac{1}{2}} \cdot\left(\mathbb{E}\left[\int_0^T\|x^1_t-y^1_t\|^2+\|\hat{x}^1_t-\hat{x}^1_t\|^2+\frac{1}{N-1}\left\| \mathbb{E}\left[\hat{x}^j_t\right]-\hat{x}^j_t\right\|^2dt\right]\right)^{\frac{1}{2}} \\
	&\leq 4\sqrt{2}C_6(1+C_7)\left(\mathbb{E}\left[\int_0^T\|x^1_t\|^2+\|v^1_t\|^2+\|\hat{x}^1_t\|^2+\|\hat{v}^1_t\|^2dt\right]\right)^{\frac{1}{2}} \\
	&\quad\cdot\left(\mathbb{E}\left[\int_0^T\|x^1_t-y^1_t\|^2+\|\hat{x}^1_t-\hat{x}^1_t\|^2+\frac{1}{N-1}\left\| \mathbb{E}\left[\hat{x}^j_t\right]-\hat{x}^j_t\right\|^2+\frac{1}{N-1}\left\| \mathbb{E}\left[\hat{v}^j_s\right]-\hat{v}^j_s\right\|^2dt\right]\right)^{\frac{1}{2}},
	\end{align*}
	where the second last inequality follows from \eqref{lqmf.mfg.appendix.inequality_in_lemma4_1.v} and \eqref{lqmf.mfg.appendix.inequality_in_lemma4_3.x.2}.
	We take the difference of \eqref{lqmf.N_player.objective.appendix} and \eqref{lqmf.mfg.objective.appendix}, and substitute the above inequalities into it. By Cauchy-Schwarz inequality, we have
	\begin{align*}
	&J(v^1)-\mathcal{J}(v^1,\hat{v}^{-1}) \\
	&\leq 12C_6(1+C_7)\Bigg\{\left(\mathbb{E}\bigg[ \int_0^T \|x^1_t\|^2+\|y^1_t\|^2+\|v^1_t\|^2+\|\hat{x}^1_t\|^2+\|\hat{y}^1_t\|^2+\|\hat{v}^1_t\|^2 dt\bigg]\right)^{\frac{1}{2}}\\
	&\qquad \qquad \qquad \qquad\cdot \left(\mathbb{E}\bigg[\int_0^T \|x^1_t-y^1_t\|^2+\|\hat{x}^1_t-\hat{x}^1_t\|^2+ \frac{1}{N-1}\left\|\mathbb{E}\left[\hat{x}^1_t\right] -\hat{x}^j_t\right\|^2+\frac{1}{N-1}\left\| \mathbb{E}\left[\hat{v}^1_t\right]-\hat{v}^j_t\right\|^2 d t\bigg]\right)^{\frac{1}{2}} \\
	&\qquad \qquad \qquad \qquad+ \left(\mathbb{E}\bigg[\|x^1_T\|^2+\|y^1_T\|^2+\|\hat{x}^1_T\|^2+\|\hat{y}^1_T\|^2\bigg]\right)^{\frac{1}{2}} \cdot \left(\mathbb{E}\bigg[\|x^1_T-y^1_T\|^2+\|\hat{x}^1_T-\hat{x}^1_T\|^2+ \frac{1}{N-1}\left\|\mathbb{E}\left[x^1_T\right] -\hat{x}^j_T\right\|^2 \bigg]\right)^{\frac{1}{2}}\Bigg\} \\
	\leq& 12C_6(1+C_7)\left(\mathbb{E}\bigg[ \int_0^T \|x^1_t\|^2+\|y^1_t\|^2+\|v^1_t\|^2+\|\hat{x}^1_t\|^2+\|\hat{y}^1_t\|^2+\|\hat{v}^1_t\|^2 dt\bigg]+\mathbb{E}\bigg[\|x^1_T\|^2+\|y^1_T\|^2+\|\hat{x}^1_T\|^2+\|\hat{y}^1_T\|^2\bigg]\right)^{\frac{1}{2}}\\
	&\qquad \qquad \qquad\cdot \Bigg(\mathbb{E}\bigg[\int_0^T \|x^1_t-y^1_t\|^2+\|\hat{x}^1_t-\hat{y}^1_t\|^2+ \frac{1}{N-1}\left\|\mathbb{E}\left[\hat{x}^1_t\right] -\hat{x}^j_t\right\|^2+\frac{1}{N-1}\left\| \mathbb{E}\left[\hat{v}^1_t\right]-\hat{v}^j_t\right\|^2 d t\bigg] \\
	&\qquad \qquad \qquad \qquad+\mathbb{E}\bigg[\|x^1_T-y^1_T\|^2+\|\hat{x}^1_T-\hat{y}^1_T\|^2 + \frac{1}{N-1}\left\|\mathbb{E}\left[x^1_T\right] -\hat{x}^j_T\right\|^2 \bigg]\Bigg)^{\frac{1}{2}} \\
	\leq& 36C_6(1+C_7)\Bigg(\mathbb{E}\bigg[ \int_0^T \|x^1_t\|^2+\|x^1_t-y^1_t\|^2+\|v^1_t\|^2+\|\hat{x}^1_t\|^2+\|\hat{x}^1_t-\hat{y}^1_t\|^2+\|\hat{v}^1_t\|^2 dt\bigg]\\
	&\qquad \qquad \qquad \qquad+\mathbb{E}\bigg[\|x^1_T\|^2+\|x^1_T-y^1_T\|^2+\|\hat{x}^1_T\|^2+\|\hat{x}^1_T-\hat{y}^1_T\|^2\bigg]\Bigg)^{\frac{1}{2}}\\
	&\qquad \qquad \quad\cdot \Bigg(\mathbb{E}\bigg[\int_0^T \|x^1_t-y^1_t\|^2+\|\hat{x}^1_t-\hat{y}^1_t\|^2+ \frac{1}{N-1}\left\|\mathbb{E}\left[\hat{x}^1_t\right] -\hat{x}^j_t\right\|^2+\frac{1}{N-1}\left\| \mathbb{E}\left[\hat{v}^1_t\right]-\hat{v}^j_t\right\|^2 d t\bigg] \\
	&\qquad \qquad \qquad \qquad+\mathbb{E}\bigg[\|x^1_T-y^1_T\|^2+\|\hat{x}^1_T-\hat{y}^1_T\|^2 + \frac{1}{N-1}\left\|\mathbb{E}\left[x^1_T\right] -\hat{x}^j_T\right\|^2 \bigg]\Bigg)^{\frac{1}{2}} \\
	&\leq 36C_6(1+C_7)\Bigg(\mathbb{E}\left[\left\|x^1_T\right\|^2+\int_0^T  \left\|x^1_t\right\|^2+ \left\|v^1_t\right\|^2dt\right]+\mathbb{E}\left[\left\|\hat{x}^1_T\right\|^2+\int_0^T  \left\|\hat{x}^1_t\right\|^2+ \left\|\hat{v}^1_t\right\|^2dt\right]\\
	&\qquad \qquad \qquad \qquad+\mathbb{E}\bigg[ \|x^1_T-y^1_T\|^2+\int_0^T \|x^1_t-y^1_t\|^2 dt\bigg]+\mathbb{E}\bigg[ \|\hat{x}^1_T-\hat{y}^1_T\|^2+\int_0^T \|\hat{x}^1_t-\hat{y}^1_t\|^2 dt\bigg]\Bigg)^{\frac{1}{2}}\\
	&\qquad \qquad \qquad\cdot \Bigg(\frac{1}{N-1}\mathbb{E}\bigg[\left\|\mathbb{E}\left[\hat{x}^1_T\right] -\hat{x}^j_T\right\|^2+\int_0^T \left\|\mathbb{E}\left[\hat{x}^1_t\right] -\hat{x}^j_t\right\|^2+\left\| \mathbb{E}\left[\hat{v}^1_t\right]-\hat{v}^j_t\right\|^2 d t\bigg] +\mathbb{E}\bigg[ \|x^1_T-y^1_T\|^2+\int_0^T \|x^1_t-y^1_t\|^2 dt\bigg] \\
	&\qquad \qquad \qquad \qquad+\mathbb{E}\bigg[ \|\hat{x}^1_T-\hat{y}^1_T\|^2+\int_0^T \|\hat{x}^1_t-\hat{y}^1_t\|^2 dt\bigg]\Bigg)^{\frac{1}{2}} \\
	\leq& 36C_6(1+C_7)\Bigg((T+1)\sup_{0\leq u\leq T}\mathbb{E}\left[\left\|x^1_u\right\|^2\right]+\mathbb{E}\left[\left\|\hat{x}^1_T\right\|^2+\int_0^T  \left\|\hat{x}^1_t\right\|^2+ \left\|\hat{v}^1_t\right\|^2dt\right]\\
	&\qquad \qquad \qquad \qquad+(T+1)\mathbb{E}\bigg[ \sup_{0\leq u\leq T}\|x^1_u-y^1_u\|^2+ \sup_{0\leq u\leq T}\|\hat{x}^1_u-\hat{y}^1_u\|^2\bigg]+ \int_0^T\mathbb{E}\left[\left\|v^1_t\right\|^2\right]dt\Bigg)^{\frac{1}{2}}\\
	&\qquad \qquad \qquad\cdot \Bigg(\frac{1}{N-1}\mathbb{E}\bigg[\left\|\mathbb{E}\left[\hat{x}^1_T\right] -\hat{x}^j_T\right\|^2+\int_0^T \left\|\mathbb{E}\left[\hat{x}^1_t\right] -\hat{x}^j_t\right\|^2+\left\| \mathbb{E}\left[\hat{v}^1_t\right]-\hat{v}^j_t\right\|^2 d t\bigg] \\
	&\qquad \qquad \qquad \qquad+(T+1)\mathbb{E}\bigg[ \sup_{0\leq u\leq T}\|x^1_u-y^1_u\|^2+ \sup_{0\leq u\leq T}\|\hat{x}^1_u-\hat{y}^1_u\|^2\bigg]\Bigg)^{\frac{1}{2}} \\
	&\leq \frac{36C_6(1+C_7)}{\sqrt{N-1}}\Bigg((M_{e,2}+M_{d,5}(T+1))\left\|\mathbb{E}\left[x^1_0\right]\right\|^2 +(M_{e,1}+M_{d,4}(T+1)+M_{d,6}(T+1))\left(\mathbb{E}\left[\left\|\mathbb{E}\left[x^1_0\right]-x^1_0\right\|^2\right] +T\|\sigma\|_T^2\right) \\
	&\qquad \qquad \qquad \qquad+M_{d,5}(T+1)\int_0^t\left\|\mathbb{E}\left[v^1_s\right]\right\|^2 ds + \int_0^T\mathbb{E}\left[\left\|v^1_t\right\|^2\right]dt+(M_{d,4}+M_{d,7})(T+1) \int_0^t\mathbb{E}\left[\left\|\mathbb{E}\left[v^1_s\right]-v^1_s\right\|^2\right] ds\Bigg)^{\frac{1}{2}}\\
	&\quad\cdot \bigg((M_{e,1}+M_{d,6}(N-1)(T+1))\left(\mathbb{E}\left[\left\|\mathbb{E}\left[x^i_0\right]-x^i_0\right\|^2\right]+T\|\sigma\|_T^2\right) +M_{d,7}(N-1)(T+1)\int_0^t\mathbb{E}\left[\left\|\mathbb{E}\left[v^1_s\right]-v^1_s\right\|^2\right] ds\bigg)^{\frac{1}{2}} \\
	&\leq \frac{36C_6(1+C_7)}{\sqrt{N-1}}\Bigg((M_{e,2}+M_{d,5}(T+1))\left\|\mathbb{E}\left[x^1_0\right]\right\|^2 +(M_{e,1}+M_{d,4}(T+1)+M_{d,6}(T+1))\left(\mathbb{E}\left[\left\|\mathbb{E}\left[x^1_0\right]-x^1_0\right\|^2\right] +T\|\sigma\|_T^2\right) \\
	&\qquad \qquad \qquad \qquad+M_{d,5}(T+1)\int_0^t\left\|\mathbb{E}\left[v^1_s\right]\right\|^2 ds + \int_0^T\mathbb{E}\left[\left\|v^1_t\right\|^2\right]dt+(M_{d,4}+M_{d,7})(T+1) \int_0^t\mathbb{E}\left[\left\|\mathbb{E}\left[v^1_s\right]-v^1_s\right\|^2\right] ds\Bigg)
	\end{align*}
	where the second last inequality follows from the estimates in Lemma \ref{lqmf.mfg.lemma.boundedness.x-Ex}, Lemma \ref{lqmf.mfg.lemma.boundedness.Ex2}, Lemma \ref{lqmf.mfg.lemma.boundedness.Ex2.dummy} and Lemma \ref{lqmf.mfg.lemma.approximation_state.pertubed}.
\end{proof}

\section{Proofs of Statements in Section \ref{lqmf.section.emftc}} \label{lqmf.section.appendixA5}

\subsection{Proof of Theorem \ref{lqmf.mfc.theorem_smp}}

\begin{proof}
	We define
	\begin{align*}
	\widetilde{x}_t := \lim_{\theta \to 0}\frac{ x^{v+\theta\widetilde{v}}_t - x^{v}_t}{\theta}.
	\end{align*}
	By the argument in \cite{bensoussan2017interpretation}, we have
	
	\begin{align*}
	\lim_{\theta \to 0}\frac{ \mathbb{E}\left[x^{v+\theta\widetilde{v}}_t\right] - \mathbb{E}\left[x^{v}_t\right]}{\theta} = \mathbb{E}\left[\widetilde{x}_t\right],
	\end{align*}
	hence
	\begin{align*}
	d \widetilde{x}_t=\left(\mathsf{A}_t \widetilde{x}_t+\mathsf{B}_t \widetilde{v}_t+\bar{\mathsf{A}}_t \mathbb{E}\left[\widetilde{x}_t\right]+\bar{\mathsf{B}}_t \mathbb{E}\left[\widetilde{v}_t\right]\right) d t, \quad \widetilde{x}(0)=0,
	\end{align*}
	and
	\begin{align*}
	\frac{d J(v+\theta\widetilde{v})}{d\theta}\bigg|_{\theta=0} =& \mathbb{E}\Bigg[ \int_0^T \widetilde{x}_t^{\top} \mathsf{Q}_t x_t+\widetilde{v}_t^{\top} \mathsf{P}_t v_t+\left(\widetilde{x}_t-\mathsf{S}_t \mathbb{E}\left[\widetilde{x}_t\right]\right)^{\top} \bar{\mathsf{Q}}_t\left(x_t-\mathsf{S}_t \mathbb{E}\left[x_t\right]\right) \\
	&\quad\quad+ \left(\widetilde{x}_t-\bar{\mathsf{S}}_t \mathbb{E}\left[\widetilde{x}_t\right]\right)^{\top} \mathsf{N}_t\left(v_t-\bar{\mathsf{R}}_t \mathbb{E}\left[v_t\right]\right) + \left(\widetilde{v}_t-\bar{\mathsf{R}}_t \mathbb{E}\left[\widetilde{v}_t\right]\right)^{\top} \mathsf{N}_t^{\top}\left(x_t-\bar{\mathsf{S}}_t \mathbb{E}\left[x_t\right]\right) \\
	&\quad\quad+ \left(\widetilde{v}_t-\mathsf{R}_t \mathbb{E}\left[\widetilde{v}_t\right]\right)^{\top} \bar{\mathsf{P}}_t\left(v_t-\mathsf{R}_t \mathbb{E}\left[v_t\right]\right) d t\Bigg] \\
	&\quad\quad+\mathbb{E}\left[ \widetilde{x}_T^{\top} \mathsf{Q}_T x_T+\left(\widetilde{x}_T-\mathsf{S}_T\mathbb{E}\left[\widetilde{x}_T\right] \right)^{\top} \bar{\mathsf{Q}}_T\left(x_T-\mathsf{S}_T \mathbb{E}\left[x_T\right]\right)\right].
	\end{align*}
	We can obtain the Euler first order condition: for any $\widetilde{v} \in $, there exist the optimal state process and the optimal control $\hat{x}_t$ and $\hat{v}_t$ such that,
	\begin{align*}
	0 \leq \frac{d J(\hat{v}+\theta\widetilde{v})}{d\theta}\bigg|_{\theta=0} =& \mathbb{E}\Bigg[ \int_0^T \widetilde{x}_t^{\top} \mathsf{Q}_t \hat{x}_t+\widetilde{v}_t^{\top} \mathsf{P}_t \hat{v}_t+\left(\widetilde{x}_t-\mathsf{S}_t \mathbb{E}\left[\widetilde{x}_t\right]\right)^{\top} \bar{\mathsf{Q}}_t\left(\hat{x}_t-\mathsf{S}_t \mathbb{E}\left[\hat{x}_t\right]\right) \\
	&\quad\quad+ \left(\widetilde{x}_t-\bar{\mathsf{S}}_t \mathbb{E}\left[\widetilde{x}_t\right]\right)^{\top} \mathsf{N}_t\left(\hat{v}_t-\bar{\mathsf{R}}_t \mathbb{E}\left[\hat{v}_t\right]\right) + \left(\widetilde{v}_t-\bar{\mathsf{R}}_t \mathbb{E}\left[\widetilde{v}_t\right]\right)^{\top} \mathsf{N}_t^{\top}\left(\hat{x}_t-\bar{\mathsf{S}}_t \mathbb{E}\left[\hat{x}_t\right]\right) \\
	&\quad\quad+ \left(\widetilde{v}_t-\mathsf{R}_t \mathbb{E}\left[\widetilde{v}_t\right]\right)^{\top} \bar{\mathsf{P}}_t\left(\hat{v}_t-\mathsf{R}_t \mathbb{E}\left[\hat{v}_t\right]\right) d t\Bigg] \\
	&\quad\quad+\mathbb{E}\left[ \widetilde{x}_T^{\top} \mathsf{Q}_T \hat{x}_T+\left(\widetilde{x}_T-\mathsf{S}_T\mathbb{E}\left[\widetilde{x}_T\right] \right)^{\top} \bar{\mathsf{Q}}_T\left(\hat{x}_T-\mathsf{S}_T \mathbb{E}\left[\hat{x}_T\right]\right)\right].
	\end{align*}
	By arrangement, the inequality becomes
	\begin{align}
	\nonumber 0 \leq& \mathbb{E}\Bigg[\int_0^T \widetilde{x}_t^{\top}(\mathsf{Q}_t + \bar{\mathsf{Q}}_t) \hat{x}_t +  \widetilde{x}_t^{\top}( - \bar{\mathsf{Q}}_t \mathsf{S}_t - \mathsf{S}_t^{\top} \bar{\mathsf{Q}}_t  + \mathsf{S}_t^{\top} \bar{\mathsf{Q}}_t \mathsf{S}_t ) \mathbb{E}\left[\hat{x}_t\right] \\
	\nonumber &\quad\quad+\widetilde{x}_t^{\top}\mathsf{N}_t\hat{v}_t + \widetilde{x}_t^{\top}( - \mathsf{N}_t\bar{\mathsf{R}}_t - \bar{\mathsf{S}}_t^{\top}\mathsf{N}_t  + \bar{\mathsf{S}}_t^{\top}\mathsf{N}_t\bar{\mathsf{R}}_t ) \mathbb{E}\left[\hat{v}_t\right] \\
	\nonumber &\quad\quad+\widetilde{v}_t^{\top}\mathsf{N}_t^{\top}\hat{x}_t + \widetilde{v}_t^{\top}( - \mathsf{N}_t^{\top}\bar{\mathsf{S}}_t - \bar{\mathsf{R}}_t^{\top}\mathsf{N}_t^{\top}  + \bar{\mathsf{R}}_t^{\top}\mathsf{N}_t^{\top}\bar{\mathsf{S}}_t ) \mathbb{E}\left[\hat{x}_t\right] \\
	\nonumber &\quad\quad+   \widetilde{v}_t^{\top}(\mathsf{P}_t + \bar{\mathsf{P}}_t) \hat{v}_t +  \widetilde{v}_t^{\top}( - \bar{\mathsf{P}}_t \mathsf{R}_t - \mathsf{R}_t^{\top} \bar{\mathsf{P}}_t  + \mathsf{R}_t^{\top} \bar{\mathsf{P}}_t \mathsf{R}_t ) \mathbb{E}\left[\hat{v}_t\right] dt\Bigg] \\
	\label{lqmf.mfc.Euler_condition} &\quad\quad+\mathbb{E}\left[\widetilde{x}_T^{\top}(\mathsf{Q}_T + \bar{\mathsf{Q}}_T) \hat{x}_T +   \widetilde{x}_T^{\top}( - \bar{\mathsf{Q}}_T \mathsf{S}_T - \mathsf{S}_T^{\top} \bar{\mathsf{Q}}_T  + \mathsf{S}_T^{\top} \bar{\mathsf{Q}}_T \mathsf{S}_T ) \mathbb{E}\left[\hat{x}_T\right]\right].
	\end{align}
	By the chain rule,
	\begin{align*}
	d \left(\widetilde{x}_t^{\top} p_t\right) =& \widetilde{x}_t^{\top} d p_t + d\widetilde{x}_t^{\top} p_t \\
	=& - \left(\widetilde{x}_t^{\top}\mathsf{A}_t^{\top} p_t  + \widetilde{x}_t^{\top}\bar{\mathsf{A}}_t^{\top} \mathbb{E}\left[p_t\right] \right.\\
	&+\widetilde{x}_t^{\top}(\mathsf{Q}_t + \bar{\mathsf{Q}}_t) \hat{x}_t +   \widetilde{x}_t^{\top}( - \bar{\mathsf{Q}}_t \mathsf{S}_t - \mathsf{S}_t^{\top} \bar{\mathsf{Q}}_t  + \mathsf{S}_t^{\top} \bar{\mathsf{Q}}_t \mathsf{S}_t ) \mathbb{E}\left[\hat{x}_t\right]  +\widetilde{x}_t^{\top}\mathsf{N}_t\hat{v}_t + \widetilde{x}_t^{\top}( - \mathsf{N}_t\bar{\mathsf{R}}_t - \bar{\mathsf{S}}_t^{\top}\mathsf{N}_t  + \bar{\mathsf{S}}_t^{\top}\mathsf{N}_t\bar{\mathsf{R}}_t ) \mathbb{E}\left[\hat{v}_t\right]  \big)dt \\
	&+ \left(\widetilde{x}_t^{\top} \mathsf{A}_t^{\top}p_t +\widetilde{v}_t^{\top}\mathsf{B}_t^{\top}p_t +\mathbb{E}\left[\widetilde{x}_t\right]^{\top}\bar{\mathsf{A}}_t^{\top}p_t +\mathbb{E}\left[\widetilde{v}_t\right]^{\top}\bar{\mathsf{B}}_t^{\top}p_t \right) dt + \widetilde{x}_t^{\top} \theta_t dW_t.
	\end{align*}
	Taking integration against time and expectation of both sides,
	\begin{align*}
	&\mathbb{E}\left[\widetilde{x}_T^{\top}(\mathsf{Q}_T + \bar{\mathsf{Q}}_T) \hat{x}_T +   \widetilde{x}_T^{\top}( - \bar{\mathsf{Q}}_T \mathsf{S}_T - \mathsf{S}_T^{\top} \bar{\mathsf{Q}}_T  + \mathsf{S}_T^{\top} \bar{\mathsf{Q}}_T \mathsf{S}_T ) \mathbb{E}\left[\hat{x}_T\right]\right] \\
	=& \mathbb{E}\left[\int_0^T - \widetilde{x}_t^{\top}(\mathsf{Q}_t + \bar{\mathsf{Q}}_t) \hat{x}_t -  \widetilde{x}_t^{\top}( - \bar{\mathsf{Q}}_t \mathsf{S}_t - \mathsf{S}_t^{\top} \bar{\mathsf{Q}}_t  + \mathsf{S}_t^{\top} \bar{\mathsf{Q}}_t \mathsf{S}_t ) \mathbb{E}\left[\hat{x}_t\right]  -\widetilde{x}_t^{\top}\mathsf{N}_t\hat{v}_t - \widetilde{x}_t^{\top}( - \mathsf{N}_t\bar{\mathsf{R}}_t - \bar{\mathsf{S}}_t^{\top}\mathsf{N}_t  + \bar{\mathsf{S}}_t^{\top}\mathsf{N}_t\bar{\mathsf{R}}_t ) \mathbb{E}\left[\hat{v}_t\right]  dt\right] \\
	&+ \mathbb{E}\left[\int_0^T \widetilde{v}_t^{\top}\mathsf{B}_t^{\top}p_t  +\widetilde{v}_t^{\top}\bar{\mathsf{B}}_t^{\top}\mathbb{E}\left[p_t\right]  dt\right]
	\end{align*}
	Substituting the above equality into the Euler first order condition \eqref{lqmf.mfc.Euler_condition}, we have, for any $\widetilde{v}_t$,
	\begin{align}
	\nonumber 0 \leq& \ \mathbb{E}\left[\int_0^T \widetilde{v}_t^{\top}\mathsf{B}_t^{\top}p_t  +\widetilde{v}_t^{\top}\bar{\mathsf{B}}_t^{\top}\mathbb{E}\left[p_t\right]+\widetilde{v}_t^{\top}\mathsf{N}_t^{\top}\hat{x}_t + \widetilde{v}_t^{\top}( - \mathsf{N}_t^{\top}\bar{\mathsf{S}}_t - \bar{\mathsf{R}}_t^{\top}\mathsf{N}_t^{\top}  + \bar{\mathsf{R}}_t^{\top}\mathsf{N}_t^{\top}\bar{\mathsf{S}}_t ) \mathbb{E}\left[\hat{x}_t\right] \right.\\
	\nonumber &+   \widetilde{v}_t^{\top}(\mathsf{P}_t + \bar{\mathsf{P}}_t) \hat{v}_t +  \widetilde{v}_t^{\top}( - \bar{\mathsf{P}}_t \mathsf{R}_t - \mathsf{R}_t^{\top} \bar{\mathsf{P}}_t  + \mathsf{R}_t^{\top} \bar{\mathsf{P}}_t \mathsf{R}_t ) \mathbb{E}\left[\hat{v}_t\right] dt\bigg]
	\end{align}
	Thus, for any $t \in [0,T]$,
	\begin{align*}
	0 = \mathsf{B}_t^{\top}p_t  +\bar{\mathsf{B}}_t^{\top}\mathbb{E}\left[p_t\right]+\mathsf{N}_t^{\top}\hat{x}_t + ( - \mathsf{N}_t^{\top}\bar{\mathsf{S}}_t - \bar{\mathsf{R}}_t^{\top}\mathsf{N}_t^{\top}  + \bar{\mathsf{R}}_t^{\top}\mathsf{N}_t^{\top}\bar{\mathsf{S}}_t ) \mathbb{E}\left[\hat{x}_t\right] + (\mathsf{P}_t + \bar{\mathsf{P}}_t) \hat{v}_t +  ( - \bar{\mathsf{P}}_t \mathsf{R}_t - \mathsf{R}_t^{\top} \bar{\mathsf{P}}_t  + \mathsf{R}_t^{\top} \bar{\mathsf{P}}_t \mathsf{R}_t ) \mathbb{E}\left[\hat{v}_t\right].
	\end{align*}
	We take expectation on both sides and rearrange it to get:
	\begin{align}
	\label{lqmf.mfc.optimal_control.mean}
	\mathbb{E}\left[\hat{v}_t\right] = -\left(\mathsf{P}_t + (I-\bar{\mathsf{R}}_t^{\top})\bar{\mathsf{P}}_t(I-\bar{\mathsf{R}}_t)\right)^{-1}\left((\mathsf{B}_t^{\top}+\bar{\mathsf{B}}_t^{\top})\mathbb{E}\left[p_t\right] +  (I-\bar{\mathsf{R}}_t^{\top})\mathsf{N}_t^{\top}(I-\bar{\mathsf{S}}_t) \mathbb{E}\left[\hat{x}_t\right]\right).
	\end{align}
	Therefore,
	\begin{align*}
	\hat{v}_t =& -(\mathsf{P}_t + \bar{\mathsf{P}}_t)^{-1}\left(\mathsf{B}_t^{\top}p_t  +\bar{\mathsf{B}}_t^{\top}\mathbb{E}\left[p_t\right]+\mathsf{N}_t^{\top}\hat{x}_t + \left( - \mathsf{N}_t^{\top} + (I-\bar{\mathsf{R}}_t^{\top})\mathsf{N}_t^{\top}(I-\bar{\mathsf{S}}_t) \right) \mathbb{E}\left[\hat{x}_t\right] \right.\\
	&+  \left(-\bar{\mathsf{P}}_t + (I-\bar{\mathsf{R}}_t^{\top})\bar{\mathsf{P}}_t(I-\bar{\mathsf{R}}_t)\right) \mathbb{E}\left[\hat{v}_t\right]\big) \\
	=& -(\mathsf{P}_t + \bar{\mathsf{P}}_t)^{-1}\left(\mathsf{B}_t^{\top}p_t  +\bar{\mathsf{B}}_t^{\top}\mathbb{E}\left[p_t\right]+\mathsf{N}_t^{\top}\hat{x}_t + \left( - \mathsf{N}_t^{\top} + (I-\bar{\mathsf{R}}_t^{\top})\mathsf{N}_t^{\top}(I-\bar{\mathsf{S}}_t) \right) \mathbb{E}\left[\hat{x}_t\right] \right.\\
	&-  \left(-\bar{\mathsf{P}}_t + (I-\bar{\mathsf{R}}_t^{\top})\bar{\mathsf{P}}_t(I-\bar{\mathsf{R}}_t)\right) \left(\mathsf{P}_t + (I-\bar{\mathsf{R}}_t^{\top})\bar{\mathsf{P}}_t(I-\bar{\mathsf{R}}_t)\right)^{-1}\left((\mathsf{B}_t^{\top}+\bar{\mathsf{B}}_t^{\top})\mathbb{E}\left[p_t\right] +  (I-\bar{\mathsf{R}}_t^{\top})\mathsf{N}_t^{\top}(I-\bar{\mathsf{S}}_t) \mathbb{E}\left[\hat{x}_t\right]\right)\big) \\
	=& -(\mathsf{P}_t + \bar{\mathsf{P}}_t)^{-1}\mathsf{B}_t^{\top}p_t  -(\mathsf{P}_t + \bar{\mathsf{P}}_t)^{-1}\bar{\mathsf{B}}_t^{\top}\mathbb{E}\left[p_t\right] \\
	&+(\mathsf{P}_t + \bar{\mathsf{P}}_t)^{-1}  \left(-\bar{\mathsf{P}}_t + (I-\bar{\mathsf{R}}_t^{\top})\bar{\mathsf{P}}_t(I-\bar{\mathsf{R}}_t)\right) \left(\mathsf{P}_t + (I-\bar{\mathsf{R}}_t^{\top})\bar{\mathsf{P}}_t(I-\bar{\mathsf{R}}_t)\right)^{-1}(\mathsf{B}_t^{\top}+\bar{\mathsf{B}}_t^{\top})\mathbb{E}\left[p_t\right]  \\
	&-(\mathsf{P}_t + \bar{\mathsf{P}}_t)^{-1}\mathsf{N}_t^{\top}\hat{x}_t -(\mathsf{P}_t + \bar{\mathsf{P}}_t)^{-1} \left( - \mathsf{N}_t^{\top} + (I-\bar{\mathsf{R}}_t^{\top})\mathsf{N}_t^{\top}(I-\bar{\mathsf{S}}_t) \right) \mathbb{E}\left[\hat{x}_t\right] \\
	&+(\mathsf{P}_t + \bar{\mathsf{P}}_t)^{-1}  \left(-\bar{\mathsf{P}}_t + (I-\bar{\mathsf{R}}_t^{\top})\bar{\mathsf{P}}_t(I-\bar{\mathsf{R}}_t)\right) \left(\mathsf{P}_t + (I-\bar{\mathsf{R}}_t^{\top})\bar{\mathsf{P}}_t(I-\bar{\mathsf{R}}_t)\right)^{-1} (I-\bar{\mathsf{R}}_t^{\top})\mathsf{N}_t^{\top}(I-\bar{\mathsf{S}}_t) \mathbb{E}\left[\hat{x}_t\right] \\
	=& (\mathsf{P}_t + \bar{\mathsf{P}}_t)^{-1}\mathsf{B}_t^{\top}\left(\mathbb{E}\left[p_t\right]-p_t\right)-\left(\mathsf{P}_t + (I-\bar{\mathsf{R}}_t^{\top})\bar{\mathsf{P}}_t(I-\bar{\mathsf{R}}_t)\right)^{-1}(\mathsf{B}_t^{\top}+\bar{\mathsf{B}}_t^{\top})\mathbb{E}\left[p_t\right]  \\
	&+(\mathsf{P}_t + \bar{\mathsf{P}}_t)^{-1}\mathsf{N}_t^{\top}\left(\mathbb{E}\left[\hat{x}_t\right]-\hat{x}_t\right)-\left(\mathsf{P}_t + (I-\bar{\mathsf{R}}_t^{\top})\bar{\mathsf{P}}_t(I-\bar{\mathsf{R}}_t)\right)^{-1} (I-\bar{\mathsf{R}}_t^{\top})\mathsf{N}_t^{\top}(I-\bar{\mathsf{S}}_t) \mathbb{E}\left[\hat{x}_t\right].
	\end{align*}
The last equality follows from:
\begin{align*}
&(\mathsf{P}_t + \bar{\mathsf{P}}_t)^{-1}  \left(-\bar{\mathsf{P}}_t + (I-\bar{\mathsf{R}}_t^{\top})\bar{\mathsf{P}}_t(I-\bar{\mathsf{R}}_t)\right) \left(\mathsf{P}_t + (I-\bar{\mathsf{R}}_t^{\top})\bar{\mathsf{P}}_t(I-\bar{\mathsf{R}}_t)\right)^{-1} =(\mathsf{P}_t + \bar{\mathsf{P}}_t)^{-1} - \left(\mathsf{P}_t + (I-\bar{\mathsf{R}}_t^{\top})\bar{\mathsf{P}}_t(I-\bar{\mathsf{R}}_t)\right)^{-1}.
\end{align*}		
\end{proof}


\section{Comparison with some existing works}  \label{lqmf.section.appendixB}

\subsection{Comparison with \cite{alasseur2020extended}} \label{lqmf.section.alasseur}

The work \cite{alasseur2020extended} considers a dynamic grid optimization problem in both the EMFG and EMFTC settings. The energy-grid system is supposed to comprise both local prosumers and an external producer. The representative agent, a local prosumer, has three state processes denoted by $Q_0$, $Q_1$, and $S$, where $Q_0$ represents the energy delivered to the grid by the external producer, $Q_1$ is the demand (positive in value) or supply (negative in value) rate of the representative agent, and $S(t)$ is the amount of energy stored in the local device by the representative agent at time $t$. The dynamics of these state processes are governed by the following dynamical systems:
\begin{align}
\label{lqmf.comp.jota.state}
\left\{\begin{array}{ccl}
dQ_0(t) &=& b_0(Q_0,t)dt + \beta_0db(t), \quad Q_0(0)=Q_{00}; \\
dQ_1(t) &=& b_1(Q_1,t)dt + \sigma(Q_1)dw(t) + \beta_1db(t), \quad Q_1(0)=Q_{10}; \\
dS(t) &=& v(t)dt, \quad S(0)=0,
\end{array}\right.
\end{align}
where $b_0(Q_0,t)$ and $b_1(Q_1,t)$ are assumed to be in mean reverting form
\begin{align}
	\label{lqmf.comp.mean_reverting}
	b_0(Q_0,t):=-\alpha_0(Q_0(t)-\gamma_0(t)); \quad b_1(Q_1,t):=-\alpha_1(Q_1(t)-\gamma_1(t)); \quad
	\sigma(Q_1):=\sigma.
\end{align}
As outlined in Section 4 of \cite{alasseur2020extended}, the objective of the representative agent is to minimize the functional $J(v)$, which comprises two terms. The first term involves the expected value of the integral of a running cost function over a finite horizon $[0,T]$, which includes the current storage cost: $\frac{A_2}{2}S(t)^2 + A_1S(t) + \frac{C}{2}v(t)^2$, the demand charge cost: $\frac{K_0}{2}Q_0(t)^2+\frac{K_1}{2}\left|Q_1(t)-v(t)\right|^2$, and the volumetric charge cost: $p(\mathbb{E}^{\mathcal{B}^t}\left[v(t)-Q_0(t)-Q_1(t)\right])(v(t)-Q_0(t)-Q_1(t))$ with $p$ the pricing rule. The second term is the terminal storage cost: $\frac{B_2}{2}S(T)^2-B_1S(T)$. More precisely, the functional $J(v)$ is defined as follows:
\begin{align}
\nonumber J(v):=&\mathbb{E}\bigg[\int_0^T \frac{A_2}{2}S(t)^2 + A_1S(t) + \frac{C}{2}v(t)^2 + \frac{K_0}{2}Q_0(t)^2+\frac{K_1}{2}\left|Q_1(t)-v(t)\right|^2dt\bigg] \\
\nonumber &+\mathbb{E}\bigg[\int_0^T p(\mathbb{E}^{\mathcal{B}^t}\left[v(t)-Q_0(t)-Q_1(t)\right])(v(t)-Q_0(t)-Q_1(t)) dt\bigg] \\
\label{lqmf.comp.jota.costfunction} &+\mathbb{E}\left[\frac{B_2}{2}S(T)^2-B_1S(T)\right],
\end{align}
where $A_1$, $A_1$, $C$, $K_0$, $K_1$, $B_1$, and $B_2$ are constants, as specified in \cite{alasseur2020extended} and $\mathbb{E}^{\mathcal{B}^t}$ is the conditional expectation given the $\sigma$-field $\mathcal{B}^t = \sigma(b(s):0\leq s\leq t)$, i.e. the historical information generated by the common noise. In this model setting, \cite{alasseur2020extended} claimed in Proposition 3.3 that if $\hat{v}$ is an optimal control for the EMFTC problem with the pricing rule $p$, then $\hat{v}$ is the mean field equilibrium for the EMFG with the pricing rule $p^{MFG}(x):=p(x)+xp'(x)$. They also provided the explicit solution to the EMFTC problem in the linear quadratic case, in other words, the pricing rule was supposed to be:

\begin{align}
	\label{lqmf.comp.price}
p(x):=p_0 x +p_1, \quad \text{for some constants} \quad p_1 > 0.
\end{align}

For our present work, our methodology and results can be easily extended to scenarios involving common noise, the research problem of \cite{alasseur2020extended} can be immediately resolved using our approach which we shall illustrate more in details in Section \ref{lqmf.comp_section.grid}. It is important to emphasize that there are several fundamental differences between our linear quadratic model and that of \cite{alasseur2020extended}. Firstly, the state processes $Q_0$ and $Q_1$ specified in \cite{alasseur2020extended} were not influenced by the control variable $v$, and the control-dependent state $S$ does not contain a noise term, while our present study allows their presence. As shown in \eqref{lqmf.mfg.state_process}, we consider the most comprehensive state process under the linear setting. Secondly, in contrast to the state process \eqref{lqmf.comp.jota.state} as proposed by \cite{alasseur2020extended}, our state process \eqref{lqmf.mfg.state_process} incorporates both the mean-field terms of states and controls. Thirdly, the objective functional \eqref{lqmf.comp.jota.costfunction} as considered in \cite{alasseur2020extended} did not contain the mean-field terms of states, whereas our objective functional \eqref{lqmf.mfg.objective} includes both the mean-field terms of states and controls. Last but not least, the interactions of the state, control, mean field terms of states and controls in our objective functional are completely allowed here in our model, especially, our objective functional is entirely non-seperable including the cross term of all these for terms.


\subsection{Comparison with Dynamic Programming Approach} \label{lqmf.section.master_equation}

When addressing the dynamic grid optimization problem mentioned in the previous section, we also adopted a new master equation approach in \cite{bensoussan2022optimization} to cope with an extended problem of \cite{alasseur2020extended}. We demonstrated the application of this master equation method by studying the same problem as presented in the paper \cite{alasseur2020extended}. In the setting of \cite{bensoussan2022optimization}, the state processes and objective functionals of the problem are still given by \eqref{lqmf.comp.jota.state} and \eqref{lqmf.comp.jota.costfunction}, respectively, with some differences in symbols, for instance: $a=A_2$, $l=A_1$, $c=C$, $h_0=B_2$, and $h_1=-B_1$. More formally, the objective functional of \cite{bensoussan2022optimization} is given by
\begin{align*}
	J(v):=&\mathbb{E}\bigg[\int_0^T \frac{a}{2}S(t)^2 + lS(t) + \frac{c}{2}v(t)^2 + \frac{K_0}{2}Q_0(t)^2+\frac{K_1}{2}\left|Q_1(t)-v(t)\right|^2dt\bigg] \\
	&+\mathbb{E}\bigg[\int_0^T p(\mathbb{E}^{\mathcal{B}^t}\left[v(t)-Q_0(t)-Q_1(t)\right])(v(t)-Q_0(t)-Q_1(t)) dt\bigg] \\
	&+\mathbb{E}\left[\frac{h_0}{2}S(T)^2+h_1S(T)\right],
\end{align*}
and the state processes are given by
\begin{align*}
	\left\{\begin{array}{ccl}
		dQ_0(t) &=& b_0(Q_0,t)dt + \beta_0db(t), \quad Q_0(0)=Q_{00}; \\
		dQ_1(t) &=& b_1(Q_1,t)dt + \sigma(Q_1)dw(t) + \beta_1db(t), \quad Q_1(0)=Q_{10}; \\
		dS(t) &=& v(t)dt, \quad S(0)=0,
	\end{array}\right.
\end{align*}
satisfying the assumptions \eqref{lqmf.comp.mean_reverting} and \eqref{lqmf.comp.price}. In Section \ref{lqmf.comp_section.grid} we shall use our approach to address the dynamic power grid problem proposed by \cite{alasseur2020extended}, and to elaborate further on the differences between our present method and those of \cite{alasseur2020extended} and \cite{bensoussan2022optimization}.

\subsection{Solving the Dynamic Grid Optimization Problem via Maximum Principle} \label{lqmf.comp_section.grid}

We define the coefficients of the dynamic grid optimization problem as follows:
\begin{align}
\label{lqmf.comp.grid.coefficients.redefinition}
\begin{array}{lllll}
\mathsf{A} := \left(\begin{array}{ccc}
-\alpha_0 & 0 & 0\\
0 & -\alpha_1 & 0 \\
0 & 0 & 0
\end{array}\right), \quad
&\mathsf{B} := \left(\begin{array}{c}
0 \\
0 \\
1
\end{array}\right), \quad
&\mathsf{C} := \left(\begin{array}{c}
\alpha_0\gamma_0 \\
\alpha_1\gamma_1 \\
0
\end{array}\right), \quad
&\mathsf{D} := \left(\begin{array}{c}
0 \\
1 \\
0
\end{array}\right), \quad
&\mathsf{E} := \left(\begin{array}{c}
\beta_0 \\
\beta_1 \\
0
\end{array}\right), \\
\mathsf{Q} := \left(\begin{array}{ccc}
K_0 & 0 & 0\\
0 & K_1 & 0 \\
0 & 0 & a
\end{array}\right), \quad
&\mathsf{P} := c+K_1, \quad
&\mathsf{N} := \left(\begin{array}{c}
0 \\
-K_1  \\
0 
\end{array}\right), \quad
&\mathsf{R} := \left(\begin{array}{c}
1 \\
1 \\
0
\end{array}\right), \quad
&\bar{\mathsf{Q}} := p_0, \\
\mathsf{L} := \left(\begin{array}{c}
-p_1 \\
-p_1 \\
l
\end{array}\right), \quad
&\mathsf{M} := p_1, \quad
&\mathsf{Q}_T := \left(\begin{array}{ccc}
0 & 0 & 0\\
0 & 0 & 0 \\
0 & 0 & h_0
\end{array}\right), \quad
&\mathsf{L}_T := \left(\begin{array}{c}
0 \\
0 \\
h_1
\end{array}\right).\end{array}
\end{align}
Then the state processes $Q_0(t)$, $Q_1(t)$ and $S(t)$ correspond to the following vector-valued state process
\begin{align}
\label{lqmf.comp.grid.state.define}
x(t) := \left(Q_0(t),Q_1(t),S(t)\right)^T, \quad x_0 := \left(Q_{00},Q_{10},0\right)^T,
\end{align}
which satisfies
\begin{align}
\label{lqmf.comp.grid.state}
dx(t) = \left(\mathsf{A}x(t)+\mathsf{B}v(t)+\mathsf{C}\right) dt + \mathsf{D}\sigma dw(t) + \mathsf{E}db(t), \quad x(0)=x_0.
\end{align}
The objective functional now becomes
\begin{align}
\nonumber J(v):=&\mathbb{E}\bigg[\int_0^T \frac{1}{2}x(t)^{\top}\mathsf{Q}x(t) + \frac{1}{2}\mathsf{P}v(t)^2 + x(t)^{\top}\mathsf{N}v(t) + \mathsf{L}^{\top}x(t) + \mathsf{M}v(t)dt\bigg] \\
\label{lqmf.comp.grid.objective} &+\mathbb{E}\bigg[\int_0^T (\mathsf{R}^{\top}x(t)- v(t))\bar{\mathsf{Q}}(\mathsf{R}^{\top}\mathbb{E}^{\mathcal{B}^t}[x(t)]- \mathbb{E}^{\mathcal{B}^t}[v(t)]) dt\bigg] +\mathbb{E}\left[\frac{1}{2}x(T)^{\top}\mathsf{Q}_Tx(T)+\mathsf{L}_T^{\top}x(T)\right].
\end{align}

\begin{theorem}({\bf Maximum Principle})
The problem is solved by the pair $(\hat{x}(t),\phi(t),q(t),\bar{q}(t),\hat{v}(t))$ which is the solution of the following FBSDEs
\begin{align}
\label{lqmf.comp.grid.grid.fbsde}
\left\{\begin{array}{cll}
d\hat{x}(t) &=& \left(\mathsf{A}\hat{x}(t)+\mathsf{B}\hat{v}(t)+\mathsf{C}\right) dt + \mathsf{D}\sigma dw(t) + \mathsf{E}db(t), \\
-d\phi(t)&=&\left(A^{\top}\phi(t)+\mathsf{Q}\hat{x}(t)+\mathsf{N}\hat{v}(t)+\mathsf{L}+2\mathsf{R}\bar{\mathsf{Q}}(\mathsf{R}^{\top}\mathbb{E}^{\mathcal{B}^t}[\hat{x}(t)]- \mathbb{E}^{\mathcal{B}^t}[\hat{v}(t)])\right)dt \\
&&+q(t)dw(t)+\bar{q}(t)db(t), \\
\hat{x}(0)&=&x_0, \quad \phi(T)=\mathsf{Q}_T\hat{x}(T)+\mathsf{L}_T,
\end{array}\right.
\end{align}
where
\begin{align}
\label{lqmf.comp.grid.grid.optimal_control.with_Ev}
\hat{v}(t) =& -\mathsf{P}^{-1}\left(\mathsf{B}^{\top}\phi(t) + \mathsf{N}^{\top}\hat{x}(t) + \mathsf{M} - 2 \bar{\mathsf{Q}}\mathsf{R}^{\top}\mathbb{E}^{\mathcal{B}^t}[\hat{x}(t)]+2\bar{\mathsf{Q}}\mathbb{E}^{\mathcal{B}^t}[\hat{v}(t)]\right).
\end{align}
\end{theorem}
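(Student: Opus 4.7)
The plan is to adapt the stochastic maximum principle argument already carried out in the proof of Theorem \ref{lqmf.mfc.theorem_smp} to the present setting, where the novelty is the presence of a common noise $b(t)$ and the fact that the mean-field terms in the objective functional are conditional expectations $\mathbb{E}^{\mathcal{B}^t}[\cdot]$ rather than unconditional ones. First, I would introduce an admissible perturbation $\hat{v}+\theta\widetilde{v}$, define $\widetilde{x}(t):=\lim_{\theta\to 0}(x^{\hat{v}+\theta\widetilde{v}}(t)-\hat{x}(t))/\theta$, observe that $\widetilde{x}$ solves the deterministic-coefficient linear equation $d\widetilde{x}(t)=(\mathsf{A}\widetilde{x}(t)+\mathsf{B}\widetilde{v}(t))\,dt$ with $\widetilde{x}(0)=0$ (the noise contributions cancel), and note the commutation $\lim_{\theta\to 0}(\mathbb{E}^{\mathcal{B}^t}[x^{\hat{v}+\theta\widetilde{v}}]-\mathbb{E}^{\mathcal{B}^t}[\hat{x}])/\theta=\mathbb{E}^{\mathcal{B}^t}[\widetilde{x}]$.

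Next, I would compute $\frac{d}{d\theta}J(\hat{v}+\theta\widetilde{v})\big|_{\theta=0}\ge 0$. The terms from the quadratic part $\tfrac{1}{2}x^\top\mathsf{Q}x+\tfrac{1}{2}\mathsf{P}v^2+x^\top\mathsf{N}v+\mathsf{L}^\top x+\mathsf{M}v$, together with the terminal cost, behave exactly as in the proof of Theorem \ref{lqmf.mfc.theorem_smp}. The delicate piece is the mean-field cross term $\mathbb{E}[\int_0^T(\mathsf{R}^\top x-v)\bar{\mathsf{Q}}(\mathsf{R}^\top\mathbb{E}^{\mathcal{B}^t}[x]-\mathbb{E}^{\mathcal{B}^t}[v])\,dt]$: its Gâteaux derivative produces two contributions, one from perturbing the path variables and one from perturbing the conditional expectations. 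Using the tower property $\mathbb{E}[Y\,\bar{\mathsf{Q}}\,\mathbb{E}^{\mathcal{B}^t}[Z]]=\mathbb{E}[\mathbb{E}^{\mathcal{B}^t}[Y]\bar{\mathsf{Q}}\,\mathbb{E}^{\mathcal{B}^t}[Z]]$ (scalar $\bar{\mathsf{Q}}=p_0$), these two contributions fuse into a symmetric form, producing precisely the driver coefficients $2\mathsf{R}\bar{\mathsf{Q}}(\mathsf{R}^\top\mathbb{E}^{\mathcal{B}^t}[\hat{x}]-\mathbb{E}^{\mathcal{B}^t}[\hat{v}])$ in front of $\widetilde{x}$ and $-2\bar{\mathsf{Q}}(\mathsf{R}^\top\mathbb{E}^{\mathcal{B}^t}[\hat{x}]-\mathbb{E}^{\mathcal{B}^t}[\hat{v}])$ in front of $\widetilde{v}$ that appear in \eqref{lqmf.comp.grid.grid.fbsde} and \eqref{lqmf.comp.grid.grid.optimal_control.with_Ev}.

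Then I would apply It\^o's formula to $\widetilde{x}(t)^\top\phi(t)$, where $(\phi,q,\bar{q})$ is the candidate adjoint triple driven by both the idiosyncratic $w$ and the common $b$. Integrating from $0$ to $T$, taking expectation so that both stochastic integrals vanish, and using $\widetilde{x}(0)=0$ together with $\phi(T)=\mathsf{Q}_T\hat{x}(T)+\mathsf{L}_T$, I obtain an identity that allows me to eliminate every appearance of $\widetilde{x}$ in the expression of the Gâteaux derivative in favor of $\widetilde{v}$ (with coefficient $\mathsf{B}^\top\phi$) provided that $\phi$ satisfies the BSDE stated in \eqref{lqmf.comp.grid.grid.fbsde}. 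Substituting back, the Euler condition reduces to
\[
0\le \mathbb{E}\!\left[\int_0^T \widetilde{v}(t)^\top\!\left(\mathsf{P}\hat{v}(t)+\mathsf{B}^\top\phi(t)+\mathsf{N}^\top\hat{x}(t)+\mathsf{M}-2\bar{\mathsf{Q}}\mathsf{R}^\top\mathbb{E}^{\mathcal{B}^t}[\hat{x}(t)]+2\bar{\mathsf{Q}}\mathbb{E}^{\mathcal{B}^t}[\hat{v}(t)]\right)dt\right]\!,
\]
for every admissible $\widetilde{v}$, and the arbitrariness of $\widetilde{v}$ gives \eqref{lqmf.comp.grid.grid.optimal_control.with_Ev} after inverting the scalar $\mathsf{P}=c+K_1>0$.

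The hard part will be the bookkeeping for the conditional-expectation cross term: one must verify that the perturbation of $\mathbb{E}^{\mathcal{B}^t}[x]$ indeed exchanges with the limit in $\theta$ and that the tower-property collapse produces exactly the symmetric factor of $2$ entering the adjoint driver. A second, more technical subtlety is the martingale representation under the augmented filtration generated by both $w$ and $b$: the BSDE for $\phi$ must carry two separate integrands $q$ against $dw$ and $\bar{q}$ against $db$, and one needs to check that $\mathbb{E}[\int_0^T\widetilde{x}^\top q\,dw+\int_0^T\widetilde{x}^\top\bar{q}\,db]=0$ so that neither $q$ nor $\bar{q}$ pollutes the Euler condition; since $\widetilde{x}$ is adapted and the stochastic integrals are genuine martingales under standard integrability, this is routine but must be stated carefully.
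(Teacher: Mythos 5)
Your proposal follows essentially the same route as the paper's proof: the Gâteaux derivative of $J$, the tower-property identity $\mathbb{E}\bigl[(\mathsf{R}^{\top}\hat{x}-\hat{v})\bar{\mathsf{Q}}(\mathsf{R}^{\top}\mathbb{E}^{\mathcal{B}^t}[\widetilde{x}]-\mathbb{E}^{\mathcal{B}^t}[\widetilde{v}])\bigr]=\mathbb{E}\bigl[(\mathsf{R}^{\top}\mathbb{E}^{\mathcal{B}^t}[\hat{x}]-\mathbb{E}^{\mathcal{B}^t}[\hat{v}])\bar{\mathsf{Q}}(\mathsf{R}^{\top}\widetilde{x}-\widetilde{v})\bigr]$ producing the symmetric factor of $2$, the duality computation on $\widetilde{x}^{\top}\phi$ to eliminate $\widetilde{x}$, and the arbitrariness of $\widetilde{v}$ yielding the first-order condition. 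The argument is correct, and your explicit attention to $\widetilde{x}(0)=0$ and to the vanishing of both stochastic integrals under the augmented filtration is if anything slightly more careful than the paper's write-up.
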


\begin{proof}
	Consider the derivative
	\begin{align*}
	\widetilde{x}(t) := \lim_{\theta \to 0}\frac{ x^{v+\theta\widetilde{v}}(t) - x^{v}(t)}{\theta}.
	\end{align*}
	It is clear that $\widetilde{x}(t)$ satisfies
	\begin{align*}
	d \widetilde{x}(t)=\left(\mathsf{A}(t) \widetilde{x}(t)+\mathsf{B}(t) \widetilde{v}(t)\right) d t, \quad \widetilde{x}(0)=x_0.
	\end{align*}
	By taking the G{\^a}teaux derivative, we have
	\begin{align*}
	\frac{d J(v+\theta\widetilde{v})}{d\theta}\bigg|_{\theta=0} =&\mathbb{E}\bigg[\int_0^T \widetilde{x}(t)^{\top}\mathsf{Q}x(t) + \mathsf{P}v(t)\widetilde{v}(t) + \widetilde{x}(t)^{\top}\mathsf{N}v(t) + x(t)^{\top}\mathsf{N}\widetilde{v}(t) + \mathsf{L}^{\top}\widetilde{x}(t) + \mathsf{M}\widetilde{v}(t)dt\bigg] \\
	&+\mathbb{E}\bigg[\int_0^T (\mathsf{R}^{\top}\widetilde{x}(t)- \widetilde{v}(t))\bar{\mathsf{Q}}(\mathsf{R}^{\top}\mathbb{E}^{\mathcal{B}^t}[x(t)]- \mathbb{E}^{\mathcal{B}^t}[v(t)]) + (\mathsf{R}^{\top}x(t)- v(t))\bar{\mathsf{Q}}(\mathsf{R^{\top}}\mathbb{E}^{\mathcal{B}^t}[\widetilde{x}(t)]- \mathbb{E}^{\mathcal{B}^t}[\widetilde{v}(t)]) dt\bigg] \\
	&+\mathbb{E}\left[\widetilde{x}(T)^{\top}\mathsf{Q}_Tx(T)+\mathsf{L}_T^{\top}\widetilde{x}(T)\right].
	\end{align*}
	Note that
	\begin{align*}
	&\mathbb{E}\left[(\mathsf{R}^{\top}x(t)- v(t))\bar{\mathsf{Q}}(\mathsf{R^{\top}}\mathbb{E}^{\mathcal{B}^t}[\widetilde{x}(t)]- \mathbb{E}^{\mathcal{B}^t}[\widetilde{v}(t)])\right] =\mathbb{E}\left[\mathbb{E}^{\mathcal{B}^t}\left[(\mathsf{R}^{\top}x(t)- v(t))\bar{\mathsf{Q}}(\mathsf{R^{\top}}\mathbb{E}^{\mathcal{B}^t}[\widetilde{x}(t)]- \mathbb{E}^{\mathcal{B}^t}[\widetilde{v}(t)])\right]\right] \\
	&=\mathbb{E}\left[\mathbb{E}^{\mathcal{B}^t}\left[(\mathsf{R}^{\top}\mathbb{E}^{\mathcal{B}^t}[x(t)]- \mathbb{E}^{\mathcal{B}^t}[v(t)])\bar{\mathsf{Q}}(\mathsf{R^{\top}}\widetilde{x}(t)- \widetilde{v}(t))\right]\right] \\
	&=\mathbb{E}\left[(\mathsf{R}^{\top}\mathbb{E}^{\mathcal{B}^t}[x(t)]- \mathbb{E}^{\mathcal{B}^t}[v(t)])\bar{\mathsf{Q}}(\mathsf{R^{\top}}\widetilde{x}(t)- \widetilde{v}(t))\right],
	\end{align*}
	and by the convexity of objective functional, it admits the minimum at the point $(\hat{x}(t),\hat{v}(t))$ satisfying the Euler's condition:
	\begin{align}
	\nonumber
	0 \leq &\mathbb{E}\bigg[\int_0^T \widetilde{x}(t)^{\top}\mathsf{Q}\hat{x}(t) + \mathsf{P}\hat{v}(t)\widetilde{v}(t) + \widetilde{x}(t)^{\top}\mathsf{N}\hat{v}(t) + \hat{x}(t)^{\top}\mathsf{N}\widetilde{v}(t) + \mathsf{L}^{\top}\widetilde{x}(t) + \mathsf{M}\widetilde{v}(t)dt\bigg] \\
	\label{lqmf.comp.grid.grid.Euler_condition}
	&+\mathbb{E}\bigg[\int_0^T 2(\mathsf{R}^{\top}\widetilde{x}(t)- \widetilde{v}(t))\bar{\mathsf{Q}}(\mathsf{R}^{\top}\mathbb{E}^{\mathcal{B}^t}[\hat{x}(t)]- \mathbb{E}^{\mathcal{B}^t}[\hat{v}(t)]) dt\bigg] +\mathbb{E}\left[\widetilde{x}(T)^{\top}\mathsf{Q}_T\hat{x}(T)+\mathsf{L}_T^{\top}\widetilde{x}(T)\right],
	\end{align}
	for any $\widetilde{v}(t) \in L_{\mathcal{G}_{\cdot}}^{2}\left(0, T ; \mathbb{R}^m\right)$. By using \eqref{lqmf.comp.grid.grid.fbsde} and integration-by-parts, we have
	\begin{align}
	\nonumber 
	&\mathbb{E}\left[\widetilde{x}(T)^{\top}\mathsf{Q}_T\hat{x}(T)+\mathsf{L}_T^{\top}\widetilde{x}(T)\right] = \mathbb{E}\left[\int_0^T \phi(t)^{\top}d\widetilde{x}(t)+\int_0^T \widetilde{x}(t)^{\top}d\phi(t)\right] \\
	\label{lqmf.comp.grid.grid.dual}
	&= \mathbb{E}\bigg[\int_0^T \phi(t)^{\top}\left(\mathsf{A} \widetilde{x}(t)+\mathsf{B} \widetilde{v}(t)\right) d t-\int_0^T \widetilde{x}(t)^{\top}A^{\top}\phi(t)+\widetilde{x}(t)^{\top}\mathsf{Q}\hat{x}(t)+\widetilde{x}(t)^{\top}\mathsf{N}\hat{v}(t)+\widetilde{x}(t)^{\top}\mathsf{L}+2\widetilde{x}(t)^{\top}\mathsf{R}\bar{\mathsf{Q}}(\mathsf{R}^{\top}\mathbb{E}^{\mathcal{B}^t}[\hat{x}(t)]- \mathbb{E}^{\mathcal{B}^t}[\hat{v}(t)])dt\bigg].
	\end{align}
	Combining \eqref{lqmf.comp.grid.grid.Euler_condition} and \eqref{lqmf.comp.grid.grid.dual}, we obtain for any $\widetilde{v}(t) \in L_{\mathcal{G}_{\cdot}}^{2}\left(0, T ; \mathbb{R}^m\right)$:
	\begin{align*}
	0 \leq &\mathbb{E}\bigg[\int_0^T  \phi(t)^{\top}\mathsf{B} \widetilde{v}(t) + \mathsf{P}\hat{v}(t)\widetilde{v}(t) + \hat{x}(t)^{\top}\mathsf{N}\widetilde{v}(t) + \mathsf{M}\widetilde{v}(t)dt\bigg] -\mathbb{E}\bigg[\int_0^T 2 \widetilde{v}(t)\bar{\mathsf{Q}}(\mathsf{R}^{\top}\mathbb{E}^{\mathcal{B}^t}[\hat{x}(t)]- \mathbb{E}^{\mathcal{B}^t}[\hat{v}(t)]) dt\bigg],
	\end{align*}
	and therefore,
	\begin{align*}
	0 = & \mathsf{B}^{\top}\phi(t)  + \mathsf{P}\hat{v}(t) + \mathsf{N}^{\top}\hat{x}(t) + \mathsf{M} - 2 \bar{\mathsf{Q}}\mathsf{R}^{\top}\mathbb{E}^{\mathcal{B}^t}[\hat{x}(t)]+2\bar{\mathsf{Q}}\mathbb{E}^{\mathcal{B}^t}[\hat{v}(t)].
	\end{align*}
\end{proof}

By substituting the coefficients \eqref{lqmf.comp.grid.coefficients.redefinition} back into \eqref{lqmf.comp.grid.grid.optimal_control.with_Ev}, we obtain
\begin{align}
	\label{lqmf.comp.grid.optimal_control.hat}
	\hat{v}(t) =& \frac{K_1Q_1(t)-\phi_3(t)  - \zeta\left(\mathbb{E}^{\mathcal{B}^t}[\hat{v}(t)]-\mathbb{E}^{\mathcal{B}^t}[Q_0(t)]-\mathbb{E}^{\mathcal{B}^t}[Q_1(t)]\right)}{c+K_1},
\end{align}
where $\zeta(x) := 2p_0x+p_1$ and $\phi_3(t)$ is the third component of the adjoint process $\phi(t)$. This corresponds to the result of Proposition 1 in \cite{bensoussan2022optimization}. Taking the conditional expectation of both sides of \eqref{lqmf.comp.grid.grid.optimal_control.with_Ev}, we further obtain:
\begin{align*}
\mathbb{E}^{\mathcal{B}^t}[\hat{v}(t)] =& -\frac{\mathbb{E}^{\mathcal{B}^t}[\phi_3(t)]-2p_0\mathbb{E}^{\mathcal{B}^t}[Q_0(t)]-(2p_0+K_1)\mathbb{E}^{\mathcal{B}^t}[Q_1(t)] +p_1}{c+K_1+2p_0}.
\end{align*}
which can be substituted back into \eqref{lqmf.comp.grid.grid.optimal_control.with_Ev} to deduce that:
\begin{align}
\label{lqmf.comp.grid.grid.optimal_control.without_Ev}
\hat{v}(t) =&
\frac{\mathbb{E}^{\mathcal{B}^t}[\phi_3(t)]-\phi_3(t) - K_1\left(\mathbb{E}^{\mathcal{B}^t}[Q_1(t)]-Q_1(t)\right)}{c+K_1} -\frac{\mathbb{E}^{\mathcal{B}^t}[\phi_3(t)]-2p_0\mathbb{E}^{\mathcal{B}^t}[Q_0(t)]-(2p_0+K_1)\mathbb{E}^{\mathcal{B}^t}[Q_1(t)] +p_1}{c+K_1+2p_0}.
\end{align}
We can replace \eqref{lqmf.comp.grid.optimal_control.hat} by \eqref{lqmf.comp.grid.grid.optimal_control.without_Ev} in the system of mean-field FBSDEs, by then the new system depends on $\mathbb{E}^{\mathcal{B}^t}[\phi(t)]$ rather than on $\mathbb{E}^{\mathcal{B}^t}[\hat{v}(t)]$.

Since the optimal control only depends on the third component of the adjoint process while $\phi_3(t)$ is separable, we can substitute \eqref{lqmf.comp.grid.coefficients.redefinition} back and consider the following FBSDE system:
\begin{align}
\label{lqmf.comp.grid.fbsde.adjoint_relation.hat}
\left\{\begin{array}{cll}
dQ_0(t) &=& -\alpha_0(Q_0(t)-\gamma_0(t))dt + \beta_0db(t), \quad Q_0(0)=Q_{00};\\
dQ_1(t) &=& -\alpha_1(Q_1(t)-\gamma_1(t))dt + \sigma dw(t) + \beta_1db(t), \quad Q_1(0)=Q_{10};\\
d\hat{S}(t) &=& \hat{v}(t)dt, \quad \hat{S}(0)=0;\\
-d\phi_3(t)&=&\left(a\hat{S}(t)+l\right)dt +q_3(t)dw(t)+\bar{q}_3(t)db(t), \quad \phi_3(T)=h_0\hat{S}(T)+h_1,
\end{array}\right.
\end{align}
where $\hat{v}(t)$ is still given by \eqref{lqmf.comp.grid.optimal_control.hat}. To compare with Section 5.1.1 of \cite{bensoussan2022optimization}, we take a similar {\em ansatz} as that of $\lambda(S,Q,T)$ in \cite{bensoussan2022optimization} and suppose:
\begin{align*}
\phi_3(t):=\lambda_0(t)\hat{S}_t+\lambda_1(t),
\end{align*}
where $\lambda_0(t)$ and $\lambda_1(t)$ are indetermined coefficients. We find that $\lambda_0(t)$ and $\lambda_1(t)$ satisfy:
\begin{align}
\left\{\begin{array}{rl}
	\label{lqmf.comp.grid.bsde.lambda_0_and_lambda_1}
	-d\lambda_0(t)=&\left(a-\frac{\lambda_0(t)^2}{c+K_1}\right)dt; \\
	-d\lambda_1(t)=&\left(-\frac{\lambda_0(t)}{c+K_1}\lambda_1(t) + \frac{\lambda_0(t)\left(K_1Q_1(t)-\zeta\left(\mathbb{E}^{\mathcal{B}^t}[\hat{v}(t)]-\mathbb{E}^{\mathcal{B}^t}[Q_0(t)]-\mathbb{E}^{\mathcal{B}^t}[Q_1(t)]\right)\right)}{c+K_1} + l \right)dt +q_3(t)dw(t)+\bar{q}_3(t)db(t),
\end{array}\right.
\end{align}
with $\lambda_0(T) = h_0$ and $\lambda_1(T) = h_1$.

Now, our $\lambda_0(t)$ and $\lambda_1(t)$ in \eqref{lqmf.comp.grid.bsde.lambda_0_and_lambda_1} correspond to their $\lambda_0(t)$ and $\lambda_1(t,Q_1(t))+\nu(t)$, respectively (as defined in their Eq. (32) in \cite{bensoussan2022optimization}). We note that the FBSDE system \eqref{lqmf.comp.grid.optimal_control.hat}-\eqref{lqmf.comp.grid.fbsde.adjoint_relation.hat} is not completely solved due to the presence of $\mathbb{E}^{\mathcal{B}^t}[\hat{v}(t)]$ in the BSDEs \eqref{lqmf.comp.grid.bsde.lambda_0_and_lambda_1}. Together with \eqref{lqmf.comp.grid.optimal_control.hat}, this gives rise to another mean-field coupled system, and we shall elaborate on how to obtain the value of $\mathbb{E}^{\mathcal{B}^t}[\hat{v}(t)]$ as follows.

\paragraph{Explicit Solution}

To obtain the explicit expression of $\mathbb{E}^{\mathcal{B}^t}[\hat{v}(t)]$ and have the explicit solution , we first consider the expected value form of the FBSDEs \eqref{lqmf.comp.grid.fbsde.adjoint_relation.hat}-\eqref{lqmf.comp.grid.optimal_control.hat}. Write $\bar{Q}_0(t) := \mathbb{E}^{\mathcal{B}^t}[Q_0(t)]$, $\bar{Q}_1(t) := \mathbb{E}^{\mathcal{B}^t}[Q_1(t)]$, $\bar{S}(t) := \mathbb{E}^{\mathcal{B}^t}[\hat{S}(t)]$, $\bar{\phi}_3(t) := \mathbb{E}^{\mathcal{B}^t}[\phi_3(t)]$ and $\bar{v}(t) := \mathbb{E}^{\mathcal{B}^t}[\hat{v}(t)]$, then
\begin{align}
\label{lqmf.comp.grid.fbsde.adjoint_relation.bar}
\left\{\begin{array}{cll}
d\bar{Q}_0(t) &=& -\alpha_0(\bar{Q}_0(t)-\gamma_0(t))dt + \beta_0db(t), \quad \bar{Q}_0(0)=Q_{00};\\
d\bar{Q}_1(t) &=& -\alpha_1(\bar{Q}_1(t)-\gamma_1(t))dt + \beta_1db(t), \quad \bar{Q}_1(0)=Q_{10};\\
d\bar{S}(t) &=& \bar{v}(t)dt, \quad \bar{S}(0)=0; \\
-d\bar{\phi_3}(t)&=&\left(a\bar{S}(t)+l\right)dt+\bar{q}_3(t)db(t), \quad \bar{\phi}_3(T)=h_0\bar{S}(T)+h_1,
\end{array}\right.
\end{align}
with
\begin{align}
\label{lqmf.comp.grid.optimal_control.bar}
\bar{v}(t) =& -\frac{\bar{\phi}_3(t)-2p_0\bar{Q}_0(t)-(2p_0+K_1)\bar{Q}_1(t) +p_1}{c+K_1+2p_0}.
\end{align}
It is clear that \eqref{lqmf.comp.grid.fbsde.adjoint_relation.bar}-\eqref{lqmf.comp.grid.optimal_control.bar} are classical FBSDEs without the mean field term. As a standard argument, we construct the {\em ansatz}:
\begin{align}
\bar{\phi}_3(t) = \bar{\lambda}_0(t)\bar{S}(t)+\bar{\Gamma}_0(t)\bar{Q}_0(t) + \bar{\Gamma}_1(t)\bar{Q}_1(t) +\bar{\nu}(t),
\end{align}
then it can be solved explicitly by the following (linear mixed with Riccati) ordinary differential equations: 
\begin{align}
\label{lqmf.comp.grid.ode.bar.1}
\frac{d\bar{\lambda}_0(t)}{dt}+a-\frac{\bar{\lambda}_0(t)^2}{c+K_1+2p_0}=0, \quad \bar{\lambda}_0(T) = h_0,
\end{align}
\begin{align}
\label{lqmf.comp.grid.ode.bar.2}
\frac{d\bar{\Gamma}_0(t)}{dt}-\left(\alpha_0+\frac{\bar{\lambda}_0(t)}{c+K_1+2p_0}\right)\bar{\Gamma}_0(t) + \frac{2\bar{\lambda}_0(t)p_0}{c+K_1+2p_0}=0, \quad \bar{\Gamma}_0(T) = 0,
\end{align}
\begin{align}
\label{lqmf.comp.grid.ode.bar.3}
\frac{d\bar{\Gamma}_1(t)}{dt}-\left(\alpha_1+\frac{\bar{\lambda}_0(t)}{c+K_1+2p_0}\right)\bar{\Gamma}_1(t) + \frac{\bar{\lambda}_0(t)(K_1+2p_0)}{c+K_1+2p_0}=0, \quad \bar{\Gamma}_1(T) = 0,
\end{align}
\begin{align}
\label{lqmf.comp.grid.ode.bar.4}
\frac{d\bar{\nu}(t)}{dt}-\frac{\bar{\lambda}_0(t)}{c+K_1+2p_0}\bar{\nu}(t)-\frac{\bar{\lambda}_0(t)p_1}{c+K_1+2p_0}+\gamma_0\bar{\Gamma}_0(t)+\gamma_1\bar{\Gamma}_1(t)=0, \quad \bar{\nu}(T) = h_1,
\end{align}
and
\begin{align}
\label{lqmf.comp.grid.ode.bar.5}
\bar{q}_3(t)=-\beta_0\bar{\Gamma}_0(t)-\beta_1\bar{\Gamma}_1(t).
\end{align}
To solve the ODE system \eqref{lqmf.comp.grid.ode.bar.1}-\eqref{lqmf.comp.grid.ode.bar.4}, we begin with solving the independent equation \eqref{lqmf.comp.grid.ode.bar.1} first. Once we obtain a solution, we substitute it into \eqref{lqmf.comp.grid.ode.bar.2} and \eqref{lqmf.comp.grid.ode.bar.3} to obtain solutions for $\bar{\Gamma}_0(t)$ and $\bar{\Gamma}_1(t)$, in order. Finally, we substitute the solutions of \eqref{lqmf.comp.grid.ode.bar.1}, \eqref{lqmf.comp.grid.ode.bar.2} and \eqref{lqmf.comp.grid.ode.bar.3} into \eqref{lqmf.comp.grid.ode.bar.4} to obtain the explicit solution for $\bar{\nu}(t)$. At this point, $\bar{v}(t)$ can be obtained by \eqref{lqmf.comp.grid.optimal_control.bar}. 

Apart from directly solving the equations \eqref{lqmf.comp.grid.bsde.lambda_0_and_lambda_1}, we provide a more convenient method to derive $\hat{v}(t)$ as below. We define the centered random variables: $\widetilde{Q}_0(t):=Q_0(t)-\bar{Q}_0(t)$, $\widetilde{Q}_1(t):=Q_1(t)-\bar{Q}_1(t)$, $\widetilde{S}(t):=\hat{S}(t)-\bar{S}(t)$, $\widetilde{\phi}_3(t):=\phi_3(t)-\bar{\phi}_3(t)$ and $\widetilde{v}(t):=\hat{v}(t)-\bar{v}(t)$. Thus by \eqref{lqmf.comp.grid.fbsde.adjoint_relation.hat}-\eqref{lqmf.comp.grid.optimal_control.hat} and \eqref{lqmf.comp.grid.fbsde.adjoint_relation.bar}-\eqref{lqmf.comp.grid.optimal_control.bar}, we obtain the following FBSDEs
\begin{align}
\label{lqmf.comp.grid.fbsde.adjoint_relation.widetilde}
\left\{\begin{array}{cll}
d\widetilde{Q}_0(t) &=& -\alpha_0\widetilde{Q}_0(t)dt, \quad \widetilde{Q}_0(0)=0;\\
d\widetilde{Q}_1(t) &=& -\alpha_1\widetilde{Q}_1(t)dt + \sigma dw(t), \quad \widetilde{Q}_1(0)=0; \\
d\widetilde{S}(t) &=& \widetilde{v}(t)dt, \quad \widetilde{S}(0)=0; \\ 
-d\widetilde{\phi}_3(t)&=&a\widetilde{S}(t)dt +q_3(t)dw(t), \quad \widetilde{\phi}_3(T)=h_0\widetilde{S}(T),
\end{array}\right.
\end{align}
with
\begin{align}
\label{lqmf.comp.grid.optimal_control.widetilde}
\widetilde{v}(t) =& \frac{K_1\widetilde{Q}_1(t)-\widetilde{\phi}_3(t)}{c+K_1}.
\end{align}
This system is also a classical FBSDE system, which can be easily solved by an {\em ansatz}:
\begin{align}
\widetilde{\phi}_3(t) = \lambda_0(t)\widetilde{S}(t)+\Gamma_0(t)\widetilde{Q}_0(t) + \Gamma_1(t)\widetilde{Q}_1(t),
\end{align}
where the coefficients $\lambda_0(t)$, $\Gamma_0(t)$ and $\Gamma_1(t)$ satisfy
\begin{align}
\label{lqmf.comp.grid.ode.widetilde.1}
\frac{d\lambda_0(t)}{dt}+a-\frac{\lambda_0(t)^2}{c+K_1}=0, \quad \lambda_0(T) = h_0,
\end{align}
\begin{align}
\label{lqmf.comp.grid.ode.widetilde.2}
\frac{d\Gamma_0(t)}{dt}-\left(\alpha_0+\frac{\lambda_0(t)}{c+K_1}\right)\Gamma_0(t)=0, \quad \Gamma_0(T) = 0,
\end{align}
\begin{align}
\label{lqmf.comp.grid.ode.widetilde.3}
\frac{d\Gamma_1(t)}{dt}-\left(\alpha_1+\frac{\lambda_0(t)}{c+K_1}\right)\Gamma_1(t) + \frac{\lambda_0(t)K_1}{c+K_1}=0, \quad \Gamma_1(T) = 0,
\end{align}
moreover, $q_3(t)=-\sigma \Gamma_1(t)$. Note that the ODE system \eqref{lqmf.comp.grid.ode.widetilde.1}-\eqref{lqmf.comp.grid.ode.widetilde.3} is decoupled, it can be easily solved in an explicit form. Then by combining the explicit solutions of the systems \eqref{lqmf.comp.grid.fbsde.adjoint_relation.bar}-\eqref{lqmf.comp.grid.optimal_control.bar} and \eqref{lqmf.comp.grid.fbsde.adjoint_relation.widetilde}-\eqref{lqmf.comp.grid.optimal_control.widetilde}, the solutions of \eqref{lqmf.comp.grid.fbsde.adjoint_relation.hat}-\eqref{lqmf.comp.grid.optimal_control.hat} can be expressed by
$\hat{S}(t)=\widetilde{S}(t)+\bar{S}(t)$, $\phi_3(t)=\widetilde{\phi}_3(t)+\bar{\phi}_3(t)$ and $\hat{v}(t)=\widetilde{v}(t)+\bar{v}(t)$. Comparing to the master equation approach shown in \cite{bensoussan2022optimization}, our proposed approach is more mechanical and tractable.

\end{appendices}


\end{document}